\def\cV{{\mathcal V}}
\def\cO{{\mathcal{O}}}
\def\tan{\mbox{tan}}
\def\e{\varepsilon}
\def\t{\langle t\rangle }
\def\s{\langle s\rangle }
\def\f{\frac}
\def\p{\partial}
\def\CN{\mathcal{N}}
\def\n{\mathbf{n}}
\def\btau{\boldsymbol{\tau}}
\numberwithin{equation}{section}
\newtheorem{theorem}{Theorem}[section]
\newtheorem{proposition}[theorem]{Proposition}
\newtheorem{remark}[theorem]{Remark}
\newtheorem{definition}[theorem]{Definition}
\newtheorem{corollary}[theorem]{Corollary}
\newtheorem{lemma}[theorem]{Lemma}
\newcommand{\andf}{\quad\hbox{and}\quad}
\newcommand{\with}{\quad\hbox{with}\quad}
\newcommand{\beq}{\begin{equation}}
	\newcommand{\eeq}{\end{equation}}
\newcommand{\ben}{\begin{eqnarray}}
	\newcommand{\een}{\end{eqnarray}}
\newcommand{\beno}{\begin{eqnarray*}}
	\newcommand{\eeno}{\end{eqnarray*}}
\providecommand{\R}{\mathbb{R}}
\providecommand{\dive}{\mathrm{div} \,}
\date{\today}
\title[ ]{Global small-time approximate null and Lagrangian controllability of the viscous non-resistive MHD system in a $3D$ domain with Navier type boundary conditions  }
\author[J. Liao]{Jiajiang Liao}
\address[J. Liao]
{School of Mathematical Sciences, Beihang University, 102206 Beijing, China. } \email{jjliao@buaa.edu.cn}
\author[F. Sueur]{Franck Sueur}
\address [F. Sueur]
{ Department of Mathematics, Maison du nombre, 6 avenue de la Fonte,
University of Luxembourg, L-4364 Esch-sur-Alzette, Luxembourg.} \email{Franck.Sueur@uni.lu}
\author[P. Zhang]{Ping Zhang}
\address[P. Zhang]
{State Key Laboratory of Mathematical Sciences, Academy of Mathematics $\&$ Systems Science, The Chinese Academy of
	Sciences, Beijing 100190, China, and School of Mathematical Sciences, University of Chinese Academy of Sciences, Beijing 100049, China. } \email{zp@amss.ac.cn}
\subjclass{Primary 93B05; Secondary 35Q35.}
\keywords{Approximate controllability, magnetohydrodynamics, boundary layers, multi-scales asymptotic expansion, well-prepared dissipation method}
\begin{document}
\maketitle

\begin{abstract}
    We consider the incompressible viscous MHD system without magnetic diffusion in a $3D$ bounded domain with Navier type boundary condition.
    We establish the global small-time approximate null controllability and the Lagrangian controllability of the system, in the class of smooth solutions, by  following the approach initiated in \cite{CMS}  to establish the global small-time  null controllability of the incompressible Navier-Stokes equations  in the class of weak solutions and extended in \cite{LSZ1} to establish the global small-time  null and Lagrangian controllability of the incompressible Navier-Stokes equations  in the class of strong solutions.
This approach makes use of controls with an extra fast scale in time and  some corresponding multi-scale asymptotic expansions of the controlled solution. This expansion is constructed by an iterative process which requires some regularity.
The extra-difficulty here is that the MHD system at stake is  mixed hyperbolic-parabolic,  without any regularizing effect on the magnetic field.
Despite our strategy makes use of a quite precise asymptotic expansion, we succeed to cover the case where the initial velocity belongs the Sobolev space  $H^{24}$ and the initial magnetic field belongs to the Sobolev space  $H^8$.

\end{abstract}

\tableofcontents

\section{Introduction}

\subsection{Setting}

Let $\Omega$ be a bounded simply connected smooth domain in $\mathbb{R}^d$ $ (d=2,3)$ and let $\Gamma$ be a non-empty connected open part of the boundary $\p\Omega$. We additionally require that $\p\Omega$ has only one component and is simply connected when $d=3$. We consider the incompressible viscous magnetohydrodynamic (MHD) system without magnetic diffusion inside $\Omega.$ One may check \cite{Ca70,CP76,LL} for
 the physical background of this system, one may also check \cite{AZ5,CMRR,FMR,FMRR} and the references therein for the well-posedness theory to the Cauchy problem of this system. We denote by $u,p,B$ its velocity, pressure and the magnetic field, respectively. We assume that we can act on $\Gamma$ for both $u$ and $B$, while on the rest of the boundary we suppose that the velocity $u$ satisfies Navier slip-with-friction boundary condition and the magnetic field $B$ is tangential to the boundary. Hence $(u,p,B)$ satisfies
\begin{equation}\label{MHD}
\left\{
\begin{aligned}
&\p_t u+u\cdot\nabla u-B\cdot\nabla B-\Delta u +\nabla p=0,\quad &&\text{ in }(0,T)\times\Omega,\\
&\p_t B+u\cdot\nabla B-B\cdot\nabla u=0,\quad &&\text{ in }(0,T)\times\Omega,\\
&\dive u=\dive B=0,\quad &&\text{ in }(0,T)\times\Omega,\\
&u\cdot\n=0,\quad \mathcal{N}(u)=0,\quad B\cdot \n=0\quad &&\text{ on }(0,T)\times\p\Omega\setminus\Gamma,\\
&(u,B)|_{t=0}=(u_0,B_0),\quad&&\text{ in }\Omega,
\end{aligned}
\right.
\end{equation}
where we assume the viscosity coefficient to be one for simplicity.

Let $\cO$ be a bounded smooth extension of the initial domain $\Omega$ such that $\Gamma\subset \cO,\, \p\Omega\setminus\Gamma\subset\p\cO$ and $\p\cO$ is simply connected (see Figure \ref{figure1} for a simple case).
We denote $\n$ to be the outward unit normal vector field on $\p\cO$.
We define the Navier boundary operator $\mathcal{N}(f)$ for a vector function $f$ by
\begin{equation*}
\label{defNf}\mathcal{N}(f):=(D(f)\cdot\n+Mf)_{\tan}, \quad f_{\tan}:=f-(f\cdot\n) \n,\quad  D(f):=\frac{1}{2}(\nabla f+\nabla^T f),
\end{equation*}
where $M$ is a given smooth matrix-valued function defined near the boundary $\p\cO$, which describes the friction.

\subsection{Main results}

\subsubsection{Assumptions on the initial data}

For simplicity, we assume that the initial data $(u_0,B_0)$ satisfies
\begin{subequations} \label{S1eq9}
\begin{gather}
\label{ini1}
\begin{split}
u_0 \text{ is a restriction on $\Omega$ of }u_a\in H^{24}(\cO) \text{ with } \dive u_a=0\text{ in } \Omega,\\
u_a\cdot\n=0\text{ on }\p\cO \text{ and } \CN(u_a)=0 \text{ on }\p\Omega\setminus\Gamma,
\end{split}
\\
\label{ini2}B_0 \text{ is a restriction on $\Omega$ of }B_a\in H^8(\cO) \text{ with } \dive B_a=0 \text{ in } \cO \text{ and } B_a\cdot\n=0 \text{ on }\p\cO.
\end{gather}
\end{subequations}

\begin{figure}
  \centering
  \includegraphics{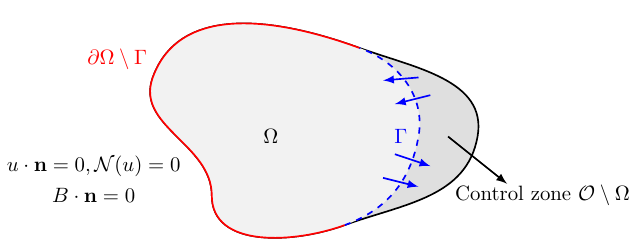}
  \caption{Extension of the physical domain $\Omega\subset\mathcal{O}$ }\label{figure1}
\end{figure}

Instead of imposing the flux condition on the initial data as in \cite{KNV} and \cite{L} to extend the initial data into a larger domain and maintain the divergence free condition and tangent to the boundary, we directly require the initial data $(u_0,B_0)$   being a restriction of $(u_a,B_a)$ defined in a larger domain $\cO$ satisfying \eqref{ini1} and \eqref{ini2}.

\subsubsection{Small-time global approximate null controllability}

Our first main goal is the small-time global approximate null controllability of this system, that is, for a given final time $T>0$ and initial data $(u_0,B_0)$ in some Sobolev spaces, we aim at finding a solution $(u,B)$ of \eqref{MHD}, for some  controls for both $u$ and $B$ on $\Gamma$ (these controls will be implicit in \eqref{MHD}), which
is approximately equal to zero at  time $T$.


The main results state as follows:

\begin{theorem}\label{mainth}
{\sl
Let $T>0$ and we assume that the initial data $(u_0,B_0)$
satisfies \eqref{ini1} and \eqref{ini2}.
Then for any $\epsilon>0$, there exists a solution $(u,p,B)$ of \eqref{MHD} with
$$u\in C([0,T];H^1(\Omega))\cap L^2((0,T);H^2(\Omega)), B\in C([0,T];H^1(\Omega)),$$
which satisfies
\begin{equation} \label{petitesse}
\|(u(T,\cdot),B(T,\cdot))\|_{H^1(\Omega)}+\|(u(T,\cdot),B(T,\cdot))\|_{L^{\infty}(\Omega)}<\epsilon.
\end{equation}
}
\end{theorem}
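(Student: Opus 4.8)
The plan is to follow the \emph{return method} combined with a \emph{well-prepared dissipation} argument, as pioneered in \cite{CMS} and adapted to strong solutions in \cite{LSZ1}, but now accounting for the hyperbolic nature of the magnetic equation. The overall scheme has three stages. First, I would reduce the controllability problem on $\Omega$ to a well-posedness problem on the extended domain $\cO$: rather than prescribing boundary data on $\Gamma$, one constructs a solution $(u,p,B)$ on all of $(0,T)\times\cO$ with $u\cdot\n=0$, $\CN(u)=0$, $B\cdot\n=0$ on $\p\cO$ (no control needed on $\p\cO$ since $\Gamma\subset\cO$), and then the restriction to $\Omega$ automatically solves \eqref{MHD} with \emph{some} implicit controls on $\Gamma$. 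So the entire task becomes: build a solution on $\cO$, starting from $(u_a,B_a)$, that is smaller than $\epsilon$ in $H^1\cap L^\infty$ at time $T$.

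Second, I would introduce the fast time scale. Rescaling time by a small parameter $\e>0$ and using large controls, one expects the leading-order behaviour of $u$ to be governed by the Euler equations (inviscid limit with vanishing viscosity $\e$ over the rescaled time), while $B$ is simply transported. The key algebraic fact to exploit is that for the Euler flow one can prescribe, via interior/boundary controls, a velocity field that flushes all fluid particles out through $\Gamma$ in arbitrarily short time — this is the classical Coron-type global controllability of the Euler equation, here realized on $\cO$ with the Navier condition on $\p\cO\setminus\Gamma$. Concretely I would take a fixed smooth potential-like profile $\nabla\theta(t,x)$ (with $\theta$ solving an elliptic problem adapted to the geometry) whose flow carries $\overline\Omega$ out of $\cO$ before the rescaled final time; the magnetic field, being transported and stretched by this flow, is then carried out along with it, so that after this phase $B$ is supported away from $\Omega$. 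This handles the ``inviscid'' transport of both fields.

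Third — and this is where the regularity budget $H^{24}$ for $u_0$ and $H^8$ for $B_0$ is consumed — I would set up the multi-scale asymptotic expansion of the true controlled solution in powers of $\sqrt\e$, with an Euler profile plus boundary-layer correctors near $\p\cO\setminus\Gamma$ (of amplitude $O(\sqrt\e)$ and width $O(\sqrt\e)$) to fix the Navier condition, plus interior correctors. One constructs these profiles order by order, estimates the remainder by an energy method, and — crucially — uses the \emph{well-prepared dissipation} observation: because the boundary-layer profile satisfies a heat-type equation in the fast variable, after the flushing phase one lets the solution dissipate over the remaining (macroscopic) time, during which the layers decay and the small residual velocity is damped by viscosity, while the magnetic field, having been transported out of $\Omega$, contributes nothing on $\Omega$. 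Carefully tracking which Sobolev norms of the profiles and remainders are needed closes the estimate and yields \eqref{petitesse}.

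The main obstacle is the absence of any smoothing for $B$: the magnetic equation is a transport equation coupled to $u$ through the Lorentz force $B\cdot\nabla B$ and the stretching term $B\cdot\nabla u$, so every derivative of $B$ one needs must be produced either by the (limited) initial regularity or by the regularity of the Euler/boundary-layer profiles, and the nonlinear feedback $B\cdot\nabla B$ into the velocity equation means the velocity correctors must absorb magnetic terms without loss. Hence the delicate point is to design the expansion so that the magnetic field only ever needs to be differentiated finitely many times (matching $H^8$), while still producing enough velocity regularity ($H^{24}$) to control the parabolic part and the boundary layers; verifying the commutator and product estimates in the remainder system, uniformly in $\e$, is the technical heart of the argument.
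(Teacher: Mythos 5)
Your overall architecture --- extension to $\cO$, time scaling, Coron's return method, a multi-scale expansion in powers of $\sqrt\e$ with $O(\sqrt\e)$ boundary-layer correctors, well-prepared dissipation, and conormal energy estimates for the remainder --- is exactly the paper's strategy. However, your treatment of the magnetic field has a genuine gap. You write that the flushing flow carries the magnetic field ``out of $\cO$'' so that after this phase $B$ is ``supported away from $\Omega$'' and ``contributes nothing''. Neither step is correct. The base flow $u^0$ is tangent to $\p\cO$, so nothing ever leaves $\cO$: the flushing property \eqref{flush} only guarantees that each point of $\overline\cO$ visits the control region $\overline\cO\setminus\overline\Omega$ at \emph{some} time $t_x\in(0,T)$. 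Since the magnetic equation is pure transport with no damping, once a parcel carrying $B$ enters the control region it is later transported back unless one \emph{actively} cancels it while it is there; and because different parcels reach the control region at different times, one cannot switch $B$ off at a single instant. The missing mechanism is exactly Proposition~\ref{prB2} (proved in Section~\ref{mfe}): cover $\overline\cO$ by balls $\mathcal{B}_l$ with flushing times $t_{x_l}$, take a partition of unity $\alpha_l$, write the vector potential as $B_a=\nabla\times\Psi_a$ with $(\Psi_a)_{\tan}=0$ on $\p\cO$ (Lemma~\ref{Sto}, needed so each piece stays tangent), propagate each $B_l=\nabla\times(\alpha_l\Psi_a)$ freely under the linearized transport operator, and switch each piece off with a cutoff $\beta_\e(t-t_{x_l})$ precisely while it is inside the control region. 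This yields a divergence-free, tangential control $\mu^2$ supported in $\overline\cO\setminus\overline\Omega$ that drives the linearized magnetic profile $B^2$ exactly to zero at time $T$.

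Relatedly, your phrase ``one lets the solution dissipate \dots\ while the magnetic field contributes nothing'' misattributes to $B$ a smoothing/damping mechanism it does not have. What makes $B^\e(T/\e,\cdot)$ small is that the leading profile $\e B^2$ vanishes exactly at $T$ by the above construction, so only the $\e^{3/2}R^\e$ remainder survives; well-prepared dissipation is used only to tame the velocity boundary-layer profiles $v^1,v^2,v^3$ after time $T$. Finally, your concern that ``the velocity correctors must absorb magnetic terms without loss'' is largely a non-issue at the expansion level: in the scaling \eqref{expB} the magnetic field carries no boundary layer and is already of size $O(\e)$, so the Lorentz force $B^\e\cdot\nabla B^\e$ is $O(\e^2)$ and goes entirely into the $\e^{3/2}f^\e$ source of the remainder system. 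The genuine difficulty tied to the hyperbolicity of $B$ shows up one level down, in the remainder estimates: one cannot gain a derivative on $R^\e$ from its equation, which forces the tangential/normal energy estimates, the use of Lemma~\ref{unfi} to rewrite $B^\e\cdot\nabla$ as a combination of tangential derivatives, and the continuity argument of Section~\ref{infes} to control $\|\nabla r^\e\|_{L^\infty}$ without any maximum-principle gain for $R^\e$.
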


Here and below, we always  use for simplicity the following notation
 for any Banach space $X$ and $f_1,f_2\in X$,
\begin{equation*}\label{f1f2}
\|(f_1,f_2)\|_X:=\big(\|f_1\|^2_X+\|f_2\|_X^2\big)^{\frac{1}{2}}.
\end{equation*}

\begin{remark}
{\sl
We focus on proving Theorem \ref{mainth} in three dimensional case, the two dimensional case can be obtained similarly.
}
\end{remark}


\subsubsection{Small-time global approximate Lagrangian  controllability}
Our second result is the following small-time global approximate Lagrangian controllability theorem.

\begin{theorem}\label{LAC}
{\sl
Let $\gamma_0,\gamma_1$ be two Jordan surfaces included in $\Omega$ such that $\gamma_0$ and $\gamma_1$ are isotopic and surround the same volume.
Let $T>0,$ we assume that the initial data $(u_0,B_0)$ satisfies \eqref{ini1} and \eqref{ini2}.
Then for any $\epsilon >0$, there exist a time $T_*\in (0,T)$ so that the system \eqref{MHD} has a solution $(u,p,B)$ on $[0,T_\ast]$ with $u\in C([0,T_*];H^1(\Omega))\cap L^2((0,T_*);H^2(\Omega)), B\in C([0,T_*];H^1(\Omega)),$
and
\begin{subequations} \label{S1eq10}
\begin{gather}
\label{lc1}\forall\, t\in [0,T_*], \ \phi^u(t,0,\gamma_0)\subset\Omega,\\
\label{lc2}\|\phi^u(T_*,0,\gamma_0)-\gamma_1\|_{L^{\infty}}<\epsilon,
\end{gather}
\end{subequations}
holds up to reparameterization, where $\phi^u$ is the flow map associated with $u$ defined by
\begin{equation}\label{flowu}
\left\{
\begin{aligned}
\p_t\phi^u(t,s,x)&=u(t,\phi^u(t,s,x)),\qquad &&\forall\, t,s\in [0,T_*], \ x\in\Omega,\\
\phi^u(s,s,x)&=x,\qquad &&\forall s\in [0,T_*], \ x\in \Omega.
\end{aligned}
\right.
\end{equation}

Moreover, if we impose more regularity on the initial data, say $u_a\in H^{24+2k}(\cO)$ and $B_a\in H^{8+k}(\cO)$ for an integral $k\in \mathbb{N}_+,$  then \eqref{lc2} can be improved to be
\begin{align}
\label{lc3}\|\phi^{u}(T_*,0,\gamma_0)-\gamma_1\|_{C^k}<\epsilon.
\end{align}
}
\end{theorem}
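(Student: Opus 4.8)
\textbf{Proof proposal for Theorem \ref{LAC}.}

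The plan is to deduce the Lagrangian controllability from the approximate null controllability of Theorem \ref{mainth}, following the scheme of \cite{CMS,LSZ1}. The starting observation is that Theorem \ref{mainth} is not merely a statement about the final state: the asymptotic expansion on which it rests is built around a fast ground-state profile that, to leading order, transports fluid particles along the flow of a prescribed smooth inviscid field. Concretely, one first chooses, by a purely kinematic argument (the classical Dacorogna--Moser / Krygin-type construction used in \cite{CMS,LSZ1}), a smooth compactly-supported-in-$\mathcal O$ divergence-free vector field $w$ whose time-one (or time-$T$) flow carries $\gamma_0$ arbitrarily close to $\gamma_1$ while keeping the image inside $\Omega$; here the hypotheses that $\gamma_0,\gamma_1$ are isotopic and bound the same volume are exactly what make such a $w$ exist. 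One then runs the construction of Theorem \ref{mainth} with the large fast control built from this $w$, so that the controlled velocity $u$ has the form $u = \frac1\e \nabla\theta(t/\e,\cdot) + \text{(slower terms)} + \text{(remainder)}$, and the fast phase $\theta$ is engineered so that the averaged transport it produces over the short active window matches the flow of $w$.

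The key steps, in order, are as follows. First, fix $\e$ small and split $[0,T]$ into a short dissipation/transport layer $[0,T_*]$ (with $T_*\to 0$) on which the fast controls act, followed, if desired, by a passive phase; since we only need $\phi^u(T_*,0,\gamma_0)$ close to $\gamma_1$ we may simply work on $[0,T_*]$. Second, invoke the expansion underlying Theorem \ref{mainth} to get the velocity field $u$ together with explicit control on the size of the remainder in a norm strong enough to control flows, namely $L^1((0,T_*);W^{1,\infty}(\Omega))$ or better; this is where the high Sobolev regularity $H^{24}$ on $u_0$ (resp. $H^{24+2k}$ for the $C^k$-refinement) is consumed, because controlling $\phi^u$ in $C^k$ requires controlling $\nabla u$ up to order $k$, hence roughly $2k$ extra derivatives through the parabolic and hyperbolic estimates of the iteration. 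Third, perform a Gronwall/perturbation-of-flows comparison: write $\phi^u$ versus the flow $\phi^{\bar u}$ of the smooth profile $\bar u$ governing the expansion, estimate $\|\phi^u(T_*,0,\cdot)-\phi^{\bar u}(T_*,0,\cdot)\|_{C^k}$ by the remainder, and separately check that $\phi^{\bar u}(T_*,0,\gamma_0)$ is, up to reparameterization, within $\e/2$ of the target $w$-flow image and hence of $\gamma_1$; also verify \eqref{lc1} by a continuity/smallness argument ensuring the whole trajectory stays in $\Omega$ (using that $w$ was chosen with image compactly inside $\Omega$ and $T_*$ small). Fourth, absorb the reparameterization: the flow image is a Jordan surface isotopic to $\gamma_1$, and closeness in $C^k$ up to reparameterization follows from closeness of the two embeddings modulo the diffeomorphism group, a standard manifold fact.

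The main obstacle I expect is the third step in the regime $k\ge1$: propagating enough regularity through the multi-scale iteration of Theorem \ref{mainth} so that $\nabla^j u$, $1\le j\le k$, is controlled in $L^1_t L^\infty_x$ uniformly (or with a quantified blow-up rate in $\e$ that is still beaten by the smallness of the active window), given that the magnetic equation is purely transport and offers no smoothing — every derivative on $B$ must be paid for by a derivative on the initial data $B_a$, which is why the statement asks for $H^{8+k}$ in parallel with $H^{24+2k}$. Making the bookkeeping of derivatives in the asymptotic expansion compatible with a clean $C^k$ flow estimate, and checking that the remainder's $\e$-dependence does not destroy the flow comparison, is the delicate part; the kinematic construction of $w$, the Gronwall estimate for flows, and the reparameterization argument are all routine once the regularity of $u$ is in hand.
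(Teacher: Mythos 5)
Your proposal captures the right overall strategy: build a kinematic diffeotopy (Krygin) carrying $\gamma_0$ to $\gamma_1$, approximate its transport by a potential flow $\nabla\theta$, run the multi-scale expansion of Theorem \ref{mainth} with this profile in place of the flushing field $u^0$, and close by a Gronwall comparison of flows. This is indeed what the paper does, and you correctly identify that the flushing property is no longer needed since one solves on the extended domain $\mathcal O$ and only needs a short active window $T_*=\e T$.

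There is however a genuine gap that you gloss over: the passage from the Glass--Horsin potential $\theta$ (defined in $\Omega$, harmonic there, with $\partial_{\n}\theta=0$ only on $\partial\Omega\setminus\Gamma$) to a profile $\tilde u^0$ defined on all of $\mathcal O$, tangent to $\partial\mathcal O$, and still transporting $\gamma_0$ close to $\gamma_1$. You write ``extend $\nabla\theta$ to $\mathcal O$'' as if it were automatic, but the whole of Proposition \ref{lagctr} is devoted to this: one invokes a Runge/Walsh-type harmonic approximation (Theorem \ref{harapp}) on a compact set $K=K_1\cup K_2$ with $\mathbb R^3\setminus K$ connected, approximates $\theta$ uniformly by a harmonic polynomial $v_K$ in $\mathcal O$, cuts off near $\Gamma$, and corrects the resulting normal trace on $\partial\mathcal O$ by solving a Neumann problem for a second harmonic function $\Psi_K$. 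Without this step there is no Euler-type profile $(\tilde u^0,\tilde p^0,\tilde\sigma^0)$ on $\mathcal O$ to plug into the expansion \eqref{expu}, and the source term $\tilde\sigma^0$ (supported in $\overline{\mathcal O\setminus\Omega}$) that makes the construction consistent comes precisely from the cut-off. Your proposal does not contain this idea, and without it the profile you call $\bar u$ is not defined on the domain where the remainder estimates of Section~4 are carried out.

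A second, smaller imprecision: you describe the fast profile as $\frac1\e\nabla\theta(t/\e,\cdot)$ and speak of an ``averaged transport over the active window,'' suggesting an oscillatory homogenization mechanism. In fact nothing is averaged; the transport on $[0,T_*]$ is by time-scaling exactly the transport of $\tilde u^0$ on $[0,T]$, and the comparison with $\phi^{\tilde u^0}$ is a plain Gronwall estimate using $\|u^{\e}-\tilde u^0\|_{L^\infty}\lesssim\sqrt\e$ from Lemma \ref{appes} and Proposition \ref{2inf}/\ref{ult}; your inequality $\|\phi^u-\phi^{\bar u}\|\lesssim$ remainder in $L^1_tW^{1,\infty}_x$ is the right shape, but the paper's estimate \eqref{pueu0} only requires $\nabla\tilde u^0$ (not $\nabla u$) in $L^\infty$ and the remainder in $L^\infty$, which is what makes it close under the available bounds. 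Finally, your intuition about where the extra regularity $H^{24+2k}\times H^{8+k}$ is spent (propagating $k$ more derivatives through the expansion) matches the paper's Remark \ref{indexpl}-style bookkeeping, so that part of the argument would survive once the domain-extension gap is filled.
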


 The proof of Theorem \ref{LAC} will be given in Section \ref{sec-lagr}.

\subsection{Related literature}\label{relli}
\subsubsection{On the well-posedness of the non-resistive MHD system.} For the viscous, non-resistive MHD equations,
Fefferman {\it et al} \cite{FMR} established the local existence and uniqueness of solutions in $\mathbb{R}^d,$ $ d=2,3,$ for initial data in $H^s(\mathbb{R}^d)$ with $s>d/2$. These results have been extended in \cite{FMRR} to the case when the initial data $B_0\in H^s(\mathbb{R}^d)$ and $u_0\in H^{s-1+\e}(\mathbb{R}^d)$ for $s>d/2$ and any $0<\e<1$.  Chemin {\it et al } proved in \cite{CMRR} the local existence of weak solution when the initial data is in some Besov spaces, that is $B_0\in B^{d/2}_{2,1}(\mathbb{R}^d)$ and $u_0\in B^{d/2-1}_{2,1}(\mathbb{R}^d),$ and the uniqueness of such solution in two space dimensions
was proved by Wan in \cite{Wan}. Many papers have established the existence of global-in-time solutions for initial data sufficiently close to certain equilibrium solutions,
cf. \cite{LXZ, Zhang} in $\mathbb{R}^2$ and \cite{LZ,AZ5} in $\mathbb{R}^3$.

\subsubsection{On the Navier-Stokes equations}
The boundary controllability problem for Navier-Stokes equations was first raised by J. L. Lions in \cite{Lions}. In Lions' original question, the control is acted in a small subdomain (this situation is similar to controlling only part of the boundary) and the boundary condition of the uncontrolled part is Dirichlet boundary condition. 

When the initial data is close to the final state, Fern\'{a}ndez-Cara {\it et al} proved in \cite{FGIP} the small time local controllability to the trajectories of Navier-Stokes equations by using some parabolic Carleman estimates.
Guerrero \cite{Guerrero} has extended this result to the case of the Navier boundary condition.

For the case of the Navier boundary condition on the uncontrolled part of the boundary,  Coron {\it et al} proved in \cite{CMS} the small time global controllability to the trajectories of Navier-Stokes equations by using Coron's return method and ``well-prepared dissipation" method in a $L^2$ framework. We extend this result to the class of strong solutions in \cite{LSZ1} by constructing higher order expansions of the approximate solutions. As a byproduct, we proved the Lagrangian approximate controllability.

For Dirichlet boundary condition, Coron {\it et al} proved in \cite{CMSZ} the small time global exact controllability of the Navier-Stokes equations in a rectangle at the expense of a small phantom force inside the domain. We got a similar theorem in a cylinder in \cite{LSZ2}.

\subsubsection{On the MHD equations}
For the controllability of MHD equations, Badra \cite{Bad} proved the small time local controllability to the trajectories by using Carleman's estimates. However, what he imposed are Dirichlet boundary condition for the velocity and Navier boundary condition for the magnetic field, besides here we don't have magnetic diffusion, thus we cannot use Badra's result to get exact null controllability.

For ideal MHD equations, Rissel and Wang proved in \cite{RW1} the null-controllability by adding an additional magnetic force. Kukavica {\it et al} provide a sufficient and necessary condition such that the MHD equations is exact controllable in a rectangle. Recently, Kucavica and O\.za\'nski proved in \cite{KO} the exact boundary controllability for $3D$ incompressible ideal MHD equations in general domains.

For MHD equations with viscosity and magnetic diffusion, Rissel and Wang proved in \cite{RW} the approximate controllability to a given trajectory in the weak sense. The boundary conditions are coupled Navier boundary condition for both velocity and magnetic field. They only constructed the main boundary layer of order $\cO(\sqrt{\e})$ and the null controllability for the linearized equations. So, their results only hold in a weak framework. We construct higher-order boundary layers and adapt the approximate results in both $H^1$ and $L^{\infty}$.

Compared to \cite{LSZ1}, since the equation for the magnetic field $B$ is a transport equation, we no longer can gain any regularity through the equation. Therefore, we wish to minimize the regularity for the initial data.
For this purpose, we use a new method: the continuity method to estimate the $L^{\infty}$ norm of the normal derivatives of the remainder. It is more efficient and powerful than the method we used in \cite{LSZ1}, which was more based on a maximum principle.
In addition, thanks to a more accurate version of \cite[Proposition 5.10]{LSZ1}, that is, Proposition \ref{2inf}, we manage to close the estimates by simply expanding $(u^{\e},B^{\e})$ to order $\cO(\e^{\frac{3}{2}})$ instead of $\cO(\e^2)$ and reducing the minimum regularity required for the initial data to $H^{24}(\cO)\times H^8(\cO)$. For more explanations, see Remark \ref{ueaid} and Remark \ref{indexpl}.

\subsubsection{Status of the main results here VS the literature}
Theorem \ref{mainth}  thus extends the result obtained in   \cite{LSZ1} for  the incompressible Navier-Stokes equations to the
case of the viscous non-resistive MHD system, both with Navier type boundary conditions for a domain with a  arbitrary shape and in the class of strong solutions.
The extra-difficulty  is that the system at stake here  is  mixed hyperbolic-parabolic, without any regularizing effect on the magnetic field.
 Albeit the strategy makes use of a quite precise asymptotic expansion, we succeed to cover the case where the initial velocity belongs to the Sobolev space  $H^{24}$ and the initial magnetic field belongs to  the Sobolev space  $H^8$.

Theorem \ref{LAC} extends the Lagrangian controllability result to the case of MHD equations from the results \cite{GH1,GH2,HK} in the case of the Euler equations, the result \cite{GH3} in the case of the steady Stokes equations and the result \cite{LSZ1} in the case of the Navier-Stokes equations with Navier boundary conditions.

\section{Scheme of the proof of Theorem \ref{mainth}}\label{Sect2}


Firstly, we use the time scale \eqref{tsca1} to stretch the time interval from $[0,T]$ to $[0, {T}/{\e}]$. After that, we construct a series of approximate solutions $(u^{\e}_{app},B^{\e}_{app})$ in Section \ref{cexp} that have a nice decay rate in time. The last step is to estimate the remainder  parts $(r^{\e},R^{\e})$ in Section \ref{estrem} and to make sure that they are globally small.

As in \cite{CMS,CMSZ,L,LSZ1,LSZ2,Marbach,RW}, we use the ``well-prepared dissipation" method to construct approximate solutions. This method was proposed by Marbach \cite{Marbach} to prove the small-time global exact controllability for the Burgers equation.  It consists of two stages.
\begin{itemize}
    \item
During the first stage, we use Coron's return method to construct a base solution $u^0$ of Euler equations which satisfies the flushing condition \eqref{flush}. We can use this flushing property of $u^0$ to construct a solution $u^2$ of the linearized Euler equation which drives the initial data $u_0$ to zero at time $T$.

Since the boundary condition of MHD equations does not coincide with the ones of the Euler equation, some boundary layers appear. Due to the Navier boundary condition on  the velocity, the main boundary layer $v^1$ is of the order $\cO(\sqrt{\e})$
and the equation for $v^1$ is a linear equation.
   \item
The second stage is to use the flushing property of $u^0$ to construct some control functions over time $t\in [0,T]$ such that the new data $v^1(T,\cdot,\cdot)$ satisfies certain conditions which entail a sufficiently nice decay for $v^1$. For $v^1$, this condition for $v^1(T,\cdot,\cdot)$ is the vanishing moments condition, see \cite{CMS}. In order to get higher-order regularities of the remainder, we have to construct higher order expansions. When $t\geq T$, the equation for higher-order boundary layers $v^i,i\geq 2,$ is a Poisson equation with non-zero source terms. In this case, we give the corresponding condition in \cite{LSZ1} which ensures the fast decay for $v^i,i\geq 2.$
\end{itemize}

For the estimates of the remainders, we use the conormal Sobolev spaces and the continuity method.

\subsection{Time scaling}\label{scal}
We introduce a smooth function $\varphi:\mathbb{R}^3\rightarrow\mathbb{R}$ such that $\varphi=0$ on $\p\cO$, $\varphi>0$ in $\cO$ and $\varphi<0$ outside $\overline{\cO}$. Moreover, we can assume that $\varphi(x)=\text{dist}(x,\p\cO)$ in a small neighborhood of $\p\cO$. Hence we can extend $\n$ by $-\nabla \varphi$ to $\cO.$ We define
\begin{equation*}
\cV_{\delta}:=\bigl\{\ x\in \cO |\ 0\leq \varphi(x)<\delta\ \bigr\}.
\end{equation*}
Thus there exists a $\delta_0>0$ such that $|\n|=1$ in $\cV_{\delta_0}$.

For any $\epsilon>0$, we want to find some regular control functions $\xi, \Xi,\sigma$ which are supported in $\overline{\cO\setminus\Omega}$, such that the associated solution $(u,p,B)$ of
\begin{equation}\label{MHDO}
\left\{
\begin{aligned}
&\p_t u+u\cdot\nabla u-B\cdot\nabla B-\Delta u +\nabla p=\xi,\quad  && \text{ in }(0,T)\times\cO,\\
&\p_t B+u\cdot\nabla B-B\cdot\nabla u+\sigma B=\Xi,\quad  && \text{ in }(0,T)\times\cO,\\
&\dive u=\sigma,\quad \dive B=0,\quad &&\text{ in }(0,T)\times\cO,\\
&u\cdot\n=0,\quad \mathcal{N}(u)=0,\quad B\cdot \n=0,\quad && \text{ on }(0,T)\times\p\cO,\\
&(u,B)|_{t=0}=(u_a,B_a),\quad  && \text{ in }\cO,
\end{aligned}
\right.
\end{equation}
satisfies
\begin{equation*}
\|(u(T,\cdot),B(T,\cdot))\|_{H^1(\cO)}+\|(u(T,\cdot),B(T,\cdot))\|_{L^{\infty}(\cO)}<\epsilon.
\end{equation*}

The reason why we add the term $\sigma B$ in the equation of the magnetic field is that we want to maintain the divergence free property for $B$.
From the construction of $\Xi$ below in Section \ref{mfe}, it is a divergence free function.
By taking divergence and make use of $\dive u=\sigma$, $\dive B$ satisfies
\begin{align*}
\p_t \dive B+u\cdot\nabla \dive B+\sigma \dive B=\dive \Xi=0.
\end{align*}
 Hence, the divergence-free property of $B$ can be propagated.
Since the control functions $\sigma, \xi$ and $\Xi$ are supported in $\overline{\cO\setminus \Omega}$, the system \eqref{MHDO} reduces to the system \eqref{MHD} in the original space domain $\Omega$.

We remark that, we can indeed view $\tilde{\Xi}=\Xi-\sigma B$ as the total control force of $B$. While if we don't add the term $\sigma B,$ in order to maintain $\dive B=0,$  the control force $\tilde{\Xi}$ is not divergence free and cannot  be easily constructed as below.

We perform the time scaling
\beq \label{tsca1}
\begin{aligned}
u^{\e} (t,x)&:=\e u(\e t,x),\quad      & p^{\e}(t,x) &:=\e^2p(\e t,x),\quad  &B^{\e}(t,x)&:=\e B(\e t,x),\\
\xi^{\e}(t,x)&:=\e^2 \xi(\e t,x),\quad &\Xi^{\e}(t,x)&:=\e^2\Xi(\e t,x),\quad& \sigma^{\e}(t,x)&:=\e\sigma(\e t,x).
\end{aligned}\eeq

Then $(u^{\e},p^{\e},B^{\e})$ satisfies
\begin{equation}\label{MHDe}
\left\{
\begin{aligned}
&\p_t u^{\e}+u^{\e}\cdot\nabla u^{\e}-B^{\e}\cdot\nabla B^{\e}-\e\Delta u^{\e} +\nabla p^{\e}=\xi^{\e},\quad &&\text{ in }(0,{T}/{\e})\times\cO,\\
&\p_t B^{\e}+u^{\e}\cdot\nabla B^{\e}-B^{\e}\cdot\nabla u^{\e}+\sigma^{\e}B^{\e}=\Xi^{\e},\quad &&\text{ in }(0,{T}/{\e})\times\cO,\\
&\dive u^{\e}=\sigma^{\e},\quad \dive B^{\e}=0,\quad&&\text{ in }(0,{T}/{\e})\times\cO,\\
&u^{\e}\cdot\n=0,\quad \mathcal{N}(u^{\e})=0,\quad B^{\e}\cdot \n=0\quad &&\text{ on }(0,{T}/{\e})\times\p\cO,\\
&(u^{\e},B^{\e})|_{t=0}=\e(u_a,B_a),\quad&&\text{ in }\cO.
\end{aligned}
\right.
\end{equation}
Through time scaling, we stretch the time interval from $[0,T]$ to $[0,{T}/{\e}]$.
For some regular control functions $\xi^{\e}, \Xi^{\e}, \sigma^{\e}$ and some regular initial data $(u_a,B_a)$, it follows from well-known works such as \cite{JN,FMR,FMRR}, for each $\e>0,$ there is a unique corresponding local strong solution.

To prove \eqref{petitesse} and end the proof of Theorem \ref{mainth},
it is sufficient to find some regular control functions $\sigma^{\e},\ \xi^{\e},\ \Xi^{\e},$  which are supported in $\overline{\cO\setminus\Omega}$, such that
\begin{equation}\label{uBeT}
\big\|\big(u^{\e}({T}/{\e},\cdot),B^{\e}({T}/{\e},\cdot)\big)\big\|_{H^1(\cO)}
+\big\|\big(u^{\e}({T}/{\e},\cdot),B^{\e}({T}/{\e},\cdot)\big)\big\|_{L^{\infty}(\cO)}=o(\e).
\end{equation}

\subsection{Return method}

\begin{lemma}\label{lmu0}
{\sl There exists a solution $(u^0,p^0,\nu^0,\sigma^0 )\in C^{\infty}([0,T]\times\overline{\cO}; \mathbb{R}^3\times\mathbb{R}\times\mathbb{R}^3\times\mathbb{R})$ to the system:
\begin{equation}\label{euler0}
\begin{aligned}
\partial_t u^0+u^0\cdot\nabla u^0+\nabla p^0=&\ \nu^0, \quad & &\mbox{ in }(0,T)\times\cO,\\
\dive u^0=&\ \sigma^0,  \quad & &\mbox{ in }(0,T)\times\cO,\\
u^0\cdot \mathbf{n}=&\ 0, & &\mbox{ on }(0,T)\times\partial\cO,\\
u^0(0,\cdot)=&\ 0, & & \mbox{ in }\cO,\\
u^0(T,\cdot)=&\ 0,  & &\mbox{ in }\cO,
\end{aligned}
\end{equation}
such that the flow $\phi^{u^0}$ defined by $\partial_t \phi^{u^0}(t,s,x)=u^0(t,\phi^{u^0}(t,s,x))$ for $t,s\in [0,T]$ and $\phi^{u^0}(s,s,x)=x$ satisfies
\begin{equation} \label{flush}
\forall\, x\,\in \overline{\cO},\,\exists\, t_x\in (0,T),\quad\phi^{u^0}(t_x,0,x)\in \overline{\cO}\setminus\overline{\Omega}.
\end{equation}
Moreover, $u^0$ can be chosen such that:
  \begin{equation*}
\nabla\times u^0=0\quad\mbox{ in }[0,T]\times \overline{\cO}.
\end{equation*}
In addition, $\nu^0$ and $\sigma^0 $ are supported in $\overline{\cO}\setminus\overline{\Omega}$, $(u^0,p^0,\nu^0,\sigma^0 )$ are compactly supported in $(0,T)$. In the sequel, we shall implicitly extend them by zero after T when it is necessary.}
\end{lemma}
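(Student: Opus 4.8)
The statement to prove is Lemma \ref{lmu0}, the return-method lemma: one needs a smooth solution $(u^0,p^0,\nu^0,\sigma^0)$ of the forced Euler equations on $(0,T)\times\cO$, starting and ending at rest, curl-free, with the controls $\nu^0,\sigma^0$ supported in $\overline{\cO}\setminus\overline{\Omega}$, everything compactly supported in time, and such that the flow flushes every point of $\overline{\cO}$ out through the control region $\overline{\cO}\setminus\overline{\Omega}$ at some intermediate time.

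\medskip

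\textbf{Proof strategy.} The plan is to adapt the construction from \cite{CMS,LSZ1}, which in turn goes back to Coron's return method. First I would reduce the problem to choosing a suitable scalar potential: since we want $\nabla\times u^0=0$ on a (say simply connected) domain, we look for $u^0(t,x)=\nabla\theta(t,x)$ for some smooth $\theta$. Then the first Euler equation is automatically solvable for $(p^0,\nu^0)$ once $\theta$ is chosen, because $\p_t\nabla\theta+\nabla\theta\cdot\nabla\nabla\theta$ is itself a gradient, $\nabla\big(\p_t\theta+\tfrac12|\nabla\theta|^2\big)$; so outside $\overline{\cO}\setminus\overline{\Omega}$ we set $\nu^0=0$ and $p^0=-\p_t\theta-\tfrac12|\nabla\theta|^2$, while inside the control region we may absorb any discrepancy into $\nu^0$. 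Similarly $\sigma^0:=\Delta\theta$, and we must arrange $\sigma^0$ to be supported in $\overline{\cO}\setminus\overline{\Omega}$, i.e. $\theta$ should be harmonic on a neighborhood of $\overline{\Omega}$. The boundary condition $u^0\cdot\n=0$ on $\p\cO$ becomes a Neumann condition $\p_\n\theta=0$ on $\p\cO$.

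\medskip

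\textbf{Construction of the potential.} I would pick a fixed smooth function $\theta_0$ on $\cO$ that is harmonic near $\overline{\Omega}$, satisfies $\p_\n\theta_0=0$ on $\p\cO$, and whose gradient flow $\dot x=\nabla\theta_0(x)$ carries every point of $\overline{\cO}$ into $\overline{\cO}\setminus\overline{\Omega}$ in uniformly bounded time — this is exactly the place where the hypotheses on $\Omega,\Gamma,\cO$ (connectedness, $\Gamma$ nonempty, $\p\cO$ simply connected) are used, and the existence of such a $\theta_0$ is precisely the "flushing" ingredient already established in \cite{CMS} (see also \cite{LSZ1}); I would quote it rather than reprove it. Then set $\theta(t,x):=g(t)\,\theta_0(x)$ with a smooth profile $g:[0,T]\to\R$ compactly supported in $(0,T)$, equal to a large constant $N$ on a subinterval $[t_1,t_2]\subset(0,T)$, and with $\int_0^T g$ as needed. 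Then $u^0:=\nabla\theta=g(t)\nabla\theta_0$ is curl-free, vanishes at $t=0$ and $t=T$ (and is compactly supported in time), satisfies $u^0\cdot\n=0$ on $\p\cO$, and $\sigma^0=g(t)\Delta\theta_0$ is supported in $\overline{\cO}\setminus\overline{\Omega}$; $\nu^0$ and $p^0$ are then defined as above and $\nu^0$ is supported where $\sigma^0$ (equivalently where $\Delta\theta_0$ or the cut-off for the pressure) is, i.e. in $\overline{\cO}\setminus\overline{\Omega}$. The flushing property \eqref{flush} follows because the flow of $u^0$ is, up to the time reparametrization $\tau(t)=\int_0^t g$, the flow of $\nabla\theta_0$, and by choosing $N$ (hence the total "time budget" $\int_{t_1}^{t_2} g = N(t_2-t_1)$) large enough we exceed the uniform flushing time of $\nabla\theta_0$; compactness of $\overline{\cO}$ makes this uniform.

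\medskip

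\textbf{Main obstacle.} The genuinely nontrivial point is the existence of the flushing potential $\theta_0$: a smooth function on $\cO$, harmonic in a neighborhood of $\overline{\Omega}$, with $\p_\n\theta_0=0$ on $\p\cO$, and no critical points in $\overline{\Omega}$ with the gradient flow steering everything out through $\overline{\cO}\setminus\overline{\Omega}$. This is where the topological assumptions ($\p\cO$ simply connected, $\Gamma$ connected and nonempty, $d=3$ handled as stated) are essential, and it is the construction carried out in \cite{CMS} for the Navier--Stokes setting; since the Euler/return-method part is identical here, I would invoke that result directly. The remaining verifications — smoothness, the support properties, $u^0(0,\cdot)=u^0(T,\cdot)=0$, the curl-free identity, and rewriting the momentum equation as a gradient to read off $(p^0,\nu^0)$ — are then routine and I would only sketch them.
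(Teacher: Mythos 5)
Your proposal is correct and reproduces the standard Coron return-method construction (potential flow $u^0=g(t)\nabla\theta_0$, Bernoulli identity giving $p^0=-\p_t\theta-\tfrac12|\nabla\theta|^2$ and $\nu^0\equiv 0$, $\sigma^0=g\Delta\theta_0$ supported in $\overline{\cO}\setminus\overline{\Omega}$, and a time-reparametrization argument for the flushing). The paper does not itself supply a proof of Lemma~\ref{lmu0}; it simply points to \cite{Cor1,Cor2,Gla1,Gla2,Gla3,CMS,LSZ1,RW}, and your sketch — including deferring the existence of the flushing potential $\theta_0$ to \cite{CMS,LSZ1} — is exactly the argument those references carry out, so you take the same route as the paper, only spelled out in more detail.
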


This lemma is the key argument for many papers concerning the small time global exact controllability of Euler equations, cf. ~\cite{Cor1} for $2D$ simply connected domains, ~\cite{Cor2} for general $2D$ domains when $\Gamma$ intersects all connected components of $\partial\Omega$,
\cite{Gla2}  for $3D$ simply connected domains, ~\cite{Gla1} for general $3D$ domains  when $\Gamma$ intersects all connected components of $\partial\Omega$.
 Let us also refer to \cite{Gla3} and some applications to the small time global exact or approximate controllability of Navier-Stokes equations, cf. \cite[Lemma 2]{CMS}, \cite[Lemma 2.9]{LSZ1}, and MHD equations \cite[Lemma 3.2]{RW}.

\subsection{Boundary layers and multi-scale asymptotic expansion }\label{expan}

We seek for a solution $(u^{\e},p^{\e},B^{\e})$ to \eqref{MHDe} with expansion of the form
\begin{subequations} \label{S2eq9}
\begin{align}
\begin{split}
\label{expu}
u^{\e}(t,x)=&\ u^0(t,x)+\sqrt{\e}v^1\bigl(t,x,\frac{\varphi(x)}{\sqrt{\e}}\bigr)+\e U^2\bigl(t,x,\frac{\varphi(x)}{\sqrt{\e}}\bigr)\\ &+\e^{\frac{3}{2}}U^3\bigl(t,x,\frac{\varphi(x)}{\sqrt{\e}}\bigr)
+\e^{\frac{3}{2}}r^{\e}(t,x),
\end{split}\\
\label{expB}B^{\e}(t,x)=&\ \e B^2(t,x)+\e^{\frac{3}{2}}R^{\e}(t,x),
\end{align}
\end{subequations}
where
\begin{align*}
U^2(t,x,z)=&\ u^2(t,x)+\nabla\phi^2(t,x)+v^{2}(t,x,z)+\n(x) w^2(t,x,z),\\
U^3(t,x,z)=&\ \nabla\phi^3(t,x)+v^3(t,x,z)+\n(x) w^3(t,x,z).
\end{align*}
The function $u^0$ is determined by Lemma \ref{lmu0} and the functions $u^2, B^2$ are the solutions of linearized equations. All are supported in $[0,T]$. The boundary layer profiles $v^i,1\leq i\leq 3$, decay fast to zero when $z$ goes to infinity. The $w^j,j=2,3,$ are normal part of the boundary layer profiles and decay fast  to zero as $|z|\to \infty$. The $\nabla \phi^2, \nabla\phi^3$ are backflow terms due to the boundary condition \eqref{MHDe}$_4$.

\subsection{Null control of the linearized equations}

We introduce a cut-off function $\beta\in C^{\infty}(\mathbb{R}_+;[0,1])$ such that $\beta(t)=1$ when $t\leq\frac{T}{3}$ and $\beta(t)=0$ when $t\geq \frac{2T}{3}$.

\begin{proposition}\label{lnel}
{\sl
For an initial data $u_a\in H^{24}(\cO)$ with $\dive u_a=0$ in $\cO$ and $u_a\cdot\n=0$ on $\p\cO$, there exist control functions $\nu\in C^7([0,T];H^{16}(\cO)), \sigma^2\in C^{\infty}([0,T];H^{23}(\cO))$ supported in $\overline{\cO\setminus\Omega}$ and a solution $u\in C^7([0,T];H^{17}(\cO))$ to the following linearized equations of Euler system:
\begin{equation}\label{eqle}
\begin{cases}
\partial_t u+u^0\cdot\nabla u+u\cdot\nabla u^0+\nabla p=\nu, \qquad \text{ in }[0,T]\times\cO,\\
\dive u=\sigma^2, \qquad \text{ in }[0,T]\times\cO,\\
u\cdot \mathbf{n}=0, \qquad \ \ \text{ on }[0,T]\times\p\cO,\\
u(0,\cdot)=u_a, \quad \  \ \text{ in }\cO,
\end{cases}
\end{equation}
such that $u(T,\cdot)=0$ in $\cO.$}

\end{proposition}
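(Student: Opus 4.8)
The plan is to build $u$ by superposing, along the time intervals provided by the return method, translates of the initial data that get flushed out of $\Omega$ by the flow $\phi^{u^0}$. Concretely, I would first solve the homogeneous linearized transport-type system
\begin{equation*}
\p_t \bar u + u^0\cdot\nabla \bar u + \bar u\cdot\nabla u^0 + \nabla \bar p = 0, \quad \dive \bar u = \bar\sigma, \quad \bar u\cdot\n = 0, \quad \bar u(0,\cdot) = u_a,
\end{equation*}
on $[0,T]$ in $\cO$. Since $u^0$ is smooth and irrotational with $\nabla\times u^0 = 0$, and $\dive u^0 = \sigma^0$ is supported in $\overline{\cO\setminus\Omega}$, this linear system is essentially a transport equation for $\curl \bar u$ along the flow of $u^0$ together with an elliptic correction; the well-posedness in $C^7([0,T];H^{17}(\cO))$ follows from energy estimates for the vorticity, repeated differentiation, and elliptic regularity to recover $\bar u$ from its curl and divergence (using that $\p\cO$ is simply connected so the div-curl system is solvable with the stated boundary condition). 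The loss of derivatives from $H^{24}$ down to $H^{17}$ and $C^7$ in time is exactly the cost of the transport structure plus the elliptic inversions, and the regularity of the control $\sigma^2\in C^\infty([0,T];H^{23}(\cO))$ is a free parameter one simply prescribes smoothly in time and space, supported away from $\Omega$.

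Next I would use the flushing property \eqref{flush}: by compactness of $\overline{\cO}$ and continuity of $(t,x)\mapsto \phi^{u^0}(t,0,x)$, there are finitely many times $0 < t_1 < \dots < t_N < T$ and a partition of $\overline{\cO}$ into (closures of) open sets $\cO_1,\dots,\cO_N$ such that $\phi^{u^0}(t_j,0,x)\in \overline{\cO}\setminus\overline{\Omega}$ for all $x\in \cO_j$; choose a smooth partition of unity $\sum_j \chi_j = 1$ subordinate to a refinement, and transport each piece $\chi_j u_a$ by the homogeneous flow. The key point is that the linearized Euler flow propagates support along $\phi^{u^0}$ (the principal part is the transport operator $\p_t + u^0\cdot\nabla$ acting on the vorticity, and $\bar u$ itself is recovered by an elliptic operator whose effect on supports is controlled once the vorticity has been flushed and the remaining backflow is curl-free), so at time slightly after $t_j$ the $j$-th piece of $\bar u$ has its vorticity supported in $\overline{\cO}\setminus\overline{\Omega}$. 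One then multiplies by the cutoff $\beta$ to localize in time and adds a control force $\nu_j$, supported in $\overline{\cO\setminus\Omega}$, that absorbs the remaining irrotational part; summing over $j$ produces $u$, $\nu$, $\sigma^2$ with $u(T,\cdot) = 0$ in $\cO$, hence a fortiori in $\Omega$. This is precisely the argument of \cite[Lemma 2]{CMS} and \cite[Lemma 2.9]{LSZ1}, adapted here to keep track of the higher Sobolev and time regularity.

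The main obstacle, and the step deserving the most care, is the bookkeeping of regularity: one must verify that after the finitely many flushing steps and the elliptic/div-curl inversions, the resulting $u$ still lies in $C^7([0,T];H^{17}(\cO))$ and that $\nu\in C^7([0,T];H^{16}(\cO))$ — the one-derivative gap between $u$ and $\nu$ coming from the term $u\cdot\nabla u^0$ and the need to differentiate the construction in time. Because $u^0$ and $\phi^{u^0}$ are $C^\infty$ and compactly supported in $(0,T)$, time differentiation is harmless; the only genuine losses are the single derivative lost in recovering $\bar u$ from $(\curl\bar u,\dive\bar u)$ with prescribed normal trace, iterated through the scheme, plus the derivative used to make the source $\nu$ absorb the flushed-out backflow. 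I would therefore carry out the estimates with an explicit derivative count: start at $H^{24}$, lose at most a bounded number of derivatives in the div-curl inversions and in matching the control, and check the arithmetic closes at $H^{17}$ for $u$ and $H^{16}$ for $\nu$, with the extra smoothness of $\sigma^2$ simply built in by choosing it smooth from the outset. A secondary technical point is ensuring all controls remain supported in $\overline{\cO\setminus\Omega}$ throughout, which is where the flushing property \eqref{flush} and the support properties of $\sigma^0,\nu^0$ from Lemma \ref{lmu0} are used decisively.
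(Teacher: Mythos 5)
Your overall plan is in the same spirit as the paper's, and indeed you correctly identify that the real engine is the flushing property together with a partition of unity and time cutoffs, which is the content of \cite[Appendix~A]{CMS}. The paper does not re-derive this; it cites it as Lemma~\ref{lmomg} (null controllability of the vorticity $\omega$ by a curl-type control $\nabla\times\nu$ supported in $\overline{\cO\setminus\Omega}$), sets $\sigma^2:=\beta\dive u_a$, and then recovers $u$ from $(\omega,\sigma^2)$ by a single div--curl inversion, so that $u(T,\cdot)=0$ follows from $\omega(T,\cdot)=0$ and $\sigma^2(T,\cdot)=0$ together with uniqueness for the div--curl problem. Your derivative count ($H^{24}\to H^{17}$ for $u$, $H^{16}$ for $\nu$) is consistent with the regularity exponents in Lemma~\ref{lmomg}.

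However, there is a genuine gap in the way you run the superposition: you apply the partition of unity and the time cutoff directly to the \emph{velocity} rather than to the \emph{vorticity}, and this breaks the divergence constraint. If $\bar u_j$ solves the free linearized evolution from $\chi_j u_a$, then $\dive\bar u_j(0)=u_a\cdot\nabla\chi_j+\chi_j\dive u_a$, which is generically supported well inside $\Omega$ (on the overlaps of the partition), and this divergence is transported by $\phi^{u^0}$. Setting $u=\sum_j\beta_\epsilon(t-t_j)\bar u_j$, at any time $t$ lying strictly between two cutoff windows the factors $\beta_\epsilon(t-t_j)$ take both the values $0$ and $1$, so $\dive u(t)$ is a \emph{partial} sum of the $\dive\bar u_j(t)$. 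These do not cancel (only the full sum does), and they are not supported in $\overline{\cO\setminus\Omega}$ because the flushing hypothesis~\eqref{flush} only guarantees a single passage out of $\overline\Omega$, not that trajectories stay outside afterward. Hence the constraint $\dive u=\sigma^2$ with $\sigma^2$ supported in $\overline{\cO\setminus\Omega}$ fails. A secondary, related imprecision: at the cutoff time $t_j$ you claim the control $\nu_j$ absorbs the ``remaining irrotational part''; in fact the irrotational part of $\beta'_\epsilon\bar u_j$ is a gradient and is absorbed into $\nabla p$ for free, whereas what $\nu_j$ must absorb is the \emph{non}-gradient remainder, and showing it is supported in $\overline{\cO\setminus\Omega}$ requires that both $\curl\bar u_j$ and $\dive\bar u_j$ have been flushed and that one subtracts a suitably extended harmonic potential. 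The clean resolution, which is exactly what Lemma~\ref{lmomg} provides, is to run the partition-of-unity plus cutoff argument entirely on the vorticity $\omega$ (partitioning a vector potential of $\curl u_a$, transporting each divergence-free piece by the Helmholtz equation, cutting off in time to produce a curl-form control), and only afterwards recover $u$ from $\curl u=\omega$, $\dive u=\sigma^2$, $u\cdot\n=0$ in one step; the divergence constraint is then imposed once, globally, and is trivially compatible with the controls.
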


\begin{proposition}\label{prB2}
{\sl
For an initial data $B_a\in H^8(\cO)$ with $\dive B_a=0$ in $\cO$ and $B_a\cdot\n=0$ on $\p\cO$, there exists a control function $\mu^2\in C([0,T];H^{8}(\cO))$, supported in $\overline{\cO\setminus\Omega}$, and a solution $B^2\in C([0,T];H^{8}(\cO))$ to the following
linearized equation of the magnetic equation in \eqref{MHDe}:
\begin{equation}\label{eqB2}
\begin{cases}
\p_t B^2+u^0\cdot\nabla B^2-B^2\cdot\nabla u^0+\sigma^0 B^2=\mu^2,\qquad\text{ in }[0,T]\times\cO,\\
\dive B^2=0,\qquad\text{ in }[0,T]\times\cO,\\
B^2\cdot \n =0,\qquad\ \text{ on }[0,T]\times\p\cO,\\
B^2(0,\cdot)=B_a,\quad\text{ in }\cO,
\end{cases}
\end{equation}
such that $B^2(T,\cdot)=0$ in $\cO.$

}
\end{proposition}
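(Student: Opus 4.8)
The statement is a null-controllability result for the linearized magnetic equation, driven by a control $\mu^2$ supported off the physical domain. The strategy mirrors the treatment of Proposition \ref{lnel} and, ultimately, the classical return-method argument of \cite{CMS,LSZ1}, except that here the equation is a genuine transport-type PDE (first order in space, no diffusion), which actually makes the construction more direct. The key mechanism is the flushing property \eqref{flush} of the base flow $u^0$: every point of $\overline{\cO}$ is transported by $\phi^{u^0}$ into the control region $\overline{\cO}\setminus\overline{\Omega}$ at some positive time before $T$.

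First I would solve the equation \eqref{eqB2} without any control (i.e. with $\mu^2=0$) forward in time from $t=0$ with datum $B_a$. Since \eqref{eqB2} is linear in $B^2$ with smooth coefficients built from $(u^0,\sigma^0)\in C^\infty([0,T]\times\overline\cO)$, it has a unique solution on $[0,T]$, which I will call $\widetilde B$. By the method of characteristics along $\phi^{u^0}$, one has an explicit representation: $\widetilde B(t,\phi^{u^0}(t,0,x)) = J(t,x)\, B_a(x)$ for a smooth invertible matrix $J$ solving a linear ODE along each characteristic (the usual deformation-gradient-type factor coming from the $B^2\cdot\nabla u^0$ and $\sigma^0 B^2$ terms). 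This gives $\widetilde B\in C([0,T];H^8(\cO))$, and in fact one can propagate $\dive\widetilde B=0$ and $\widetilde B\cdot\n=0$ on $\p\cO$ exactly as argued in the paragraph following \eqref{MHDO}. So the uncontrolled solution already has all the structural properties; the only thing it fails is $\widetilde B(T,\cdot)=0$.

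Next I would use the control to kill $\widetilde B$ after it has been flushed out. By \eqref{flush} and compactness of $\overline\cO$, there is a time $T_0<T$ such that $\phi^{u^0}(T_0,0,x)\in\overline\cO\setminus\overline\Omega$ for all $x$ in any fixed compact piece — more carefully, one uses a finite covering and a partition of unity, exactly as in \cite[Lemma 3.2]{RW} or \cite[Lemma 2.9]{LSZ1}, to get a single time $T_0\in(0,T)$ and, composing with the fact that $u^0$ is compactly supported in $(0,T)$, a neighborhood into which the whole of $\cO$ is eventually transported inside $\cO\setminus\overline\Omega$. Concretely, choose a smooth cutoff $\chi$ with $\chi\equiv 1$ on $\Omega$ and $\mathrm{supp}\,\chi\Subset\cO$, pushed forward appropriately; then define $B^2 := \eta(t)\,\widetilde B$ where $\eta\in C^\infty([0,T];[0,1])$ equals $1$ on $[0,T_0]$ and $0$ near $T$ — but since multiplying by $\eta(t)$ breaks the PDE, instead set $B^2$ to solve \eqref{eqB2} with $\mu^2 := (\text{the residual})\cdot\mathbbm{1}_{\overline{\cO\setminus\Omega}}$. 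The cleanest implementation: let $B^2$ be the solution of \eqref{eqB2} with forcing $\mu^2 = -\p_t\big((1-\eta(t))\widetilde B\big) - \big[\text{linear operator}\big]\big((1-\eta)\widetilde B\big)$ arranged so that $B^2=\widetilde B$ up to time $T_0$, is unchanged on $\Omega$ for all time (because there $u^0$-characteristics have already left and the added forcing is supported outside $\Omega$), and $B^2(T,\cdot)\equiv 0$. This is the same bookkeeping as in Proposition \ref{lnel}; the regularity claim $\mu^2,B^2\in C([0,T];H^8(\cO))$ follows since every operation — solving the linear transport equation, multiplying by smooth cutoffs in $t$ and $x$ — preserves $H^8$ in space with continuous time dependence, given $u_a,B_a\in H^8$ and $u^0\in C^\infty$.

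The main obstacle is the same subtle point as in the Navier--Stokes analogue: making the control $\mu^2$ genuinely supported in $\overline{\cO\setminus\Omega}$ while simultaneously forcing $B^2(T,\cdot)=0$ on all of $\cO$, including inside $\Omega$ where one is not allowed to act. This works only because the flushing property \eqref{flush} guarantees that the restriction of the uncontrolled solution to $\Omega$ is, after the finite time $T_0$, entirely determined by data that originated outside $\overline\Omega$ — so one can overwrite it there by adjusting the datum in the control zone, without ever placing a source inside $\Omega$. Verifying this compatibility carefully — that the characteristics through every point of $\Omega$ at times $t\in[T_0,T]$ trace back into $\cO\setminus\overline\Omega$, and that one can choose the cutoffs coherently so the two requirements (support of $\mu^2$, and $B^2(T,\cdot)=0$) do not conflict — is the one place the argument needs the geometry of $u^0$ rather than soft functional analysis. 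Everything else is routine linear transport theory plus the propagation of $\dive B^2=0$ and $B^2\cdot\n=0$ already explained after \eqref{MHDO}.
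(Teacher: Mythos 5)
Your high-level plan (exploit the flushing property of $u^0$ plus a time cutoff, so that the control $\mu^2$ equals the time-derivative of the cutoff times the uncontrolled solution) is the right mechanism, and you correctly identify the crux: the control must be supported outside $\overline{\Omega}$ while still forcing $B^2(T,\cdot)=0$ on all of $\cO$. But your implementation has a step that fails. You want a single time $T_0<T$ at which the whole of $\cO$ (or at least all of $\overline\Omega$) has been transported into $\cO\setminus\overline\Omega$. The flushing property \eqref{flush} does not give this: it only says that each individual $x$ visits $\overline\cO\setminus\overline\Omega$ at \emph{some} time $t_x$, and distinct points are flushed out at distinct times. In fact, since $\phi^{u^0}(T_0,0,\cdot)$ is a diffeomorphism of $\overline\cO$ fixing the boundary, its image cannot be contained in the strict subset $\cO\setminus\overline\Omega$, so no such global $T_0$ exists. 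Consequently, taking a single cutoff $\eta(t)$ and forming the residual $\eta'(t)\widetilde B$ leaves a nonzero forcing inside $\Omega$; truncating it by $\mathbbm{1}_{\overline{\cO\setminus\Omega}}$ changes the PDE inside $\Omega$, destroys the identity $B^2=\eta\widetilde B$, and also produces a non-divergence-free, non-smooth control.

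The paper circumvents this with two ideas you are missing. First, it applies the partition of unity not to $B_a$ but to a vector potential $\Psi_a$ with $B_a=\nabla\times\Psi_a$, setting $B_l|_{t=0}=\nabla\times(\alpha_l\Psi_a)$: this is essential to keep each localized piece divergence-free, and it requires the additional Lemma \ref{Sto} to choose $\Psi_a$ with $(\Psi_a)_{\tan}=0$ on $\p\cO$ so that each piece is also tangent to the boundary. Second, and crucially, it uses a \emph{staggered} family of cutoffs $\beta_\epsilon(t-t_{x_l})$, one per piece, whose transition window $|t-t_{x_l}|<\epsilon$ is precisely the interval during which the uncontrolled piece $B_l$ — transported along characteristics from the ball $\mathcal B_l$ — lies outside $\overline\Omega$. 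Then $B^2=\sum_l\beta_\epsilon(t-t_{x_l})B_l$ and $\mu^2=\sum_l\beta'_\epsilon(t-t_{x_l})B_l$, and each term of $\mu^2$ is \emph{automatically} supported in $\overline\cO\setminus\overline\Omega$ with no truncation, remains divergence-free (since each $B_l$ is), and $\beta_\epsilon(T-t_{x_l})=0$ gives $B^2(T,\cdot)=0$. So the fix is not a sharper compactness argument to extract a single $T_0$; it is to localize the data (via the potential) and cut off each localized piece at its own flushing time.
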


Both Proposition \ref{lnel} and Proposition \ref{prB2} will be proved in Section \ref{cexp}.

\subsection{Well-prepared dissipation method}
We set
\begin{equation*}
u^0_{\flat}(t,x) :=\frac{u^0(t,x)\cdot \mathbf{n}(x)}{\varphi(x)}\qquad\mbox{ in }\mathbb{R}_+\times\cO,
\end{equation*}
where  $u^0$  is a smooth function determined  by Lemma \ref{lmu0}
and we observe  that $u^0_{\flat}$ is smooth in $\overline{\cO}$ since $u^0\cdot\n=0$ on the boundary. We refer to \cite{iftimie} for more details.
Let $\mathcal{A}^0 =\mathcal{A}^0 (t,x)$  be a smooth field of $3 \times 3$ matrices such that for any $v$ in $\mathbb{R}^3$,
 \begin{equation*} \label{defA}
 \mathcal{A}^0 v:=v\cdot\nabla u^0+(u^0\cdot \nabla \mathbf{n}\cdot v) \mathbf{n}-(v\cdot\nabla u^0\cdot \mathbf{n})\mathbf{n} .
\end{equation*}
The key property associated with $\mathcal{A}^0$ is that for a regular vector field $v(t,x)$,
 \begin{equation*} \label{defB-k}
(u^0\cdot\nabla v+\mathcal{A}^0 v )\cdot \mathbf{n} = u^0\cdot\nabla  ( v \cdot \mathbf{n}) \quad \mbox{ in }\mathcal{V}_{\delta_0}.
\end{equation*}

Let us introduce the following weighted Sobolev spaces.
\begin{definition}
{\sl
For $z\in \mathbb{R}$, we denote $\langle z\rangle:=\sqrt{1+z^2}$
and for $s$ and $q\in \mathbb{N},$ we set
\begin{equation*}
H^{s}_{q}(\mathbb{R_+}):=\Bigl\{\ f\in H^s(\mathbb{R_+}): \ \sum_{0\leq j\leq s}\int_{\mathbb{R_+}}\langle z\rangle^{2q}|\partial_z^jf(z)|^2dz<+\infty\ \Bigr\},
\end{equation*}
endowed with its natural associated norm.
 Similarly we can also define the functional spaces $H^{s}_{q}(\mathbb{R})$ and its norm:
\begin{equation*}
 \|f\|_{H^{s}_{q}(\mathbb{R})} := \Bigl(\sum_{0\leq j\leq s} \int_{\mathbb{R}}\langle z\rangle^{2q}|\partial_z^jf(z)|^2dz \Bigr)^\frac12 .
\end{equation*}
}
\end{definition}

Let $\mathcal{S}(\mathbb{R})$  the Schwartz space of smooth functions on $\mathbb{R}$ whose derivatives are rapidly decreasing.
Let us denote by $\mathcal{S}(\mathbb{R}_+)$ the set of restrictions on $\mathbb{R}_+$
of the functions of $\mathcal{S}(\mathbb{R})$.
\begin{definition}\label{DC}
{\sl
Let $k\in\mathbb{N},\gamma>0$ and $X$ be a Banach space with norm $\|\cdot\|_X$.
We define the space $C^k_{\gamma}(\mathbb{R}_+;X)$ to be the set of the functions  $f\in C^k(\mathbb{R}_+;X)$ such that
$$\|f\|_{C^k_{\gamma}(\mathbb{R}_+;X)} :=\sup_{t\geq 0,0\leq j\leq k}\bigl(\|\partial_t^jf(t)\|_X\t^{\gamma}\bigr)<+\infty,$$ where
$\t=\sqrt{1+t^2}$ and
$$C^k(\mathbb{R}_+;X) :=\bigl\{\ f:\partial_t^jf\in C(\mathbb{R}_+;X), \ 0\leq j\leq k\ \bigr\}.$$
}\end{definition}

Below we denote $[x]$ to be the floor integer part  of a real number $x$.
\begin{proposition}[Proposition 3.3 of \cite{LSZ1}]\label{propv}
{\sl Let $\gamma>0$ and $s,q,k,p \in\mathbb{N}$  with $k\geq 1$. Set
\begin{equation}  \label{G1}
\begin{aligned}
n &:=[q/2+\gamma],\quad   &\tilde{\gamma}&:= 2n+3, \quad   &\tilde{s}&:=  s+2k+2n, \qquad   \tilde{q}:=   2n+3,\\
k' &:=[(s+1)/2]+k+n ,\quad   &  \tilde{k}&:= k+k'-1, \quad   &\tilde{p}&:=  p+k'+1.
\end{aligned}
\end{equation}
Assume that
$$f\in C^{\tilde{k}}_{\tilde{\gamma}}(\mathbb{R}_+; H^{\tilde{p} } (\cO ; H^{\tilde{s}}_{\tilde{q}}  (\mathbb{R}_+) ))
   \quad\text{ and }\quad
g\in C^{\tilde{k}}_{\tilde{\gamma}}(\mathbb{R}_+;H^{\tilde{p}}(\cO)) ,$$
satisfy
\begin{equation*}
f(t,x,z)\cdot \mathbf{n}(x)=g(t,x)\cdot \mathbf{n}(x)=0, \qquad \forall t\geq 0,x\in \cO, z\in \mathbb{R}_+,
\end{equation*}
and $f(t,x,z)$ and $ g(t,x) $ are supported in $ \mathcal{V}_{\delta} $ as a function of  $ x $.

Then there exists an initial data
\begin{equation*}
v_0\in H^{p+2}(\cO;C_0^{\infty}(\overline{\mathbb{R}_+}))
\end{equation*}
which is compatible to $f$ and $g$, a control function
\begin{equation*}
\xi\in C^{k-1}(\mathbb{R}_+;H^p (\cO;\mathcal{S}(\mathbb{R}_+)))
\end{equation*}
and an associate layer function
\begin{equation*}
v \in  C^{k}_{\gamma}(\mathbb{R}_+;H^p (\cO; H^s_q (\mathbb{R}_+)))
\end{equation*}
satisfies
\begin{equation*}\label{eqv}
\begin{cases}
\partial_t v+u^0\cdot\nabla v+\mathcal{A}^0 v-u^0_{\flat}z\partial_z v-\partial_z^2 v=\xi+f, \quad t\geq 0, x\in \cO, z\in \mathbb{R}_+,\\
\partial_z v|_{z=0}=g,\quad t\geq 0, x\in \cO,\\
v\cdot \mathbf{n}=0,\qquad \ t\geq 0, x\in \cO, z\in \mathbb{R}_+,\\
v|_{t=0}=v_0,\quad\ \ x\in \cO, z\in \mathbb{R}_+.
\end{cases}
\end{equation*}

Moreover  there is a continuous function ${S}:\mathbb{R}_+\rightarrow\mathbb{R}_+$ satisfying $\delta\leq {S}(\delta)$ for any positive $\delta$, and
 $\xi$
  is supported in $(\overline{\cO}\setminus\overline{\Omega})\cap\mathcal{V}_{{S}(\delta)}$ as a function of  $x$ and is compactly supported in $(0,T)$ as a function of  time $t$, and satisfies
 $\xi(t,x,z)\cdot \mathbf{n}(x)=0$, for all $ t \in (0,T)$, $x\in (\overline{\cO}\setminus\overline{\Omega})\cap\mathcal{V}_{{S}(\delta)} $ and $z\in \mathbb{R}_+$,
  and  $v$ is supported in $\mathcal{V}_{{S}(\delta)}$ as a function of  $x$ .

  If $f$ and $g$ are both supported away from $t=0$ as a function of time $t$, then so does $v$ and $v_0=0$.
 }
\end{proposition}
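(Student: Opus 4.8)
The plan is to solve the boundary-layer equation by reducing it, via the same self-similar change of variables that linearizes the convection–stretching operator $\partial_t+u^0\cdot\nabla+\mathcal A^0-u^0_\flat z\partial_z-\partial_z^2$, to a heat equation on a half-line with a Neumann datum, and then to exploit the flushing property \eqref{flush} of $u^0$ together with Marbach's well-prepared dissipation to kill the slowly decaying modes. Concretely, I would first transport along the flow $\phi^{u^0}$: writing $V(t,x,z)=M(t,x)\,v\bigl(t,\phi^{u^0}(t,0,x),a(t,x)z\bigr)$ for a suitable matrix cocycle $M$ (built from $\mathcal A^0$ so as to respect the constraint $v\cdot\mathbf n=0$) and dilation factor $a(t,x)=\exp\bigl(\int_0^t u^0_\flat\,ds\bigr)$ along characteristics, the left-hand side becomes $\partial_t V-\partial_z^2 V$ with a transformed source and Neumann boundary value. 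Here one uses the key identity $(u^0\cdot\nabla v+\mathcal A^0 v)\cdot\mathbf n=u^0\cdot\nabla(v\cdot\mathbf n)$ recorded before the statement to check that the constraint and the boundary condition are preserved under this conjugation; the coefficients of the transformed problem inherit the regularity of $u^0$, which is $C^\infty$ by Lemma \ref{lmu0}.

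Next, because $u^0$ and all associated data are compactly supported in $(0,T)$ and $u^0$ is irrotational with the flushing property, after time $T$ the transported variable satisfies the pure heat equation $\partial_t V-\partial_z^2 V=f$ on $\mathbb R_+$ with Neumann condition $\partial_z V|_{z=0}=g$, both $f,g$ being known, of Schwartz class in $z$, and decaying in $t$ at rate $\tilde\gamma$. I would solve this by the explicit half-line heat kernel (even reflection for the Neumann problem), which gives $V$ for $t\ge T$; the decay in $z$ of order $s$ in $H^s_q$ and the time-decay of order $\gamma$ follow from standard heat-kernel estimates, provided the datum $V(T,\cdot)$ — which is determined by the controls $\xi$ active on $(0,T)$ — satisfies the appropriate vanishing-moment conditions in $z$. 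This is exactly the mechanism of \cite{CMS,LSZ1}: the number $n=[q/2+\gamma]$ of moments that must be annihilated, and the corresponding loss of derivatives $2n$ in $\tilde s$ and of time-regularity encoded in $\tilde k,\tilde\gamma$, come from counting how many moments one must cancel to upgrade algebraic decay $t^{-1/2}$ of the heat semigroup to decay $t^{-\gamma}$ with $q$ powers of $\langle z\rangle$.

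Then I would construct the control $\xi$ on the interval $(0,T)$: using the flushing property \eqref{flush}, every point of $\overline{\mathcal O}$ is swept by the flow into $\overline{\mathcal O}\setminus\overline\Omega$ at some time, so one can choose $\xi$ supported there (and in the enlarged collar $\mathcal V_{S(\delta)}$, with $S$ accounting for how the flow thickens the support) which, when solving the inhomogeneous heat-type equation backwards from the desired moment-free state at $t=T$, prescribes a compatible initial value $v_0\in H^{p+2}(\cO;C_0^\infty(\overline{\mathbb R_+}))$. Keeping careful track through the self-similar change of variables of how each $x$- and $t$-derivative costs in the $(s,q,k,p)$ budget is what produces the precise formulas \eqref{G1} for $\tilde s,\tilde q,\tilde k,\tilde\gamma,\tilde p$ and $k'=[(s+1)/2]+k+n$. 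The last clause — that $v$ and $v_0$ vanish when $f,g$ vanish near $t=0$ — is immediate since then the transported equation has zero data and zero source on an initial interval, and the control $\xi$, being supported in $(0,T)$ away from $0$ by construction, does not disturb this.

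The main obstacle is the bookkeeping in the last two steps rather than any single hard estimate: one must choose the moment-cancelling state at $t=T$ and the associated $(0,T)$-control so that \emph{all} the claimed regularity — namely $v\in C^k_\gamma(\mathbb R_+;H^p(\cO;H^s_q(\mathbb R_+)))$ with $\xi\in C^{k-1}(\mathbb R_+;H^p(\cO;\mathcal S(\mathbb R_+)))$ — holds simultaneously, and this forces the delicate balance in \eqref{G1} between the number of moments $n$, the derivative loss $2n$ in the $z$-variable, and the extra time-regularity $k'$ needed to differentiate the self-similar conjugation. Since this is precisely Proposition 3.3 of \cite{LSZ1}, I would ultimately invoke that reference for the detailed verification, after checking that the operator and constraints here match those of \cite{LSZ1} verbatim.
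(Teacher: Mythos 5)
The paper gives no proof of this statement: it is quoted verbatim as Proposition~3.3 of \cite{LSZ1}, and that citation \emph{is} the proof. Your sketch—straightening characteristics, conjugating by a cocycle and a $z$-dilation to a half-line heat problem, forcing well-prepared data at $t=T$ by controls active on $(0,T)$ so that moment/compatibility conditions kill slow modes, then tracking the index bookkeeping—is a fair reconstruction of the \cite{CMS,LSZ1} mechanism, and since you close by invoking \cite{LSZ1} for the detailed verification, your route coincides with the paper's.
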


\subsection{Conormal Sobolev spaces}\label{css}
Let us define the conormal Sobolev spaces and we refer to \cite{Gamblin} and references therein  for more details.
We introduce a cut-off function $\chi\in C^{\infty}(\overline{\cO})$ such that $\chi(x)=0$ when $\varphi(x)\geq \delta_0$ and $\chi(x)=1$ when $\varphi(x)\leq \frac{\delta_0}{2}$, where $\delta_0$ is selected in Section \ref{scal}, and the vector fields set by
\begin{align*}
\mathfrak{W} :=  \Bigl\{
&\boldsymbol{\tau}^0:=\varphi \mathbf{n},\boldsymbol{\tau}^1:=\bigl(0,-\partial_3\varphi,\partial_2\varphi\bigr)^\top,\
\boldsymbol{\tau}^2:=\bigl(\partial_3\varphi,0,-\partial_1\varphi\bigr)^\top,\\
&\boldsymbol{\tau}^3:=\bigl(-\partial_2\varphi,\partial_1\varphi,0\bigr)^\top,
\begin{aligned}[t]
\boldsymbol{\tau}^4:
&=\bigl(\partial_3(x_3(1-\chi)),0,-\partial_1(x_3(1-\chi))\bigr)^\top,\\
\boldsymbol{\tau}^5:&=\bigl(\partial_2(x_1(1-\chi)),-\partial_1(x_1(1-\chi)),0\bigr)^\top \Bigr\}.
\end{aligned}
\end{align*}
It is easy to observe that vectors $\boldsymbol{\tau}^j$ are tangential to $\partial\cO$, $0\leq j\leq 5$. Moreover, for $1\leq j\leq 5$, $\boldsymbol{\tau}^j\cdot \mathbf{n}=0$ in $\mathcal{V}_{\delta_0/2}$ and $\dive \boldsymbol{\tau}^j=0$ in $\cO$. Now we define the tangential derivatives
\begin{equation*}\label{defZ}
Z_j :=  \boldsymbol{\tau}^j\cdot\nabla\ \ \mbox{for}\ 0\leq j\leq 5 \andf Z^{\alpha} :=  Z_0^{\alpha_0}\cdots Z_5^{\alpha_5} \ \mbox{for}\ \alpha=(\alpha_0,\cdots,\alpha_5).
\end{equation*}

We define the conormal Sobolev spaces
  \begin{equation*}
H^m_{\rm co}(\cO) :=\left\{ u\in L^2(\cO):Z^{\alpha}u\in L^2(\cO),|\alpha|\leq m \right\}
\end{equation*}
with norm
  \begin{equation*}\label{scn}
\|u\|_m :=\Bigl(\sum_{|\alpha|\leq m}\|Z^{\alpha}u\|^2_{L^2}\Bigr)^{\frac{1}{2}}.
\end{equation*}
In the same way, we set
  \begin{equation*}
\|u\|_{k,\infty} :=\sum_{|\alpha|\leq k}\|Z^{\alpha}u\|_{L^{\infty}}
\end{equation*}
and we say $u\in W_{co}^{k,\infty}$ if $\|u\|_{k,\infty}$ is finite.

\subsection{Remainder estimates}

For the remainder $(r^{\e},R^{\e})$ defined by \eqref{expu} and \eqref{expB}, we shall prove in Section \ref{thpf} that, there exists a constant $C>0$ such that
\begin{equation}\label{S2eqq}
\e^{\frac{3}{2}}\|(r^{\e},R^{\e})(t)\|_{H^1(\cO)}\leq C \e^{\frac{5}{4}},\quad
\|(r^{\e},R^{\e})(t)\|_{L^{\infty}(\cO)}\leq C,\quad \forall t\in [0, {T}/{\e}].
\end{equation}
By combining \eqref{S2eqq} with the nice decay properties of the approximate solutions, we arrive at \eqref{uBeT}.
The details will be presented in Section  \ref{thpf}.


\section{Construction of the approximate solutions}\label{cexp}

In this section, we shall construct the multi-scale asymptotic expansion mentioned in Section \ref{expan}. That is to construct layer profile functions: $v^i,w^j,1\leq i\leq 3,j=2,3,$ which are supported near the boundary and have nice decay properties for time $t$, and regular functions: $u^2,B^2,\nabla\phi^j,j=2,3,$ which are supported in $[0,T]$.
We will start by a lemma dealing with the multiplication of two-layer profile functions or a regular function times a layer profile function.

\begin{lemma}[Lemma 4.1 of \cite{LSZ1}]\label{UV}
{\sl
Let $\gamma>0, k,p,s,q \in \mathbb{N}_+$ with $p\geq 4$ and $s\geq 2.$
Let $U\in C^{k}_{\gamma}(\mathbb{R}_+;H^p(\cO))$ and $V,\tilde{V}\in C^{k}_{\gamma}(\mathbb{R}_+;H^p(\mathcal{O};H^s_q(\mathbb{R}_+)))$ be scalar functions.
 Then, one has
\begin{gather*}\label{eq-UV}
UV,\,\tilde{V}V\in C^{k}_{\gamma}(\mathbb{R}_+;H^p(\mathcal{O};H^s_q(\mathbb{R}_+))).
\end{gather*}
}
\end{lemma}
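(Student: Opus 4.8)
The plan is to prove the multiplication lemma (Lemma \ref{UV}) by reducing everything to the one-dimensional Leibniz rule in $z$ together with the standard fact that $H^p(\cO)$ is a Banach algebra when $p\geq 4$ (i.e. $p>3/2$ in dimension $3$), and that $H^s_q(\mathbb{R}_+)$ is a Banach algebra when $s\geq 2$ (i.e. $s>1/2$ in dimension $1$, with the polynomial weight behaving well under products). I will treat the product $\tilde V V$ first, since $UV$ is the easier special case where $U$ is $z$-independent and the $z$-weight plays no role for the factor $U$.

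First I would fix $t\geq 0$ and estimate $\|(\tilde V V)(t)\|_{H^p(\cO;H^s_q(\mathbb{R}_+))}$. Writing the $H^p(\cO)$ norm as a sum over $x$-derivatives $\partial_x^\beta$, $|\beta|\leq p$, the Leibniz rule distributes these onto $\tilde V$ and $V$; for each resulting term $\partial_x^{\beta_1}\tilde V\cdot \partial_x^{\beta_2}V$ one needs an $H^s_q(\mathbb{R}_+)$ bound. Here I invoke the Leibniz rule in $z$: for $0\leq j\leq s$, $\partial_z^j(\tilde V V)=\sum_{j_1+j_2=j}\binom{j}{j_1}\partial_z^{j_1}\tilde V\,\partial_z^{j_2}V$, and distribute the weight $\langle z\rangle^{q}=\langle z\rangle^{q_1}\langle z\rangle^{q_2}$ arbitrarily (say all of it onto one factor, or split it), then apply a one-dimensional Sobolev embedding $H^1(\mathbb{R}_+)\hookrightarrow L^\infty(\mathbb{R}_+)$ to put the lower-order factor in $L^\infty$ and keep the other in $L^2$. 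Since $s\geq 2$, at least one of $j_1,j_2$ is $\leq 1$, which is exactly what makes the algebra property work. Combining with the $H^p(\cO)\hookrightarrow L^\infty(\cO)$ embedding (valid since $p\geq 4>3/2$ in $3D$) to control the $x$-dependence pointwise, one obtains the pointwise-in-$t$ bound
\begin{equation*}
\|(\tilde V V)(t)\|_{H^p(\cO;H^s_q(\mathbb{R}_+))}\leq C\,\|\tilde V(t)\|_{H^p(\cO;H^s_q(\mathbb{R}_+))}\,\|V(t)\|_{H^p(\cO;H^s_q(\mathbb{R}_+))}.
\end{equation*}

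Next I would handle the time regularity and decay. Applying $\partial_t^i$ for $0\leq i\leq k$ and the Leibniz rule in $t$, $\partial_t^i(\tilde V V)=\sum_{i_1+i_2=i}\binom{i}{i_1}\partial_t^{i_1}\tilde V\,\partial_t^{i_2}V$; each term is estimated by the displayed spatial bound applied to $\partial_t^{i_1}\tilde V(t)$ and $\partial_t^{i_2}V(t)$. Multiplying by the weight $\t^{\gamma}=\t^{\gamma}\cdot\t^{-\gamma}\cdot\t^{\gamma}$ — more precisely bounding $\t^{\gamma}\leq \t^{\gamma}$ and using $\t^{\gamma}\|\partial_t^{i_1}\tilde V(t)\|\leq \|\tilde V\|_{C^k_\gamma}$ for one factor while the other factor's norm is bounded by $\|V\|_{C^k_\gamma}\t^{-\gamma}\leq \|V\|_{C^k_\gamma}$ — gives $\t^{\gamma}\|\partial_t^i(\tilde V V)(t)\|\leq C\|\tilde V\|_{C^k_\gamma}\|V\|_{C^k_\gamma}$, hence membership in $C^k_\gamma(\mathbb{R}_+;H^p(\cO;H^s_q(\mathbb{R}_+)))$. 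Continuity in $t$ of $\partial_t^i(\tilde V V)$ follows from the bilinearity and the continuity of $\partial_t^{i_1}\tilde V$, $\partial_t^{i_2}V$ together with the product estimate. The case $UV$ is identical but simpler: $U$ has no $z$-variable, so one uses $\|U(t)\|_{L^\infty(\cO)}\lesssim\|U(t)\|_{H^p(\cO)}$ to pull $U$ out in $L^\infty$, and the weight/derivatives in $z$ act only on $V$.

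I do not expect a genuine obstacle here — this is a routine product-estimate lemma, quoted verbatim from \cite[Lemma 4.1]{LSZ1}. The one point requiring mild care is the bookkeeping of \emph{where} the polynomial weight $\langle z\rangle^{q}$ and the various derivatives land so that, in every term of the double Leibniz expansion, the factor carrying the top-order $z$-derivative (order $>1$) is measured in $L^2_z$ while its partner, carrying $\leq 1$ derivative in $z$, is placed in $L^\infty_z$ via $H^1(\mathbb{R}_+)\hookrightarrow L^\infty(\mathbb{R}_+)$; the hypotheses $p\geq 4$ and $s\geq 2$ are precisely what guarantee such a split always exists. Since the statement only asserts the product lies in the space (no explicit constant tracking), I would simply remark that all constants depend only on $\cO, p, s, q$ and conclude.
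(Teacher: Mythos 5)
The paper does not give a proof of this lemma; it simply cites \cite[Lemma 4.1]{LSZ1}, so there is no in-text argument to compare against. Your approach --- double Leibniz in $(x,z)$ plus the Sobolev embeddings $H^p(\cO)\hookrightarrow L^\infty(\cO)$ (valid since $p\geq 4>3/2$) and $H^1(\mathbb{R}_+)\hookrightarrow L^\infty(\mathbb{R}_+)$, and then the Leibniz rule in $t$ with the observation that $\t^{-2\gamma}\leq\t^{-\gamma}$ --- is the natural and standard route, and the overall structure is sound.

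There is, however, one claim in the middle that is simply false as stated: ``Since $s\geq 2$, at least one of $j_1,j_2$ is $\leq 1$.'' For $s\geq 4$ and $j=s$ one can have $j_1=j_2=s/2\geq 2$. This does not sink the argument, but the correct justification is slightly different: in each term $\partial_z^{j_1}\tilde V\,\partial_z^{j_2}V$ of the $z$-Leibniz expansion, assume without loss of generality $j_1\leq j_2$; then $j_1\leq j/2\leq s/2\leq s-1$ (here is where $s\geq 2$ is used), so $\partial_z^{j_1}\tilde V\in H^1_z\hookrightarrow L^\infty_z$ with norm controlled by $\|\tilde V\|_{H^s_q}$, while the full weight $\langle z\rangle^{q}$ goes onto the partner $\partial_z^{j_2}V$ which stays in $L^2_z$ as part of $\|V\|_{H^s_q}$. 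The same ``put the lower-order factor in $L^\infty$'' principle should be applied to the $x$-Leibniz expansion: with $|\beta_1|\leq|\beta_2|$ one has $|\beta_1|\leq p/2$, and $p-|\beta_1|\geq p/2\geq 2>3/2$ so $H^{p-|\beta_1|}(\cO)\hookrightarrow L^\infty(\cO)$; this is where $p\geq 4$ enters. (Strictly $p\geq 4$ gives $p-|\beta_1|\geq 2>3/2$ only when $|\beta_1|\leq p/2$, which is fine, but it is worth saying explicitly rather than invoking ``$H^p\hookrightarrow L^\infty$'' pointwise.) Your distribution of the $z$-weight ``arbitrarily'' onto either factor should also be pinned down: the safe choice is to put all of $\langle z\rangle^q$ on the $L^2_z$ factor, since the unweighted $H^1_z\hookrightarrow L^\infty_z$ then closes the estimate without having to re-justify a weighted $L^\infty_z$ bound. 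With these small repairs the proof is correct and complete.
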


\subsection{Velocity expansion}\label{cexp1}
The process to construct the approximate solutions is similar to the one that in \cite{LSZ1}.
We define some indices according to Proposition \ref{propv}. Let $\gamma>1, k,p,s,q\in \mathbb{N}_+$ be some fixed constants with $p\geq 8,s\geq 7,q\geq 2$.
We define the mapping $\frak{\Gamma}:$ $\frak{\Gamma}(\gamma,k,p,s,q)=(\tilde{\gamma},\tilde{k},\tilde{p},\tilde{s},\tilde{q})$, where $\tilde{\gamma},\tilde{k},\tilde{p},\tilde{s},\tilde{q}$ are given respectively by \eqref{G1}. We denote
\begin{subequations} \label{S3eq1}
\begin{align}
(\gamma_3,k_3,p_3,s_3,q_3):&=(\gamma,k,p,s,q),\\
\label{index2}(\gamma_2,k_2,p_2-1,s_2-1,q_2-2):&=\frak{\Gamma}(\gamma_3,k_3,p_3,s_3,q_3),\\
(\gamma_1,k_1,p_1-1,s_1-1,q_1-2):&=\frak{\Gamma}(\gamma_2,k_2,p_2,s_2,q_2).
\end{align}
\end{subequations}
In order to close the estimates of the remainder in the next section, we take
\begin{align}\label{inp}
(\gamma,k,p,s,q)=\bigl(\frac{5}{4},1,8,7,2\bigr).
\end{align}
For more explanations, one may check Remark \ref{indexpl} below. By construction, we have
\begin{align*}
(\gamma_2,k_2,p_2,s_2,q_2)=(7,7,17,14,9)\quad\text{ and }\quad (\gamma_1,k_1,p_1,s_1,q_1)=(25,31,44,51,27).
\end{align*}

By Proposition \ref{propv}, $S:\mathbb{R}_+\rightarrow \mathbb{R}_+$ is a continuous function satisfying $S(0)=0$ and $S(\delta)\geq \delta$ for any $\delta>0$. We can choose and fix a small $\delta>0$ such that
\begin{align*}
\delta_1:=S(\delta),\quad \delta_2=S(\delta_1),\quad \text{and }\quad \delta_3=S(\delta_2),
\end{align*}
which satisfy $0< \delta\leq \delta_1\leq \delta_2\leq \delta_3\leq \delta_0,$ for $\delta_0$ being defined in Section \ref{css}.

We assume that the initial data $u_a$ satisfies
\begin{align}\label{inua}
u_a\in H^{24}(\cO).
\end{align}
The choice of the regularity index for $u_a$ is due to Proposition \ref{lnel}, see Remark \ref{indexpl} for more explanation.

\noindent$\bullet$ \underline{Order $\mathcal{O}(\sqrt{\e}).$} \vspace{0.2cm}

\noindent\underline{Construction of $v^1$.} Let $\chi_1$ be a cut-off function such that $\chi_1(x)=1$ for $x\in \mathcal{V}_{\delta/2}$ and $\chi_1(x)=0$ for $x\in \cO\setminus\mathcal{V}_{\delta}$. Set
\begin{equation*}
g^1(t,x):=2\mathcal{N}(u^0)(t,x)\chi_1(x).
\end{equation*}
Since $u^0$ is a smooth function compactly supported in $(0,T)$, hence $g^1$ is also smooth and compactly supported in $(0,T)$. By the definition of $\mathcal{N}$ and $\chi_1$, function $g^1$ is supported in $\mathcal{V}_{\delta_1}$ and  satisfies $g^1\cdot\n=0$.
By Proposition \ref{propv}, we can find a control function
\begin{equation*}
\xi^1\in C^{k_1-1}(\mathbb{R}_+;H^{p_1}(\mathcal{O};\mathcal{S}(\mathbb{R}_+)))
\end{equation*}
and a layer profile function
\begin{equation}\label{S3eqv1}
v^1\in C^{k_1}_{\gamma_1}(\mathbb{R}_+;H^{p_1}(\mathcal{O};H^{s_1}_{q_1}(\mathbb{R}_+)))
\end{equation}
satisfy
\begin{equation}\label{defv1}
\begin{cases}
\p_t v^1+u^0\cdot\nabla v^1+\mathcal{A}^0v^1-u^0_{\flat}z\p_z v^1-\p_z^2 v^1=\xi^1,\quad \text{ in } \mathbb{R}_+\times\cO\times\mathbb{R}_+,\\
\p_z v^1=g^1,\quad \text{ on }\mathbb{R}_+\times\cO\times\{0\},\\
v^1\cdot\n=0,\quad \text{ in }\mathbb{R}_+\times\cO\times\mathbb{R}_+,\\
v^1=0,\qquad \ \text{ on }\{0\}\times\cO\times \mathbb{R}_+.\\
\end{cases}
\end{equation}
Moreover the control $\xi^1$ is supported in $\overline{\mathcal{O}}\setminus \overline{\Omega}\cap \mathcal{V}_{\delta_1}$,
compactly supported in $(0,T)$, the
layer profile function $v^1$ is supported in $\mathcal{V}_{\delta_1}$ and is supported away from $t=0$,  and they satisfy $\xi^1\cdot\n= v^1\cdot\n=0$ for any $t\in\mathbb{R}_+,x\in\cO,z\in\mathbb{R}_+$.

\vspace{0.2cm}
\noindent$\bullet$ \underline{Order $\mathcal{O}(\e)$.}\vspace{0.2cm}

\begin{lemma}[see Appendix A of \cite{CMS}]\label{lmomg}
{\sl
In $3D$ case, for $p\in \mathbb{N}_+$ and  an initial data $u_a\in H^p(\cO), $ there exists a control function $\nu\in \cap_{k=0}^{p-1} C^k([0,T];H^{p-1-k}(\cO))$ supported in $\overline{\cO\setminus\Omega}$ and a solution $\omega\in \cap_{k=0}^{p-1} C^k([0,T];H^{p-1-k}(\cO))$ to
the following system:
\begin{equation}\label{eqomg}
\left\{
\begin{aligned}
&\p_t\omega+\nabla\times(\omega\times u^0)=\nabla\times\nu,\qquad  &&\text{ in }(0,T)\times\cO,\\
&\dive \omega=0,\qquad                                             &&\text{ in }(0,T)\times\cO,\\
&\omega(0,\cdot)=\nabla\times u_a,\qquad                           &&\text{ in }\cO,
\end{aligned}
\right.
\end{equation}
such that $\omega(T,\cdot)=0.$
}
\end{lemma}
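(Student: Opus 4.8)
\textbf{Proof proposal for Lemma \ref{lmomg}.}

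The plan is to reformulate the statement as a controllability problem for the linear vorticity-transport equation $\p_t\omega + \nabla\times(\omega\times u^0) = \nabla\times\nu$, driven to rest at time $T$ by a control $\nu$ supported in $\overline{\cO\setminus\Omega}$, and then to exploit the flushing property \eqref{flush} of $u^0$ exactly as in Appendix A of \cite{CMS}. First I would recall that, since $\nabla\times u^0 = 0$ and $\dive u^0 = \sigma^0$, the operator $\omega\mapsto\nabla\times(\omega\times u^0)$ acting on divergence-free fields reduces to the transport-type operator $u^0\cdot\nabla\omega - \omega\cdot\nabla u^0 + (\dive u^0)\,\omega$ (using $\dive\omega = 0$), so that \eqref{eqomg} is genuinely a linear transport equation for $\omega$ along the flow $\phi^{u^0}$, with a zeroth-order coupling term and a forcing $\nabla\times\nu$. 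The divergence-free constraint $\dive\omega = 0$ is automatically propagated because $\dive(\nabla\times(\omega\times u^0)) = 0$ and $\dive(\nabla\times\nu) = 0$, so it suffices to produce any sufficiently regular $\omega$ solving the PDE with $\omega(0,\cdot) = \nabla\times u_a$ and $\omega(T,\cdot) = 0$.

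The core construction is the return-method / flushing argument. By \eqref{flush}, for each $x\in\overline\cO$ there is a time $t_x\in(0,T)$ with $\phi^{u^0}(t_x,0,x)\in\overline\cO\setminus\overline\Omega$; by compactness and continuity of the flow one can cover $\overline\cO$ by finitely many open sets on which a common flushing time works, and (after a standard time-reparametrization, or directly because $u^0$ is compactly supported in $(0,T)$) arrange that the entire domain is flushed out of $\overline\Omega$ before time $T$. Concretely, I would choose $\omega$ to coincide with the free (uncontrolled, $\nu=0$) transport-and-stretching evolution of $\nabla\times u_a$ as long as the corresponding characteristics remain inside $\overline\Omega$, and then, once a fluid particle has entered the control region $\overline\cO\setminus\overline\Omega$, smoothly switch off $\omega$ there using a cutoff in time adapted to the flushing; the forcing $\nabla\times\nu$ is then \emph{defined} to be whatever is needed to make this chosen $\omega$ solve the equation. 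Because the switch-off happens entirely within $\overline\cO\setminus\overline\Omega$, the resulting $\nu$ can be taken supported in $\overline{\cO\setminus\Omega}$ (one recovers $\nu$ from $\nabla\times\nu$ up to a gradient by solving a div-curl system, choosing the gradient part to keep the support in the control region), and by construction $\omega(T,\cdot) = 0$. The regularity claim $\omega,\nu\in\cap_{k=0}^{p-1}C^k([0,T];H^{p-1-k}(\cO))$ follows from the smoothness of $u^0$, the $H^p(\cO)$ regularity of $u_a$, and standard energy/transport estimates for the linear hyperbolic equation \eqref{eqomg}, losing one derivative because of the $\nabla\times$ acting on the velocity in the stretching term $\omega\cdot\nabla u^0$ and the $\nabla\times\nu$ forcing — hence $\omega$ lives at order $H^{p-1}$ and each time derivative costs one more spatial derivative through the equation.

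The main obstacle is the bookkeeping of supports: one must ensure simultaneously that (i) the cutoff that annihilates $\omega$ is triggered only after the characteristic has left $\overline\Omega$, uniformly in $x$, (ii) the induced $\nu$ stays supported in $\overline{\cO\setminus\Omega}$ — which requires care in inverting $\nabla\times\nu = (\text{given source})$ while controlling the support of the potential/gradient correction — and (iii) everything remains compactly supported in $(0,T)$ in time, so that $\omega$ can be extended by $0$ past $T$. This is precisely the delicate point handled in \cite{CMS}; since the statement here is verbatim their result with $u_a\in H^p(\cO)$, I would simply invoke their construction, checking only that it goes through with the present $u^0$ from Lemma \ref{lmu0} (which has the same flushing and irrotationality properties) and tracking the regularity indices to get the stated $C^k_tH^{p-1-k}_x$ scale.
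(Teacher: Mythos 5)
The paper does not prove this lemma but cites it to Appendix~A of \cite{CMS}, and your outline reproduces exactly that construction, which the present paper also carries out in detail for the analogous magnetic result, Proposition~\ref{prB2}: a finite partition of $\overline\cO$ into pieces $\mathcal{B}_l$ each with its own flushing time $t_{x_l}$, free evolution of the localized pieces $\omega_l$ of $\nabla\times u_a$ under the uncontrolled vorticity equation, a per-piece time cutoff $\beta_\epsilon(t-t_{x_l})$ triggered once the piece sits in $\overline\cO\setminus\overline\Omega$, and then reading off $\nabla\times\nu=\sum_l\beta'_\epsilon(t-t_{x_l})\omega_l$, with the gradient freedom in the curl inversion (legitimate because $\nabla\times\nu=0$ in the simply connected $\Omega$, so any curl-potential is a gradient there that can be subtracted off after extension) used to force $\operatorname{supp}\nu\subset\overline{\cO\setminus\Omega}$. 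One small inaccuracy to flag: the one-derivative loss ($H^p\to H^{p-1}$) comes solely from the initial datum $\omega(0,\cdot)=\nabla\times u_a\in H^{p-1}(\cO)$ and the fact that each $\p_t$ trades for one $\nabla$ through the transport equation — the stretching term $\omega\cdot\nabla u^0$ costs nothing since $\nabla u^0$ is $C^\infty$.
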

\begin{remark}
{\sl
For $2D$ case, the equation \eqref{eqomg} shall be replaced by
\begin{equation*}
\left\{
\begin{aligned}
&\p_t \omega+u^0\cdot\nabla \omega+(\dive u^0)\omega={\rm curl}\,\nu,\qquad  &&\text{ in }(0,T)\times\cO,\\
&\dive \omega=0,\qquad                                                       &&\text{ in }(0,T)\times\cO,\\
&\omega(0,\cdot)={\rm curl}\, u_a,\qquad                                     &&\text{ in }\cO.
\end{aligned}
\right.
\end{equation*}
}
\end{remark}

\begin{proof}[\bf Proof of Proposition \ref{lnel}]We focus on proving the $3D$ case, the $2D$ case can be proved similarly. For initial data $u_a\in H^p(\cO)$, by virtue of Lemma \ref{lmomg}, we can find a control function $\nu\in\cap_{k=0}^{p-1} C^k([0,T];H^{p-1-k}(\cO)),$ which is supported in $\overline{\cO\setminus\Omega},$  and a solution $\omega\in \cap_{k=0}^{p-1}C^k([0,T];H^{p-1-k}(\cO)),$  which satisfies \eqref{eqomg}. Let $\sigma^2:=\beta\dive u_a\in C^{\infty}[0,T],H^{p-1}(\cO)$, then we can find a solution $u\in \cap_{k=0}^{p}C^k([0,T];H^{p-k}(\cO))$ to
the system:
\begin{equation*}
\left\{
\begin{aligned}
&\dive u=\sigma^2,\qquad &&\text{ in }(0,T)\times\cO,\\
&\nabla\times u=\omega,\qquad &&\text{ in }(0,T)\times\cO,\\
&u\cdot\n=0,\qquad     && \text{ on }(0,T)\times\p\cO,
\end{aligned}
\right.
\end{equation*}
We refer to \cite[Theorem \uppercase\expandafter{\romannumeral 4}.4.13]{Boyer} for its proof. Evaluating at time $t=T,$ we find $\omega(T,\cdot)=0$ and $\beta=0$. Thus, $u(T,\cdot)=0$. Specifically, when $u_a\in H^{24}(\cO)$ with  $\dive u_a=0$ in $\Omega$, $u_a\cdot\n=0$ on $\p\cO$, we can find control functions $\nu\in C^7([0,T];H^{16}(\cO)),$ $\sigma^2=\beta \dive u_a$ supported in $\overline{\cO\setminus\Omega}$ and a solution $u\in C^7([0,T];H^{17}(\cO))$ to \eqref{eqle} such that $u(T,\cdot)=0.$
\end{proof}

\vspace{0.2cm}
\noindent\underline{Construction of $u^2$.}
By Lemma \ref{lmu0} we have $\nabla\times u^0=0$ in $\cO$, thus $-\Delta u^0+\nabla\dive u^0=\nabla\times(\nabla\times u^0)=0$, so that $\Delta u^0=\nabla \sigma^0$ is smooth and is supported in $\overline{\cO}\setminus\overline{\Omega}$ and can be absorbed into control terms. Recall that $\beta$ is a cut-off function with $\beta(t)=1$ when $t\leq \frac{T}{3}$ and $\beta(t)=0$ when $t\geq\frac{2T}{3}.$
By using Proposition \ref{lnel}, we can find control functions $\nu^2\in C^{k_2}(\mathbb{R}_+;H^{p_2-1}(\cO)),\sigma^2\in C^{\infty}([0,T];H^{23}(\cO))$ supported in $\overline{\cO\setminus\Omega}$, a solution $u^2\in C^{k_2}(\mathbb{R}_+;H^{p_2}(\cO))$ such that
\begin{equation*}\label{u2}
\begin{cases}
\partial_t u^2+u^0\cdot\nabla u^2+ u^2\cdot\nabla u^0+\nabla p^2=\nu^2+\Delta u^0,\quad\text{ in } \mathbb{R}_+\times \cO,\\
\dive  u^2=\sigma^2,\quad\text{ in } \mathbb{R}_+\times \cO,\\
 u^2\cdot \mathbf{n}= 0 ,\quad\ \ \text{ on }\mathbb{R}_+\times \partial\cO,\\
 u^2=u_a,\qquad\ \, \text{ on } \{0\}\times \cO.
\end{cases}
\end{equation*}
Moreover, the functions $\nu^2,\sigma^2$ and $u^2$ are supported in $[0,T]$.

\vspace{0.2cm}
\noindent\underline{Construction of $w^2$.}
We set
\begin{equation*}\label{intw2}
w^2(t,x,z):=-\int_z^{+\infty}\dive_x v^1(t,x,z')\,dz'.
\end{equation*}
Then
\begin{equation}\label{S3eqw2}
w^2\in C^{k_1}_{\gamma_1}(\mathbb{R}_+;H^{p_1-1}(\mathcal{O};H^{s_1}_{q_1-2}(\mathbb{R}_+))).
\end{equation}
Indeed, since for $t\in\mathbb{R}_+,x\in \cO,$
\begin{align*}
\|\langle z\rangle^{q_1-2}w^2(t,x,z)\|_{L^2_z(\mathbb{R}_+)}
&\leq \|\langle z\rangle^{-1}\int_z^{+\infty}\langle z\rangle^{q_1-1}|\dive_x v^1(t,x,z')|\,dz'\|_{L^2_z(\mathbb{R}_+)}\\
&\leq \|\langle z\rangle^{-1}\|^2_{L^2_z(\mathbb{R}_+)}\|\langle z\rangle^{q_1}\dive_x v^1(t,x,z)\|_{L^2_z(\mathbb{R}_+)},
\end{align*}
and $\p_z w^2=\dive_x v^1$, \eqref{S3eqw2} can be easily obtained by \eqref{S3eqv1}. Moreover, $w^{2}$ is supported in $\mathcal{V}_{\delta_1}$ and is supported away from $t=0$. Similar to the proof in \cite[Section 6.1]{sueur}, we find that
\begin{equation}\label{S3eq2}
\int_{\p\mathcal{O}}w^2(t,x,0)\,dx=0.
\end{equation}

\vspace{0.2cm}
\noindent\underline{Construction of $\phi^2$.}
Let $\phi^2$ be a solution of the following Neumann problem,
\begin{equation*}
\left\{
\begin{aligned}
&\Delta \phi^2=0,\qquad                   &&\text{ in }\cO,\\
&\p_{\n}\phi^2=-w^2(t,x,0),\qquad         &&\text{ on }\p\cO.
\end{aligned}
\right.
\end{equation*}
Thanks to \eqref{S3eq2}, there exists a unique solution $\phi^2\in C^{k_1}_{\gamma_1}(\mathbb{R}_+;H^{p_1}(\cO))$ up to a constant and $\phi^2$ is supported away from $t=0$.

\vspace{0.2cm}
\noindent\underline{Construction of $\pi^2$.}
We set
\begin{equation*}
\pi^{2}(t,x,z):=-\int_z^{\infty}(-u^0\cdot\nabla\n\cdot v^1+v^1\cdot\nabla u^0\cdot \n)(t,x,z')\,dz'.
\end{equation*}
Then $\pi^{2}\in C^{k_1}_{\gamma_1}(\mathbb{R}_+;H^{p_1}(\mathcal{O};H^{s_1}_{q_1-2}(\mathbb{R}_+)))$ and satisfies $\p_z \pi^2=-u^0\cdot\nabla\n\cdot v^1+v^1\cdot\nabla u^0\cdot\n.$
Moreover, $\pi^{2}$ is supported in $\mathcal{V}_{\delta_1}$ and is supported away from $t=0$.

\vspace{0.2cm}
\noindent\underline{Construction of $v^2$.}
Let
\begin{align*}
f^2  :=\ &-\left(\nabla\pi^2+v^1\cdot\nabla v^1-w^2\partial_zv^1 +2\p_{\n}\partial_z v^1-\Delta\varphi \partial_z v^1\right)_{\tan}\\
 \ &-\left(u^0\cdot\nabla(\n w^2)+w^2\p_{\n} u^0\right)_{\tan},\\
 g^2 :=\ &2\CN(v^1|_{z=0})\chi_1(x).
\end{align*}
By Lemma \ref{UV}, we find that $f^2\in C^{k_1}_{\gamma_1}(\mathbb{R}_+;H^{p_1-1}(\mathcal{O};H^{s_1-1}_{q_1-2}(\mathbb{R}_+)))$, $g^2\in C^{k_1}_{\gamma_1}(\mathbb{R}_+;H^{p_1-1}(\cO))$ and they satisfy the conditions of Proposition \ref{propv}, that is, $f^2$ and $g^2$ are supported in $\mathcal{V}_{\delta_1}$ and are supported away from $t=0$ and satisfy $f^2(t,x,z)\cdot \mathbf{n}(x)=g^2(t,x)\cdot \mathbf{n}(x)=0$ for any $t\geq 0,x\in \mathcal{O} $ and $z\geq 0.$
Therefore there exist $\xi^2\in C^{k_2-1}_{\gamma_2}(\mathbb{R}_+;H^{p_2}(\cO;\mathcal{S}(\mathbb{R}_+)))$ and a solution $v^2\in C^{k_2}_{\gamma_2}(\mathbb{R}_+;H^{p_2}(\mathcal{O};H^{s_2}_{q_2}(\mathbb{R}_+)) )$ to
\begin{equation*}\label{u22}
\begin{cases}
\partial_t v^2+u^0\cdot\nabla v^2+\mathcal{A}^0 v^2-u^0_{\flat}z\partial_z v^2
-\partial_z^2 v^2=\xi^2+f^2,\quad \mbox{ in }\mathbb{R}_+\times\cO\times\mathbb{R}_+,\\
\partial_z v^2|_{z=0}=g^2,\quad \mbox{ on }\mathbb{R}_+\times\cO\times\{z=0\},\\
v^2|_{t=0}=0,\qquad\ \, \mbox{ on }\cO\times\mathbb{R}_+ .
\end{cases}
\end{equation*}
Moreover, $\xi^2$ is supported in $(\overline{\mathcal{O}}\setminus\overline{\Omega})\cap\mathcal{V}_{\delta_2}$ and is compactly supported in $(0,T)$, and $v^2$ is supported in $\mathcal{V}_{\delta_2}$ and is supported away from $t=0$, and $\xi^2\cdot \mathbf{n}=v^2\cdot\mathbf{n}=0$.

We denote
\begin{align*}
U^2(t,x,z):=u^2(t,x)+\nabla\phi^2(t,x)+v^2(t,x,z)+\n(x)w^2(t,x,z).
\end{align*}

\noindent$\bullet$ \underline{Order $\mathcal{O}(\e^{\frac{3}{2}})$.}\vspace{0.2cm}

\noindent\underline{Construction of $w^3$.}
We set
\begin{equation*}
\label{w3}
w^3 (t,x,z):=-\int_z^{\infty}\dive  (v^2+\n w^2)(t,x,z')dz'.
\end{equation*}
Then $\partial_z w^3=\dive (v^2+\n w^2)$ and $w^3\in C^{k_2}_{\gamma_2}(\mathbb{R}_+;H^{p_2-1}(\mathcal{O};H^{s_2}_{q_2-2}(\mathbb{R}_+))$ is supported in $\mathcal{V}_{\delta_2}$ and is supported away from $t=0$. Moreover, $w^3$ satisfies
\begin{equation}\label{iw3}
\int_{\partial\cO} w^3(t,x,0)dx=0.
\end{equation}

\vspace{0.2cm}
\noindent\underline{Construction of $\phi^3$.}
Let $\phi^3$ be a solution of the following Neumann problem:
\begin{equation*}\label{phi3}
\left\{
\begin{aligned}
&\Delta \phi^3=0,\quad                  &&\text{ in }\mathcal{O},\\
&\partial_{\mathbf{n}}\phi^3=-w^3(t,x,0),\quad &&\text{ on } \partial\mathcal{O}.
\end{aligned}
\right.
\end{equation*}
Thanks to (\ref{iw3}), there exists a unique solution $\phi^3\in C^{k_2}_{\gamma_2}(\mathbb{R}_+;H^{p_2}(\mathcal{O}))$ up to a constant and $\phi^3$ is supported away from $t=0$.

\vspace{0.2cm}
\noindent\underline{Construction of $\pi^3$.}
We set
\begin{eqnarray*}
\pi^3(t,x,z) :=-\int_z^{+\infty}\Big(\partial_t w^2+u^0\cdot \nabla (\n w^2)\cdot\n-u^0\cdot\nabla \mathbf{n}\cdot v^2+ (v^2+\n w^2)\cdot\nabla u^0\cdot \mathbf{n} \\
+v^1\cdot\nabla v^1\cdot \mathbf{n}-u^0_{\flat}z'\partial_z w^2-\partial_z^2 w^2+\partial_\mathbf{n} \pi^2\Big)(t,x,z')dz'.
\end{eqnarray*}
Then $\pi^3\in C^{k_2}_{\gamma_2}(\mathbb{R}_+;H^{p_2}(\mathcal{O};H^{s_2}_{q_2-2}(\mathbb{R}_+))$ and
\begin{equation*}
\label{pi3}
\begin{split}
\partial_z\pi^3=\ &\partial_t w^2+u^0\cdot \nabla (\n w^2)\cdot\n-u^0\cdot\nabla \mathbf{n}\cdot v^2+ (v^2+\n w^2)\cdot\nabla u^0\cdot \mathbf{n}\\
\ &+v^1\cdot\nabla v^1\cdot \mathbf{n}-u^0_{\flat}z\partial_z w^2-\partial_z^2 w^2+\partial_\mathbf{n} \pi^2.
\end{split}
\end{equation*}
Moreover, $\pi^3 $ is supported in $\mathcal{V}_{\delta_2}$ and is supported away from $t=0$.

\vspace{0.2cm}
\noindent\underline{Construction of $v^3$.}

Let
\beq\label{defg3}
\begin{split}
 f^3 :=\ &-\left(\nabla\pi^3+u^0\cdot\nabla(\n w^3)+w^3\p_{\n} u^0+v^1\cdot\nabla U^2+U^2\cdot\nabla v^1\right)_{\tan}\\
&+\left(U^2\cdot\n\p_z(v^2+\n w^2)+w^3\p_z v^1+\Delta v^1\right)_{\tan}\\
&-\left(2\p_{\n}\p_z(v^2+\n w^2)+\Delta\varphi\p_z(v^2+\n w^2)\right)_{\tan}\\
g^3 :=\ &2\CN\big(u^2+\nabla \phi^2+(v^2+w^2\mathbf{n})|_{z=0}\big)\chi_1(x).
\end{split}\eeq
From Lemma \ref{UV} and the fact that $u^2\in C^{k_2}([0,T];H^{p_2}(\cO))$, we find that
\begin{align*}
f^3\in C^{k_2}_{\gamma_2}(\mathbb{R}_+;H^{p_2-1}(\mathcal{O};,H^{s_2-1}_{q_2-2}(\mathbb{R}_+))
,\qquad g^3\in C^{k_2}_{\gamma_2}(\mathbb{R}_+;H^{p_2-1}(\cO)),
\end{align*}
and satisfy $f^3(t,x,z)\cdot \mathbf{n}(x)=g^3(t,x)\cdot \mathbf{n}(x)=0$ for any $t\geq 0,x\in \mathcal{O} $ and $z\geq 0.$
Moreover $f^3$ and $g^3$ are supported in $\mathcal{V}_{\delta_2}$. Then, by using Proposition \ref{propv}, there exist
 $\xi^3\in C^{k_3-1}_{\gamma_3}(\mathbb{R}_+;H^{p_3}(\cO;\mathcal{S}(\mathbb{R}_+)))$ , $ v^3\in C^{k_3}_{\gamma_3}(\mathbb{R}_+;H^{p_3}(\mathcal{O};H^{s_3}_{q_3}(\mathbb{R}_+))$ and $v^3_0\in H^{p_3+2}(\cO;C_0^{\infty}(\overline{\mathbb{R}_+}))$ such that
\begin{equation*}\label{u3}
\begin{cases}
\partial_t v^3+u^0\cdot\nabla v^3+\mathcal{A}^0 v^3-u^0_{\flat}z\partial_z v^3
-\partial_z^2 v^3=\xi^3+f^3\quad\mbox{ in }\ \mathbb{R}_+\times\cO\times\mathbb{R}_+,\\
\partial_z v^3|_{z=0}=g^3\quad\mbox{ on }\mathbb{R}_+\times\cO\times\{z=0\},\\
 v^3|_{t=0}=v^3_0\qquad\mbox{ on }\ \cO\times\mathbb{R}_+.
\end{cases}
\end{equation*}
Moreover, $\xi^3$ is supported in $(\overline{\mathcal{O}}\setminus\overline{\Omega})\cap\mathcal{V}_{\delta_3}$, $v^3$ is supported in $\mathcal{V}_{\delta_3}$ and $\xi^3\cdot \mathbf{n}=v^3\cdot\mathbf{n}=0$.

We denote
$$
U^3(t,x,z):=\nabla\phi^3(t,x)+v^3(t,x,z)+\n(x)w^3(t,x,z).
$$

\vspace{0.2cm}
\noindent\underline{Construction of $\pi^4$.}
We set
\begin{align*}
\pi^4(t,x,z) :=-&\int_z^{+\infty}\Big(\partial_t w^3+u^0\cdot \nabla (\n w^3)\cdot\n-u^0\cdot\nabla \mathbf{n}\cdot v^3+ (v^3+\n w^3)\cdot\nabla u^0\cdot \mathbf{n} \\
&-u^0_{\flat}z\p_zw^3-\p_zw^3+\p_{\n}\pi^3+v^1\cdot\nabla U^2\cdot\n+U^2\cdot\nabla v^1\cdot\n-U^2\cdot\n \p_z U^2\cdot\n\\
&-w^3\p_zv^1-\Delta v^1+\p_{\n}\p_z(v^2+\n w^2)-\Delta\varphi\p_z(v^2+\n w^2)\Big)(t,x,z')\,dz'.
\end{align*}
Then one has
\begin{align}\label{defpi4}
\pi^4\in C^{k_3}_{\gamma_3}(\mathbb{R}_+;H^{p_3}(\mathcal{O};H^{s_3}_{q_3-2}(\mathbb{R}_+)),
\end{align}
and
\begin{align*}
\p_z\pi^4=\ &\partial_t w^3+u^0\cdot \nabla (\n w^3)\cdot\n-u^0\cdot\nabla \mathbf{n}\cdot v^3+ (v^3+\n w^3)\cdot\nabla u^0\cdot \mathbf{n}\\
&-u^0_{\flat}z'\p_zw^3-\p_zw^3+\p_{\n}\pi^3+v^1\cdot\nabla U^2\cdot\n+U^2\cdot\nabla v^1\cdot\n-U^2\cdot\n \p_z U^2\cdot\n\\
&-w^3\p_zv^1-\Delta v^1+\p_{\n}\p_z(v^2+\n w^2)-\Delta\varphi\p_z(v^2+\n w^2)
\end{align*}
Moreover, $\pi^4 $ is supported in $\mathcal{V}_{\delta_3}$ and is supported away from $t=0$.

\begin{remark}\label{ueaid}
{\sl
Actually, for any fixed indexes $\gamma>1,k,p,s,q\in\mathbb{N}_+$, we can choose a control function
$\xi^1\in C^{k-1}_{\gamma}((0,T);H^p(\cO;\mathcal{S}(\mathbb{R}_+)))$ and a solution $v^1\in C^k_{\gamma}(\mathbb{R}_+;H^p(\cO;H^s_q(\mathbb{R}_+)))$ to system \eqref{defv1} and satisfies the above properties. Roughly speaking, we can choose $\xi^1$ such that $v^1$ is smooth enough and has any fixed decay rate. Similarly, as long as $(\gamma_1,k_1,p_1,s_1,q_1)$ is chosen large enough, the layer functions $v^2, v^3$ can be smooth enough and have any fixed decay rate. Thus $w^2,w^3,\phi^2,\phi^3,\pi^2,\pi^3,f^2,g^2$ have the same properties. While this property is not true for $f^3,g^3$ and $\pi^4$. They involve the regularity of $u^2$. Eventually, the regularity of $f^3,g^3$ and $\pi^4$ depend on $u_a$.

}
\end{remark}

\subsection{Construction of $B^2$.}\label{mfe}

\begin{proof}[\bf Proof of Proposition \ref{prB2}]
Since $B_a\in H^8(\cO), \dive B_a=0$ and $\cO$ is a simply-connected domain, there exists a $\Psi_a\in H^{9}(\cO)$ such that $B_a=\nabla\times\Psi_a$ for $3D$ case and $B_a=\nabla^{\perp}\Psi_a  =(-\p_y\Psi_a,\p_x\Psi_a)^{T}$ for $2D$ case. We divide the rest proof into 3 steps.

\vspace{0.2cm}
\noindent$\bullet$ \underline{Step 1: construction of an appropriate partition of unity.}\vspace{0.2cm}

From the flushing condition \eqref{flush} and  the continuity of $\phi^{u^0}$,
\begin{align*}
\forall x\in \overline{\cO}, \exists t_{x}\in (0,T), \exists r_x>0,\exists \epsilon_x>0,
\text{ such that }\forall x\in \mathcal{B}(x,r_x), |t-t_x|\leq \epsilon_x, \phi^{u^0}(t,0,x)\notin\overline{\Omega}.
\end{align*}
 Then by the compactness of $\overline{\cO}$, we can find finite balls $\mathcal{B}_l=\mathcal{B}(x_l,r_{x_l})$ for $1\leq l\leq L$ covering $\overline\cO$ and $\epsilon:=\min\{\epsilon_{x_l}|1\leq l\leq L\}>0$, such that
\begin{align}\label{flcom}
\forall l\in \{1,\cdots,L\},\exists t_{x_l}\in (\epsilon,T-\epsilon),\forall |t-t_{x_l}|<\epsilon, \phi^{u^0}(t,0,\mathcal{B}_l)\notin \overline{\Omega}.
\end{align}
We can choose a finite partition of unity described by smooth functions $\alpha_l$ for $1\leq l\leq L$ with $0\leq \alpha_l(x)\leq 1$ and $\sum_{l=1}^L\alpha_l=1$ on $\overline{\cO}.$ Let $\beta_{\epsilon}:\mathbb{R}\rightarrow\mathbb{R}$ be a smooth function with $\beta_{\epsilon}(t)=1$ when $t\leq -\epsilon$ and $\beta_{\epsilon}(t)=0$ when $t\geq \epsilon.$

\vspace{0.2cm}
\noindent$\bullet$ \underline{Step 2: local solvability of the uncontrolled equations.}\vspace{0.2cm}

For $1\leq l\leq L,$ let $B_l$ solve equations
\begin{equation}\label{eqBl}
\left\{
\begin{aligned}
&\p_t B_l+u^0\cdot\nabla B_l-B_l\cdot\nabla u^0+\sigma^0 B_l=0,\qquad &&\text{ in }(0,T)\times\cO,\\
&B_l|_{t=0}=\nabla\times(\alpha_l \Psi_a),\qquad  &&\text{ in }\cO.
\end{aligned}\right.
\end{equation}
The last equation will be replaced by $B_l|_{t=0}=\nabla^{\perp}(\alpha_l\Psi_a)$ when the dimension equals 2.
Since $\Psi_a\in H^9(\cO)$ and $u^0,\sigma^0$ are smooth, there exists a unique solution $B_l\in C([0,T];H^8(\cO))$ satisfying \eqref{eqBl}.
Moreover, since $\alpha_l$ is supported in $\mathcal{B}_l$, so does $B_l|_{t=0}$.

Let us show that $B_l$ satisfies
\begin{align}\label{cdBl}
\dive B_l=0\text{ in }\cO\quad \text{and}\quad B_l\cdot\n=0\text{ on } \p\cO.
\end{align}

Thanks to $\dive u^0=\sigma^0,$ by taking the divergence of the first equation of \eqref{eqBl}, we obtain
\begin{align*}
\p_t\dive B_l+u^0\cdot\nabla B_l+\sigma^0\dive B_l=0.
\end{align*}
Then we get,  by using energy methods, that the divergence condition $\dive B_l=0$ propagates.
Similarly, since $\nabla \n$ is a symmetric matrix, we find that
\begin{align*}
\p_t(B_l\cdot\n)+u^0\cdot\nabla (B_l\cdot\n)-B_l\cdot\nabla (u^0\cdot\n)+\sigma^0 B_l\cdot\n=0.
\end{align*}
It follows from Lemma \ref{optan} that
\begin{align*}
B_l\cdot\nabla =B_l\cdot \btau^1Z_1+B_l\cdot \btau^2Z_2+B_l\cdot \btau^3Z_3+B_l\cdot\n\p_{\n}.
\end{align*}
Since $Z_i$ are tangential derivatives and $u^0\cdot\n=0$ on $\p\cO,$ we find that $Z_i(u^0\cdot\n)=0$ on $\p\cO$, $1\leq i\leq 3.$ Hence
there holds
\begin{align*}
\p_t(B_l\cdot\n)+u^0\cdot\nabla (B_l\cdot\n)-B_l\cdot\n\p_{\n} (u^0\cdot\n)+\sigma^0 B_l\cdot\n=0,\qquad\text{ on }\p\cO.
\end{align*}
Then $B_l\cdot\n=0$ holds for all time on $\p\cO$ once we have proved that
\begin{align}\label{psia}
\nabla\times(\alpha_l\Psi_a)\cdot\n=0,\qquad \text{ on }\p\cO.
\end{align}

When the dimension equals 2,  since we assumed that $\cO$ is a simply connected domain, the boundary $\p\cO$ is connected. We deduce from $B_a\cdot\n=0$ that $\Psi_a$ is a  constant on the boundary, we may assume $\Psi_a=0$ on $\p\cO$ without losing generality. Then we obtain
\begin{align*}
B_l|_{t=0}\cdot\n=\alpha_lB_a\cdot\n+\n^{\perp}\cdot\nabla\alpha_l\Psi_a=0,\qquad\text{ on }\p\cO.
\end{align*}
When the dimension equals 3, we need the following lemma, which will be proved later.
\begin{lemma}\label{Sto}
{\sl
Let $p\in\mathbb{N}$,
for a three dimensional divergence free vector field $u\in H^p(\cO)$ with $u\cdot\n=0$ on $\p\cO$, we can find a vector field $\Phi\in H^{p+1}(\cO)$ with $(\nabla\times\Phi)\cdot\n=0$ and $\Phi_{\tan}=0$ on the boundary, such that $u=\nabla\times\Phi.$
}\end{lemma}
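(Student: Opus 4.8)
The plan is to build $\Phi$ by correcting an arbitrary vector potential of $u$ through a gauge transformation, the correction being the gradient of a scalar whose boundary values solve a surface elliptic equation on $\p\cO$. First, since $\dive u=0$ in $\cO$ and $u\cdot\n=0$ on $\p\cO$ (so the flux of $u$ through each connected component of $\p\cO$ vanishes), $u$ admits a vector potential; taking the Coulomb gauge $\dive A=0$ and using elliptic regularity for the associated $\mathrm{div}$--$\mathrm{curl}$ system (see e.g.\ \cite{Boyer}) one gets $A\in H^{p+1}(\cO)$ with $\nabla\times A=u$. For any $q\in H^{p+2}(\cO)$ the field $\Phi:=A+\nabla q$ still belongs to $H^{p+1}(\cO)$, satisfies $\nabla\times\Phi=u$, and obeys $(\nabla\times\Phi)\cdot\n=u\cdot\n=0$ on $\p\cO$ automatically; so everything reduces to choosing $q$ with $\Phi_{\tan}=0$ on $\p\cO$, i.e.\ $(\nabla q)_{\tan}=-A_{\tan}$, which after restriction to the surface reads $\nabla_{\p\cO}(q|_{\p\cO})=-A_{\tan}$ for the tangential gradient $\nabla_{\p\cO}$.

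The key step is solving this surface equation. The trace $A_{\tan}\in H^{p+1/2}(\p\cO)$ is a closed tangential field: the standard tangential trace identity gives $\mathrm{curl}_{\p\cO}A_{\tan}=(\nabla\times A)\cdot\n|_{\p\cO}=u\cdot\n=0$. Since $\cO$ is simply connected with connected, simply connected boundary, $\p\cO$ is diffeomorphic to the $2$-sphere, hence $H^1_{\mathrm{dR}}(\p\cO)=0$ and the Hodge decomposition on $\p\cO$ forces every closed tangential field to be a surface gradient; solving $\Delta_{\p\cO}\psi=\mathrm{div}_{\p\cO}A_{\tan}$ then produces $\psi\in H^{p+3/2}(\p\cO)$, unique up to a constant, with $\nabla_{\p\cO}\psi=A_{\tan}$ (the net gain of one derivative coming from elliptic regularity of the Laplace--Beltrami operator on $\p\cO$). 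Finally I would let $q\in H^{p+2}(\cO)$ be a bounded lifting of $-\psi$, so $q|_{\p\cO}=-\psi$, and set $\Phi:=A+\nabla q\in H^{p+1}(\cO)$; then $\nabla\times\Phi=u$, $\Phi_{\tan}|_{\p\cO}=A_{\tan}+\nabla_{\p\cO}(q|_{\p\cO})=A_{\tan}-\nabla_{\p\cO}\psi=0$, and $(\nabla\times\Phi)\cdot\n=0$ on $\p\cO$, which is exactly the assertion.

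The step I expect to be most delicate is the boundary bookkeeping: justifying the tangential trace identity $\mathrm{curl}_{\p\cO}A_{\tan}=(\nabla\times A)\cdot\n$ and the surface Hodge decomposition at the relevant fractional Sobolev regularity, and tracking the precise loss and gain of derivatives through the traces, the surface elliptic solve, and the lifting, so that $\Phi$ really lies in $H^{p+1}(\cO)$. An alternative that avoids the explicit gauge computation is to invoke directly the existence of a vector potential in the normal gauge: since $\cO$ is simply connected with connected boundary and $\dive u=0$, $u\cdot\n=0$, there exists $\Phi\in H^{p+1}(\cO)$ with $\dive\Phi=0$, $\nabla\times\Phi=u$, and $\Phi\times\n=0$ (equivalently $\Phi_{\tan}=0$) on $\p\cO$, the remaining condition $(\nabla\times\Phi)\cdot\n=0$ being automatic from $u\cdot\n=0$. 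I would nevertheless present the self-contained gauge argument above, using \cite{Boyer} only for the vector-potential input.
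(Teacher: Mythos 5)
Your argument is correct and its overall architecture matches the paper's: start from an arbitrary vector potential $A$ with $\nabla\times A=u$ and $A\in H^{p+1}(\cO)$ (both of you cite \cite{Boyer}), then gauge-correct by $\nabla q$ so that $\Phi=A+\nabla q$ has vanishing tangential trace, noting $(\nabla\times\Phi)\cdot\n=u\cdot\n=0$ comes for free. The genuine difference is in how you produce the boundary scalar whose surface gradient equals $A_{\tan}$. The paper constructs $q|_{\p\cO}$ \emph{explicitly} as a curvilinear integral $q(x)=\int_{L_x}\overline{\Psi}\cdot dx$ along paths on $\p\cO$ and proves path-independence by Stokes' theorem on the surface, using the identity $(\n\times\nabla)\cdot\overline{\Psi}=(\nabla\times\overline{\Psi})\cdot\n=u\cdot\n=0$; it then extends $q$ into $\cO$ via the tubular-neighborhood coordinates $x=\sigma-s\n(\sigma)$ and a cutoff. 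You instead invoke the Hodge decomposition on $\p\cO$ (using $H^1_{\mathrm{dR}}(\p\cO)=0$ since $\p\cO$ is a topological sphere) and solve the Laplace–Beltrami equation $\Delta_{\p\cO}\psi=\mathrm{div}_{\p\cO}A_{\tan}$, then lift $-\psi$. Both routes hinge on the simple connectedness of $\p\cO$ — in the paper to guarantee path-independence, in yours to kill the harmonic part. The paper's line-integral construction is more elementary and self-contained (no surface elliptic theory needed, only the divergence theorem on $\p\cO$), and it transparently delivers the identity $Z_iq=\btau^i\cdot\overline{\Psi}$ on the boundary used later; your approach is somewhat more abstract but packages the regularity bookkeeping ($\psi\in H^{p+3/2}(\p\cO)$, $q\in H^{p+2}(\cO)$, $\Phi\in H^{p+1}(\cO)$) cleanly through surface elliptic regularity and the trace/extension theorems, and generalizes more readily if one wanted to track weaker Sobolev scales. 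Both end up with the same regularity index for $\Phi$, so either is adequate here.
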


From Lemma \ref{Sto}, there exists a vector field $\Psi_a\in H^{9}(\cO)$ with $(\nabla\times\Psi_a)\cdot\n=0$ and $(\Psi_a)_{\tan}=0$ on the boundary, such that $B_a=\nabla\times\Psi_a.$ So that on the boundary, $\Psi_a$ is a normal vector field and $\Psi_a\times\n=0.$ As a consequence, on the boundary $\p\cO,$ we find
\begin{align*}
\nabla\times(\alpha_l\Psi_a)\cdot\n=\alpha_l(\nabla\times\Psi_a)\cdot\n+\nabla\alpha_l\times\Psi_a\cdot\n
=\alpha_l(\nabla\times\Psi_a)\cdot\n+\Psi_a\times\n\cdot\nabla\alpha_l=0.
\end{align*}
Therefore  the  initial condition \eqref{psia} holds true for both two and three dimension cases and we have shown that $B_l$ satisfies \eqref{cdBl}.

\vspace{0.2cm}
\noindent$\bullet$ \underline{Step 3: construction of $B^2$ and $\mu^2.$}\vspace{0.2cm}

Let
\begin{subequations} \label{S3eqR}
\begin{gather}
\label{defB2}B^2(t,x):=\sum_{1\leq l\leq L}\beta_{\epsilon}(t-t_{x_l})B_l(t,x),\\
\label{defmu2}\mu^2(t,x):=\sum_{1\leq l\leq L}\beta'_{\epsilon}(t-t_{x_l})B_l(t,x).
\end{gather}
\end{subequations}
In view of \eqref{flcom}, formula \eqref{defB2} and \eqref{defmu2} define a solution to \eqref{eqB2} with control function $\mu^2$ supported in $\overline{\cO}\setminus\overline{\Omega}$ satisfying $\dive \mu^2=0$ and $\mu^2\cdot\n=0$ on the boundary, such that
$B^2,\mu^2\in C([0,T];H^8(\cO))$ and $B^2(T,\cdot)=0$.

\end{proof}

\begin{remark}\label{B2n0}
{\sl
We observe from the proof of the above Proposition that the support of $B^2\cdot\n$ is transported by the flow of $u^0$.  This is the reason why we impose the condition \eqref{ini2} for $B_0$:  that $B_a\cdot\n$ vanishes on the entire boundary $\p\cO$ ensures that $B^2\cdot\n=0$ remains on $\p\Omega\setminus\Gamma$ for all time.
}
\end{remark}

\begin{proof}[\bf Proof of Lemma \ref{Sto}] Since $u$ is divergence free, we can find a function $\overline{\Psi}\in H^{p+1}(\cO)$
such that $u=\nabla\times\overline{\Psi} $ and $\|\overline{\Psi}\|_{H^{p+1}(\cO)}\leq C\|u\|_{H^p(\cO)}$, we refer to \cite[Theorem \uppercase\expandafter{\romannumeral 4}.4.13]{Boyer}.
We want to find a scalar function $q$ defined in $\cO$ such that the tangential part of $\overline{\Psi}-\nabla q$ disappears on the boundary.
If such a function $q$ exists, let $\Psi=\overline{\Psi}-\nabla q$. Then $u=\nabla\times\Psi$, and $\Psi$ satisfies $\Psi_{\tan}=0$ and $ (\nabla\times\Psi)\cdot\n=(\nabla\times\overline{\Psi})\cdot\n=u\cdot\n=0$ on $\p\cO$.

Now we construct such a scalar function $q$. First, we define $q$ on the boundary $\p\cO.$ Since $\p\cO$ is simply connected, fix a point $x_0\in \p\cO,$ for any $x\in\p\cO,$ we can take a smooth curve $L_x$ on $\p\cO$ connecting $x_0$ and $x$. For $x\in\p\cO,$ we define $q$ by the second curvilinear integral
\begin{align}\label{defq}
q(x)=\int_{L_x}\overline{\Psi}\cdot dx=\int_{L_x}(\overline{\Psi}_1dx_1+\overline{\Psi}_2dx_2+\overline{\Psi}_3dx_3).
\end{align}
We shall prove that $q$ is actually independent of curve $L_x$. Take another curve $L_x'$ connecting $x_0$ and $x$, let $\Sigma$ be the subset of $\p\cO$ whose boundary is $L_{x}\cup L'_{x}.$ Since $\p\cO$ is simply connected, by Stokes' formula,
\begin{align*}
\int_{L_x}\overline{\Psi}\cdot dx-\int_{L'_x}\overline{\Psi}\cdot dx=\int_{\Sigma}(\n\times\nabla)\cdot\overline{\Psi} d\sigma=0,
\end{align*}
thanks to $(\n\times\nabla)\cdot\overline{\Psi}=(\nabla\times\overline{\Psi})\cdot\n=u\cdot\n=0$ on $\p\cO$. Therefore, the function $q$ is well defined by \eqref{defq} in $\p\cO$. By  the trace theorem, $\overline{\Psi}\in H^{p+\frac{1}{2}}(\p\cO)$ and $q\in H^{p+\frac{3}{2}}(\p\cO)$. Moreover, by definition \eqref{defq}, for tangential derivatives $Z_i=\btau^i\cdot\nabla,1\leq i\leq 3,$
\begin{align}\label{Ziq}
Z_iq(x)=(\btau^i\cdot\overline{\Psi})(x),\quad \text{ for }x\in\p\cO.
\end{align}

For $x\in \mathcal{V}_{\delta_0}$, we write $x=\sigma-s\n(\sigma)$ for $\sigma\in \p\cO,s>0$, and $\sigma,s$ is determined uniquely by $x\in \mathcal{V}_{\delta_0}$. Recall that $\chi\in C^{\infty}(\overline{\cO})$ is a cutoff function supported in $\mathcal{V}_{\delta_0}$ such that $\chi(x)=1$ when $x\in \mathcal{V}_{\delta_0/2}$, see Section \ref{css}. Define
\begin{equation*}
q(x)=\left\{
\begin{aligned}
&q(\sigma)\chi(x),\quad &&\text{ for }x=\sigma-s\n(\sigma)\in \mathcal{V}_{\delta_0}, \sigma\in\p\cO,s>0,\\
&0,\quad &&\text{ for }x\in\cO\setminus\mathcal{V}_{\delta_0}.
\end{aligned}\right.
\end{equation*}
Then $q\in H^{p+2}(\cO), \Psi:=\overline{\Psi}-\nabla q\in H^{p+1}(\cO)$. We deduce from \eqref{Ziq} that $\btau^i\cdot \Psi=0$ on the boundary, for $1\leq i\leq 3$, which implies that $\Psi_{\tan}=0$ on $\p\cO$ and finishes the proof.

\end{proof}

\subsection{Consistency of the full expansion}\label{cstexp}
For a profile $f(t,x,z)$, we define
\begin{equation*}
\{f\}_{\e}(t,x):=f\bigl(t,x,\frac{\varphi(x)}{\sqrt{\e}}\bigr).
\end{equation*}
We define the approximate solution by
\beq\label{dfueapp}
\begin{split}
u^{\e}_{app}:=\ &u^0+\sqrt{\e}\{v^1\}_{\e}+\e \{U^2\}_{\e}+\e^{\frac{3}{2}}\{U^3\}_{\e},\\
p^{\e}_{app}:=\ &p^0+\e(p^2-\p_t\phi^2-u^0\cdot\nabla \phi^2+\{\pi^2\}_{\e})\\
 &+\e^{\frac{3}{2}}(p^3-\p_t\phi^3-u^0\cdot\nabla \phi^3+\{\pi^3\}_{\e})+\e^2\{\pi^4\}_{\e},\\
B^{\e}_{app}:=\ & \e B^2,\quad \sigma^{\e}:= \sigma^0+\e\sigma^2,\quad \Xi^{\e}:=\ \e\mu^2\\
\xi^{\e}:=\ &\nu^0+\sqrt{\e}\{\xi\}_{\e}+\e(\nu^2+\{\xi^2\}_{\e})+\e^{\frac{3}{2}}(\nu^3+\{\xi^3\}_{\e}).
\end{split}\eeq

By the above construction, we find that $(u^{\e}_{app},p^{\e}_{app},B^{\e}_{app})$ satisfies,
\begin{equation}\label{MHDapp}
\left\{
\begin{aligned}
&\p_t u^{\e}_{app}+u^{\e}_{app}\cdot\nabla u^{\e}_{app}-B^{\e}_{app}\cdot\nabla B^{\e}_{app}-\e \Delta u^{\e}_{app}+\nabla p^{\e}_{app}
=\xi^{\e}-\e^{\frac{3}{2}}{f}^{\e},\quad                                      &&\text{ in }\cO,\\
&\p_t B^{\e}_{app}+u^{\e}_{app}\cdot\nabla B^{\e}_{app}-B^{\e}_{app}\cdot\nabla u^{\e}_{app}+\sigma^{\e} B^{\e}_{app}
=\Xi^{\e}-\e^{\frac{3}{2}}F^{\e},\quad                                        &&\text{ in }\cO,\\
&\dive u^{\e}_{app}=\sigma^0+\e\sigma^2-\e^{\frac{3}{2}}d^{\e},\quad \dive B^{\e}_{app}=0,\quad       &&\text{ in }\cO,\\
&u^{\e}_{app}\cdot\n=0,\quad \CN(u^{\e}_{app})=-\e^{\frac{3}{2}}N^{\e},\quad B^{\e}_{app}\cdot\n=0,\quad      &&\text{ on }\p\cO,\\
&(u^{\e}_{app},B^{\e}_{app})|_{t=0}=(\e u_a-\e^{\frac{3}{2}}r_0^{\e},\e B_a),\quad             &&\text{ on }\p\cO.
\end{aligned}\right.
\end{equation}
where
\begin{align*}
f^{\e}=\ &-\e^{\frac{1}{2}}\left\{v^1\cdot\nabla U^3+U^2\cdot\nabla U^2-U^2\cdot\n\p_z U^3+U^3\cdot\nabla v^1-U^3\cdot\n\p_z v^1\right\}_{\e}\\
\nonumber&-\e\left\{U^2\cdot\nabla U^3+U^3\cdot\nabla U^2-U^3\cdot\n\p_z U^3\right\}_{\e}-\e^{\frac{3}{2}}\left\{U^3\cdot\nabla U^3\right\}_{\e},\\
\nonumber&+\e^{\frac{1}{2}}\left\{\Delta U^2-2\p_{\n}\p_zU^3+\Delta\varphi\p_zU^3\right\}_{\e}+\e\{\Delta U^3\}_{\e}
-\e^{\frac{1}{2}}\{\nabla\pi^4\}_{\e}+\e^{\frac{1}{2}}B^2\cdot\nabla B^2,\\
F^{\e}=\ &-\{v^1+\sqrt{\e}U^2+\e U^3\}_{\e}\cdot\nabla B^2+B^2\cdot\nabla\{v^1+\sqrt{\e}U^2+\e U^3\}_{\e},\\
d^{\e}=\ &-\left\{\dive_x(v^3+\n w^3)\right\}_{\e},\\
N^{\e}=\ &-\CN(\nabla\phi^3+v^3|_{z=0}+\n w^3|_{z=0}),\qquad r^{\e}_0(x)= -v^3_0(x,\frac{\varphi(x)}{\sqrt{\e}}).
\end{align*}

Since $B^2$ is supported in $[0,T]$ by construction, then $F^{\e}$ is also supported in $[0,T]$.

\begin{lemma}[Lemma 3 of \cite{iftimie}]\label{IS}
{\sl
There exists a constant $C$ independent of $\e$ such that for all $h=h(x,z)$ in $H^1(\cO;L^2(\mathbb{R}_+))$ which supported in $\mathcal{V}_{\delta_0},$
\begin{align*}
\|h(x,\frac{\varphi(x)}{\sqrt{\e}})\|_{L^2(\cO)}\leq C\e^{\frac{1}{4}}\|h\|_{H^1(\cO;L^2(\mathbb{R}_+))}.
\end{align*}

}
\end{lemma}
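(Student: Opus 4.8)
The plan is to follow Lemma~3 of \cite{iftimie}: straighten the boundary and reduce to a one–dimensional Sobolev inequality in the normal direction. First, since $h$ is supported in $\mathcal{V}_{\delta_0}$, I would work in boundary–normal coordinates. Shrinking $\delta_0$ if needed so that the map $\Phi:(\sigma,\rho)\in\p\cO\times[0,\delta_0)\mapsto \sigma-\rho\,\n(\sigma)$ is a diffeomorphism onto $\mathcal{V}_{\delta_0}$ whose Jacobian $J$ satisfies $0<c\le J\le C$ uniformly and for which $\varphi\circ\Phi(\sigma,\rho)=\rho$, set $\widetilde h(\sigma,\rho,z):=h(\Phi(\sigma,\rho),z)$. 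The change of variables then gives
\[
\big\|h\bigl(x,\tfrac{\varphi(x)}{\sqrt\e}\bigr)\big\|_{L^2(\cO)}^2\le C\int_{\p\cO}\!\int_0^{\delta_0}\big|\widetilde h\bigl(\sigma,\rho,\tfrac{\rho}{\sqrt\e}\bigr)\big|^2\,d\rho\,d\sigma .
\]

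Next, for fixed $\sigma$ I would substitute $z=\rho/\sqrt\e$ in the inner integral, which turns it into $\sqrt\e\int_0^{\delta_0/\sqrt\e}|\widetilde h(\sigma,\sqrt\e z,z)|^2\,dz$ and produces the gain of $\sqrt\e$; the remaining point is then to control the pointwise value $|\widetilde h(\sigma,\sqrt\e z,z)|^2$. For this, for each fixed $z$ and each $\rho\in(0,\delta_0)$, the fundamental theorem of calculus applied to $\rho'\mapsto|\widetilde h(\sigma,\rho',z)|^2$ from $\rho$ to $\sqrt\e z$, followed by integration in $\rho$ over $(0,\delta_0)$ and Young's inequality, yields
\[
|\widetilde h(\sigma,\sqrt\e z,z)|^2\le C\int_0^{\delta_0}\bigl(|\widetilde h|^2+|\p_\rho\widetilde h|^2\bigr)(\sigma,\rho,z)\,d\rho .
\]
Plugging this in, enlarging the $z$–integration to $(0,\infty)$ (legitimate because $\widetilde h(\sigma,\rho,\cdot)\in L^2(\R_+)$), integrating in $\sigma$, using once more the two–sided bound on $J$ together with $|\p_\rho\widetilde h|\le C\,|\na_x h|\circ\Phi$, and Fubini, one is left with
\[
\big\|h\bigl(x,\tfrac{\varphi(x)}{\sqrt\e}\bigr)\big\|_{L^2(\cO)}^2\le C\sqrt\e\,\|h\|_{H^1(\cO;L^2(\R_+))}^2 ,
\]
and taking square roots gives the claimed $\e^{1/4}$ bound. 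The constant is manifestly independent of $\e$, since after the substitution $z=\rho/\sqrt\e$ nothing retains any $\e$–dependence except the explicit prefactor.

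The argument is essentially routine; the only two points that need care are (i) the straightening diffeomorphism and the uniform two–sided control of its Jacobian — this is exactly where the hypothesis that $h$ is supported in the tubular neighbourhood $\mathcal{V}_{\delta_0}$ is used — and (ii) the one–dimensional Agmon–type step controlling the pointwise normal value by an $L^2$ average plus the normal derivative, which is the reason $H^1$–regularity of $h$ in the $x$–variable is required; note that only the \emph{normal} derivative $\p_\rho\widetilde h$ is used, the tangential derivatives being simply discarded. I expect no genuine obstacle beyond this bookkeeping.
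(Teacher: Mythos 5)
Your proof is correct and follows the standard route of the cited reference (Iftimie--Sueur, Lemma~3): straighten the boundary so that $\varphi$ becomes the normal coordinate $\rho$, gain the factor $\sqrt\e$ from the substitution $z=\rho/\sqrt\e$ in the normal integral, and control the resulting pointwise value $|\widetilde h(\sigma,\sqrt\e z,z)|^2$ by a one--dimensional Agmon estimate involving only the normal derivative. No gaps; the two hypotheses (support in $\mathcal{V}_{\delta_0}$ and $H^1$ regularity in $x$) are used exactly where you say they are.
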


\begin{lemma}\label{appes}
{\sl
Let $\t:=\sqrt{1+t^2}$, assume that \eqref{inp} and \eqref{inua} is satisfied, there exists a constant $C>0$ such that the approximate solution $(u^{\e}_{app},B^{\e}_{app})$ and $f^{\e}, F^{\e}, d^{\e}, N^{\e},r^{\e}_0$ satisfy
\begin{subequations} \label{S3eq29}
\begin{gather}
\label{app1}\|u^{\e}_{app}\|_{6}+\|\nabla u^{\e}_{app}\|_{5}+\sqrt{\e}\|\nabla^2 u^{\e}_{app}\|_{4}\leq C\t^{-\gamma},\\
\label{app2}\|u^{\e}_{app}\|_{6,\infty}+\|\nabla u^{\e}_{app}\|_{5,\infty}+\sqrt{\e}\|\nabla^2 u^{\e}_{app}\|_{4,\infty}\leq C\t^{-\gamma},\\
\label{app3}\|u^{\e}_{app}-u^0\|_{5,\infty}\leq \e^{\frac{1}{2}}\t^{-\gamma},\quad \|\nabla(u^{\e}_{app}-u^0)\|_{5,\infty}\leq C\e^{\frac{1}{2}}\t^{-\gamma},\\
\label{appB1}\|B^{\e}_{app}\|_{5}+\|\nabla B^{\e}_{app}\|_{5}\leq C\e\chi_{[0,T]}\\
\label{appB2}\|B^{\e}_{app}\|_{5,\infty}+\|\nabla B^{\e}_{app}\|_{5,\infty}+\|\nabla^2 B^{\e}_{app}\|_{4,\infty}\leq C\e\chi_{[0,T]},\\
\label{app4}\|{f}^{\e}\|_{5}\leq \e^{\frac{1}{2}}\t^{-\gamma},\quad\sqrt{\e}\|\nabla {f}^{\e}\|_{4}\leq C\e^{\frac{3}{4}}\t^{-\gamma},\\
\label{appF1}\|F^{\e}\|_{5}+\sqrt{\e}\|\nabla F^{\e}\|_{4}\leq \e^{\frac{1}{4}}\chi_{[0,T]},\quad
\|\nabla F^{\e}\|_{L^{\infty}(\cO)}\leq C\e^{-\f12}\chi_{[0,T]},\\
\label{app5}\|d^{\e}\|_{5}+\e\|d^{\e}\|_{H^2(\cO)}\leq C\e^{\frac{1}{4}}\t^{-\gamma},\quad
\|d^{\e}\|_{5,\infty}+\|d^{\e}\|_{H^4(\p\cO)}\leq C\t^{-\gamma},\\
\label{app6}\|N^{\e}\|_{H^{7}(\cO)}+\|\p_t N^{\e}\|_{H^{4}(\cO)}+\|N^{\e}\|_{H^4(\p\cO)}\leq C\t^{-\gamma},\\
\label{app7}\|r_0^{\e}\|_5+\sqrt{\e}\|\nabla r^{\e}_0\|_{4}\leq C\e^{\frac{1}{4}}.
\end{gather}
\end{subequations}
}
\end{lemma}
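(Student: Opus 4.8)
The plan is to establish each estimate in \eqref{S3eq29} by tracking how the building blocks of the approximate solution — the return-method field $u^0$, the linearized solutions $u^2,B^2$, the boundary layer profiles $v^1,v^2,v^3$, their normal counterparts $w^2,w^3$, the backflow potentials $\nabla\phi^2,\nabla\phi^3$, and the auxiliary pressures $\pi^2,\pi^3,\pi^4$ — propagate through the formulas \eqref{dfueapp} and the definitions of $f^\e,F^\e,d^\e,N^\e,r^\e_0$ in Section~\ref{cstexp}. The two structural facts that drive everything are: (i) the regular pieces $u^0,u^2,\nabla\phi^2,\nabla\phi^3,B^2$ are supported in $[0,T]$ and smooth (hence bounded in every conormal and $W^{k,\infty}_{co}$ norm by a time-independent constant), and the layer profiles lie in the weighted spaces $C^{k_i}_{\gamma_i}(\mathbb{R}_+;H^{p_i}(\cO;H^{s_i}_{q_i}(\mathbb{R}_+)))$ recorded in \eqref{S3eqv1}, \eqref{S3eqw2}, \eqref{defpi4}, etc., with $\gamma_i\geq\gamma$; and (ii) evaluating a profile at $z=\varphi(x)/\sqrt\e$ converts weighted $L^2_z$-decay into powers of $\e^{1/4}$ via Lemma~\ref{IS}, while leaving conormal derivatives essentially untouched because $Z_j$ are tangential (so $Z_j\{f\}_\e=\{Z_jf\}_\e+\{(\p_z f)(\cdot)\, Z_j\varphi/\sqrt\e\}_\e$ and $Z_j\varphi$ vanishes to the right order near $\p\cO$). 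I would first record these two principles as a preliminary computation, together with the elementary chain-rule identity $\na\{f\}_\e=\{\na_x f\}_\e+\e^{-1/2}\{\p_z f\}_\e\,\na\varphi$, which explains every factor of $\e^{-1/2}$ (resp.\ $\e^{+1/2}$) appearing in the statement.

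Then I would go through the estimates essentially in the order listed. For \eqref{app1}--\eqref{app3}: since $u^\e_{app}=u^0+\sqrt\e\{v^1\}_\e+\e\{U^2\}_\e+\e^{3/2}\{U^3\}_\e$, the $H^{p_i}_x$-regularity of the profiles with $p_1,p_2,p_3\geq 8>6$ and $\gamma_i\geq\gamma$ gives $\|u^\e_{app}\|_6\lesssim\t^{-\gamma}$; one normal derivative costs at most a $\sqrt\e$ prefactor against the $\e$-power in front of each layer term, so the ``worst'' contributions to $\na u^\e_{app}$ and $\sqrt\e\,\na^2 u^\e_{app}$ remain $O(\t^{-\gamma})$; and $u^\e_{app}-u^0$ starts at order $\sqrt\e$, which is \eqref{app3}. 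The $W^{k,\infty}_{co}$ bounds \eqref{app2} follow identically using a Sobolev embedding $H^{p}(\cO;H^{s}_q(\mathbb{R}_+))\hookrightarrow W^{k,\infty}_{co}$ for the chosen indices together with $\sup_z\langle z\rangle^{q}|\cdot|$ control. For \eqref{appB1}--\eqref{appB2}: here $B^\e_{app}=\e B^2$ with $B^2\in C([0,T];H^8(\cO))$ from Proposition~\ref{prB2}, so every bound is simply $\e$ times a constant times $\chi_{[0,T]}$, and for $\na^2 B^\e_{app}$ in $W^{4,\infty}_{co}$ one uses $H^8\hookrightarrow W^{4,\infty}$-type embeddings (note $B^2$ is genuinely only $H^8$, which is exactly why $k=1$ and the conormal orders were chosen as in \eqref{inp}).

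The remainder terms \eqref{app4}--\eqref{app7} are where the consistency was engineered: by construction the profiles $\pi^2,\pi^3$ and the layer equations were chosen precisely so that all terms of order $\sqrt\e$ and $\e$ in the residual cancel, leaving $f^\e$ as the collection displayed in Section~\ref{cstexp}, whose leading piece carries an explicit $\e^{1/2}$; combining that prefactor with the $H^{p_i}(\cO;H^{s_i}_{q_i})$ bounds and the product Lemma~\ref{UV} gives $\|f^\e\|_5\lesssim\e^{1/2}\t^{-\gamma}$, and the extra $\na$ in $\sqrt\e\|\na f^\e\|_4$ costs one more $\e^{-1/2}$ against $\sqrt\e$ from the prefactor and $\sqrt\e$ standing out front, netting $\e^{3/4}\t^{-\gamma}$. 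For $F^\e=-\{v^1+\sqrt\e U^2+\e U^3\}_\e\cdot\na B^2+B^2\cdot\na\{\cdots\}_\e$ one uses that $B^2\in C([0,T];H^8)$ and the layer factor sits in a weighted space: Lemma~\ref{IS} (applied to $v^1$, which decays rapidly) converts the bracket into $\e^{1/4}$, while the $\na$ hitting the bracket produces the $\e^{-1/2}$ in the $L^\infty$ bound. Finally $d^\e=-\{\dive_x(v^3+\n w^3)\}_\e$, $N^\e=-\CN(\na\phi^3+v^3|_{z=0}+\n w^3|_{z=0})$ and $r^\e_0=-v^3_0(x,\varphi(x)/\sqrt\e)$ are treated by: $\dive_x v^3$ and $\dive_x(\n w^3)$ lie in the $H^{s_3-1}_{q_3-2}$-type weighted space (they involve one extra $x$-derivative, hence the loss from $p_3$ to a lower index, all of which is comfortably above $5$ by \eqref{inp}), Lemma~\ref{IS} gives the $\e^{1/4}$ in $\|d^\e\|_5$; $N^\e$ involves only traces at $z=0$ of $H^{p_3}$-profiles, hence sits in $H^{p_3-1/2}(\p\cO)\subset H^7$, and $\p_t N^\e$ costs one unit of the $C^{k_3}_{\gamma_3}$ regularity; and $\|r^\e_0\|_5\lesssim\e^{1/4}$ again by Lemma~\ref{IS} applied to the compactly supported $v^3_0$.

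The main obstacle — and the only place that requires genuine care rather than bookkeeping — is verifying that the index budget actually closes: one must check that after the successive applications of $\mathfrak{\Gamma}$ in \eqref{S3eq1} with the concrete choice \eqref{inp}, every profile entering $f^\e,F^\e,d^\e,N^\e$ still has enough $x$-regularity (at least $5$ or $6$ conormal derivatives plus the one or two extra genuine derivatives demanded), enough $z$-weight ($q\geq 2$ so that $\langle z\rangle^{-1}\in L^2_z$ and the $w$-type antiderivatives converge), and enough time-decay ($\gamma_i\geq\gamma=\tfrac54$, and $\gamma>1$ so the decay is integrable in the later remainder estimate). Since $f^3,g^3,\pi^4$ inherit the regularity of $u^2$ (Remark~\ref{ueaid}), and $u^2$ is only $H^{p_2}=H^{17}$ by Proposition~\ref{lnel}, one must confirm that $p_2=17$, $s_2=14$, $q_2=9$, $k_2=7$ are large enough to absorb the derivative losses in \eqref{defg3} and the definition of $\pi^4$ and still leave the indices $(p_3,s_3,q_3,k_3,\gamma_3)=(8,7,2,1,\tfrac54)$ of \eqref{inp} for $v^3$; this is precisely the point where the chain of constraints in \eqref{G1} and \eqref{S3eq1} was reverse-engineered, and I would present it as a short table verifying each inequality, referring to Remark~\ref{indexpl} for the motivation. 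Everything else is a routine, if lengthy, application of Lemma~\ref{UV}, Lemma~\ref{IS}, the chain rule for $\{\cdot\}_\e$, and the stated regularity of the individual profiles.
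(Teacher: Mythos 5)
Your proposal is correct and follows essentially the same route as the paper's own proof: record the profile regularities (here \eqref{S3eqp}), then apply Lemma~\ref{IS}, the product Lemma~\ref{UV}, the chain rule for $\{\cdot\}_\e$, and Sobolev embedding term by term, tracking the explicit $\e$-powers. The paper is simply terser, stating the resulting memberships and invoking those lemmas without spelling out the chain-rule identity or the index budget that you make explicit.
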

\begin{proof}
By the construction of $u^{\e}_{app}$ and $B^{\e}_{app}$, for $\gamma=\frac{5}{4},\, 1\leq i\leq 3, j=2,3,$ we have
\begin{subequations} \label{S3eqp}
\begin{gather}
\label{ufi23}u^2\in C^7([0,T];H^{17}(\cO)),\quad \phi^2, \phi^3\in C^{7}_{7}(\mathbb{R}_+;H^{17}(\cO)),\\
\label{vw23}v^i,w^j\in C^1_{5/4}(\mathbb{R}_+;H^8(\cO;H^7_2(\mathbb{R}_+))),\\
\label{B23}B^2\in C([0,T];H^8(\cO)).
\end{gather} \end{subequations}
By employing Lemma \ref{IS} along with the Sobolev embedding theorem, one can straightforwardly confirm \eqref{app1} - \eqref{appB2}. We emphasize that both the regular and layer components of $f^{\e}$ are of at least order $\cO(\e^{\frac{1}{2}})$. Utilizing \eqref{ufi23} - \eqref{B23}, \eqref{defpi4}, and Lemma \ref{UV}, and Lemma \ref{IS}, we can check term by term that
\eqref{app4} holds true.

Since the regular part of $F^{\e}$ is at least of order $\cO(\e^{\frac{1}{2}}),$ by using Lemma \ref{UV} and Lemma \ref{IS}, we obtain \eqref{appF1}.

By the definition of $d^{\e}$, \eqref{vw23} and the Sobolev embedding, we deduce \eqref{app5}.

By the definition of $N^{\e}$, we find that $N^{\e}\in C^1_{5/4}(\mathbb{R}_+;H^7(\cO))$, which implies \eqref{app6}.

By the construction of $v^{3}_0\in H^{10}(\cO;C^{\infty}_0(\overline{\mathbb{R}_+}))$,  we deduce  \eqref{app7} from  Lemma \ref{IS}.

We thus complete the proof of Lemma \ref{appes}.
\end{proof}

\begin{remark}{\sl
Here we expand the velocity $u^{\e}$ and the magnetic field $B^{\e}$ to order $\cO(\e^{\frac{3}{2}})$ to create a small coefficient $\e^{\frac{3}{2}}$ in the nonlinear terms of $r^{\e}$ and $R^{\e}$, so that we can bound the remainder estimates; see Remark \ref{od3} for more explanation.

Of course, we can construct higher-order expansions similarly. In general, the initial data of $u^2$ is nonzero, so is $g^3$ defined in \eqref{defg3}. Due to the compatible condition with $\p_z v^3=g^3$, the initial data $v^3_0$ of $v^3$ is nonzero in general. Although we can expand the velocity and the magnetic field to order $\e^{\frac{N}{2}}$ for a lager integer $N$ and construct the approximate solution $(u^{\e,N}_{app},B^{\e,N}_{app})$, the initial data of the remainder term $r^{\e,N}$ defined by
\begin{align*}
u^{\e}=u^{\e,N}_{app}+\e^{\frac{N}{2}}r^{\e,N}
\end{align*}
will be of order $\cO(\e^{\frac{3-N}{2}})$ due to $u^{\e}|_{t=0}=\e u_a$, at least for the $L^{\infty}$ norm. Therefore, it is not profitable to expand $(u^{\e},B^{\e})$ to higher order. Unless we put an initial layer on the initial data of $(u^{\e},B^{\e})$. Or, we make a strong assumption on $(u_a,B_a)$ such that $g^i,i\geq 3$, disappear at time $t=0$. In that case, we could choose $v^i$ to vanish initially. So that the initial data of $r^{\e,N}$ would be zero.
}
\end{remark}

\section{Estimates of the remainder $(r^{\e},R^{\e})$}\label{estrem}

In this section, we shall derive the {\it a priori} estimates, which are necessary to prove Theorem \ref{mainth}.
Let us write
\beq\label{S4eq1}
\begin{split}
u^{\e}(t,x)=\ &u^{\e}_{app}(t,x)+\e^{\frac{3}{2}}r^{\e}(t,x),\\
p^{\e}(t,x)=\ &p^{\e}_{app}(t,x)+\e^{\frac{3}{2}}\pi^{\e}(t,x),\\
B^{\e}(t,x)=\ &B^{\e}_{app}(t,x)+\e^{\frac{3}{2}}R^{\e}(t,x),
\end{split}
\eeq
so that by virtue of \eqref{MHDe} and \eqref{MHDapp},
the remainder term $(r^{\e},R^{\e})$ satisfies
\begin{equation}\label{eqrR}
\begin{cases}
\p_t r^{\e}+u^{\e}\cdot\nabla r^{\e} +r^{\e}\cdot\nabla u^{\e}_{app}-B^{\e}\cdot\nabla R^{\e}-  R^{\e}\cdot\nabla B^{\e}_{app}-\e\Delta r^{\e}+\nabla\pi^{\e}=f^{\e},\\
\p_t R^{\e}+u^{\e}\cdot\nabla R^{\e}+ r^{\e}\cdot\nabla B^{\e}_{app}-B^{\e}\cdot\nabla r^{\e}-R^{\e}\cdot\nabla u^{\e}_{app}+\sigma^{\e} R^{\e}=F^{\e},\\
\begin{aligned}
&\dive r^{\e}=d^{\e},\quad \dive R^{\e}=0,\quad &&\text{ in }\R_+\times\cO,\\
&r^{\e}\cdot\n=0,\quad \CN(r^{\e})=N^{\e},\quad R^{\e}\cdot\n=0\qquad &&\text{ on }\R_+\times\p\cO,\\
&r^{\e}|_{t=0}=r^{\e}_{0},\quad R^{\e}|_{t=0}=0,\quad &&\text{ in }\cO.
\end{aligned}
\end{cases}
\end{equation}

\subsection{Preliminary on tangential derivatives}
We recall some properties of the tangential derivatives and we refer to \cite{LSZ1} for more details. The tangential derivatives $Z_i$ satisfies
\begin{subequations} \label{S4eq1}
\begin{gather}
\label{Z1}\nabla Z_j=Z_j\nabla+ \nabla \btau^j\cdot \nabla,\\
\label{Z2}\Delta Z_j=Z_j\Delta +2\nabla \btau^j:\nabla^2+\Delta \btau^j \cdot \nabla .
\end{gather}
\end{subequations}
In general, for $|\alpha|=m\in \mathbb{N}_+,$ we get, by  using Leibniz's formula, that
\begin{equation*}
\label{Z3}[\Delta, Z^{\alpha}]=\sum_{|\beta|,|\gamma|\leq m-1}(c_{\beta}\nabla^2Z^{\beta}+c_{\gamma}\nabla Z^{\gamma}),
\end{equation*}
for some smooth functions $c_{\beta}$ and $c_{\gamma}$ depended only on the vector field $\mathfrak{W}.$ Moreover, the commutator of the tangential derivatives satisfy the following property, for $1\leq i,j\leq 5,$
\begin{equation}\label{cm}
\text{ the commutators }[\partial_\mathbf{n},Z_{i}],[Z_0,Z_i],[Z_i,Z_j]\text{ are tangential derivatives}.
\end{equation}

\begin{proposition}\label{optan}
{\sl
By using tangential derivatives and normal derivative, we can rewrite the gradient operator $\nabla$ and the Laplacian operator $\Delta$ in $\mathcal{V}_{\delta_0}$ as
\begin{align}
\label{nabf}\nabla &= \btau^1Z_1 +\btau^2Z_2+\btau^3Z_3+\n\p_{\n},\\
\label{Delf}\Delta &=Z_1^2+Z_2^2+Z_3^3+\p_{\n}^2-\Delta\varphi\p_{\n}.
\end{align}
As a consequence, for a vector field $f$ supported in $\mathcal{V}_{\delta_0}$,
\begin{align}
\label{divef}\dive f=Z_1f\cdot \btau^1+Z_2f\cdot \btau^2+Z_3f\cdot \btau^3+\p_{\n} f\cdot \n.
\end{align}
}
\end{proposition}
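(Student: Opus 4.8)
The plan is to derive the whole proposition from the first identity \eqref{nabf}, which in turn rests on a single pointwise algebraic fact about the frame $\{\boldsymbol{\tau}^1,\boldsymbol{\tau}^2,\boldsymbol{\tau}^3,\mathbf{n}\}$. First I would record that, straight from the definitions and the convention $\mathbf{n}=-\nabla\varphi$ in $\mathcal{V}_{\delta_0}$, one has $\boldsymbol{\tau}^j=e_j\times\nabla\varphi=\mathbf{n}\times e_j$ for $j=1,2,3$. The key claim is the matrix identity
\begin{equation*}
\sum_{j=1}^{3}\boldsymbol{\tau}^j\otimes\boldsymbol{\tau}^j=|\mathbf{n}|^2\,\mathrm{Id}-\mathbf{n}\otimes\mathbf{n}\qquad\text{in }\mathcal{V}_{\delta_0}.
\end{equation*}
This follows by computing the $(a,b)$ entry $\sum_j(\mathbf{n}\times e_j)_a(\mathbf{n}\times e_j)_b=\sum_j\sum_{c,d}\varepsilon_{acj}\varepsilon_{bdj}n_cn_d$ and invoking the contraction identity $\sum_j\varepsilon_{acj}\varepsilon_{bdj}=\delta_{ab}\delta_{cd}-\delta_{ad}\delta_{cb}$, which gives $\delta_{ab}|\mathbf{n}|^2-n_an_b$. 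Since $|\mathbf{n}|=1$ in $\mathcal{V}_{\delta_0}$ — the only place where $\delta_0$ is used — the right-hand side is $\mathrm{Id}-\mathbf{n}\otimes\mathbf{n}$.

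Applying this identity to the gradient operator, i.e.\ letting both factors act on a scalar $u$, yields $\sum_{j=1}^{3}\boldsymbol{\tau}^j(\boldsymbol{\tau}^j\cdot\nabla u)=\nabla u-\mathbf{n}(\mathbf{n}\cdot\nabla u)$, which is exactly \eqref{nabf}. Then \eqref{divef} is immediate: for a vector field $f$, apply \eqref{nabf} to each component $f_k$, take the $k$-th scalar component by dotting with $e_k$ (using $e_k\cdot\boldsymbol{\tau}^j=(\boldsymbol{\tau}^j)_k$ and $e_k\cdot\mathbf{n}=n_k$), and sum over $k$; this produces $\dive f=(Z_1 f)\cdot\boldsymbol{\tau}^1+(Z_2 f)\cdot\boldsymbol{\tau}^2+(Z_3 f)\cdot\boldsymbol{\tau}^3+(\partial_{\mathbf{n}}f)\cdot\mathbf{n}$.

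For \eqref{Delf} I would write $\Delta u=\dive(\nabla u)$, insert \eqref{nabf}, and expand by Leibniz. For each tangential term, $\dive(\boldsymbol{\tau}^j Z_j u)=(\dive\boldsymbol{\tau}^j)(Z_j u)+\boldsymbol{\tau}^j\cdot\nabla(Z_j u)=Z_j^2 u$, because $\dive\boldsymbol{\tau}^j=0$ in $\mathcal{O}$ (checked directly from the curl-type form of $\boldsymbol{\tau}^j$) and $\boldsymbol{\tau}^j\cdot\nabla=Z_j$. For the normal term, $\dive(\mathbf{n}\,\partial_{\mathbf{n}}u)=(\dive\mathbf{n})(\partial_{\mathbf{n}}u)+\mathbf{n}\cdot\nabla(\partial_{\mathbf{n}}u)=-\Delta\varphi\,\partial_{\mathbf{n}}u+\partial_{\mathbf{n}}^2 u$, using $\dive\mathbf{n}=\dive(-\nabla\varphi)=-\Delta\varphi$. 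Adding the four pieces gives $\Delta u=Z_1^2 u+Z_2^2 u+Z_3^2 u+\partial_{\mathbf{n}}^2 u-\Delta\varphi\,\partial_{\mathbf{n}}u$, which is \eqref{Delf}.

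The computation is short, and there is no real obstacle: the one point that needs care is operator ordering, since $\boldsymbol{\tau}^j$ and $\mathbf{n}$ are not constant. The identity $\dive\boldsymbol{\tau}^j=0$ is precisely what makes the second-order tangential contributions collapse to the clean $Z_j^2$ with no lower-order remainder, so I would flag that as the crux of the argument, the only genuinely geometric input being the normalization $|\mathbf{n}|=1$ in $\mathcal{V}_{\delta_0}$.
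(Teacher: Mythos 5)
Your proof is correct. For \eqref{nabf} and \eqref{divef} it uses essentially the same underlying fact as the paper — completeness of the frame $\{\btau^1,\btau^2,\btau^3,\n\}$, i.e.\ $\sum_j\btau^j\otimes\btau^j=|\n|^2\mathrm{Id}-\n\otimes\n$ — though you make it explicit through the Levi--Civita contraction while the paper simply invokes ``direct calculation'' to get $\btau^1Z_1+\btau^2Z_2+\btau^3Z_3+\n\p_{\n}=|\n|^2\nabla$. The genuine divergence is in \eqref{Delf}. The paper expands each second-order operator $Z_1^2,Z_2^2,Z_3^2,\p_{\n}^2$ componentwise in terms of $\p_i\p_j$ plus first-order corrections $(Z_j\btau^j)\cdot\nabla$ and $(\p_{\n}\n)\cdot\nabla$, then sums and shows these corrections collect to $(\nabla|\n|^2-(\dive\n)\n)\cdot\nabla$ before using $|\n|=1$. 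You instead write $\Delta=\dive\circ\nabla$, insert \eqref{nabf}, and note that $\dive\btau^j=0$ (a curl is divergence free) kills the lower-order remainders in the Leibniz expansion outright, so the sum collapses directly to $\sum_jZ_j^2+\p_{\n}^2-\Delta\varphi\,\p_{\n}$, with $-\Delta\varphi$ arising as $\dive\n$. This is shorter and more structural: the absence of cross-term corrections is explained by $\dive\btau^j=0$ rather than by explicit cancellation. The paper's heavier computation has the side benefit of exposing the first-order commutator terms that reappear in the identity \eqref{Z2}, but for proving the proposition itself your route is the cleaner one.
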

\begin{proof}
By the definition of $Z_i, \p_{\n}$ and direct calculation, we find that
\begin{align*}
\btau^1Z_1+\btau^2Z_2+\btau^3Z_3+\n\p_{\n}=|\n|^2\nabla,
\end{align*}
which implies \eqref{nabf}and \eqref{divef} hold true since $|\n|=1$ in $\mathcal{V}_{\delta_0}$.

Let $\n=(n_1,n_2,n_3),$ we can find through calculation that,
\begin{align*}
Z_1^2=&n_3^2\p_2^2+n_2^2\p_3^2-2n_2n_3\p_2\p_3+(Z_1\btau^1)\cdot\nabla,\\
Z_2^2=&n_1^2\p_3^2+n_3^2\p_1^2-2n_1n_3\p_1\p_3+(Z_2\btau^2)\cdot\nabla,\\
Z_3^2=&n_1^2\p_2^2+n_2^2\p_1^2-2n_1n_2\p_1\p_2+(Z_3\btau^3)\cdot\nabla,\\
\p_{\n}^2=&n_1^2\p_1^2+n_2^2\p_2^2+n_3^2\p_3^2+(\p_{\n}\n)\cdot\nabla\\
&+2n_1n_2\p_1\p_2+2n_2n_3\p_2\p_3+2n_1n_3\p_1\p_3.\\
\end{align*}
Hence
\begin{align*}
Z_1^2+Z^2_2+Z_3^3+\p_{\n}^2&=|\n|^2\Delta+(Z_1\btau^1+Z_2\btau^2+Z_3\btau^3+\p_{\n} \n)\cdot\nabla\\
&=|\n|^2\Delta+\left(\nabla|\n|^2-(\dive\n)\n\right)\cdot\nabla,
\end{align*}
where we used $\n=-\nabla\varphi$ is symmetric, and  \eqref{Delf} follows since $|\n|=1$ in $\mathcal{V}_{\delta_0}$ and $\dive\n=-\Delta\varphi.$
\end{proof}

\subsection{$L^2$ estimates}
\begin{proposition}\label{L2et}
{\sl
Let $\gamma=\frac{5}{4},$ there exist constants $c_0, C_0>0$, such that for all $t\in [0,\frac{T}{\e}]$,
\begin{align}
\label{L2rRe}\|(r^{\e},R^{\e})(t)\|_{L^2(\cO)}^2+c_0\e\int_0^t\|r^{\e}(s)\|_{H^1(\cO)}^2ds\leq C_0\e^{\frac{1}{2}}.
\end{align}
}\end{proposition}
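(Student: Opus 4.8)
The strategy is the classical energy method applied to the coupled system \eqref{eqrR}, carefully tracking the powers of $\e$. The plan is to test the $r^\e$-equation against $r^\e$ and the $R^\e$-equation against $R^\e$ in $L^2(\cO)$, add the two identities, and integrate in time. Two structural features make this work: first, the symmetric coupling terms $-B^\e\cdot\nabla R^\e$ (in the $r^\e$-equation), $+r^\e\cdot\nabla B^\e_{app}$ and $-R^\e\cdot\nabla u^\e_{app}$ versus $-R^\e\cdot\nabla B^\e_{app}$ and $+r^\e\cdot\nabla B^\e_{app}$: upon integration by parts the genuinely dangerous first-order terms involving $B^\e$ should cancel against each other, since $\int_\cO (B^\e\cdot\nabla R^\e)\cdot r^\e + \int_\cO (B^\e\cdot\nabla r^\e)\cdot R^\e = -\int_\cO (\dive B^\e)\, r^\e\cdot R^\e = 0$ using $\dive B^\e = 0$ and $B^\e\cdot\n = 0$ on $\p\cO$. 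Second, the transport term $\int_\cO (u^\e\cdot\nabla r^\e)\cdot r^\e$ is handled via $\dive u^\e = \sigma^\e$, which is $\cO(\e)$ in the relevant norm and hence harmless; the same for the $R^\e$-transport term.

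\textbf{Key steps.} First I would record the basic energy identity: $\frac12 \frac{d}{dt}\|(r^\e,R^\e)\|_{L^2}^2 + \e\|\nabla r^\e\|_{L^2}^2 = (\text{boundary term from }-\e\Delta r^\e) + (\text{pressure term}) + (\text{quadratic-in-remainder terms with }\nabla u^\e_{app},\ \nabla B^\e_{app}) + \langle f^\e, r^\e\rangle + \langle F^\e, R^\e\rangle + (\text{terms from }\dive r^\e = d^\e,\ \dive u^\e=\sigma^\e)$. The viscous term produces, after integration by parts using $r^\e\cdot\n = 0$ and $\CN(r^\e) = N^\e$, a good term $-c_0\e\|r^\e\|_{H^1}^2$ (via a Korn-type inequality on the symmetric gradient, as in \cite{CMS,LSZ1}) plus a boundary contribution $\e\int_{\p\cO} N^\e\cdot r^\e$ that is controlled using \eqref{app6} and a trace estimate, absorbing a fraction of the good term. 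The pressure term $\int_\cO \nabla\pi^\e\cdot r^\e = -\int_\cO \pi^\e \dive r^\e = -\int_\cO \pi^\e d^\e$ is bounded using \eqref{app5} after an elliptic estimate on $\pi^\e$ (or one avoids $\pi^\e$ altogether by projecting, but the non-zero divergence $d^\e$ forces a small correction here). The source terms: $\langle f^\e, r^\e\rangle \leq \|f^\e\|_{L^2}\|r^\e\|_{L^2} \lesssim \e^{1/2}\t^{-\gamma}\|r^\e\|_{L^2}$ by \eqref{app4}, and $\langle F^\e, R^\e\rangle \leq \|F^\e\|_{L^2}\|R^\e\|_{L^2} \lesssim \e^{1/4}\chi_{[0,T]}\|R^\e\|_{L^2}$ by \eqref{appF1}. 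The linear-in-remainder terms with coefficients $\nabla u^\e_{app}$ and $\nabla B^\e_{app}$ are bounded by $\|\nabla u^\e_{app}\|_{L^\infty}\|(r^\e,R^\e)\|_{L^2}^2 \lesssim \t^{-\gamma}\|(r^\e,R^\e)\|_{L^2}^2$ using \eqref{app2}, \eqref{appB2}; note $\nabla B^\e_{app}$ contributes only an $\cO(\e)$ coefficient on $[0,T]$, even better.

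\textbf{Putting it together.} Writing $y(t) := \|(r^\e,R^\e)(t)\|_{L^2}^2$, the inequality takes the Grönwall form $y'(t) + c_0\e\|r^\e\|_{H^1}^2 \leq C\t^{-\gamma}\, y(t) + C\e^{1/2}\t^{-\gamma}\sqrt{y(t)} + C\e^{1/4}\chi_{[0,T]}(t)\sqrt{y(t)} + C\e^{?}(\ldots)$; since $\gamma = \frac54 > 1$, the factor $\exp(C\int_0^\infty \t^{-\gamma}dt)$ is a finite constant uniform in $\e$ and $t\in[0,T/\e]$. With $y(0) = \|r^\e_0\|_{L^2}^2 \lesssim \e^{1/2}$ by \eqref{app7}, one obtains $y(t) \lesssim \e^{1/2}$ after absorbing the $\e^{1/4}\sqrt{y}$ term (which acts only on $[0,T]$ and integrates to $\cO(\e^{1/4})$ in that window, compatible with $\sqrt{y}\sim\e^{1/4}$), and integrating the good term gives $c_0\e\int_0^t\|r^\e\|_{H^1}^2 \lesssim \e^{1/2}$ as well. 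The main obstacle, as in all works of this type, is the interplay at the boundary between the viscous dissipation and the friction boundary data $N^\e$: one must verify the Korn-type coercivity $\int_\cO |D(r^\e)|^2 + \int_{\p\cO}(\ldots) \geq c_0\|r^\e\|_{H^1}^2 - C\|r^\e\|_{L^2}^2$ together with a sharp trace bound so that the $\e\int_{\p\cO}N^\e\cdot r^\e$ term (which is $\cO(\e)$) can be split as $\le \frac{c_0}{2}\e\|r^\e\|_{H^1}^2 + C\e\|N^\e\|^2_{H^{1/2}(\p\cO)}$, the latter integrating to $\cO(\e)$ over $[0,T/\e]$ — hence harmless against the target $\e^{1/2}$. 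A secondary point is ensuring the pressure/divergence correction from $d^\e$ does not degrade the bound; this is where the estimate $\e\|d^\e\|_{H^2} \lesssim \e^{1/4}\t^{-\gamma}$ from \eqref{app5} is used.
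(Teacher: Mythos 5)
Your proposal is correct in outline and takes a genuinely different route from the paper's. The paper tests the $r^\e$-equation against the Leray projection $\mathbb{P}r^\e$, so the pressure term vanishes identically; the price is that the $B^\e$-coupling no longer cancels exactly with the term from the $R^\e$-equation, leaving a residual $\int_\cO B^\e\cdot\nabla(r^\e-\mathbb{P}r^\e)\cdot R^\e$, controlled via $\|r^\e-\mathbb{P}r^\e\|_{H^2}\lesssim\|d^\e\|_{H^2}\lesssim\e^{-3/4}\t^{-\gamma}$ together with the $\e^{3/2}$ factor in $B^\e=B^\e_{app}+\e^{3/2}R^\e$. You instead test against $r^\e$ itself, so the $B^\e$-coupling does cancel exactly (your identity is right), but now the pressure term $\int_\cO\nabla\pi^\e\cdot r^\e=\int_\cO\nabla\pi^\e\cdot\nabla\phi^\e$ survives and must be estimated. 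That is the trade: exact cancellation for a non-trivial pressure term.

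The one genuine gap is precisely there. The phrase ``an elliptic estimate on $\pi^\e$'' is where the argument, taken at face value, would fail. Decomposing $\pi^\e$ as the paper later does, the piece $\pi^\e_5$ sourced by $\dive(B^\e\cdot\nabla R^\e+R^\e\cdot\nabla B^\e_{app})$ has the naive bound $\|\nabla\pi^\e_5\|_{L^2}\lesssim\|B^\e\cdot\nabla R^\e\|_{L^2}$, which requires $\|\nabla R^\e\|_{L^2}$ — a quantity that is not controlled (the magnetic equation has no diffusion, and this is precisely the basic $L^2$ estimate). So the elliptic estimate cannot be applied blindly. To make your route rigorous you must integrate by parts once more inside $\int_\cO\nabla\pi^\e_5\cdot\nabla\phi^\e=\int_\cO(B^\e\cdot\nabla R^\e+R^\e\cdot\nabla B^\e_{app})\cdot\nabla\phi^\e$ and then use $\dive B^\e=0$, $B^\e\cdot\n=0$ to move the derivative off $R^\e$ onto $\nabla\phi^\e$, producing $-\int_\cO R^\e\cdot(B^\e\cdot\nabla^2\phi^\e)$. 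At that point you need exactly the two bounds $\|B^\e_{app}\|_{L^\infty}\|\nabla^2\phi^\e\|_{L^2}$ and $\e^{3/2}\|\nabla^2\phi^\e\|_{L^\infty}$ that the paper already uses for its residual coupling; so the two approaches are ultimately the same estimate, just reached from opposite ends of the integration by parts. A second, minor inaccuracy: $\sigma^\e=\sigma^0+\e\sigma^2$ is not $\cO(\e)$ (the order-one part $\sigma^0$ survives); what makes the transport term harmless is not its size but its support in $[0,T]$, giving a $\chi_{[0,T]}\|(r^\e,R^\e)\|_{L^2}^2$ contribution that is then absorbed by Gr\"onwall via $\chi_{[0,T]}\leq C\t^{-\gamma}$.
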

\begin{proof}Firstly, let us deal with the $L^2$ estimates of $R^{\e}$. We get, by taking $L^2$ inner product of
the $R^\e$ equation of \eqref{eqrR} with $R^\e$, that,
\begin{equation}\label{S4eq2}
\begin{split}
\frac{1}{2}\frac{d}{dt}\|R^{\e}\|^2_{L^2(\cO)}&+\int_{\cO}u^{\e}\cdot\nabla R^{\e}\cdot R^{\e}+\int_{\cO} r^{\e}\cdot\nabla B^{\e}_{app}\cdot R^{\e}\\
&-\int_{\cO}B^{\e}\cdot\nabla r^{\e}\cdot R^{\e}-\int_{\cO} R^{\e}\cdot\nabla u^{\e}_{app}\cdot R^{\e}+\int_{\cO}\sigma^{\e}|R^{\e}|^2=\int_{\cO}F^{\e}\cdot R^{\e}.
\end{split}
\end{equation}
Now we estimate each term in the above equation. Since $u^{\e}\cdot\n=0$ on $\p\cO$ and $\dive u^{\e}=\sigma^0+\e\sigma^2$ is supported in $[0,T],$ by using integration by parts, we find that
\begin{equation*}
\left|\int_{\cO}u^{\e}\cdot\nabla R^{\e}\cdot R^{\e}+\int_{\cO}\sigma^{\e}|R^{\e}|^2\right| =\frac{1}{2}\left|\int_{\cO}(\sigma^0+\e\sigma^2)|R^{\e}|^2\right|\leq \chi_{[0,T]}\|R^{\e}\|_{L^2(\cO)}^2.
\end{equation*}
We remark that if $u_a$ is not tangent to the whole boundary $\p\cO$, we have no reason to require $u^{\e}\cdot\n=0$ on $\p\cO$ for all time. As a result, there will be a boundary term $\|R^{\e}\|^2_{L^2(\p\cO)}$ appears on the right-hand-side of the above inequality, which would be a bad term since we cannot gain regularity through the magnetic equation.

By Lemma \ref{appes}, there is a constant $C>0$ such that
\begin{align*}
\left| \int_{\cO}F^{\e}\cdot R^{\e} \right|  \leq \|F^{\e}\|_{L^2(\cO)}\|R^{\e}\|_{L^2(\cO)}
\leq C\e^{\frac{1}{4}}\chi_{[0,T]}\|R^{\e}\|_{L^2(\cO)}\leq C\chi_{[0,T]}(\e^{\frac{1}{2}}+\|R^{\e}||_{L^2(\cO)}^2).
\end{align*}
There exists a constant $C>0$ such that $\chi_{[0,T]}(t)\leq C\t^{-\gamma}$ for all $t\geq 0$. So that we deduce from Lemma \ref{appes} that
\begin{align*}
&\left|\int_{\cO} r^{\e}\cdot\nabla B^{\e}_{app}\cdot R^{\e}\right|+\left|\int_{\cO}R^{\e}\cdot\nabla u^{\e}_{app}\cdot R^{\e}\right|\\
&\leq \left(\|\nabla B^{\e}_{app}\|_{L^{\infty}(\cO)}+\|\nabla u^{\e}_{app}\|_{L^{\infty}(\cO)}\right)\|(r^{\e},R^{\e})\|_{L^2(\cO)}^2
\leq C\t^{-\gamma}\|(r^{\e},R^{\e})\|_{L^2(\cO)}^2.
\end{align*}

By substituting the above estimates into \eqref{S4eq2}, we obtain
\begin{align}\label{L2Re}
\frac{1}{2}\frac{d}{dt}\|R^{\e}\|_{L^2(\cO)}^2-\int_{\cO}B^{\e}\cdot\nabla r^{\e}\cdot R^{\e}\leq C\e^{\frac{1}{2}}\t^{-\gamma}+C\t^{-\gamma}\|(r^{\e},R^{\e})\|_{L^2(\cO)}^2.
\end{align}
The second term will be handled later by combining the energy estimate of $r^{\e}$.

\bigskip

Next we turn to the $L^2$   estimate of $r^{\e}.$ Let $\mathbb{P}$ to be the Leray projection operator to the divergence free vector field, we decompose $r^{\e}$ into $r^{\e}=\mathbb{P} r^{\e}+\nabla \phi^{\e}$. Hence $\phi^{\e}$ satisfies
\begin{equation}\label{dfphi}
\begin{cases}
\Delta\phi^{\e}=d^{\e},\qquad\text{ in }\cO,\\
\p_{\n}\phi^{\e}=0,\qquad\text{ on }\p\cO.
\end{cases}
\end{equation}
By elliptic regularity estimates and the property of $d^{\e}$, one has
\begin{align}\label{phie}
\|r^{\e}-\mathbb{P}r^{\e}\|_{H^1(\cO)}\leq \|\phi^{\e}\|_{H^2(\cO)}\leq C\|d^{\e}\|_{L^2(\cO)}\leq C\e^{\frac{1}{4}}\t^{-\gamma}.
\end{align}
Then we get, by taking the $L^2$ inner product of \eqref{eqrR}$_{1}$ with $\mathbb{P}r^{\e}$, that
\beq\label{L2egr}
\begin{split}
\int_{\cO}\p_t r^{\e}\cdot\mathbb{P}r^{\e}&+\int_{\cO}u^{\e}\cdot\nabla r^{\e}\cdot\mathbb{P}r^{\e}+\int_{\cO}r^{\e}\cdot\nabla u^{\e}_{app}\cdot \mathbb{P}r^{\e}-\int_{\cO}B^{\e}\cdot\nabla R^{\e}\cdot\mathbb{P}r^{\e}\\
&-\int_{\cO} R^{\e}\cdot\nabla B^{\e}_{app}\cdot\mathbb{P}r^{\e}-\e\int_{\cO}\Delta r^{\e}\cdot\mathbb{P}r^{\e}+\int_{\cO}\nabla\pi^{\e}\cdot\mathbb{P}r^{\e}=\int_{\cO}f^{\e}\cdot\mathbb{P}r^{\e}.
\end{split} \eeq
\noindent$\bullet$ \underline{$\int_{\cO}\p_t r^{\e}\cdot\mathbb{P}r^{\e}$ and $\int_{\cO}\nabla\pi^{\e}\cdot\mathbb{P}r^{\e}$.}\vspace{0.2cm}

By the definition of $\mathbb{P}, \phi^{\e}$ and by using an integration by parts, we obtain that
\begin{gather*}
\label{L21}\int_{\cO}\p_t r^{\e}\cdot\mathbb{P}r^{\e}=\frac{1}{2}\frac{d}{dt}\|\mathbb{P}r^{\e}\|^2_{L^2(\cO)}+\int_{\cO}\p_t\nabla\phi^{\e}\cdot\mathbb{P} r^{\e}=\frac{1}{2}\frac{d}{dt}\|\mathbb{P}r^{\e}\|^2_{L^2(\cO)},\\
\label{L22}\int_{\cO}\nabla\pi^{\e}\cdot\mathbb{P}r^{\e}=0.
\end{gather*}

\noindent$\bullet$ \underline{$\int_{\cO}f^{\e}\cdot\mathbb{P}r^{\e}$.}\vspace{0.2cm}

By Lemma \ref{appes} and the properties of the Leray projection operator $\mathbb{P}:$  $\|\mathbb{P}r^{\e}\|_{L^2}\leq \|r^{\e}\|_{L^2(\cO)}$, there is a constant $C>0$ such that
\begin{align*}
\label{L23}
 \left| \int_{\cO}f^{\e}\cdot \mathbb{P}r^{\e}\right|
\leq \|f^{\e}\|_{L^2(\cO)}\|\mathbb{P}r^{\e}\|_{L^2(\cO)}
\leq C\e^{\frac{1}{2}}\t^{-\gamma}\|r^{\e}\|_{L^2(\cO)}\leq C\e\t^{-\gamma}+C\t^{-\gamma}\|r^{\e}||_{L^2(\cO)}^2.
\end{align*}

\noindent$\bullet$ \underline{$\int_{\cO}u^{\e}\cdot\nabla r^{\e}\cdot\mathbb{P}r^{\e}$.}\vspace{0.2cm}

By using the Helmholtz-Leray decomposition, we obtain that
\begin{align*}
 \int_{\cO}u^{\e}\cdot\nabla r^{\e}\cdot\mathbb{P}r^{\e}
=\int_{\cO}u^{\e}\cdot\nabla \mathbb{P}r^{\e}\cdot\mathbb{P}r^{\e}+\int_{\cO}u^{\e}\cdot\nabla (r^{\e}-\mathbb{P}r^{\e})\cdot\mathbb{P}r^{\e}.
\end{align*}
Since $\dive u^{\e}=\sigma^0+\e\sigma^2$ is supported in $[0,T], u^{\e}\cdot\n=0$ on $\p\cO,$ we get, by using integration bay parts, that
\begin{equation*}
\left|\int_{\cO}u^{\e}\cdot\nabla \mathbb{P}r^{\e}\cdot\mathbb{P}r^{\e}\right|=\frac{1}{2}\left|\int_{\cO}(\sigma^0+\e\sigma^2)|\mathbb{P}r^{\e}|^2\right|\leq C\chi_{[0,T]}\|r^{\e}\|_{L^2(\cO)}^2.
\end{equation*}
While it follows from \eqref{phie}, the decomposition: $u^{\e}=u^{\e}_{app}+\e^{\frac{3}{2}}r^{\e},$ and the properties of $\mathbb{P}$, that
\begin{align*}
\left|\int_{\cO}u^{\e}\cdot\nabla (r^{\e}-\mathbb{P}r^{\e})\cdot\mathbb{P}r^{\e}\right|
&\leq \|\nabla (r^{\e}-\mathbb{P}r^{\e})\|_{L^2(\cO)}(\|u^{\e}_{app}\|_{L^{\infty}(\cO)}\|r^{\e}\|_{L^2(\cO)}+\e^{\frac{3}{2}}\|r^{\e}\|^2_{L^4(\cO)})\\
\nonumber&\leq C\e^{\frac{1}{4}}\t^{-\gamma}(\t^{-\gamma}\|r^{\e}\|_{L^{2}(\cO)}+\e^{\frac{3}{2}}\|r^{\e}\|^2_{L^{4}(\cO)})
\end{align*}
By using H\"older's inequality and Sobolev embedding,
\begin{equation*}
\|r^{\e}\|_{L^4(\cO)}\leq C\|r^{\e}\|^{\frac{1}{4}}_{L^2(\cO)}\|r^{\e}\|^{\frac{3}{4}}_{L^6(\cO)}\leq C\|r^{\e}\|^{\frac{1}{4}}_{L^2(\cO)}\|\nabla r^{\e}\|_{L^2(\cO)}^{\frac{3}{4}}.
\end{equation*}
Hence
\begin{align*}
\left|\int_{\cO}u^{\e}\cdot\nabla (r^{\e}-\mathbb{P}r^{\e})\cdot\mathbb{P}r^{\e}\right|
\leq \frac{1}{4}c_0\e\|r^{\e}\|^2_{H^1(\cO)}+C\e^{\frac{1}{2}}\t^{-\gamma}+C\|r^{\e}\|^2_{L^2(\cO)}(\e+\t^{-\gamma}).
\end{align*}
Therefore
\begin{equation*}\label{l24}
\left|\int_{\cO}u^{\e}\cdot\nabla r^{\e}\cdot\mathbb{P}r^{\e}\right|
\leq \frac{1}{4}c_0\e\|r^{\e}\|^2_{H^1(\cO)}+C\e^{\frac{1}{2}}\t^{-\gamma}+C\|r^{\e}\|^2_{L^2(\cO)}(\e+\t^{-\gamma}).
\end{equation*}

\noindent$\bullet$ \underline{$\int_{\cO}r^{\e}\cdot\nabla u^{\e}_{app}\cdot \mathbb{P}r^{\e}$ and $\int_{\cO}R^{\e}\cdot\nabla B^{\e}_{app}\cdot\mathbb{P}r^{\e}$.}\vspace{0.2cm}

Thanks to Lemma \ref{appes}, we find that
\begin{align*}
\left|\int_{\cO}r^{\e}\cdot\nabla u^{\e}_{app}\cdot \mathbb{P}r^{\e}\right|\leq & C\t^{-\gamma}\|r^{\e}\|_{L^2(\cO)}^2,\\
\left|\int_{\cO}R^{\e}\cdot\nabla B^{\e}_{app}\cdot \mathbb{P}r^{\e}\right|
\leq & C\e\chi_{[0,T]}(\|r\|_{L^2(\cO)}^2+\|R^{\e}\|_{L^2(\cO)}^2).
\end{align*}

\noindent$\bullet$ \underline{$\int_{\cO}B^{\e}\cdot\nabla R^{\e}\cdot\mathbb{P}r^{\e}$.}\

We handle this term by combining it with the second term of inequality \eqref{L2Re}. Indeed by using an integration by parts and  $\dive B^{\e}=0, B^{\e}\cdot\n=0,$ we find that
\begin{align*}
\int_{\cO}B^{\e}\cdot\nabla R^{\e}\cdot\mathbb{P}r^{\e}+\int_{\cO}B^{\e}\cdot\nabla r^{\e}\cdot R^{\e}=\int_{\cO}B^{\e}\cdot\nabla (r^{\e}-\mathbb{P}r^{\e})\cdot R^{\e}.
\end{align*}
Thanks to $r^{\e}=\mathbb{P}r^{\e}+\nabla\phi^{\e}, B^{\e}=B^{\e}_{app}+\e^{\frac{3}{2}}R^{\e},$ one has
\begin{align*}
&\left|\int_{\cO}B^{\e}\cdot\nabla(r^{\e}-\mathbb{P}r^{\e})\cdot R^{\e}\right|\\
&\leq \|B^{\e}_{app}\|_{L^{\infty}(\cO)}\|\nabla^2\phi^{\e}\|_{L^2(\cO)}\|R^{\e}\|_{L^2(\cO)}
+\e^{\frac{3}{2}}\|R^{\e}\|^2_{L^2(\cO)}\|\nabla^2\phi^{\e}\|_{L^{\infty}(\cO)}.
\end{align*}
By Lemma \ref{appes}, $\|B^{\e}_{app}\|_{L^{\infty}(\cO)}\leq C\e\chi_{[0,T]}.$ By Sobolev embedding, the regularity estimates for elliptic equation \ref{dfphi} and \eqref{app5},
\begin{align*}
\|\nabla^2\phi^{\e}\|_{L^2(\cO)}&\leq C\|d^{\e}\|_{L^2(\cO)}\leq \e^{\frac{1}{4}}\t^{-\gamma},\\
\|\nabla^2\phi^{\e}\|_{L^{\infty}(\cO)}&\leq C\|\nabla^2\phi^{\e}\|_{H^2(\cO)}\leq C\|d^{\e}\|_{H^2(\cO)}\leq C\e^{-\frac{3}{4}}\t^{-\gamma},
\end{align*}
where we used the definition of $d^{\e}$ in Section \ref{cstexp} and the properties of the boundary layers.
As a result, it comes out
\begin{align*}\label{l27}
\left|\int_{\cO}B^{\e}\cdot\nabla (r^{\e}-\mathbb{P}r^{\e})\cdot R^{\e}\right|
&\leq C\e^{\frac{5}{4}}\chi_{[0,T]}\|R^{\e}\|_{L^2(\cO)}+C\e^{\frac{3}{4}}\t^{-\gamma}\|R^{\e}\|^2_{L^2(\cO)}\\
\nonumber&\leq C\e^{\frac{7}{4}}\chi_{[0,T]}+C\e^{\frac{3}{4}}\t^{-\gamma}\|R^{\e}\|^2_{L^2(\cO)}.
\end{align*}

\noindent$\bullet$ \underline{$\int_{\cO}\e\Delta r^{\e}\cdot\mathbb{P}r^{\e}$.}\vspace{0.2cm}

Due to $\dive r^{\e}=d^{\e},$ we get, by using an integration by parts, that
\beq\label{Drr}
\begin{split}
&-\int_{\cO}\e\Delta r^{\e}\cdot\mathbb{P}r^{\e}=-\int_{\cO}\e\p_{x_i}(\p_{x_i}r^{\e}_j+\p_{x_j}r^{\e}_i)\cdot\mathbb{P}r^{\e}_j
+\int_{\cO}\e\nabla d^{\e}\cdot\mathbb{P}r^{\e}\\
&=2\e\int_{\cO} D(r^{\e}):D(\mathbb{P}r^{\e})-2\e\int_{\p\cO}(D(r^{\e})\cdot \n)_{\tan}\cdot\mathbb{P}r^{\e}+\e\int_{\cO}\nabla d^{\e}\cdot\mathbb{P}r^{\e},
\end{split}\eeq
where $D(f)$ denotes the symmetric part of $\nabla f$, $A:B=\sum_{1\leq i,j\leq 3} A_{ij}B_{ij}$ denotes the trace of matrix $AB$ for two matrix $A,B\in M_{3\times3}.$
We deduce from  \eqref{phie} that
\begin{align*}
\int_{\cO}D(r^{\e}):D(\mathbb{P}r^{\e})&=\|D(r^{\e})\|_{L^2(\cO)}^2-\int_{\cO}D(r^{\e}):D(r^{\e}-\mathbb{P}r^{\e})\\
&\geq \|D(r^{\e})\|_{L^2(\cO)}^2-C\e^{\frac{1}{4}}\t^{-\gamma}\|D(r^{\e})\|_{L^2(\cO)}\\
&\geq\frac{1}{2}\|D(r^{\e})\|_{L^2(\cO)}^2-C\e^{\frac{1}{2}}\t^{-2\gamma}.
\end{align*}
Yet by Korn's inequality, there exist constants $c_0,c_1>0$ such that
\begin{equation*}
\|D(r^{\e})\|_{L^2(\cO)}^2\geq c_{0}\|r^{\e}\|_{H^1(\cO)}^2-c_1\|r^{\e}\|_{L^2(\cO)}^2,
\end{equation*}
which results in
\begin{equation*}
2\e\int_{\cO} D(r^{\e}):D(\mathbb{P}r^{\e})\geq c_0\e\|r^{\e}\|_{H^1(\cO)}^2-c_1\e\|r^{\e}\|_{L^2(\cO)}^2-C\e^{\frac{3}{2}}\t^{-2\gamma}.
\end{equation*}
Whereas it follows from the definition of $\mathcal{N}$, $\CN(r^{\e})=N^{\e}$ and Lemma \ref{appes} that
\begin{align*}
\e\left|\int_{\p\cO}(D(r^{\e})\cdot \n)_{\tan}\cdot\mathbb{P}r^{\e}\right|&=2\e\left|\int_{\p\cO}(N^{\e}-(Mr^{\e})_{\tan})\cdot\mathbb{P}r^{\e}\right|\\
\nonumber&=2\e\left|\int_{\cO}\dive\bigl(\n(N^{\e}-(Mr^{\e})_{\tan})\cdot\mathbb{P}r^{\e}\bigr)\right|\\
\nonumber&\leq C\e(\|r^{\e}\|_{H^1(\cO)}+\t^{-\gamma})\|r^{\e}\|_{L^2(\cO)}\\
\nonumber&\leq \frac{1}{4}c_0\e\|r^{\e}\|_{H^1(\cO)}^2+C\e(\|r^{\e}\|_{L^2(\cO)}^2+\t^{-2\gamma}).
\end{align*}
By the properties of Leray projection operator $\mathbb{P}$ and an integration by parts, one has
\begin{align*}
\int_{\cO}\nabla d^{\e}\cdot\mathbb{P}r^{\e}=0.
\end{align*}
By inserting the above inequalities int \eqref{Drr}, we obtain
\begin{align}\label{l28}
-\int_{\cO}\e\Delta r^{\e}\cdot\mathbb{P}r^{\e}\geq \frac{3}{4}c_0\e\|r^{\e}\|_{H^1(\cO)}^2-C\e\|r^{\e}\|^2_{L^2(\cO)}-C\e\t^{-2\gamma}.
\end{align}

By substituting the above estimates into
 \eqref{L2egr} and using the fact that $\chi_{[0,T]}\leq C\t^{-\gamma},\forall \, t\geq 0$ for some constant $C>0,$ we achieve
 \beq\label{L2re}
\begin{split}
&\frac{1}{2}\frac{d}{dt}\|\mathbb{P}r^{\e}(t)\|^2_{L^2(\cO)}+\frac{1}{2}c_0\e\|r^{\e}\|_{H^1(\cO)}^2+\int_{\cO}B^{\e}\cdot\nabla r^{\e}\cdot R^{\e}\\
 &\leq C\e^{\frac{1}{2}}\t^{-\gamma}+C\|(r^{\e},R^{\e})\|_{L^2(\cO)}^2\big(\e+\t^{-\gamma}\big).
\end{split}\eeq

By summing up  \eqref{L2Re} and \eqref{L2re} and integrating the resulting inequality over $0,t)$ for $t\leq \frac{T}{\e}$,  we find that
\beq\label{L2rR}
\begin{split}
&\|(\mathbb{P}r^{\e},R^{\e})(t)\|^2_{L^2(\cO)}+c_0\e\int_{0}^{t}\|r^{\e}(s)\|^2_{H^1(\cO)}\ ds\\
&\leq C\e^{\frac{1}{2}}+C\int_0^{t}\|(r^{\e},R^{\e})(s)\|_{L^2(\cO)}^2\big(\e+\s^{-\gamma}\big)\ ds.
\end{split}
\eeq
 Observing from \eqref{phie} that $\|r^{\e}-\mathbb{P}r^{\e}\|^2_{L^2(\cO)}=\|\phi^{\e}\|^2_{L^2(\cO)}\leq C\e^{\frac{1}{2}}\t^{-\gamma},$ which together with  \eqref{L2rR} and Gronwall's inequality ensure  \eqref{L2rRe}. We thus finish the proof of Proposition \ref{L2et}.
\end{proof}

\subsection{Tangential derivatives estimates}

We introduce two lemmas which will be frequently used later on. The first lemma is the generalized Sobolev-Gagliardo-Nirenberg-Moser inequality; we refer to \cite{gues} for its proof.

\begin{lemma}\label{SGNM}
{\sl
Let $m\in\mathbb{N}$, there exists a constant $C>0$, such that for all multi-index $\alpha_1,\alpha_2\in \mathbb{N}^6$ with $|\alpha_1|+|\alpha_2|=m$ and for all $u,v\in L^{\infty}(\cO)\cap H^m_{co}(\cO)$, we have
\begin{align}
\|Z^{\alpha_1}uZ^{\alpha_2}v\|_{L^{2}(\cO)}\leq C\left(\|u\|_{L^{\infty}(\cO)}\|v\|_m+\|v\|_{L^{\infty}(\cO)}\|u\|_m\right).
\end{align}
}
\end{lemma}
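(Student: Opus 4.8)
The plan is to deduce the estimate from the corresponding Gagliardo--Nirenberg interpolation inequality for the conormal derivatives, namely that for every $0\le j\le m$ and every multi-index $\alpha$ with $|\alpha|=j$,
\[
\|Z^{\alpha}u\|_{L^{2m/j}(\cO)}\le C\,\|u\|_{L^{\infty}(\cO)}^{1-j/m}\,\|u\|_{m}^{j/m},
\]
with the convention $L^{2m/0}=L^{\infty}$. Granting this, the lemma follows quickly. If $j:=|\alpha_1|$ then $|\alpha_2|=m-j$, and since $\tfrac{j}{2m}+\tfrac{m-j}{2m}=\tfrac12$, Hölder's inequality gives $\|Z^{\alpha_1}u\,Z^{\alpha_2}v\|_{L^2}\le\|Z^{\alpha_1}u\|_{L^{2m/j}}\|Z^{\alpha_2}v\|_{L^{2m/(m-j)}}$; applying the interpolation inequality to each factor and writing $\theta=j/m\in[0,1]$, the right-hand side is bounded by $C\,(\|u\|_{L^{\infty}}\|v\|_{m})^{1-\theta}(\|v\|_{L^{\infty}}\|u\|_{m})^{\theta}$, and Young's inequality (in the form $a^{1-\theta}b^{\theta}\le(1-\theta)a+\theta b$) yields the desired bound; the endpoint cases $j=0$ and $j=m$ are immediate.

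It remains to prove the interpolation inequality, and here I would argue as in \cite{gues}. First I reduce to a localized statement via a finite partition of unity $\{\psi_a\}$ subordinate to a cover of $\overline{\cO}$ by coordinate charts. On interior charts the six fields $\btau^0,\dots,\btau^5$ span $\mathbb{R}^3$ at every point (since $\varphi>0$ there), so $\|\cdot\|_m$ is locally equivalent to the usual $H^m$ norm and the classical Gagliardo--Nirenberg inequality on $\mathbb{R}^3$ applies to $\psi_a u$. On boundary charts I straighten the boundary so that $\cO$ becomes the half-space $\mathbb{R}^2\times\mathbb{R}_+$ with coordinates $(y',y_3)$, $y_3=\varphi$; using that $\btau^1,\btau^2,\btau^3$ are tangent to $\partial\cO$ while $Z_0=\varphi\,\mathbf{n}\cdot\nabla$ equals $-y_3\partial_{y_3}$ modulo tangential fields, the algebra generated by $Z_0,\dots,Z_5$ is equivalent to the one generated by the model fields $\partial_{y_1},\partial_{y_2},y_3\partial_{y_3}$. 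In this model one proves the single ``Moser ladder'' estimate $\|Zw\|_{L^4}^2\le C\|w\|_{L^{\infty}}\,\|w\|_{2}$ by integrating by parts one copy of $Z=\boldsymbol{\tau}\cdot\nabla$ in $\int (Zw)^4=\int(\boldsymbol{\tau}\cdot\nabla w)(Zw)^3$: the boundary term vanishes because $\boldsymbol{\tau}\cdot\mathbf{n}=0$ on the boundary, leaving $\|Zw\|_{L^4}^4\le C\|w\|_{L^{\infty}}\bigl(\|Zw\|_{L^3}^3+\|Zw\|_{L^4}^2\|Z^2w\|_{L^2}\bigr)$ (the divergence of $\boldsymbol{\tau}$ contributing only lower-order terms), whereupon the elementary interpolation $\|Zw\|_{L^3}^3\le\|Zw\|_{L^4}^2\|Zw\|_{L^2}$ and absorption of $\|Zw\|_{L^4}^2$ give the claim. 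The full interpolation inequality for all $0\le j\le m$ is then obtained from this single estimate by the standard Nirenberg iteration/bootstrap over the order of differentiation.

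The main obstacle, as is typical in conormal analysis, is the boundary part: one must handle the degenerate vector field $y_3\partial_{y_3}$ (which is why the ladder estimate, rather than a plain Sobolev embedding, is needed — there is no elliptic gain in the normal direction), carry out the Nirenberg iteration uniformly over the whole family of multi-indices built from $Z_0,\dots,Z_5$ while keeping track of the commutators $[Z_i,Z_j]$ and $[\partial_{\mathbf{n}},Z_i]$, which by \eqref{cm} are again tangential and therefore cost nothing, and cope with the fact that only $\btau^1,\dots,\btau^5$ are divergence free whereas $\btau^0=\varphi\mathbf{n}$ is not, so that each integration by parts produces lower-order terms to be absorbed into $\|u\|_m$ and $\|u\|_{L^{\infty}}$. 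None of these points is serious, but together they account for the length of the complete argument, for which I would ultimately refer to \cite{gues}.
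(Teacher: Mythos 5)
The paper itself does not prove this lemma---it simply cites Gu\`es \cite{gues}---and your sketch recovers what is essentially the standard argument there: reduce the product estimate to the conormal Gagliardo--Nirenberg--Moser interpolation $\|Z^{\alpha}u\|_{L^{2m/j}}\le C\|u\|_{L^{\infty}}^{1-j/m}\|u\|_{m}^{j/m}$, combine it with H\"older and Young, and establish the interpolation via the ``Moser ladder'' $\|Zw\|_{L^4}^2\le C\|w\|_{L^\infty}\|w\|_2$ obtained by one integration by parts plus a Nirenberg iteration. The H\"older exponents check out ($\tfrac{j}{2m}+\tfrac{m-j}{2m}=\tfrac12$), the ladder computation is correct (the boundary term indeed vanishes because each $\btau^j$ is tangential, including $\btau^0=\varphi\n$ since $\varphi=0$ on $\p\cO$, and the divergence term is lower order), and the reduction of the product estimate to the interpolation inequality is right.

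One intermediate claim, however, is false and you should drop it. You assert that on interior charts the six fields $\btau^0,\dots,\btau^5$ span $\R^3$ ``since $\varphi>0$ there,'' and then invoke equivalence with the usual $H^m$ norm to apply classical Gagliardo--Nirenberg. But spanning requires $\nabla\varphi\neq 0$, not $\varphi>0$: since $\varphi>0$ in $\cO$ and $\varphi=0$ on $\p\cO$, $\varphi$ necessarily has an interior critical point, and there $\btau^0=\varphi\n=-\varphi\nabla\varphi$ and $\btau^1,\btau^2,\btau^3$ (which are the cross products $\n\times e_i$) all vanish, while $\btau^4,\btau^5$ (with $\chi=0$ in the interior) reduce to $e_1$ and $-e_2$, spanning only a $2$-plane. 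So $\|\cdot\|_m$ is genuinely weaker than $H^m$ near such points and the reduction to classical Gagliardo--Nirenberg does not go through. The good news is that the chart decomposition is unnecessary: the ladder estimate you prove for a single field $Z=\btau\cdot\nabla$ uses only that $\btau\cdot\n=0$ on $\p\cO$ (to kill the boundary term) and the boundedness of $\mathrm{div}\,\btau$, and the subsequent Nirenberg bootstrap uses only that the commutators $[Z_i,Z_j]$ are again conormal fields, as recorded in \eqref{cm}. So the interpolation inequality---entirely in terms of the conormal norm $\|\cdot\|_m$, which is all the lemma claims---can be proved globally by the ladder plus iteration, with no appeal to charts or to spanning. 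With that correction your argument is sound and matches the route through Gu\`es that the paper points to.
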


Thanks to $\dive r^{\e}=d^{\e}$ and $\mathcal{N}(r^{\e})=N^{\e}$ on $\p\cO$ with $\|(d^{\e},N^{\e})\|_{H^4(\p\cO)}\leq \t^{-\gamma}$ (by Lemma \ref{appes}), we can control one normal derivative on the boundary by tangential derivatives, that is, we have the following lemma. We refer to \cite[Lemma 5.2]{LSZ1} for the proof.

\begin{lemma}\label{norpa}
{\sl
Let $1\leq m\leq 5$, there exists a constant $C>0$ such that
\begin{align*}
\|\nabla r^{\e}\|_{H^{m-1}(\p\cO)}\leq C\bigl(\|r^{\e}\|_{H^m(\p\cO)}+\t^{-\gamma}\bigr).
\end{align*}
}
\end{lemma}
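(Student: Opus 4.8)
The plan is to work directly on $\p\cO$, where $|\n|=1$, and to split $\na r^\e$ into its intrinsic tangential part and its normal-derivative part. By \eqref{nabf}, near $\p\cO$ one has $\na r^\e=\btau^1 Z_1 r^\e+\btau^2 Z_2 r^\e+\btau^3 Z_3 r^\e+\n\,\p_\n r^\e$; since $Z_1,Z_2,Z_3$ are tangential (while $\btau^0=\varphi\n$ and $\btau^4,\btau^5$ vanish near $\p\cO$), applying $Z^\alpha$ with $|\alpha|\le m-1$, distributing by Leibniz, and using that commutators of tangential derivatives are again tangential of order $\le|\alpha|$ by \eqref{cm}, the first three summands are bounded in $L^2(\p\cO)$ by $C\|r^\e\|_{H^m(\p\cO)}$. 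Hence it suffices to prove $\|\p_\n r^\e\|_{H^{m-1}(\p\cO)}\le C(\|r^\e\|_{H^m(\p\cO)}+\t^{-\gamma})$, and for that I would reconstruct $\p_\n r^\e|_{\p\cO}$ component by component from the two boundary constraints satisfied by $r^\e$.

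For the normal component, \eqref{divef} applied to $r^\e$ gives on $\p\cO$
\[
\p_\n r^\e\cdot\n=\dive r^\e-\sum_{j=1}^3 Z_j r^\e\cdot\btau^j=d^\e-\sum_{j=1}^3 Z_j r^\e\cdot\btau^j .
\]
For the tangential component, the identity $D(r^\e)\cdot\n=\tfrac12\p_\n r^\e+\tfrac12(\na^T r^\e)\cdot\n$ together with $\CN(r^\e)=N^\e$ yields
\[
(\p_\n r^\e)_{\tan}=2N^\e-2(Mr^\e)_{\tan}-\big((\na^T r^\e)\cdot\n\big)_{\tan}.
\]
The last term is the one requiring care: writing $\big((\na^T r^\e)\cdot\n\big)_i=\p_i(r^\e\cdot\n)-r^\e\cdot\na n_i$ and using that $r^\e\cdot\n\equiv 0$ on all of $\p\cO$, so that its tangential derivatives vanish there and hence $\big(\na(r^\e\cdot\n)\big)_{\tan}=0$ on $\p\cO$ by \eqref{nabf}, one obtains $\big((\na^T r^\e)\cdot\n\big)_{\tan}=-(r^\e\cdot\na\n)_{\tan}$ on $\p\cO$, a zeroth-order term in $r^\e$ with smooth coefficients. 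Combining the two identities, $\p_\n r^\e|_{\p\cO}$ is an explicit expression in $d^\e$, $N^\e$, the $Z_j r^\e$ ($1\le j\le 3$), and $r^\e$ itself, with smooth coefficients.

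Finally I would apply the tangential operators $Z^\alpha$, $|\alpha|\le m-1$, to this identity on $\p\cO$ and estimate by Leibniz: the contributions of $d^\e$ and $N^\e$ are bounded by $\|d^\e\|_{H^{m-1}(\p\cO)}+\|N^\e\|_{H^{m-1}(\p\cO)}\le C\t^{-\gamma}$ (here $m-1\le 4$, so Lemma \ref{appes} applies), those coming from the $Z_j r^\e$ by $C\|r^\e\|_{H^m(\p\cO)}$ since they carry at most $m$ tangential derivatives of $r^\e$, and the zeroth-order remainders by $C\|r^\e\|_{H^{m-1}(\p\cO)}$. This gives $\|\p_\n r^\e\|_{H^{m-1}(\p\cO)}\le C(\|r^\e\|_{H^m(\p\cO)}+\t^{-\gamma})$, and plugging back into the decomposition of $\na r^\e$ yields the claim. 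The main obstacle is the bookkeeping around the term $\big((\na^T r^\e)\cdot\n\big)_{\tan}$: one must genuinely use that $r^\e\cdot\n$ vanishes identically on $\p\cO$ (not merely at a point) to see that only a smooth zeroth-order remainder survives, and one must also check that distributing $Z^\alpha$ produces no hidden higher-order boundary term, which is exactly what \eqref{cm} guarantees.
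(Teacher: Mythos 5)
Your proof is correct and is essentially the argument the paper has in mind (it defers to \cite[Lemma 5.2]{LSZ1}): one expresses $\p_\n r^\e$ on $\p\cO$ through the divergence relation and the Navier condition, exactly the identities the paper records as \eqref{diver2} and \eqref{etaN}, and then bounds $d^\e$, $N^\e$ via \eqref{app5}–\eqref{app6} and the remaining terms by tangential derivatives of $r^\e$ on the boundary. The point you single out — that $r^\e\cdot\n\equiv 0$ on all of $\p\cO$ kills $(\na(r^\e\cdot\n))_{\tan}$ so only a zeroth-order term with smooth coefficients survives — is indeed the key step, and your handling of it is right.
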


\begin{lemma}\label{unfi}
{\sl
Let $m\in\mathbb{N_+},$ there exists a constant $C>0$ such that for all $u$ satisfies $\dive u=f$
in $\cO$ and $u\cdot\n=0$ on the boundary $\p\cO,$ we have
\begin{subequations} \label{S4eq3}
\begin{align}
\|\frac{u\cdot\n}{\varphi}\|_{m-1,\infty}&\leq C\left(\|u\|_{m,\infty}+\|f\|_{m-1,\infty}\right), \label{unphi1}\\
\|\frac{u\cdot\n}{\varphi}\|_{m-1}&\leq C\left(\|u\|_{m}+\|f\|_{m-1}\right), \label{unphi2}\\
\|u\cdot\nabla v\|_{m-1}&\leq C\left(\|u\|_{1,\infty}\|v\|_{m}+\|u\|_{m}\|v\|_{1,\infty}+\|f\|_{m-1,\infty}\|v\|_m\right).\label{unabv}
\end{align}
\end{subequations}
}
\end{lemma}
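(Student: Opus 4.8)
\textbf{Proof proposal for Lemma \ref{unfi}.}
The plan is to reduce everything to the decomposition \eqref{nabf}--\eqref{divef} of Proposition \ref{optan}, which is valid in the collar $\mathcal{V}_{\delta_0}$, together with the key observation that near the boundary a vector field $u$ with $u\cdot\n=0$ on $\p\cO$ has a normal component $u\cdot\n$ that vanishes on $\p\cO$, so that $\frac{u\cdot\n}{\varphi}$ is a genuine smooth function there (as already exploited for $u^0_\flat$ in Section \ref{scal}). Away from the boundary, say on $\cO\setminus\mathcal{V}_{\delta_0/2}$, the weight $\varphi$ is bounded below, $|\n|$ may degenerate, but there all tangential derivatives $Z_j$ are just ordinary vector fields with smooth coefficients and $\varphi^{-1}$ is smooth; hence the three estimates are trivial consequences of the Leibniz rule and the definition of the conormal norms on that region. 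So the whole content is localized to $\mathcal{V}_{\delta_0}$, where $|\n|=1$.

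For \eqref{unphi1} and \eqref{unphi2}: inside $\mathcal{V}_{\delta_0}$ I would write, using \eqref{divef},
\begin{equation*}
\p_\n(u\cdot\n)=f-\big(Z_1u\cdot\btau^1+Z_2u\cdot\btau^2+Z_3u\cdot\btau^3\big)+u\cdot\p_\n\n ,
\end{equation*}
so $\p_\n(u\cdot\n)$ is controlled by $f$ and by one tangential derivative of $u$ (the last term being harmless since $\p_\n\n$ is smooth). Because $(u\cdot\n)|_{\p\cO}=0$, one has $\frac{u\cdot\n}{\varphi}(x)=-\int_0^1 (\p_\n(u\cdot\n))(\sigma-s'\,\varphi(x)\,\n)\,ds'$ in boundary coordinates $x=\sigma-s\n(\sigma)$ with $s=\varphi(x)$; more to the point one may use the elementary one‑dimensional identity $\frac{h}{\varphi}=\int_0^1 (\p_\n h)(\cdot\, , s'\varphi)\,ds'$ for $h$ vanishing at $\varphi=0$, which commutes with the tangential fields $Z_1,\dots,Z_5$ modulo lower order tangential terms by \eqref{cm}. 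Applying $Z^\alpha$ with $|\alpha|\le m-1$, distributing via Leibniz, using that each $Z_j$ of a product of smooth coefficients times a tangential derivative of $u$ or of $f$ stays in the required norm, and taking $L^\infty$ resp. $L^2$ norms, yields \eqref{unphi1} resp. \eqref{unphi2}. The factor $Z_0=\btau^0\cdot\nabla=\varphi\p_\n$ deserves a word: applying $Z_0$ to $\frac{u\cdot\n}{\varphi}$ produces $\varphi\p_\n\big(\frac{u\cdot\n}{\varphi}\big)=\p_\n(u\cdot\n)-\frac{u\cdot\n}{\varphi}$, which is again covered by the same bound (the $\frac{u\cdot\n}{\varphi}$ term is absorbed by induction on $m$).

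For \eqref{unabv}: I would split $u\cdot\nabla v$ inside $\mathcal{V}_{\delta_0}$ according to \eqref{nabf} as
\begin{equation*}
u\cdot\nabla v=\sum_{j=1}^{3}(u\cdot\btau^j)\,Z_j v+\frac{u\cdot\n}{\varphi}\,Z_0 v ,
\end{equation*}
so that the potentially dangerous normal derivative $\p_\n v$ appears only against the factor $\frac{u\cdot\n}{\varphi}$, which by \eqref{unphi1} is controlled by $\|u\|_{m,\infty}$ (down to $\|u\|_{1,\infty}$ plus $\|f\|_{m-1,\infty}$ when few derivatives land on it) — and crucially $Z_0 v$ is a tangential derivative of $v$, not a normal one. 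Then I apply $Z^\alpha$, $|\alpha|\le m-1$, use \eqref{cm} so that all commutators stay tangential, and invoke the generalized Moser inequality (Lemma \ref{SGNM}) to distribute derivatives between the coefficient $u\cdot\btau^j$ (or $\frac{u\cdot\n}{\varphi}$) and $Z_j v$: the top-order terms give either $\|u\|_{1,\infty}\|v\|_m$ or $\|v\|_{1,\infty}\|u\|_m$, and the term where the many derivatives hit $\frac{u\cdot\n}{\varphi}$ is where the $\|f\|_{m-1,\infty}\|v\|_m$ contribution enters via \eqref{unphi1}. The region $\cO\setminus\mathcal{V}_{\delta_0/2}$ contributes only terms already bounded by $\|u\|_m\|v\|_{1,\infty}+\|u\|_{1,\infty}\|v\|_m$ because there $\nabla$ is an honest combination of the $Z_j$'s with smooth coefficients.

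The main obstacle, and the only point requiring care, is the bookkeeping of the normal direction: one must verify that after the substitutions \eqref{nabf}, \eqref{divef} every remaining occurrence of $\p_\n$ is either (i) multiplied by $\varphi$, hence reorganized into $Z_0$, or (ii) acting on $u\cdot\n$, hence absorbed through the division by $\varphi$ using $(u\cdot\n)|_{\p\cO}=0$, and that the iterated commutators with the $Z_j$'s never regenerate a free $\p_\n$ (which is exactly what \eqref{cm} guarantees). Once this structural point is in place, the estimates are routine applications of Leibniz, Lemma \ref{SGNM}, and the elementary Hardy-type identity $\frac{h}{\varphi}=\int_0^1(\p_\n h)(\cdot,s'\varphi)\,ds'$ valid for $h|_{\varphi=0}=0$.
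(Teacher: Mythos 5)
Your proposal follows essentially the same route as the paper: localize to the collar $\mathcal{V}_{\delta_0}$ via a cutoff (trivial elsewhere since $\varphi$ is bounded below), represent $\frac{u\cdot\n}{\varphi}$ by the Hardy-type integral of $\p_\n(u\cdot\n)$ in boundary coordinates, substitute $\p_\n u\cdot\n = f-\sum_{i=1}^3 Z_iu\cdot\btau^i$ from \eqref{divef}, decompose $u\cdot\nabla v=\sum_{j=1}^3(u\cdot\btau^j)Z_jv+\frac{u\cdot\n}{\varphi}Z_0v$, and close with Lemma \ref{SGNM}. Minor remarks: the paper eliminates your $u\cdot\p_\n\n$ term outright by noting $\p_\n\n=\tfrac12\nabla|\n|^2=0$ in $\mathcal{V}_{\delta_0}$ (so one gets $\p_\n(u\cdot\n)=\p_\n u\cdot\n$ exactly), and it applies $Z^\alpha$ directly to the integral formula \eqref{unphi} rather than running your separate $Z_0$ induction — incidentally your identity should read $\varphi\p_\n\bigl(\tfrac{u\cdot\n}{\varphi}\bigr)=\p_\n(u\cdot\n)+\tfrac{u\cdot\n}{\varphi}$ since $\p_\n\varphi=-1$ in the collar, though the sign is immaterial for the bound.
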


\begin{proof}
First, let us assume $u$ and $f$ are supported in $\mathcal{V}_{\delta_0}$. For $x\in\mathcal{V}_{\delta},$ we can write $x=\sigma-s\n(\sigma)$ with $\sigma\in\p\cO,s>0$, and $\sigma,s$ is determined uniquely by $x\in\mathcal{V}_{\delta_0}$. Let $\widetilde{u\cdot\n}(\sigma,s)=(u\cdot\n)(x)$, we observe that
\begin{align*}
\frac{u\cdot\n}{\varphi}(x)=\frac{\widetilde{u\cdot\n}(\sigma,s)}{s}=\int_0^1\p_s(\widetilde{u\cdot\n})(\sigma,\xi s)d\xi
=-\int_0^1\widetilde{\p_{\n}(u\cdot\n)}(\sigma,\xi s)d\xi.
\end{align*}
Since $\nabla \n$ is symmetric, $\p_{\n}\n=\frac{1}{2}\nabla|\n|^2=0$ in $\mathcal{V}_{\delta_0},$ we have
\begin{align*}
\frac{u\cdot\n}{\varphi}(x)=-\int_0^1(\p_{\n}u\cdot\n)(\sigma-\xi s\n (\sigma))d\xi,
\end{align*}
from which, $\dive u=f $ and Proposition \ref{optan}, we infer
\begin{align}\label{unphi}
\frac{u\cdot\n}{\varphi}(x)=\int_0^1\left(Z_1u\cdot \btau^1+Z_2u\cdot \btau^2+Z_3u\cdot \btau^3-f\right)(\sigma-\xi s\n (\sigma))d\xi.
\end{align}
Consequently, there exists a constant $C>0$ such that
\begin{align*}
\|\frac{u\cdot\n}{\varphi}\|_{L^{\infty}(\cO)}&\leq C\left(\|f\|_{L^{\infty}(\cO)}+\|u\|_{1,\infty}\right),\\
\|\frac{u\cdot\n}{\varphi}\|_{L^{2}(\cO)}&\leq C\left(\|f\|_{L^{2}(\cO)}+\|u\|_{1}\right).
\end{align*}
By applying tangential derivatives to \eqref{unphi} and using Lemma \ref{SGNM}, we deduce \eqref{unphi1} and \eqref{unphi2} for general $m.$

While by virtue of  Proposition \ref{optan}, we write
\begin{align}\label{S4eq6}
u\cdot\nabla v=u\cdot \btau^1Z_1v+u\cdot \btau^2Z_2v+u\cdot \btau^3Z_3v+\frac{u\cdot\n}{\varphi}Z_0v,
\end{align}
which together with \eqref{unphi} and Lemma \ref{SGNM} ensures \eqref{unabv}.

For general $u$ and $f$, we write $u=\chi u+(1-\chi)u$, where $\chi$ is a cut-off function supported in $\mathcal{V}_{\delta_0}$ and $\chi=1$ in $\mathcal{V}_{\delta_0/2}$.  Hence $\dive(\chi u)=\chi f+u\cdot\nabla\chi$ is also supported in $\mathcal{V}_{\delta_0}$. When $1-\chi\neq 0$, one has $\varphi\geq \frac{\delta_0}{2}$. By the regularities of $\chi$ and $\varphi,$  we can easily derive the estimates \eqref{unphi1}, \eqref{unphi2} and \eqref{unabv} for general cases.
\end{proof}

The equations \eqref{unphi} and \eqref{S4eq6} show that $u\cdot\nabla $ is actually a linear combination of some tangential derivatives with coefficients composed of $\dive u$ or $Z^{\alpha}u$ for $|\alpha|\leq 1$.

\begin{proposition}\label{tanes}
{\sl
Let $\gamma=\frac{5}{4}$, $m\in \mathbb{N}$ with $1\leq m\leq 5$, The remainders $r^{\e}$ and $R^{\e}$ satisfy, for $t\in \bigl[0,\frac{T}{\e}\bigr]$ and for some constants $c_0>0, C>0$
\beq\label{ZarR}
\begin{split}
\|(r^{\e},R^{\e})(t)\|_m^2+c_0\e\int_0^{t}\|\nabla r^{\e}(s)\|_{m}^2ds
\leq C\e^{\frac{1}{2}}+C\sum_{|\alpha|\leq m}\left|\int_0^t\int_{\cO}Z^{\alpha}\nabla\pi^{\e}\cdot Z^{\alpha}r^{\e}dxds\right|\\
 +C\int_0^t\|(r^{\e},R^{\e})\|^2_{m}\left(\e+\s^{-\gamma}+\e^2\|(r^{\e},R^{\e})\|^2_{2,\infty}+\e^{\frac{3}{2}}\|\nabla r^{\e}\|_{L^{\infty}(\cO)}\right)\,ds.
\end{split}\eeq
}\end{proposition}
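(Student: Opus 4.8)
\textbf{Proof plan for Proposition \ref{tanes}.} The plan is to run the conormal energy method: apply the tangential operator $Z^\alpha$ for $|\alpha|\le m$ to the equations \eqref{eqrR} for $r^\e$ and $R^\e$, take the $L^2(\cO)$ inner product with $Z^\alpha r^\e$ and $Z^\alpha R^\e$ respectively, sum over $|\alpha|\le m$, and estimate every resulting term. Since $Z_j$ are tangential and divergence-free, the leading transport terms $u^\e\cdot\nabla Z^\alpha r^\e$ and $u^\e\cdot\nabla Z^\alpha R^\e$ integrate by parts with only a harmless factor $\dive u^\e=\sigma^0+\e\sigma^2$, supported in $[0,T]$, hence bounded by $C\chi_{[0,T]}\|(r^\e,R^\e)\|_m^2\le C\t^{-\gamma}\|(r^\e,R^\e)\|_m^2$. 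The terms involving $u^\e_{app},B^\e_{app}$ and their gradients are controlled using Lemma \ref{appes} (which gives $\t^{-\gamma}$ or $\e\chi_{[0,T]}$ decay of the relevant norms) together with Lemma \ref{SGNM}; the source terms $f^\e,F^\e$ contribute $C\e^{1/2}\t^{-\gamma}$ and $C\e^{1/4}\chi_{[0,T]}$ after Young's inequality, feeding the $C\e^{1/2}$ on the right. The commutator $[Z^\alpha,u^\e\cdot\nabla]$ is, by the identities \eqref{S4eq6}–\eqref{unphi} and \eqref{cm}, a combination of tangential derivatives with coefficients built from $Z^{\le1}u^\e$ and $\dive u^\e=\sigma^0+\e\sigma^2-\e^{3/2}d^\e$; splitting $u^\e=u^\e_{app}+\e^{3/2}r^\e$ and using $\|u^\e_{app}\|_{2,\infty}\lesssim\t^{-\gamma}$, $\|d^\e\|_{m-1,\infty}\lesssim\t^{-\gamma}$ yields exactly the $\e+\s^{-\gamma}$ factor, while the $\e^{3/2}r^\e$ part produces the nonlinear factors $\e^2\|(r^\e,R^\e)\|_{2,\infty}^2$ and $\e^{3/2}\|\nabla r^\e\|_{L^\infty}$ after one Young's inequality.

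The viscous term $-\e\Delta r^\e$ is handled via \eqref{Delf}: write $\Delta=\sum_i Z_i^2+\p_\n^2-\Delta\varphi\,\p_\n$, commute $Z^\alpha$ through using \eqref{S4eq1}, and integrate by parts. For the purely tangential part $\sum_i Z_i^2$ this produces the good dissipation $c_0\e\|\nabla r^\e\|_m^2$ (up to Korn's inequality and the commutator terms $[\Delta,Z^\alpha]$ which are lower order, bounded by $\e\|\nabla r^\e\|_{m-1}\|\nabla r^\e\|_m\le \tfrac18c_0\e\|\nabla r^\e\|_m^2+C\e\|\nabla r^\e\|_{m-1}^2$ and absorbed inductively). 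The boundary contribution from integration by parts is controlled by the Navier condition $\CN(r^\e)=N^\e$: on $\p\cO$ the normal derivative $\p_\n r^\e$ is expressed via tangential derivatives, $N^\e$ and $Mr^\e$ through Lemma \ref{norpa}, giving a boundary term $\le C\e(\|r^\e\|_{m}^2(\cdot)+\t^{-2\gamma})$; a trace/integration-by-parts trick (converting the boundary integral to a divergence integral over $\cO$, as already used in the $L^2$ step) lets one absorb $\tfrac14c_0\e\|\nabla r^\e\|_m^2$. The divergence constraint $\dive r^\e=d^\e$ enters through the correction terms $\e\nabla Z^\alpha d^\e$ and through the Leray-type decomposition if needed; these are all $O(\e^{1/4}\t^{-\gamma})$ by \eqref{app5}.

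The pressure term is the one I would deliberately \emph{not} estimate at this stage: $\int_\cO Z^\alpha\nabla\pi^\e\cdot Z^\alpha r^\e$ does not vanish because $Z^\alpha r^\e$ is not divergence free ($\dive Z^\alpha r^\e$ involves $Z^\alpha d^\e$ plus commutators $[\,\dive,Z^\alpha]r^\e$), and $Z^\alpha r^\e\cdot\n\ne0$ on $\p\cO$ in general. So I would keep $\sum_{|\alpha|\le m}\big|\int_0^t\int_\cO Z^\alpha\nabla\pi^\e\cdot Z^\alpha r^\e\big|$ as an explicit term on the right-hand side, to be dealt with separately in a later step (presumably by a dedicated pressure estimate using the elliptic problem for $\pi^\e$ obtained by taking the divergence of the $r^\e$-equation). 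Collecting all the bounds, applying Young's inequality to split off the dissipation, and integrating in $t$ over $[0,t]$ with $t\le T/\e$ (using $\int_0^{T/\e}\s^{-\gamma}\,ds<\infty$ since $\gamma=\tfrac54>1$, and the $L^2$ bound \eqref{L2rRe} to start the induction on $m$) yields \eqref{ZarR}.

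\textbf{Main obstacle.} The delicate point is the boundary term from the viscous integration by parts combined with the fact that one cannot gain regularity on $R^\e$: one must arrange that no $\|R^\e\|^2_{L^2(\p\cO)}$-type boundary term survives (this is exactly why $u^\e\cdot\n=0$ on all of $\p\cO$ was imposed, cf.\ the remark after \eqref{S4eq2}), and that the $\e$-weighted boundary term for $r^\e$ is fully absorbed by the dissipation via Lemma \ref{norpa} and Korn's inequality, without producing any uncontrolled power of $\e^{-1}$. Tracking the precise powers of $\e$ in the nonlinear commutator terms so that they appear only in the benign combination $\e^2\|(r^\e,R^\e)\|_{2,\infty}^2+\e^{3/2}\|\nabla r^\e\|_{L^\infty}$ (which will later be closed by the $L^\infty$ estimates via the continuity method) is the other place where care is essential.
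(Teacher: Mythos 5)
Your plan correctly sets up the conormal energy method, correctly keeps the pressure term explicit, and correctly identifies the general role of Korn's inequality, Lemma \ref{norpa}, the boundary identity for $\CN(r^\e)$, and the splitting $u^\e=u^\e_{app}+\e^{3/2}r^\e$. But there are two substantive problems.

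The first is a genuine gap. You never address the term $\int_\cO Z^\alpha(B^\e\cdot\nabla R^\e)\cdot Z^\alpha r^\e$ in the $r^\e$-equation, which is \emph{the} central difficulty of the non-resistive case: it carries a full derivative on $R^\e$, and there is no magnetic dissipation to control $\|\nabla R^\e\|_m$. Lemma \ref{appes} and Lemma \ref{SGNM} alone (which is all you invoke for the ``terms involving $u^\e_{app},B^\e_{app}$'') do not resolve this. The paper's mechanism is to use $\dive B^\e=0$ and $B^\e\cdot\n=0$ to write $B^\e\cdot\nabla R^\e=\dive(R^\e\otimes B^\e)$ and integrate by parts, so that the derivative lands on $Z^\alpha r^\e$, where it is tamed by the viscous dissipation $\e\|\nabla r^\e\|_m^2$; the same maneuver (in the form of a Young-inequality absorption) appears for $\int Z^\alpha(B^\e\cdot\nabla r^\e)\cdot Z^\alpha R^\e$ in the $R^\e$-equation. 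Without this derivative transfer the estimate does not close. Related to this, you misattribute the origin of the $\e^{3/2}\|\nabla r^\e\|_{L^\infty(\cO)}$ factor: it does not come from the commutator $[Z^\alpha,u^\e\cdot\nabla]$, but precisely from the cross term $\int Z^\alpha(B^\e\cdot\nabla r^\e)\cdot Z^\alpha R^\e$ after splitting $B^\e=B^\e_{app}+\e^{3/2}R^\e$ and applying Lemma \ref{SGNM}, which produces $\e^{3/2}\|R^\e\|_m\|\nabla r^\e\|_{L^\infty}\|R^\e\|_m$.

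The second is an error in the viscous step. You claim that writing $\Delta=\sum_i Z_i^2+\p_\n^2-\Delta\varphi\,\p_\n$ and integrating by parts, ``the purely tangential part $\sum_i Z_i^2$ produces the good dissipation $c_0\e\|\nabla r^\e\|_m^2$.'' It cannot: the tangential Laplacian only yields control of tangential derivatives $\|ZZ^\alpha r^\e\|_{L^2}$, not of $\p_\n Z^\alpha r^\e$. The paper instead keeps $\Delta Z^\alpha r^\e$ whole, integrates by parts to get $\|D(Z^\alpha r^\e)\|_{L^2}^2$ (plus boundary terms involving $\CN$), and then uses Korn's inequality to pass to $\|Z^\alpha r^\e\|_{H^1}^2$; the identity \eqref{Delf} is needed later, for the normal-derivative estimates of Proposition \ref{nores}, not here. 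The commutator $[\Delta,Z^\alpha]$ is handled separately via \eqref{Z2} and Lemma \ref{norpa}, as you otherwise indicate.
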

\begin{remark}\label{od3}
{\sl
We have to expand the velocity $u^{\e}$ at least to the order of $\cO(\e^{\frac{3}{2}})$, hence there is a coefficient $\e^{\frac{3}{2}}$ appears in the nonlinear terms of $r^{\e}$, which helps us to derive the control  of $\|\nabla r^{\e}\|_{L^{\infty}(\cO)}$, see \eqref{K4} of Section \ref{infes}.

}
\end{remark}

\begin{proof}[\bf Proof of Proposition \ref{tanes}] 
We divide the proof of this proposition into the following two steps:

\noindent{\bf Step 1.} The tangential derivative estimates of $r^{\e}.$

Let $|\alpha|\leq m\in\mathbb{N}_+,$ we get, by applying $Z^{\alpha}$ to \eqref{eqrR}$_1$ and then taking $L^2$ inner product of the resulting
equation with $Z^{\alpha}r^{\e},$ that
\beq\label{S4eq7}
\begin{split}
&\frac{1}{2}\frac{d}{dt}\|Z^{\alpha}r^{\e}(t)\|_{L^2(\cO)}^2
+\int_{\cO}Z^{\alpha}(u^{\e}\cdot\nabla r^{\e}) \cdot Z^{\alpha}r^{\e}
+\int_{\cO}Z^{\alpha}(r^{\e}\cdot\nabla u^{\e}_{app})\cdot Z^{\alpha}r^{\e}\\
&-\int_{\cO}Z^{\alpha}(B^{\e}\cdot\nabla R^{\e})\cdot Z^{\alpha}r^{\e}
-\int_{\cO} Z^{\alpha}(R^{\e}\cdot\nabla B^{\e}_{app})\cdot Z^{\alpha}r^{\e}
-\e\int_{\cO}\Delta Z^{\alpha} r^{\e}\cdot Z^{\alpha}r^{\e}\\
&-\e\int_{\cO}[Z^{\alpha},\Delta]r^{\e}\cdot Z^{\alpha}r^{\e}
+\int_{\cO}Z^{\alpha}\nabla\pi^{\e}\cdot Z^{\alpha}r^{\e}
=\int_{\cO}Z^{\alpha}f^{\e}\cdot Z^{\alpha}r^{\e}.
\end{split}\eeq

\noindent$\bullet$ \underline{$\int_{\cO}Z^{\alpha}(r^{\e}\cdot\nabla u^{\e}_{app})\cdot Z^{\alpha}r^{\e}$ and
$\int_{\cO} Z^{\alpha}(R^{\e}\cdot\nabla B^{\e}_{app})\cdot Z^{\alpha}r^{\e}$.}\vspace{0.2cm}

By using Leibniz's formula and Lemma \ref{appes}, we find that
\begin{align*}
\left|\int_{\cO}Z^{\alpha}(r^{\e}\cdot\nabla u^{\e}_{app})\cdot Z^{\alpha}r^{\e}\right|
&\leq C\|\nabla u^{\e}_{app}\|_{m,\infty}\|r^{\e}\|_m^2\leq C\t^{-\gamma}\|r^{\e}\|^2_m,\\
\left|\int_{\cO}Z^{\alpha}(R^{\e}\cdot\nabla B^{\e}_{app})\cdot Z^{\alpha}R^{\e}\right|
&\leq C\|\nabla B^{\e}_{app}\|_{m,\infty}\|R^{\e}\|_{m}^2\leq C\e\chi_{[0,T]}\|R^{\e}\|^2_m.
\end{align*}

\noindent$\bullet$ \underline{$\int_{\cO}Z^{\alpha}(u^{\e}\cdot\nabla r^{\e}) \cdot Z^{\alpha}r^{\e}$.}\vspace{0.2cm}

We decompose the vector field $u^{\e}\cdot\nabla$ into
\begin{equation*}
u^{\e}\cdot\nabla=u^0\cdot\nabla+(u^{\e}-u^0)\cdot\nabla ,
\end{equation*}
and write
\begin{equation}\label{dcue}
\begin{split}
\int_{\cO} Z^{\alpha}(u^{\e}\cdot\nabla r^{\e}) \cdot Z^{\alpha}r^{\e}
=&\int_{\cO}[Z^{\alpha},u^0\cdot\nabla]r^{\e}\cdot Z^{\alpha}r^{\e}
+\int_{\cO}u^0\cdot\nabla Z^{\alpha}r^{\e}\cdot Z^{\alpha}r^{\e}\\
&+\int_{\cO}Z^{\alpha}((u^{\e}-u^0)\cdot\nabla r^{\e}) \cdot Z^{\alpha}r^{\e}.
\end{split}
\end{equation}
We observe from \eqref{unphi} and \eqref{S4eq6} that
  $u^0\cdot\nabla=\sum_{i=1}^3\btau^iZ_i+u^0_{\flat}Z_0$ is a tangential derivative. In view of \eqref{cm}, $[Z^{\alpha}, u^0\cdot\nabla]$ is a tangential derivative of order $m$. Note that $u^0,u^0_{\flat}$ are smooth functions supported in $[0,T]$, we arrive at
\begin{align*}
\left|\int_{\cO}[Z^{\alpha},u^0\cdot\nabla]r^{\e}\cdot Z^{\alpha}r^{\e}\right|\leq C\chi_{[0,T]}\|r^{\e}\|^2_{m}.
\end{align*}
By using an integration by parts, one has
\begin{align*}
\left|\int_{\cO}u^0\cdot\nabla Z^{\alpha}r^{\e}\cdot Z^{\alpha}r^{\e}\right|
=\left|\frac{1}{2}\int_{\cO}\sigma^0|Z^{\alpha}r^{\e}|^2\right|
\leq C\chi_{[0,T]}\|r^{\e}\|^2_{m},
\end{align*}
For the third term in \eqref{dcue}, since $\dive (u^{\e}-u^0)=\e\sigma^2$ and $u^{\e}\cdot\n=u^0\cdot\n=0$ on $\p\cO$,
we get, by using an integration by parts, that
\beq\label{Zap}
\begin{split}
&\int_{\cO}Z^{\alpha}((u^{\e}-u^0)\cdot\nabla r^{\e})\cdot Z^{\alpha}r^{\e}
=\int_{\cO}Z^{\alpha}(\dive(r^{\e}\otimes(u^{\e}-u^0))-\e\sigma^2r^{\e})\cdot Z^{\alpha}r^{\e}\\
&=\sum_{|\alpha'|\leq m}c_{\alpha'}\int_{\cO}Z^{\alpha'}(r^{\e}\otimes (u^{\e}-u^0)):\nabla Z^{\alpha}r^{\e}
-\int_{\cO}\e Z^{\alpha}(\sigma^2r^{\e})\cdot Z^{\alpha}r^{\e},
\end{split}\eeq
for some smooth function $c_{\alpha'}$.
Since $\sigma^2=\beta \dive u_a\in C^{\infty}([0,T],H^{23}(\cO)),$ we have
\begin{equation*}
\left|\int_{\cO}\e Z^{\alpha}(\sigma^2r^{\e})\cdot Z^{\alpha}r^{\e}\right|\leq C\e\chi_{[0,T]}\|r^{\e}\|^2_{m}.
\end{equation*}
While due to $\|u^{\e}_{app}-u^0\|_{m,\infty}\leq C\e^{\frac{1}{2}}\t^{-\gamma},$ we get
\begin{align*}
\left|\int_{\cO}Z^{\alpha'}(r^{\e}\otimes (u^{\e}_{app}-u^0)):\nabla Z^{\alpha}r^{\e}\right|
&\leq C\e^{\frac{1}{2}}\t^{-\gamma}\|r^{\e}\|_{m}\|\nabla r^{\e}\|_{m}\\
&\leq \lambda \e\|\nabla r^{\e}\|^2_m+C_{\lambda}\t^{-2\gamma}\|r^{\e}\|^2_m,
\end{align*}
for any $\lambda>0$ and an associate constant $C_{\lambda}>0$.
It follows from Leibniz's formula and the generalized Sobolev-Gagliardo-Nirenberg-Moser inequality that
\begin{align*}
\e^{\frac{3}{2}}\int_{\cO}Z^{\alpha'}(r^{\e}\otimes r^{\e}):\nabla Z^{\alpha}r^{\e}
&\leq \e^{\frac{3}{2}}\|r^{\e}\|_{L^{\infty}(\cO)}\|r^{\e}\|_m\|\nabla r^{\e}\|_m\\
&\leq \lambda\e\|\nabla r^{\e}\|^2_m+C_{\lambda}\e^2\|r^{\e}\|^2_m\|r^{\e}\|^2_{L^{\infty}(\cO)},
\end{align*}
for any $\lambda>0$ and an associated constant $C_{\lambda}>0$.

By substituting the above estimates into \eqref{dcue}, we achieve
\begin{align*}
\left|\int_{\cO}Z^{\alpha}(u^{\e}\cdot\nabla r^{\e}) \cdot Z^{\alpha}r^{\e}\right|
\leq \lambda\e\|\nabla r^{\e}\|^2_{m}
+C_{\lambda}\|r^{\e}\|^2_{m}\left(\t^{-\gamma}+\e^2\|r^{\e}\|^2_{L^{\infty}(\cO)}\right),
\end{align*}
for any $\lambda>0$ and an associate constant $C_{\lambda}>0.$

\vspace{0.2cm}
\noindent$\bullet$ \underline{$\int_{\cO}Z^{\alpha}(B^{\e}\cdot\nabla R^{\e})\cdot Z^{\alpha}r^{\e}$.}\vspace{0.2cm}

Since there is no diffusion term, we cannot gain regularity for $R^{\e}$ by using the equation of \eqref{eqrR}$_2$. Therefore,  we have to transfer the extra derivative of $R^{\e}$ by using integration by parts. Indeed similar to \eqref{Zap}, we have
\begin{align*}
\int_{\cO}Z^{\alpha}(B^{\e}\cdot\nabla R^{\e})\cdot Z^{\alpha}r^{\e}
=\sum_{|\alpha'|\leq m}c_{\alpha'}\int_{\cO}Z^{\alpha'}(R^{\e}\otimes B^{\e}):\nabla Z^{\alpha}r^{\e},
\end{align*}
for some smooth function $c_{\alpha'}$.
We decompose $B^{\e}=B^{\e}_{app}+\e^{\frac{3}{2}}R^{\e}$, by using Lemma \ref{appes} and Lemma \ref{SGNM},
\begin{align*}
\left|\int_{\cO}Z^{\alpha}(B^{\e}\cdot\nabla R^{\e})\cdot Z^{\alpha}r^{\e}\right|
&\leq C\|\nabla r^{\e}\|_m\big(\|B^{\e}_{app}\|_{m,\infty}\|R^{\e}\|_m+\e^{\frac{3}{2}}\|R^{\e}\|_m\|R^{\e}\|_{L^{\infty}(\cO)} \big)\\
&\leq \lambda \e\|\nabla r^{\e}\|^2_m+C_{\lambda}\|R^{\e}\|^2_m\big(\e\chi_{[0,T]}+\e^2\|R^{\e}\|^2_{L^{\infty}(\cO)}\big),
\end{align*}
for any $\lambda>0$ and an associate constant $C_{\lambda}>0$.

\vspace{0.2cm}
\noindent$\bullet$ \underline{$\e\int_{\cO}\Delta Z^{\alpha} r^{\e}\cdot Z^{\alpha}r^{\e}$.}\vspace{0.2cm}

Similar to \eqref{l28}, by using integration by parts, Korn's inequality and the boundary condition $\mathcal{N}(r^{\e})=N^{\e}$ with $\|N^{\e}\|_{H^6(\cO)}\leq C\t^{-\gamma}$, we find that there exist constants $c_0,C>0$ such that
\begin{equation*}
-\sum_{|\alpha|=m}\int\e\Delta Z^{\alpha}r^{\e}\cdot Z^{\alpha}r^{\e}\geq c_0\e\|\nabla r^{\e}\|^2_{m}-C\e\|r^{\e}\|^2_m-C\e\t^{-2\gamma}.
\end{equation*}
For more details , we refer to \cite[Lemma 5.4]{LSZ1}.

\vspace{0.2cm}
\noindent$\bullet$ \underline{$\int_{\cO}\e[Z^{\alpha},\Delta]r^{\e}\cdot Z^{\alpha}r^{\e}$.}\vspace{0.2cm}

Thanks to \eqref{Z2}, we get, by using an integration by parts,  that
\begin{align*}
\left|\int_{\cO}[\Delta,Z^{\alpha}] r^{\e}\cdot Z^{\alpha}r^{\e}\right|
&\leq C\sum_{|\beta_1|,|\beta_2|\leq m-1}\left|\int_{\cO}(c_{\beta_1}\nabla^2Z^{\beta_1}r^{\e}+c_{\beta_2}\nabla Z^{\beta_2}r^{\e})\cdot Z^{\alpha}r^{\e}\right|\\
&\leq C(\|\nabla r^{\e}\|_m+\| r^{\e}\|_m)\|\nabla r^{\e}\|_{m-1}+C\|\nabla r^{\e}\|_{H^{m-1}(\p\cO)}\|r^{\e}\|_{H^m(\p\cO)}.
\end{align*}
While it follows from Lemma \ref{norpa} and the trace theorem that
\begin{align*}
\|\nabla r^{\e}\|_{H^{m-1}(\p\cO)}\|r^{\e}\|_{H^{m}(\p\cO)}
&\leq C\big(\|r^{\e}\|_{H^{m}(\p\cO)}^2+\t^{-2\gamma}\big)\\
&\leq C\left(\|r^{\e}\|^2_m+\|r^{\e}\|_m\|\nabla r^{\e}\|_m+\t^{-2\gamma} \right)\\
&\leq \lambda \|\nabla r^{\e}\|^2_m+C_{\lambda}\|r^{\e}\|^2_m+C\t^{-2\gamma}.
\end{align*}
As a consequence, it comes out
\begin{align*}
\left|\int_{\cO}\e[\Delta,Z^{\alpha}]r^{\e}\cdot Z^{\alpha}r^{\e}\right|\leq \lambda\e\|\nabla r^{\e}\|^2_m+C_{\lambda}\e(\|\nabla r^{\e}\|^2_{m-1}+\|r^{\e}\|^2_m+\t^{-2\gamma}).
\end{align*}

\vspace{0.2cm}
\noindent$\bullet$ \underline{$\int_{\cO}Z^{\alpha}f^{\e}\cdot Z^{\alpha}r^{\e}$.}\vspace{0.2cm}

By Lemma \ref{appes}, we find that
\begin{align*}
\left|\int_{\cO}Z^{\alpha}f^{\e}\cdot Z^{\alpha}r^{\e}\right|
\leq \|f^{\e}\|_{m} \|r^{\e}\|_{m}
\leq C\e^{\frac{1}{2}}\t^{-\gamma} \|r^{\e}\|_{m}
\leq C\t^{-\gamma}(\e+\|r^{\e}\|_{m}^2).
\end{align*}

By substituting the above inequalities into \eqref{S4eq7}, and then taking summation for all $|\alpha|\leq m$ and
 integrating the resulting inequality   over $ (0,t),$ and finally  choosing a sufficiently
 small constant $\lambda$, we deduce that there exist some constants $c_0,C>0$ such that
 \beq\label{Zar}
\begin{split}
&\|r^{\e}(t)\|^2_m+c_0\e\int_0^t\|\nabla r^{\e}(s)\|^2_mds\leq C\e^{\frac{1}{2}}
+C\sum_{|\alpha|\leq m}\left|\int_0^t\int_{\cO}Z^{\alpha}\nabla\pi^{\e}\cdot Z^{\alpha}r^{\e}dxds\right|\\
&+C\e\int_{0}^t\|\nabla r^{\e}\|^2_{m-1}\,ds
+C\int_0^t\|(r^{\e},R^{\e})\|^2_{m}\left(\e+\s^{-\gamma}+\e^2\|(r^{\e},R^{\e})\|^2_{L^{\infty}(\cO)}\right)\,ds.
\end{split}\eeq
 Then by virtue of Proposition \ref{L2et}, by taking summation of the above inequalities for index $1,2,\cdots,m$, we can eliminate the first term on the second line of \eqref{Zar}.

\noindent{\bf Step 2.} The tangential derivative estimates of $R^{\e}.$

The difficulty lies in the fact that: since there is no diffusion term in $R^{\e}$ equations, we cannot gain
additional one derivative  for $R^{\e}$ by using the equation of \eqref{eqrR}$_2$.
Let $|\alpha|\leq m\in\mathbb{N}_+,$ we get, by applying $Z^{\alpha}$ to \eqref{eqrR}$_2$ and then taking $L^2$
inner product of the resulting equation with $Z^{\alpha}R^{\e},$ that
\beq\label{S4eq8}
\begin{split}
&\frac{1}{2}\frac{d}{dt}\|Z^{\alpha}R^{\e}(t)\|^2_{L^2(\cO)}
+\int_{\cO}u^{\e}\cdot\nabla Z^{\alpha}R^{\e}\cdot Z^{\alpha}R^{\e}
+\int_{\cO}[Z^{\alpha},u^{\e}\cdot\nabla] R^{\e}\cdot Z^{\alpha} R^{\e}\\
&+\int_{\cO}Z^{\alpha}(r^{\e}\cdot\nabla B^{\e}_{app})\cdot Z^{\alpha} R^{\e}
-\int_{\cO}Z^{\alpha}(B^{\e}\cdot\nabla r^{\e})\cdot Z^{\alpha} R^{\e}
-\int_{\cO}Z^{\alpha}(R^{\e}\cdot\nabla u^{\e}_{app})\cdot Z^{\alpha} R^{\e}\\
&+\int_{\cO}Z^{\alpha}(\sigma^{\e} R^{\e})\cdot Z^{\alpha} R^{\e}
=\int_{\cO}Z^{\alpha}F^{\e}\cdot Z^{\alpha} R^{\e}.
\end{split}\eeq

\noindent$\bullet$ \underline{$\int_{\cO}u^{\e}\cdot\nabla Z^{\alpha}R^{\e}\cdot Z^{\alpha}R^{\e}$.} \vspace{0.2cm}

Since $\dive u^{\e}=\sigma^0+\e\sigma^2$ and
$ u^{\e}\cdot\n=0$ on the boundary, we get, by using  integration by parts, that
\begin{align*}
\left|\int_{\cO}u^{\e}\cdot\nabla Z^{\alpha}R^{\e}\cdot Z^{\alpha}R^{\e}\right|
=\left|\int_{\cO}\frac{1}{2}(\sigma^0+\e\sigma^2)|Z^{\alpha}R^{\e}|^2\right|
\leq C\chi_{[0,T]}\|R^{\e}\|_m^2.
\end{align*}

\noindent$\bullet$ \underline{$\int_{\cO} Z^{\alpha}(r^{\e}\cdot\nabla B^{\e}_{app})\cdot Z^{\alpha} R^{\e}$ and $\int_{\cO}Z^{\alpha}(R^{\e}\cdot\nabla u^{\e}_{app})\cdot Z^{\alpha} R^{\e}$.}\vspace{0.2cm}

It follows from  Lemma \ref{appes} that
\beq\label{ZarBR}
\begin{split}
\left|\int_{\cO}Z^{\alpha}(r^{\e}\cdot\nabla B^{\e}_{app})\cdot Z^{\alpha} R^{\e}\right|
&\leq C\|\nabla B^{\e}_{app}\|_{m,\infty}\|r^{\e}\|_m\|R^{\e}\|_m
\leq C\e\chi_{[0,T]}\|r^{\e}\|_m\|R^{\e}\|_m,\\
\left|\int_{\cO}Z^{\alpha}(R^{\e}\cdot\nabla u^{\e}_{app})\cdot Z^{\alpha} R^{\e}\right|
&\leq C\|\nabla u^{\e}_{app}\|_{m,\infty}\|R^{\e}\|^2_m
\leq C\t^{-\gamma}\|R^{\e}\|^2_m.
\end{split}\eeq

\noindent$\bullet$ \underline{$\int_{\cO}[Z^{\alpha},u^{\e}\cdot\nabla] R^{\e}\cdot Z^{\alpha} R^{\e}$.}\vspace{0.2cm}

By Leibniz's formula, there are some smooth functions $c_{\alpha_{1}}$ such that
\begin{align*}
[Z^{\alpha},u^{\e}\cdot\nabla]R^{\e}=\sum_{\alpha_1+\alpha_2=\alpha,\alpha_1\ne 0}c_{\alpha_1}Z^{\alpha_1}(u^{\e}\cdot\nabla)Z^{\alpha_2}R^{\e}.
\end{align*}
We split $u^{\e}$ into $u^{\e}=u^{\e}_{app}+\e^{\frac{3}{2}}r^{\e}.$
By virtue of Proposition \ref{optan}, we write
\begin{align}\label{ueapp1}
u^{\e}_{app}\cdot\nabla=u^{\e}_{app}\cdot \btau^1Z_1+u^{\e}_{app}\cdot \btau^2Z_2+u^{\e}_{app}\cdot \btau^3Z_3+\frac{u^{\e}_{app}\cdot\n}{\varphi}Z_0.
\end{align}
Since $u^{\e}_{app}\cdot\n=0$ on the boundary and $\dive u^{\e}_{app}=\sigma^0+\e\sigma^2-\e^{\frac{3}{2}}d^{\e}$, we deduce from Lemma \ref{unfi} that,
\begin{align}\label{ueapp2}
\|\frac{u^{\e}_{app}\cdot\n}{\varphi}\|_{m,\infty}\leq \|u^{\e}_{app}\|_{m+1,\infty}+\|\sigma^0+\e\sigma^2-\e^{\frac{3}{2}}d^{\e}\|_{m,\infty}\leq C\t^{-\gamma}.
\end{align}
Observing  that $\alpha_1\neq 0,$ by using \eqref{ueapp1}, we obtain
\begin{align}\label{Za1}
\|[Z^{\alpha},u^{\e}_{app}\cdot\nabla]R^{\e}\|_{L^2(\cO)}\leq C\bigl(\|u^{\e}_{app}\|_{m,\infty}+\|\frac{u^{\e}_{app}\cdot\n}{\varphi}\|_{m,\infty}\bigr)\|R^{\e}\|_m\leq C\t^{-\gamma}\|R^{\e}\|_m.
\end{align}
Similar to the proof of \eqref{unabv}  and  noting that $\alpha_1\neq 0, \dive r^{\e}=d^{\e},$ we find that
\begin{align*}
\|[Z^{\alpha},r^{\e}\cdot\nabla]R^{\e}\|_{L^2(\cO)}\leq C\left(\|r^{\e}\|_{2,\infty}\|R^{\e}\|_m+\|r^{\e}\|_{m+1}\|R^{\e}\|_{1,\infty}+\|d^{\e}\|_{m,\infty}\|R^{\e}\|_m\right).
\end{align*}
By summarizing the above two estimates, we achieve
\begin{align*}
\left|\int_{\cO}[Z^{\alpha}, u^{\e}\cdot\nabla]R^{\e}\cdot Z^{\alpha}R^{\e}\right|
&\leq C\bigl(\t^{-\gamma}\|R^{\e}\|_m^2+\e^{\frac{3}{2}}(\|r^{\e}\|_{2,\infty}\|R^{\e}\|_m^2+\|r^{\e}\|_{m+1}\|R^{\e}\|_{1,\infty}\|R^{\e}\|_m)\bigr)\\
&\leq \lambda\e\|r^{\e}\|_{m+1}^2
+C_{\lambda}\|R^{\e}\|^2_m\left(\t^{-\gamma}+\e^{\frac{3}{2}}\|r^{\e}\|_{2,\infty}+\e^2\|R^{\e}\|^2_{1,\infty}\right),
\end{align*}
for any $\lambda>0$ and an associated constant $C_{\lambda}>0$.

\vspace{0.2cm}
\noindent$\bullet$ \underline{$\int_{\cO}Z^{\alpha}(B^{\e}\cdot\nabla r^{\e})\cdot Z^{\alpha} R^{\e}$.}\vspace{0.2cm}

 It follows from Lemma \ref{SGNM} that
\begin{align*}
&\int_{\cO}Z^{\alpha}(B^{\e}\cdot\nabla r^{\e})\cdot Z^{\alpha} R^{\e}\\
&\leq C \|B^{\e}_{app}\|_{m,\infty}\|\nabla r^{\e}\|_m\|R^{\e}\|_m
+C\e^{\frac{3}{2}}\left(\|R^{\e}\|_{L^{\infty}(\cO)}\|\nabla r^{\e}\|_m+\|R^{\e}\|_m\|\nabla r^{\e}\|_{L^{\infty}(\cO)}\right)\|R^{\e}\|_m\\
&\leq \lambda\e\|\nabla r^{\e}\|_m^2
+C\|R^{\e}\|_m^2\left(\e\chi_{[0,T]}+\e^2\|R^{\e}\|^2_{L^{\infty}(\cO)}+\e^{\frac{3}{2}}\|\nabla r^{\e}\|_{L^{\infty}(\cO)}\right),
\end{align*}
for any $\lambda>0$ and an associated constants $C_{\lambda}>0.$

\vspace{0.2cm}
\noindent$\bullet$ \underline{$\int_{\cO}Z^{\alpha}(\sigma^{\e} R^{\e})\cdot Z^{\alpha} R^{\e}$.}\vspace{0.2cm}

Since $\sigma^{\e}=\sigma^0+\e\sigma^2$ is supported in $[0,T]$, one has
\begin{align*}
\left|\int_{\cO}Z^{\alpha}(\sigma^{\e} R^{\e})\cdot Z^{\alpha} R^{\e}\right|
\leq C\chi_{[0,T]}\|R^{\e}\|^2_m.
\end{align*}

\noindent$\bullet$ \underline{$\int_{\cO}Z^{\alpha}F^{\e}\cdot Z^{\alpha}R^{\e}$.}\vspace{0.2cm}

By applying Lemma \ref{appes}, we find
\begin{align*}
\left|\int_{\cO}Z^{\alpha}F^{\e}\cdot Z^{\alpha}R^{\e}\right|
\leq \|F^{\e}\|_{m} \|R^{\e}\|_{m}
\leq C\e^{\frac{1}{4}}\chi_{[0,T]}\|R^{\e}\|_{m}
\leq C\chi_{[0,T]}(\e^{\frac{1}{2}}+\|R^{\e}\|_{m}^2).
\end{align*}

By substituting the above estimates into \eqref{S4eq8} and then taking the sum on all $|\alpha|\leq m$ and   integrating the resulting
inequality over $(0,t)$, we conclude that for any $\lambda>0,$ there exist constants $C>0$ such that
\beq\label{ZaR}
\begin{split}
\|R^{\e}(t)\|_m^2\leq&\ C\e^{\frac{1}{2}}+\lambda\e\int_0^t\|\nabla r^{\e}(s)\|^2_mds\\
&+C_{\lambda}\int_0^t\|(r^{\e}, R^{\e})\|^2_m\left(\e+\s^{-\gamma}+\e^2\|(r^{\e},R^{\e})\|^2_{2,\infty}+\e^{\frac{3}{2}}\|\nabla r^{\e}\|_{L^{\infty}(\cO)}\right).
\end{split}\eeq

We conclude the proof of \eqref{ZarR} by combing \eqref{Zar} with \eqref{ZaR} and by  choosing  $\lambda>0$ to be sufficiently small.
\end{proof}

\subsection{Normal derivatives estimates}
In order to estimate the normal derivatives, as in \cite{LSZ1}, we introduce
\begin{align*}
\eta^{\e}:=\sqrt{\e}\chi(\mathcal{N}(r^{\e})-N^{\e}) \andf Q^{\e}:=\sqrt{\e}\p_{\n} R^{\e},
\end{align*}
where $\chi$ is a cut-off function supported in $\mathcal{V}_{\delta_0}$, $\chi(x)=1$ when $x\in \mathcal{V}_{\delta_0/2}$. By definition, $\eta^{\e}=0$ on the boundary. Moreover, we deduce from the definition of $\mathcal{N}$ and $r^{\e}\cdot\n=0$ on the boundary that
\begin{align}
\label{etaN}\eta^{\e}=\sqrt{\e}\chi\Bigl(\frac{1}{2}(\p_{\n}r^{\e})_{\tan}+(Mr^{\e}-\frac{1}{2}r^{\e}\cdot\nabla\n)_{\tan}-N^{\e}\Bigr).
\end{align}
While it follows from Proposition \ref{optan} that
\begin{align}\label{diver2}
\p_{\n}r^{\e}\cdot\n=d^{\e}-Z_1r^{\e}\cdot \btau^1-Z_2r^{\e}\cdot \btau^2-Z_3r^{\e}\cdot \btau^3.
\end{align}
By combining \eqref{etaN}, \eqref{diver2} with Lemma \ref{appes}, we can prove the following Lemma:
\begin{lemma}\label{eqvr}
{\sl
Let $m\in \mathbb{N}$ with $1\leq m\leq 5$ ,the following equivalences hold true:
\begin{align*}
\|\eta^{\e}\|_{m-1}+\sqrt{\e}(\|r^{\e}\|_m+\t^{-\gamma})
&\approx \sqrt{\e}(\|\nabla r^{\e}\|_{m-1}+\|r^{\e}\|_m+\t^{-\gamma}),\\
\|\eta^{\e}\|_{L^{\infty}(\cO)}+\sqrt{\e}(\|r^{\e}\|_{1,\infty}+\t^{-\gamma})
&\approx \sqrt{\e}(\|\nabla r^{\e}\|_{L^{\infty}(\cO)}+\|r^{\e}\|_{1,\infty}+\t^{-\gamma}).
\end{align*}
}
\end{lemma}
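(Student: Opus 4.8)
The plan is to prove the two claimed equivalences by unwinding the definitions of $\eta^{\e}$ and $Q^{\e}$ and using the algebraic identities \eqref{etaN} and \eqref{diver2}, together with the estimates on $d^{\e}$ and $N^{\e}$ collected in Lemma \ref{appes}. Since $Q^{\e}$ plays no role in the statement as written, the content is entirely about $\eta^{\e}$ versus $\sqrt{\e}\,\p_{\n}r^{\e}$. The key observation is that $\p_{\n}r^{\e}$ splits into its tangential part and its normal component $(\p_{\n}r^{\e}\cdot\n)\n$, and each piece is controlled in terms of $\eta^{\e}$ (plus lower-order tangential and forcing terms) and conversely.

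First I would establish the ``$\lesssim$'' direction of the first equivalence. From \eqref{etaN}, solving for $(\p_{\n}r^{\e})_{\tan}$ where $\chi=1$ gives
\[
(\p_{\n}r^{\e})_{\tan}=\frac{2}{\sqrt{\e}}\chi^{-1}\eta^{\e}-2(Mr^{\e}-\tfrac12 r^{\e}\cdot\nabla\n)_{\tan}+2N^{\e}\quad\text{in }\mathcal{V}_{\delta_0/2},
\]
so $\sqrt{\e}\,\|(\p_{\n}r^{\e})_{\tan}\|_{m-1}\lesssim \|\eta^{\e}\|_{m-1}+\sqrt{\e}\|r^{\e}\|_{m-1}+\sqrt{\e}\|N^{\e}\|_{m-1}$, and by Lemma \ref{appes} (estimate \eqref{app6}) the last term is $\lesssim \sqrt{\e}\,\t^{-\gamma}$; away from $\mathcal{V}_{\delta_0/2}$ one has $\varphi\geq\delta_0/2$ and $\nabla r^{\e}$ is already controlled by $Z$-derivatives there. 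For the normal component, \eqref{diver2} expresses $\p_{\n}r^{\e}\cdot\n$ as $d^{\e}$ minus a sum of tangential derivatives $Z_i r^{\e}\cdot\btau^i$, hence $\sqrt{\e}\|\p_{\n}r^{\e}\cdot\n\|_{m-1}\lesssim \sqrt{\e}(\|r^{\e}\|_m+\|d^{\e}\|_{m-1})\lesssim\sqrt{\e}(\|r^{\e}\|_m+\t^{-\gamma})$ using \eqref{app5}. Adding the two pieces and using $\nabla=\sum_i\btau^i Z_i+\n\p_{\n}$ in $\mathcal{V}_{\delta_0}$ (Proposition \ref{optan}) to recover the full gradient, then absorbing the region away from the boundary, yields $\sqrt{\e}\|\nabla r^{\e}\|_{m-1}\lesssim \|\eta^{\e}\|_{m-1}+\sqrt{\e}(\|r^{\e}\|_m+\t^{-\gamma})$, which gives ``$\lesssim$''.

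For the reverse direction ``$\gtrsim$'', I read \eqref{etaN} directly: $\|\eta^{\e}\|_{m-1}\leq \sqrt{\e}\|\chi\|_{W^{m-1,\infty}}\bigl(\tfrac12\|(\p_{\n}r^{\e})_{\tan}\|_{m-1}+\|(Mr^{\e}-\tfrac12 r^{\e}\cdot\nabla\n)_{\tan}\|_{m-1}+\|N^{\e}\|_{m-1}\bigr)\lesssim \sqrt{\e}\bigl(\|\nabla r^{\e}\|_{m-1}+\|r^{\e}\|_{m-1}+\t^{-\gamma}\bigr)$, where again \eqref{app6} bounds $N^{\e}$ and the smoothness of $M$, $\nabla\n$ and $\chi$ handles the algebraic factors (using the Leibniz rule / Lemma \ref{SGNM} for the products of $Z^\alpha$). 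Adding $\sqrt{\e}(\|r^{\e}\|_m+\t^{-\gamma})$ to both sides and noting $\|r^{\e}\|_{m-1}\leq\|r^{\e}\|_m$ closes the equivalence. The $L^\infty$ version is proved identically, replacing $\|\cdot\|_{j}$ by $\|\cdot\|_{j,\infty}$ throughout and using the $W^{k,\infty}$ bounds in Lemma \ref{appes}; the commutator estimates \eqref{cm} are used to ensure that applying $Z^\alpha$ to \eqref{etaN} and \eqref{diver2} only produces tangential derivatives of $r^{\e}$ of order $\leq m$ plus controlled remainders.

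The main obstacle — really the only point needing care rather than bookkeeping — is the handling of the normal component of $\p_{\n}r^{\e}$ near the boundary: a naive estimate would cost a genuine normal derivative of $r^{\e}$, which we do not control, and the whole point of \eqref{diver2} is that the divergence constraint $\dive r^{\e}=d^{\e}$ trades that normal derivative for tangential ones plus the small source $d^{\e}$. One must be careful that this identity only holds in $\mathcal{V}_{\delta_0}$ where $|\n|=1$, so the argument is genuinely local near $\p\cO$ and must be patched with the interior estimate via the cutoff $\chi$; this is exactly why $\chi$ appears in the definition of $\eta^{\e}$. Everything else is a direct application of Lemma \ref{appes}, Proposition \ref{optan}, Lemma \ref{SGNM} and the trace/Leibniz machinery already set up, mirroring \cite[Lemma 5.2]{LSZ1}.
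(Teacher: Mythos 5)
Your proof is correct and takes essentially the same route the paper indicates: expressing the tangential part of $\p_{\n}r^{\e}$ through \eqref{etaN}, the normal component through the divergence constraint \eqref{diver2}, recombining via Proposition \ref{optan}, and invoking Lemma \ref{appes} to absorb the $d^{\e}$, $N^{\e}$ forcing terms, exactly as in \cite[Lemma 5.5]{LSZ1}. (Minor cosmetic point: your labels ``$\lesssim$'' and ``$\gtrsim$'' for the two directions are interchanged relative to the inequalities you actually prove, but both implications are there.)
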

For more details, we refer to \cite[Lemma 5.5]{LSZ1}. Here $\eta^{\e}$ is nice substitute of $\p_{\n}r^{\e}$. 
\begin{proposition}\label{nores}
{\sl
Let $\gamma=\frac{5}{4}$, $m\in \mathbb{N}$ with $1\leq m\leq 5,$ there exists a constant $c_0,C>0$ such that for any $t\in [0,\frac{T}{\e}],$
\beq\label{S4eq10}
\begin{split}
&\|(\eta^{\e},Q^{\e})(t)\|^2_{m-1}+c_0\e\int_0^t\|\nabla \eta(s)\|^2_{m-1}ds\\
&\leq  C\e^{\frac{1}{2}}
+C\sum_{|\beta|\leq m-1} \left|\int_0^t\int_{\cO} Z^{\beta}(\sqrt{\e}\chi\mathcal{N}(\nabla\pi^{\e}))\cdot Z^{\beta}\eta^{\e}dxds\right|\\
&\quad+C\int_0^t\left(\|(r^{\e},R^{\e})\|^2_m+\|(\eta^{\e},Q^{\e})\|^2_{m-1}\right)\\
&\hspace{1.5cm}\cdot\left(\e+\s^{-\gamma}+\e^2\|(r^{\e},R^{\e})\|^2_{2,\infty}+\e^2\|\eta^{\e}\|^2_{1,\infty}
+\e^2\|Q^{\e}\|^2_{L^{\infty}(\cO)}\right)\,ds.
\end{split}\eeq
}\end{proposition}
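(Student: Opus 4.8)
The plan is to derive the estimate \eqref{S4eq10} by applying the operator $Z^\beta$ with $|\beta| \le m-1$ to the equations governing $\eta^\e$ and $Q^\e$, taking $L^2$ inner products, and carefully tracking every term the same way as in the proof of Proposition \ref{tanes}. First I would derive the evolution equation for $\eta^\e = \sqrt{\e}\chi(\CN(r^\e)-N^\e)$: applying $\CN$ (roughly, taking the symmetric normal derivative) to \eqref{eqrR}$_1$, multiplying by $\sqrt\e\chi$, and commuting through $\p_t - \e\Delta$; the point is that $\CN(r^\e) = N^\e$ on $\p\cO$ makes $\eta^\e$ vanish on the boundary, so $\eta^\e$ solves a parabolic problem with homogeneous Dirichlet condition and a source built from: the convection term $u^\e\cdot\nabla r^\e$ (which contributes $\sqrt\e\chi\CN$ of it, plus commutators with $\CN$ and $\chi$), the stretching terms $r^\e\cdot\nabla u^\e_{app}$, $R^\e\cdot\nabla B^\e_{app}$, $B^\e\cdot\nabla R^\e$, the pressure term $\sqrt\e\chi\CN(\nabla\pi^\e)$, the forcing $\sqrt\e\chi\CN(f^\e)$, and lower-order terms coming from $[\Delta,\chi]$, $[\Delta,\CN]$ and $N^\e$. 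For $Q^\e = \sqrt\e\p_\n R^\e$, I would apply $\p_\n$ to \eqref{eqrR}$_2$ and multiply by $\sqrt\e$: since the magnetic equation is pure transport, $Q^\e$ satisfies a transport equation (no good sign from diffusion, none needed) whose right-hand side contains $\sqrt\e\p_\n$ of $u^\e\cdot\nabla R^\e$ (so $u^\e\cdot\nabla Q^\e$ plus $\sqrt\e(\p_\n u^\e)\cdot\nabla R^\e$), the terms $\sqrt\e\p_\n(r^\e\cdot\nabla B^\e_{app})$, $\sqrt\e\p_\n(B^\e\cdot\nabla r^\e)$, $\sqrt\e\p_\n(R^\e\cdot\nabla u^\e_{app})$, $\sqrt\e\p_\n(\sigma^\e R^\e)$, and $\sqrt\e\p_\n F^\e$.

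Next I would apply $Z^\beta$, $|\beta|\le m-1$, and pair with $Z^\beta\eta^\e$ (resp.\ $Z^\beta Q^\e$), then estimate each resulting integral. The diffusion term for $\eta^\e$ is handled as in \eqref{l28} and \cite[Lemma 5.4]{LSZ1}: integration by parts plus Korn's inequality give $c_0\e\|\nabla\eta^\e\|^2_{m-1}$ up to $C\e\|\eta^\e\|^2_{m-1}$ and $C\e\t^{-2\gamma}$ (here the boundary term vanishes because $\eta^\e|_{\p\cO}=0$, which is the whole reason for introducing $\eta^\e$ rather than $\p_\n r^\e$ directly). The convective term $u^\e\cdot\nabla Z^\beta\eta^\e$ integrates by parts against itself using $\dive u^\e = \sigma^0+\e\sigma^2$ and $u^\e\cdot\n=0$, costing only $C\chi_{[0,T]}\|\eta^\e\|^2_{m-1}$; the commutators $[Z^\beta,u^\e\cdot\nabla]$ are treated exactly as in Step 2 of Proposition \ref{tanes}, splitting $u^\e = u^\e_{app}+\e^{3/2}r^\e$, writing $u^\e_{app}\cdot\nabla$ via Proposition \ref{optan} and \eqref{ueapp2}, and absorbing the $\e^{3/2}r^\e\cdot\nabla$ part with the norms $\|r^\e\|_{2,\infty}$, $\|\nabla r^\e\|_{L^\infty}$. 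All the remaining terms are lower order: the stretching terms are bounded by $\|\nabla u^\e_{app}\|_{m,\infty}$, $\|\nabla B^\e_{app}\|_{m,\infty}$ times products of norms via Lemma \ref{SGNM}; the terms $\sqrt\e\p_\n(B^\e\cdot\nabla r^\e)$ and the analogous $\CN$-terms produce $\sqrt\e\nabla^2 r^\e$-type quantities which, via Lemma \ref{eqvr}, are controlled by $\|\nabla\eta^\e\|_{m-1}+\sqrt\e\|r^\e\|_m$, so a Young inequality with small parameter $\lambda$ lets one absorb them into $\lambda\e\|\nabla\eta^\e\|^2_{m-1}$; the forcing terms give $C\e^{1/2}$ using \eqref{app4} and \eqref{appF1}; the $[\Delta,\chi]$, $[\Delta,\CN]$, $[\Delta,Z^\beta]$ commutators and the $N^\e$ contributions are handled with Lemma \ref{norpa} and Lemma \ref{appes} as in the proof of Proposition \ref{tanes}. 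Summing over $|\beta|\le m-1$, integrating in time over $(0,t)$ with $t\le T/\e$, choosing $\lambda$ small to absorb the $\lambda\e\|\nabla\eta^\e\|^2_{m-1}$ terms, and invoking Proposition \ref{L2et} and Proposition \ref{tanes} to absorb the $\|(r^\e,R^\e)\|^2_m$-with-integrable-weight terms already controlled, yields \eqref{S4eq10}, keeping the pressure contribution $\sqrt\e\chi\CN(\nabla\pi^\e)$ as an explicit term to be dealt with later.

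The main obstacle I anticipate is the interplay between the $\eta^\e$ and $Q^\e$ estimates through the cross terms $\sqrt\e\chi\CN(B^\e\cdot\nabla R^\e)$ in the $\eta^\e$ equation and $\sqrt\e\p_\n(B^\e\cdot\nabla r^\e)$ in the $Q^\e$ equation: each of these, after differentiating, involves one normal derivative of the \emph{other} unknown at top conormal order, and since there is no smoothing for $R^\e$ one cannot simply absorb $\sqrt\e\p_\n\nabla r^\e = \sqrt\e\nabla^2 r^\e$ into a dissipation budget for $Q^\e$. The resolution, as in \cite{LSZ1}, is to integrate by parts to move the extra derivative off $R^\e$ onto $B^\e$ (using $\dive B^\e=0$, $B^\e\cdot\n=0$) wherever possible, and otherwise to pair the $\eta^\e$-diffusion gain against the $Q^\e$-transport loss: the worst term in the $\eta^\e$ estimate, of the form $\sqrt\e\|B^\e_{app}\|_{m,\infty}\|\nabla Q^\e\|\cdots$, must be balanced by splitting $B^\e_{app} = \e B^2$ (so it carries a factor $\e\chi_{[0,T]}$) and using the equivalence of Lemma \ref{eqvr} to trade $\sqrt\e\nabla^2 r^\e$ for $\nabla\eta^\e$, then a $\lambda$-Young inequality closes the loop — this is exactly where the factor $\e\chi_{[0,T]}$ in \eqref{appB1}--\eqref{appB2} and the structural gain from the time-scaling are essential, and getting all powers of $\e$ to line up (so that the surviving coefficients are $\e$, $\s^{-\gamma}$, $\e^2\|(r^\e,R^\e)\|^2_{2,\infty}$, $\e^2\|\eta^\e\|^2_{1,\infty}$, $\e^2\|Q^\e\|^2_{L^\infty}$ as stated) is the delicate bookkeeping of the proof.
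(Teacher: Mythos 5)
Your proposal follows essentially the same route as the paper's proof: derive the parabolic equation for $\eta^\e$ and the transport equation for $Q^\e$, apply $Z^\beta$ for $|\beta|\le m-1$, pair in $L^2$, estimate term by term (with the $I_1,\dots,I_7$ and $J_1,\dots,J_6$ breakdowns), and close with a small $\lambda$-Young inequality, keeping the pressure contribution explicit. One small imprecision: Korn's inequality is not needed for the $\eta^\e$-diffusion term — since $\eta^\e=0$ on $\p\cO$, the integration by parts of $-\e\Delta Z^\beta\eta^\e$ against $Z^\beta\eta^\e$ yields $\e\|\nabla Z^\beta\eta^\e\|^2_{L^2}$ exactly, with no boundary contribution and no coercivity lemma required; Korn is only needed in the $r^\e$ estimates \eqref{l28} where the boundary condition is of Navier type. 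Likewise the "worst term" you flag never produces $\|\nabla Q^\e\|$: because $\dive B^\e_{app}=0$ and $B^\e_{app}\cdot\n=0$, the operator $B^\e_{app}\cdot\nabla$ acts tangentially (Lemma \ref{unfi}), so one only meets $\|R^\e\|_m$. Neither point changes the argument.
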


\begin{proof} We divide the proof of this proposition into the following two steps:

\noindent{\bf Step 1.}  The estimate of $\|\eta^{\e}(t)\|^2_{m-1}.$

In view of \eqref{eqrR}, $\eta^{\e}$ satisfies
\beq\label{eqeta}
\begin{split}
\p_t\eta^{\e}-\e\Delta \eta^{\e}=\ &\sqrt{\e}\chi\mathcal{N}(f^{\e}-\nabla\pi^{\e}-u^{\e}\cdot\nabla r^{\e}-r^{\e}\cdot\nabla u^{\e}_{app}+B^{\e}\cdot\nabla R^{\e}+R^{\e}\cdot\nabla B^{\e}_{app})\\
&\quad -\e^{\frac{3}{2}}[\Delta,\chi\mathcal{N}]r^{\e}
-(\p_t-\e\Delta)(\sqrt{\e}\chi N^{\e}).
\end{split}\eeq
Let $|\beta|\leq m-1$, since $\eta^{\e}=0$ on the boundary, by applying $Z^{\beta}$ to \eqref{eqeta} and then taking
 $L^2$ inner product of the resulting equation with $Z^{\beta}\eta^{\e},$ we find that
 \beq\label{S4eq11}
\begin{split}
&\frac{1}{2}\frac{d}{dt}\|Z^{\beta}\eta^{\e}(t)\|^2_{L^2(\cO)}
+\e\|\nabla Z^{\beta}\eta^{\e}\|^2_{L^2(\cO)}\\
&=\int_{\cO}Z^{\beta}(\sqrt{\e}\chi\CN(f^{\e}-\nabla\pi^{\e}-u^{\e}\cdot\nabla r^{\e}-r^{\e}\cdot\nabla u^{\e}_{app}+B^{\e}\cdot\nabla R^{\e}+ R^{\e}\cdot\nabla B^{\e}_{app}))\cdot Z^{\beta}\eta^{\e}\\
&\quad -\int_{\cO}\e^{\frac{3}{2}}[\Delta,Z^{\beta}\chi\CN]r^{\e}\cdot Z^{\beta}\eta^{\e}
-\int_{\cO}Z^{\beta}(\p_t-\e\Delta)(\sqrt{\e}\chi N^{\e})\cdot Z^{\beta}\eta^{\e}.
\end{split}\eeq

\noindent$\bullet$ \underline{$I_1:=\int_{\cO}Z^{\beta}(\sqrt{\e}\chi\CN(f^{\e}))\cdot Z^{\beta}\eta^{\e}$.}

It is easy to observe that
\begin{align*}
|I_1|\leq \|Z^{\beta}(\sqrt{\e}\chi\CN(f^{\e}))\|_{L^2(\cO)}\|\eta^{\e}\|_{m-1}
\leq \sqrt{\e}\left(\|f^{\e}\|_{m-1}+\|\nabla f^{\e}\|_{m-1}\right)\|\eta^{\e}\|_{m-1},
\end{align*}
from which and Lemma \ref{appes}, we infer
\begin{align*}
|I_1|\leq C\e^{\frac{3}{4}}\t^{-\gamma}\|\eta^{\e}\|_{m-1}
\leq C\e^{\frac{3}{2}}\t^{-\gamma}+C\t^{-\gamma}\|\eta^{\e}\|_{m-1}^2.
\end{align*}

\noindent$\bullet$ \underline{$I_2:=\int_{\cO}Z^{\beta}(\p_t-\e\Delta)(\sqrt{\e}\chi N^{\e})\cdot Z^{\beta}\eta^{\e}$.}\vspace{0.2cm}

Note that $m\leq 5,$ by Sobolev embedding and  Lemma \ref{appes}, $\|N^{\e}\|_{5,\infty}\leq C\|N^{\e}\|_{H^7(\cO)}\leq C\t^{-\gamma}$. So that
we infer
\beq\label{ptN}
\|Z^{\beta}(\p_t-\e\Delta)(\sqrt{\e}\chi N^{\e})\|_{L^2(\cO)} \leq C\e^{\frac{1}{2}}\big(\|\p_t N^{\e}\|_{4}+\e\|N^{\e}\|_{6}\big)
\leq C\e^{\frac{1}{2}}\t^{-\gamma},
\eeq
which implies
\begin{align*}
|I_2|\leq C\e\t^{-\gamma}+C\t^{-\gamma}\|\eta^{\e}\|^2_{m-1}.
\end{align*}

\noindent$\bullet$ \underline{$I_3:=\int_{\cO}Z^{\beta}(\sqrt{\e}\chi\CN(u^{\e}\cdot\nabla r^{\e}))\cdot Z^{\beta}\eta^{\e}$.}\vspace{0.2cm}

Note that $\eta^{\e}=0$ on the boundary, we can transfer one derivative in $\mathcal{N}$ to $Z^{\beta}\eta^{\e}$ by using integration by parts.
So that we infer
\begin{align*}
|I_3|&\leq C\sqrt{\e}\|u^{\e}\cdot\nabla r^{\e}\|_{m-1}\|\nabla \eta^{\e}\|_{m-1}\\
&\leq C\sqrt{\e}\left(\|u^{\e}_{app}\cdot\nabla r^{\e}\|_{m-1}+\e^{\frac{3}{2}}\|r^{\e}\cdot\nabla r^{\e}\|_{m-1}\right)\|\nabla\eta^{\e}\|_{m-1}.
\end{align*}
It follows from \eqref{ueapp1} and \eqref{ueapp2} that
\begin{align*}
\|u^{\e}_{app}\cdot\nabla r^{\e}\|_{m-1}
\leq C\bigl(\|u^{\e}_{app}\|_{m-1,\infty}+\|\frac{u^{\e}_{app}\cdot\n}{\varphi}\|_{m-1,\infty}\bigr)\|r^{\e}\|_{m}
\leq C\t^{-\gamma}\|r^{\e}\|_{m}.
\end{align*}
Since $\dive r^{\e}=d^{\e}$ and $r^{\e}$ is tangent to the boundary, we deduce from \eqref{unabv} that
\begin{align}\label{rnabr}
\|r^{\e}\cdot\nabla r^{\e}\|_{m-1}
\leq C\|r^{\e}\|_m(\|r^{\e}\|_{1,\infty}+\|d^{\e}\|_{m-1,\infty})
\leq C\|r^{\e}\|_m(\|r^{\e}\|_{1,\infty}+\t^{-\gamma}).
\end{align}
By combining the above inequalities with Lemma \ref{eqvr}, we find
\begin{align*}
|I_3|\leq \lambda\e\|\nabla\eta^{\e}\|^2_{m-1}
+C_{\lambda}\|r^{\e}\|^2_{m}
\left(\t^{-2\gamma}+\e^3\|r^{\e}\|^2_{1,\infty}\right),
\end{align*}
for any $\lambda>0$ and an associated constant $C_{\lambda}>0$.

\vspace{0.2cm}
\noindent$\bullet$ \underline{$I_4:=\int_{\cO}Z^{\beta}(\sqrt{\e}\chi\CN(r^{\e}\cdot\nabla u^{\e}_{app}))\cdot Z^{\beta}\eta^{\e}$.}\vspace{0.2cm}

We get, by applying Lemma \ref{eqvr} and Lemma \ref{appes}, that
\beq\label{n2uapp1}
\begin{split}%
&\|Z^{\beta}(\sqrt{\e}\chi\CN(r^{\e}\cdot\nabla u^{\e}_{app})\|\\
&\leq C\sqrt{\e}(\|\nabla r^{\e}\|_{m-1}+\| r^{\e}\|_{m-1})\|\nabla u^{\e}_{app}\|_{m-1,\infty}+C\sqrt{\e}\|r^{\e}\|_{m-1}\|\nabla^2 u^{\e}_{app}\|_{m-1,\infty}\\
&\leq C\t^{-\gamma}(\|\eta^{\e}\|_{m-1}+\|r^{\e}\|_m+\sqrt{\e}\t^{-\gamma}),
\end{split}\eeq
from which, we infer
\begin{align*}
|I_4|\leq C\e\t^{-\gamma}+C\t^{-\gamma}(\|\eta^{\e}\|^2_{m-1}+\|r^{\e}\|^2_m).
\end{align*}

\noindent$\bullet$ \underline{$I_5:=\int_{\cO}Z^{\beta}(\sqrt{\e}\chi\CN(B^{\e}\cdot\nabla R^{\e}))\cdot Z^{\beta}\eta^{\e}$.}\vspace{0.2cm}

Similar to the estimate of $I_3$, by using integration by parts, one has
\begin{align*}
|I_5|&\leq C\sqrt{\e}\|B^{\e}\cdot\nabla R^{\e}\|_{m-1}\|\nabla \eta^{\e}\|_{m-1}\\
&\leq C\sqrt{\e}\left(\|B^{\e}_{app}\cdot\nabla R^{\e}\|_{m-1}
+\e^{\frac{3}{2}}\|R^{\e}\cdot\nabla R^{\e}\|_{m-1}\right)\|\nabla \eta^{\e}\|_{m-1}.
\end{align*}
Note that $\dive B^{\e}_{app}=\e\dive B^2=0, m\leq 5,$ similar to \eqref{ueapp1} and \eqref{ueapp2}, we get, by applying Lemma \ref{unfi} and Lemma \ref{appes}, that
\beq \label{BnabR}
\begin{split}
\|B^{\e}_{app}\cdot\nabla R^{\e}\|_{m-1}
&\leq C\Bigl(\|B^{\e}_{app}\|_{m-1,\infty}+\|\frac{B^{\e}_{app}\cdot\n}{\varphi}\|_{m-1,\infty}\Bigr)\|R^{\e}\|_m\\
&\leq C\|B^{\e}_{app}\|_{m,\infty}\|R^{\e}\|_m\leq C\e\chi_{[0,T]}\|R^{\e}\|_m.
\end{split}\eeq
Since $\dive R^{\e}=0$ and $R^{\e}$ is tangent to the boundary, we deduce from Lemma \ref{unfi} that
\begin{align}\label{RnabR}
\|R^{\e}\cdot\nabla R^{\e}\|_{m-1}
\leq C\|R^{\e}\|_{m}\|R^{\e}\|_{1,\infty}.
\end{align}
By summarizing the above inequalities, we obtain
\begin{align*}
|I_5|\leq \lambda\e\|\nabla\eta^{\e}\|^2_{m-1}
+C_{\lambda}\|R^{\e}\|^2_{m}
\left(\e^2\t^{-2\gamma}+\e^3\|R^{\e}\|^2_{1,\infty}\right),
\end{align*}
for any $\lambda>0$ and an associated constant $C_{\lambda}>0$.

\vspace{0.2cm}
\noindent$\bullet$ \underline{$I_6:=\int_{\cO}Z^{\beta}(\sqrt{\e}\chi\CN(R^{\e}\cdot\nabla B^{\e}_{app}))\cdot Z^{\beta}\eta^{\e}$.}\vspace{0.2cm}

Similar to the estimate of $I_3$, we get, by using integration by parts, that
\begin{align*}
|I_6| &\leq C\sqrt{\e}\|R^{\e}\cdot\nabla B^{\e}_{app}\|_{m-1}\|\nabla \eta^{\e}\|_{m-1}\\
&\leq C\sqrt{\e}\|R^{\e}\|_{m-1}\|\nabla B^{\e}_{app}\|_{m-1,\infty}\|\nabla\eta^{\e}\|_{m-1}
\leq \lambda\e\|\nabla \eta^{\e}\|_{m-1}^2+C_{\lambda}\e^2\chi_{[0,T]}\|R^{\e}\|_{m}^2,
\end{align*}
for any $\lambda>0$ and an associated constant $C_{\lambda}>0$.

\vspace{0.2cm}
\noindent$\bullet$ \underline{$I_7:=\int_{\cO}\e^{\frac{3}{2}}[\Delta,Z^{\beta}\chi\CN]r^{\e}\cdot Z^{\beta}\eta^{\e}$.}\vspace{0.2cm}

If $m=1, $ there are at most two order normal derivatives on $[\Delta,\chi\mathcal{N}]r^{\e}.$ So that one has
\begin{align*}
|I_7|\leq C\e^{\frac{3}{2}}\|\nabla^2 r^{\e}\|_{L^2(\cO)}\|\eta^{\e}\|_{L^2(\cO)}.
\end{align*}
In view of  Proposition \ref{optan}, we write
\begin{align}\label{nabre}
\nabla r^{\e}=\btau^1Z_1r^{\e}+\btau^2Z_2r^{\e}+\btau^3Z_3r^{\e}+\n\p_{\n}r^{\e}.
\end{align}
Then by combining \eqref{etaN} with \eqref{diver2}, we obtain
\begin{align*}
\sqrt{\e}\|\nabla^2 r^{\e}\|_{L^2(\cO)}&\leq C(\|\nabla \eta^{\e}\|_{L^2(\cO)}+\sqrt{\e}\|\nabla r^{\e}\|_{1}+\t^{-\gamma})\\
&\leq C(\|\nabla\eta^{\e}\|_{L^2(\cO)}+\|\eta^{\e}\|_1+\|r^{\e}\|_2+\t^{-\gamma}),
\end{align*}
which implies
\begin{align*}
|I_7|&\leq C\e(\|\nabla \eta^{\e}\|_{L^2(\cO)}+\|\eta^{\e}\|_{1}+\|r^{\e}\|_2+\t^{-\gamma})\|\eta^{\e}\|_{L^2(\cO)}\\
&\leq \lambda\e\|\nabla\eta^{\e}\|^2_{L^2(\cO)}+C_{\lambda}\e(\|r^{\e}\|^2_{2}+\|\eta^{\e}\|^2_{1})+C\e\t^{-2\gamma},
\end{align*}
for any constant $\lambda>0$ and an associated $C_{\lambda}>0.$

If $|\beta|=m-1$ with $ 2\leq m\leq 5,$  we observe from \eqref{Delf}  that there are at most three order normal derivatives in $[\Delta,Z^{\beta}\chi\mathcal{N}]r^{\e}$. Since $\eta^{\e}=0$ on $\p\cO$ implies $Z^{\beta}\eta^{\e}=0$ on $\p\cO$, by using integration by parts, we can transfer one derivative to $Z^{\beta}\eta^{\e}$, so that
\begin{align*}
|I_7|\leq C\e^{\frac{3}{2}}\|\nabla^2r^{\e}\|_{m-2}\|\nabla\eta^{\e}\|_{m-1}.
\end{align*}
Yet it follows from \eqref{nabre}, \eqref{etaN} and \eqref{diver2} that
\begin{align}\label{nab2r}
\sqrt{\e}\|\nabla^2 r^{\e}\|_{m-2}\leq C(\|\nabla\eta^{\e}\|_{m-2}+\|\eta^{\e}\|_{m-1}+\|r^{\e}\|_m+\t^{-\gamma}),
\end{align}
which implies
\begin{align*}
|I_7|\leq \lambda\e\|\nabla \eta^{\e}\|^2_{m-1}
+ C_{\lambda}\e(\|\nabla\eta^{\e}\|^2_{m-1}+\|\eta\|^2_{m-1}+\|r^{\e}\|^2_{m}+\t^{-2\gamma}),
\end{align*}
for any $\lambda>0$ and an associated constant $C_{\lambda}>0.$

Notice from Lemma \ref{appes} that the initial data $\eta^{\e}_0$ satisfies
$\|\eta^{\e}_0\|^2_{m-1}\leq C\e^{\frac{1}{2}}.$ Then we get, by first substituting the above estimates into \eqref{S4eq11}
and  taking summation for $|\beta|\leq m-1,$ and then integrating the resulting inequality  over $(0,t)$ and choosing $\lambda>0$
 to be a sufficiently small constant, that
 \beq \label{eqetae}
\begin{split}
&\|\eta^{\e}(t)\|^2_{m-1}+ c_0\e\int_0^t\|\nabla \eta^{\e}(s)\|^2_{m-1}ds\\
&\leq C\e^{\frac{1}{2}}
+\sum_{|\beta|\leq m-1}\left|\int_0^t\int_{\cO}Z^{\beta}(\sqrt{\e}\chi\mathcal{N}(\nabla\pi^{\e}))\cdot Z^{\beta}\eta^{\e}dxds\right|\\
&\quad+\int_0^t C\left(\|(r^{\e},R^{\e})\|^2_m+\|\eta^{\e}\|^2_{m-1}\right)
\left(\e+\s^{-\gamma}+\e^3\|(r^{\e},R^{\e}\|^2_{1,\infty}\right)\,ds.
\end{split}\eeq

\noindent{\bf Step 2.}  The  normal derivatives of $R^{\e}$.

Notice that $Q^{\e}=\sqrt{\e}\p_{\n} R^{\e}.$
Let $|\beta|\leq m-1$ with $m\geq 1$, by applying $\sqrt{\e}Z^{\beta}\p_{\n}$ to \eqref{eqrR}$_2$ and then taking $L^2$ inner product
of the resulting equation with $Z^{\beta}Q^{\e},$  that
\beq \label{S4eq12}
\begin{split}
\frac{1}{2}\frac{d}{dt}\|Z^{\beta}Q^{\e}\|^2_{L^2(\cO)}
&+\int_{\cO}\sqrt{\e}Z^{\beta}\p_{\n}\left(u^{\e}\cdot\nabla R^{\e}+ r^{\e}\cdot\nabla B^{\e}_{app}-B^{\e}\cdot\nabla r^{\e}\right.\\
&\left.-R^{\e}\cdot\nabla u^{\e}_{app}+\sigma^{\e} R^{\e}\right)\cdot Z^{\beta}Q^{\e}
=\int_{\cO}\sqrt{\e}Z^{\beta}\p_{\n} F^{\e}\cdot Z^{\beta}Q^{\e}.
\end{split}\eeq

\noindent $\bullet$ \underline{$J_1:=\int_{\cO}\sqrt{\e}Z^{\beta}\p_{\n} F^{\e}\cdot Z^{\beta}Q^{\e}$.}

It follows  from Lemma \ref{appes} that
\begin{align*}
|J_1|\leq \|\sqrt{\e}Z^{\beta}\p_{\n} F^{\e}\|_{L^2(\cO)}\|Q^{\e}\|_{m-1}\leq C\e^{\frac{1}{4}}\chi_{[0,T]}\|Q^{\e}\|_{m-1}
\leq C\chi_{[0,T]}(\e^{\frac{1}{2}}+\|Q^{\e}\|_{m-1}^2).
\end{align*}

\noindent $\bullet$ \underline{$J_2:=\int_{\cO}\sqrt{\e}Z^{\beta}\p_{\n} (\sigma^{\e} R^{\e})\cdot Z^{\beta}Q^{\e}$.}

Since $\sigma^{\e}=\sigma^0+\e\sigma^2$ is supported in $[0,T]$, we observe that
\begin{align*}
|J_2|\leq C\chi_{[0,T]}(t)(\|Q^{\e}\|^2_{m-1}+\|R^{\e}\|^2_{m-1}).
\end{align*}

\noindent $\bullet$ \underline{$J_3:=\int_{\cO}\sqrt{\e}Z^{\beta}\p_{\n} (r^{\e}\cdot\nabla B^{\e}_{app})\cdot Z^{\beta}Q^{\e}$.}\vspace{0.2cm}

By using Leibniz's formula, Lemma \ref{appes} and Lemma \ref{eqvr}, we find that
\begin{align*}
|J_3|&\leq C\sqrt{\e}\left(\|\p_{\n} r^{\e}\|_{m-1}\|\nabla B^{\e}_{app}\|_{m-1,\infty}
+\|r^{\e}\|_{m-1}\|\p_{\n}\nabla B^{\e}_{app}\|_{m-1,\infty}\right)\|Q^{\e}\|_{m-1}\\
&\leq C\e\chi_{[0,T]}\left(\|\eta^{\e}\|_{m-1}+\sqrt{\e}\|r^{\e}\|_{m}+\sqrt{\e}\t^{-\gamma}\right)\|Q^{\e}\|_{m-1}\\
&\leq C\e\chi_{[0,T]}\big(\e + \|Q^{\e}\|^2_{m-1}+\|\eta^{\e}\|^2_{m-1}+\e\|r^{\e}\|^2_{m}\big).
\end{align*}

\noindent $\bullet$ \underline{$J_4:=\int_{\cO}\sqrt{\e}Z^{\beta}\p_{\n} (R^{\e}\cdot\nabla u^{\e}_{app})\cdot Z^{\beta}Q^{\e}$.}\vspace{0.2cm}

Once again we get, by using Leibniz's formula and  Lemma \ref{appes}, that
\begin{align}\label{n2uapp2}
|J_4|&\leq C\left(\sqrt{\e}\|\p_{\n} R^{\e}\|_{m-1}\|\nabla u^{\e}_{app}\|_{m-1,\infty}
+\|R^{\e}\|_{m-1}\|\sqrt{\e}\p_{\n}\nabla u^{\e}_{app}\|_{m-1,\infty}\right)\|Q^{\e}\|_{m-1}\\
\nonumber&\leq C\t^{-\gamma}(\|Q^{\e}\|^2_{m-1}+\|R^{\e}\|^2_{m-1}).
\end{align}

\noindent $\bullet$ \underline{$J_5:=\int_{\cO}\sqrt{\e}Z^{\beta}\p_{\n} (u^{\e}\cdot\nabla R^{\e})\cdot Z^{\beta}Q^{\e}$.}

By Leibniz's formula, we write
\begin{align*}
J_5=\int_{\cO}Z^{\beta}(\sqrt{\e}\p_{\n}u^{\e}\cdot\nabla R^{\e}
-\sqrt{\e}u^{\e}\cdot\nabla\n\cdot\nabla R^{\e}
+u^{\e}\cdot\nabla Q^{\e})\cdot Z^{\beta}Q^{\e}.
\end{align*}
We then decompose $J_5$ into
$J_5=J_{51}+J_{52}+J_{53}+J_{54}+J_{55}+J_{56}$ with
\begin{align*}
&J_{51}:=\int_{\cO}\sqrt{\e}Z^{\beta}(\p_{\n}u^{\e}_{app}\cdot\nabla R^{\e}
-u^{\e}_{app}\cdot\nabla\n\cdot\nabla R^{\e})\cdot Z^{\beta}Q^{\e},\\
&\begin{aligned}
J_{52}&:=\int_{\cO}\e^2 Z^{\beta}(\p_{\n}r^{\e}\cdot\nabla R^{\e})\cdot Z^{\beta}Q^{\e},\qquad
&&J_{53}:=-\int_{\cO}\e^2 Z^{\beta}(r^{\e}\cdot\nabla \n\cdot \nabla R^{\e})\cdot Z^{\beta}Q^{\e}\\
J_{54}&:=\int_{\cO}u^{\e}\cdot\nabla Z^{\beta}Q^{\e}\cdot Z^{\beta}Q^{\e}, \qquad
&&J_{55}:=\int_{\cO}[Z^{\beta},u^{\e}_{app}\cdot\nabla]Q^{\e}\cdot Z^{\beta}Q^{\e},
\end{aligned}\\
&J_{56}:=\int_{\cO}\e^{\frac{3}{2}}[Z^{\beta},r^{\e}\cdot\nabla]Q^{\e}\cdot Z^{\beta}Q^{\e}
\end{align*}

By Leibniz's formula, one has 
\begin{align*}
|J_{51}|&\leq C\sqrt{\e}\left(\|\p_{\n}u^{\e}_{app}\|_{m-1,\infty}+\|u^{\e}_{app}\|_{m-1,\infty}\right)\|\nabla R^{\e}\|_{m-1}\|Q^{\e}\|_{m-1}\\
&\leq \t^{-\gamma}(\|Q^{\e}\|^2_{m-1}+\|R^{\e}\|^2_{m}).
\end{align*}

We get, by using Proposition \ref{optan} and $\dive r^{\e}=d^{\e}$, that
\begin{gather}
\label{pnrR}\p_{\n}r^{\e}\cdot\nabla R^{\e}=\sum_{1\leq i\leq 3}\p_{\n}r^{\e}\cdot \btau^iZ_iR^{\e}+(d^{\e}-\sum_{1\leq i\leq 3}Z_ir^{\e}\cdot \btau^i)\p_{\n}R^{\e}.
\end{gather}
So that by employing Lemma \ref{SGNM}, Lemma \ref{eqvr} and Lemma \ref{appes}, we obtain
\begin{align*}
&\e^2\|\p_{\n}r^{\e}\cdot\nabla R^{\e}\|_{m-1}\\
&\leq C\e^2\big(\|\p_{\n}r^{\e}\|_{L^{\infty}(\cO)}\|R^{\e}\|_{m}+\|\p_{\n}r^{\e}\|_{m-1}\|R^{\e}\|_{1,\infty}
+\|d^{\e}\|_{m-1,\infty}\|\p_{\n}R^{\e}\|_{m-1}\\
&\hspace{1.3cm}+\|r^{\e}\|_m\|\p_{\n}R^{\e}\|_{L^{\infty}(\cO)}+\|r^{\e}\|_{1,\infty}\|\p_{\n}R^{\e}\|_{m-1}\big)\\
&\leq C\e^{\frac{3}{2}}\big((\|\eta^{\e}\|_{L^{\infty}(\cO)}+\sqrt{\e}\|r^{\e}\|_{1,\infty}+\sqrt{\e}\t^{-\gamma})\|R^{\e}\|_{m}\\
&\hspace{1.3cm}+(\|\eta^{\e}\|_{m-1}+\sqrt{\e}\|r^{\e}\|_m+\sqrt{\e}\t^{-\gamma})\|R^{\e}\|_{1,\infty}\\
&\hspace{1.3cm}+\t^{-\gamma}\|Q^{\e}\|_{m-1}+\|r^{\e}\|_{m}\|Q^{\e}\|_{L^{\infty}(\cO)}+\|r^{\e}\|_{1,\infty}\|Q^{\e}\|_{m-1}\big).
\end{align*}
Hence
\begin{align*}
|J_{52}|\leq C\e^2\t^{-\gamma}+C&\left(\|(r^{\e}, R^{\e})\|^2_m+\|(\eta^{\e},Q^{\e})\|^2_{m-1}\right)\\
&\cdot\left(\e+\e^2\|(r^{\e},R^{\e})\|^2_{1,\infty}+\e^2\|(\eta^{\e},Q^{\e})\|_{L^{\infty}(\cO)}^2\right).
\end{align*}

Notice that $|\n|=1$ in $\mathcal{V}_{\delta_0},$ we get, by using Proposition \ref{optan}, that
\begin{align}\label{rpnR}
r^{\e}\cdot\nabla\n\cdot\nabla R^{\e}=\sum_{1\leq i\leq 3}r^{\e}\cdot\nabla\n\cdot \btau^i Z_i R^{\e}.
\end{align}
By using Lemma \ref{SGNM}, we infer
\begin{align*}
\|r^{\e}\cdot\nabla\n\cdot \nabla R^{\e}\|_{m-1}
\leq C\left( \|r^{\e}\|_{m}\|R^{\e}\|_{L^{\infty}(\cO)}+\|r^{\e}\|_{L^{\infty}(\cO)}\|R^{\e}\|_{m}\right),
\end{align*}
which together with Lemma \ref{eqvr} ensures that
\begin{align*}
|J_{53}|\leq C\e^2\left(\|(r^{\e}, R^{\e})\|^2_m+\|Q^{\e}\|^2_{m-1}\right)\|(r^{\e},R^{\e})\|_{L^{\infty}(\cO)}.
\end{align*}

Since $u^{\e}$ is tangential to the boundary with $\dive u^{\e}=\sigma^{\e}$, we get, by using integration by parts, that
\begin{align*}
|J_{54}|=\frac{1}{2}\left|\int_{\cO}\sigma^{\e}|Z^{\beta}Q^{\e}|^2\right|
\leq C\chi_{[0,T]}\|Q^{\e}\|^2_{m-1}.
\end{align*}

Similar to \eqref{Za1}, one has
\begin{align*}
|J_{55}|\leq C\t^{-\gamma}\|Q^{\e}\|^2_{m-1}.
\end{align*}

Similar to \eqref{unabv}, we get, by using Proposition \ref{optan}, Lemma \ref{SGNM} and  Lemma \ref{unfi},  that
\begin{align*}
|J_{56}|&\leq C \e^{\frac{3}{2}}\left(\|r^{\e}\|_{2,\infty}\|Q^{\e}\|_{m-1}+\|r^{\e}\|_{m+1}\|Q^{\e}\|_{L^{\infty}(\cO)}
+\|d^{\e}\|_{m-1,\infty}\|Q^{\e}\|_{m-1}\right)\|Q^{\e}\|_{m-1}\\
&\leq  \lambda\e\|r^{\e}\|^2_{m+1}+C_{\lambda}\|Q^{\e}\|_{m-1}^2
\left(\e+\t^{-\gamma}+\e^2\|r^{\e}\|^2_{2,\infty}+\e^2\|Q^{\e}\|^2_{L^{\infty}(\cO)}\right).
\end{align*}

By summarizing the above inequalities, we deduce that
\begin{align*}
&|J_5|\leq C\e^2\t^{-\gamma}+\lambda\e\|r^{\e}\|^2_{m+1}
+C_{\lambda}\left(\|(r^{\e},R^{\e})\|^2_{m}+\|(\eta^{\e},Q^{\e})\|^2_{m-1}\right)\\
&\hspace{3.9cm}\cdot\left(\e+\t^{-\gamma}+\e^2\|(r^{\e},R^{\e})\|^2_{2,\infty})
+\e^2\|(\eta^{\e},Q^{\e})\|^2_{L^{\infty}(\cO)}\right).
\end{align*}

\noindent $\bullet$ \underline{$J_6:=\int_{\cO}\sqrt{\e}Z^{\beta}\p_{\n} (B^{\e}\cdot\nabla r^{\e})\cdot Z^{\beta}Q^{\e}$.}

By Leibniz's formula, we write
\begin{align*}
J_6=\int_{\cO}Z^{\beta}\left(\sqrt{\e}\p_{\n}B^{\e}\cdot\nabla r^{\e}
-\sqrt{\e}B^{\e}\cdot\nabla\n\cdot\nabla r^{\e}
+B^{\e}\cdot\nabla (\sqrt{\e}\p_{\n}r^{\e})\right)\cdot Z^{\beta}Q^{\e}.
\end{align*}
We decompose $J_6=J_{61}+J_{62}+J_{63}+J_{64}$ with
\begin{align*}
&J_{61}:=\int_{\cO}\sqrt{\e}Z^{\beta}\left(\p_{\n} B^{\e}_{app}\cdot\nabla r^{\e}
-B^{\e}_{app}\cdot\nabla\n\cdot\nabla r^{\e}\right)\cdot Z^{\beta}Q^{\e},\\
&J_{62}:=\int_{\cO}\e^2Z^{\beta}\left(\p_{\n}R^{\e}\cdot\nabla r^{\e}-R^{\e}\cdot\nabla\n\cdot\nabla r^{\e}\right)\cdot Z^{\beta}Q^{\e},\\
&J_{63}:=\sqrt{\e}\int_{\cO} Z^{\beta}(B^{\e}_{app}\cdot\nabla\p_{\n}r^{\e})\cdot Z^{\beta}Q^{\e},\\
&J_{64}:=\e^2\int_{\cO}Z^{\beta}(R^{\e}\cdot\nabla\p_{\n}r^{\e})\cdot Z^{\beta}Q^{\e}.
\end{align*}

Similar to the estimate of $J_{51}$, we get, by applying  Lemma \ref{eqvr}, that
\begin{align*}
|J_{61}|&\leq C\sqrt{\e}\left(\|\p_{\n}B^{\e}_{app}\|_{m-1,\infty}+\|B^{\e}_{app}\|_{m-1}\right)\|\nabla r^{\e}\|_{m-1}\|Q^{\e}\|_{m-1}\\
&\leq C\e\chi_{[0,T]}+C\e\chi_{[0,T]}(\|r^{\e}\|^2_{m}+\|(\eta^{\e},Q^{\e})\|_{m-1}^2).
\end{align*}

Similar to \eqref{pnrR} and \eqref{rpnR}, we obtain
\begin{gather*}
\p_{\n}R^{\e}\cdot\nabla r^{\e}=\sum_{1\leq i\leq 3}\left(\p_{\n}R^{\e}\cdot\btau^iZ_ir^{\e}-Z_{i}R^{\e}\cdot \btau^i\p_{\n}r^{\e}\right),\\
R^{\e}\cdot\nabla\n\cdot\nabla r^{\e}=\sum_{1\leq i\leq 3}R^{\e}\cdot\nabla\n\cdot\btau^iZ_ir^{\e}.
\end{gather*}

Similar to the estimate of $J_{52}$ and $J_{53}$  we get, by using Lemma \ref{SGNM} and  Lemma \ref{eqvr}, that
\begin{align*}
|J_{62}|\leq C\e^2\t^{-\gamma}+C&\left(\|(r^{\e}, R^{\e})\|^2_m+\|(\eta^{\e},Q^{\e})\|^2_{m-1}\right)\\
&\cdot\left(\e+\e^2\|(r^{\e},R^{\e})\|^2_{1,\infty}+\e^2\|(\eta^{\e},Q^{\e})\|_{L^{\infty}(\cO)}^2\right).
\end{align*}

Whereas it follows from \eqref{nab2r} that
\begin{align*}
|J_{63}|&\leq C\|B^{\e}_{app}\|_{m-1,\infty}\|\sqrt{\e}\nabla\p_{\n}r^{\e}\|_{m-1}\|Q^{\e}\|_{m-1}\\
&\leq \e\chi_{[0,T]}\left(\|\nabla\eta^{\e}\|_{m-1}+\sqrt{\e}\|r^{\e}\|_{m}+\sqrt{\e}\t^{-\gamma}\right)\|Q^{\e}\|_{m-1}\\
&\leq \lambda \e\|\nabla\eta^{\e}\|^2_{m-1}+C_{\lambda}\e\chi_{[0,T]}\left(\|Q^{\e}\|^2_{m-1}+\e\|r^{\e}\|^2_m+\e\right),
\end{align*}
for any $\lambda>0$ and an associated constant $C_{\lambda}>0$.

Finally similar to the proof of \eqref{unabv} and the estimate of $J_{56}$, by applying Proposition \ref{optan}, Lemmas \ref{SGNM}, Lemma \ref{unfi} and Lemma \ref{eqvr}, we obtain
\begin{align*}
|J_{64}|&\leq C\e^{2}\left(\|R^{\e}\|_m\|\p_{\n}r^{\e}\|_{1,\infty}+\|R^{\e}\|_{1,\infty}\|\p_{\n}r^{\e}\|_{m}\right)\|Q^{\e}\|_{m-1}\\
&\leq \lambda\e\|\nabla r^{\e}\|^2_m+C_{\lambda}
(\|Q^{\e}\|^2_{m-1}+\|R^{\e}\|^2_m)(\e+\e^2\|(r^{\e},R^{\e})\|_{2,\infty}^2+\e^2\|\eta^{\e}\|^2_{1,\infty}).
\end{align*}

By summarizing the above estimates, we arrive at
\begin{align*}
|J_6|\leq \lambda\e\|\nabla r^{\e}\|^2_m+\lambda\e\|\nabla \eta^{\e}\|^2_{m-1}
+C\e^2\t^{-\gamma}+C_{\lambda}\left(\|(r^{\e},R^{\e})\|^2_m+\|(\eta^{\e},Q^{\e})\|^2_{m-1}\right)\\
\cdot\left(\e+\t^{-\gamma}+\e^2\|(r^{\e},R^{\e})\|^2_{2,\infty}+\e^2\|\eta^{\e}\|^2_{1,\infty}+\e^2\|Q^{\e}\|^2_{L^{\infty}}\right).
\end{align*}

By substituting
the above estimates into \eqref{S4eq12} and   summing up the resulting inequalities for $|\beta|\leq m-1,$ and then integrating it over $(0,t)$, we
achieve
\beq\label{eqQe}
\begin{split}
\|Q^{\e}(t)\|^2_{m-1}\leq &C\e^{\frac{1}{2}}+C\lambda\e\int_0^t\left(\|\nabla r^{\e}\|^2_{m-1}+\|\nabla\eta^{\e}\|^2_{m-1}\right)ds\\
&+\int_0^t\left(\|(r^{\e},R^{\e})\|^2_m+\|(\eta^{\e},Q^{\e})\|^2_{m-1}\right)\\
&\hspace{1cm}\cdot\left(\e+\s^{-\gamma}+\e^2\|(r^{\e},R^{\e})\|^2_{2,\infty}
+\e^2\|\eta^{\e}\|^2_{1,\infty}+\e^2\|Q^{\e}\|^2_{L^{\infty}}\right)\,ds.
\end{split}\eeq

By summing up \eqref{eqetae} and \eqref{eqQe} and taking  $\lambda$ to be sufficiently small, we achieve \eqref{S4eq10}.
This concludes the proof of Proposition \ref{nores}.
\end{proof}

\subsection{Estimates of the pressure}
 By virtue of the $r^{\e}$  equation in \eqref{eqrR}, we shall decompose the scalar pressure function $\pi^{\e}$ into five parts $\pi^{\e}=\sum_{i=1}^5\pi^{\e}_i,$  where $\pi_i^{\e}$ $ (1\leq i\leq 5)$ are determined respectively by
\begin{equation*}
\left\{
\begin{aligned}
\Delta\pi_1^{\e}&=\dive f^{\e},\quad&&\text{ in }\cO,\\
\p_{\n}\pi_1^{\e}&=f^{\e}\cdot\n,\quad&&\text{ on }\p\cO,
\end{aligned}\right.
\quad \text{ and }\quad
\left\{
\begin{aligned}
\Delta\pi_2^{\e}&=-\p_t d^{\e},\quad&&\text{ in }\cO,\\
\p_{\n}\pi_2^{\e}&=0,\quad&&\text{ on }\p\cO,
\end{aligned}
\right.
\end{equation*}
and
\begin{equation}\label{dpi3}
\left\{
\begin{aligned}
\Delta\pi_3^{\e} &=-\dive(u^{\e}\cdot\nabla r^{\e}+r^{\e}\cdot\nabla u^{\e}_{app}),   \qquad    &&\text{ in }\cO,\\
\p_{\n}\pi_3^{\e}&=-(u^{\e}\cdot\nabla r^{\e}+r^{\e}\cdot\nabla u^{\e}_{app})\cdot\n, \qquad    &&\text{ on }\p\cO,
\end{aligned}\right.
\end{equation}
and
\begin{equation*}\label{dpi4}
\left\{
\begin{aligned}
\Delta\pi_4^{\e} &=\e\Delta d^{\e},\qquad&&\text{ in }\cO,\\
\p_{\n}\pi_4^{\e}&=\e\Delta r^{\e}\cdot\n,\qquad&&\text{ on }\p\cO,
\end{aligned}\right.
\end{equation*}
and
\begin{equation*}\label{dpi5}
\left\{
\begin{aligned}
\Delta\pi^{\e}_5 &=\dive(B^{\e}\cdot\nabla R^{\e}+R^{\e}\cdot\nabla B^{\e}_{app}),\qquad&&\text{ in }\cO,\\
\p_{\n}\pi^{\e}_5&=(B^{\e}\cdot\nabla R^{\e}+ R^{\e}\cdot\nabla B^{\e}_{app})\cdot\n,\qquad&&\text{ on }\p\cO.
\end{aligned}\right.
\end{equation*}

To handle the estimates of $\nabla \pi^{\e}_i,$ we need the following lemma from \cite{LSZ1}:
\begin{lemma}[Lemma 5.7 of \cite{LSZ1}]\label{estpik}
{\sl
Let $\pi $ be determined by
\begin{equation*}
\left\{
\begin{aligned}
\Delta\pi&= \dive f,\qquad &&\text{ in }\cO,\\
\p_{\n}\pi&= f\cdot\n, \qquad&&\text{ on }\p\cO.
\end{aligned}\right.
\end{equation*}
Then for all non-negative integral $m$, there is a constant $C>0$ such that
\begin{align*}
\|\nabla\pi\|_m\leq C\|f\|_m.
\end{align*}
}\end{lemma}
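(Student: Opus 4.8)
The final statement to be proved is Lemma~\ref{estpik}, quoted from \cite{LSZ1}: if $\pi$ solves the Neumann problem $\Delta\pi=\dive f$ in $\cO$ with $\p_{\n}\pi=f\cdot\n$ on $\p\cO$, then $\|\nabla\pi\|_m\le C\|f\|_m$ in the conormal norm.

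\begin{proof}[\bf Plan of proof]
The plan is to reduce the conormal estimate to the standard $L^2$ energy estimate for the Neumann problem by commuting the tangential fields $Z^\alpha$ through the equation. First I would record the base case $m=0$: multiply $\Delta\pi=\dive f$ by $\pi$, integrate by parts over $\cO$, and use the boundary condition $\p_{\n}\pi=f\cdot\n$ to cancel the two boundary integrals, obtaining $\|\nabla\pi\|_{L^2(\cO)}^2=-\int_\cO f\cdot\nabla\pi$, hence $\|\nabla\pi\|_{L^2(\cO)}\le\|f\|_{L^2(\cO)}=\|f\|_0$. The key structural point, which makes the Neumann boundary condition survive differentiation, is that the fields $\btau^j$ ($1\le j\le 5$) are tangent to $\p\cO$ and divergence free in $\cO$, so each $Z_j$ is a divergence-free vector field tangent to the boundary; consequently $Z^\alpha(\nabla\pi\cdot\n)$ and the normal trace of $Z^\alpha(\dots)$ transform in a compatible way.

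Next I would argue by induction on $m$. Apply $Z^\alpha$ with $|\alpha|=m$ to the interior equation, writing $\Delta Z^\alpha\pi=Z^\alpha\dive f+[\Delta,Z^\alpha]\pi$. Using \eqref{Z2} and the Leibniz commutator formula recalled just above \eqref{cm}, the bracket $[\Delta,Z^\alpha]\pi$ is a sum of terms $c_\beta\nabla^2 Z^\beta\pi+c_\gamma\nabla Z^\gamma\pi$ with $|\beta|,|\gamma|\le m-1$; each $\nabla^2 Z^\beta\pi$ can itself be written, via integration by parts once it is paired against $Z^\alpha\pi$, as $\nabla Z^\beta\pi$ tested against $\nabla Z^\alpha\pi$ plus a boundary term, so that after the energy pairing all such contributions are absorbed into $\|\nabla Z^\beta\pi\|_{L^2}\le\|\nabla\pi\|_{m-1}$ (by the inductive hypothesis, $\le C\|f\|_{m-1}\le C\|f\|_m$) times $\|\nabla Z^\alpha\pi\|_{L^2}$, and then a Young inequality splits off a small multiple of $\|\nabla Z^\alpha\pi\|_{L^2}^2$. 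For the boundary term I would use that $Z^\alpha\pi$ has $Z^\alpha(\p_{\n}\pi)=Z^\alpha(f\cdot\n)$ plus lower-order tangential commutators (again by \eqref{cm}, $[\p_{\n},Z_i]$ being tangential), so the boundary integral $\int_{\p\cO}(\p_{\n}Z^\alpha\pi)Z^\alpha\pi$ equals $\int_{\p\cO}Z^\alpha(f\cdot\n)Z^\alpha\pi$ up to terms controlled by $\|f\|_{m}$ and $\|\nabla\pi\|_{m-1}$; rewriting this boundary integral as an interior integral of a divergence (as done for instance in the $I_7$-type manipulations earlier in the paper) converts it to something bounded by $C\|f\|_m(\|\nabla\pi\|_m+\|\pi\|_m)$, and the $\|\pi\|_m$ piece is harmless because $\pi$ is defined only up to a constant and one may normalize $\int_\cO\pi=0$, then use Poincaré together with the already-controlled $\|\nabla\pi\|_{m-1}$ at each stage.

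Collecting these, the weighted energy identity for $Z^\alpha\pi$ gives $\|\nabla Z^\alpha\pi\|_{L^2(\cO)}^2\le C\|f\|_m\|\nabla Z^\alpha\pi\|_{L^2(\cO)}+C\|f\|_m^2$, whence $\|\nabla Z^\alpha\pi\|_{L^2(\cO)}\le C\|f\|_m$; summing over $|\alpha|\le m$ yields $\|\nabla\pi\|_m\le C\|f\|_m$. The main obstacle I expect is the careful bookkeeping of the boundary terms produced by the commutators $[\Delta,Z^\alpha]$ and $[\p_{\n},Z^\alpha]$: one has to verify that the normal-derivative traces appearing there are always of conormal order $\le m$ in $f$ (using the original boundary condition to trade a normal derivative of $\pi$ for $f\cdot\n$) and $\le m-1$ in $\nabla\pi$, so that the induction closes with no loss of derivatives. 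Since this is exactly the computation carried out in \cite[Lemma 5.7]{LSZ1}, I would invoke that reference for the routine details once the structure above is in place.
\end{proof}
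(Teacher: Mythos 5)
The paper does not prove this lemma; it simply cites \cite[Lemma~5.7]{LSZ1}, so there is no in-paper argument to compare your sketch against. Your outline (energy pairing, base case $m=0$, induction via $[\Delta,Z^\alpha]$ and $[\p_{\n},Z^\alpha]$ commutators, boundary cancellation from the Neumann condition) is the natural direct approach and is correct in broad structure; the base case is fine, and the interior commutator terms are handled correctly by integration by parts plus Young's inequality.

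There is, however, a concrete gap in the inductive step exactly where you wave your hands. After pairing $\Delta Z^\alpha\pi$ with $Z^\alpha\pi$ and integrating $\dive Z^\alpha f$ by parts, the leading boundary contribution $\int_{\p\cO}(\p_{\n}Z^\alpha\pi - Z^\alpha f\cdot\n)\,Z^\alpha\pi$ does indeed lose its top-order part via $Z^\alpha(\p_{\n}\pi - f\cdot\n)|_{\p\cO}=0$, but the residue consists of $[\p_{\n},Z^\alpha]\pi$ and $\sum_{\alpha'<\alpha}\binom{\alpha}{\alpha'}Z^{\alpha'}f\cdot Z^{\alpha-\alpha'}\n$ on $\p\cO$; additional boundary integrals of the form $\int_{\p\cO}(\cdots)\,\nabla Z^{\beta}\pi\cdot\n\,Z^\alpha\pi$ and $\int_{\p\cO}(\cdots)\,Z^{\gamma}f\,Z^\alpha\pi$ arise when you integrate the commutators $[\Delta,Z^\alpha]\pi$ and $[\dive,Z^\alpha]f$ by parts. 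Your claim that these are of conormal order $\le m$ in $f$ and $\le m-1$ in $\nabla\pi$ ``so that the induction closes with no loss of derivatives'' is not substantiated: applying the $I_7$-type conversion $\int_{\p\cO}h\,d\sigma=\int_{\cO}\dive(\n\,\tilde h)\,dx$ to these residues produces, via the $\n\cdot\nabla$ part, terms like $\p_{\n}Z^{\alpha'}f$ with $|\alpha'|\le m-1$ and $\p_{\n}[\p_{\n},Z^\alpha]\pi$ --- genuine normal derivatives that are not controlled by $\|f\|_m$ or by $\|\nabla\pi\|_{m}$ in the conormal scale. Absent a cancellation (e.g.\ an algebraic identity grouping the residual boundary terms into $Z^{\alpha'}\bigl((\nabla\pi-f)\cdot\n\bigr)|_{\p\cO}=0$, or a reformulation entirely within the variational identity $\int_{\cO}(\nabla\pi-f)\cdot\nabla\phi=0$ that never leaves boundary integrals) this step as written does not close, and one cannot simply call it ``routine.'' That said, since both you and the paper ultimately defer to \cite[Lemma~5.7]{LSZ1} for the actual argument, the appropriate conclusion is that your outline captures the intended strategy but the crucial boundary bookkeeping --- which is the entire content of the lemma beyond the $m=0$ case --- is not verified in the proposal.
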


\begin{proposition}\label{espi1}
{\sl
Let $\gamma=\frac{5}{4}$ and $m\in\mathbb{N}$ with $m\leq 5.$ Then for any $t\in \bigl[0,\frac{T}{\e}\bigr]$, there exists a constant $C>0$ so that there hold
\begin{subequations} \label{S4eq14}
\begin{align}
\label{estpi1}\|\nabla \pi^{\e}_1\|_m &\leq C\e^{\frac{1}{4}}\t^{-\gamma},\\
\label{estpi2}\|\nabla \pi^{\e}_2\|_m &\leq C\e^{\frac{1}{4}}\t^{-\gamma},\\
\label{estpi3}\|\nabla \pi^{\e}_3\|_m &\leq C\bigl(\e^{\frac{1}{4}}\t^{-\gamma}+\|r^{\e}\|_m\t^{-\gamma}+\e^{\frac{3}{2}}\|r^{\e}\|_{m+1}(\|r^{\e}\|_{1,\infty}+\t^{-\gamma})\bigr),\\
\label{estpi4}\|\nabla \pi^{\e}_4\|_m &\leq C\bigl(\e^{\frac{1}{4}}\t^{-\gamma}+\e\|\nabla r^{\e}\|_m\bigr),\\
\label{estpi5}\|\nabla \pi^{\e}_5\|_{m-1}&\leq C\bigl(\e\t^{-\gamma}+\e^{\frac{3}{2}}\|R^{\e}\|_{1,\infty}\bigr)\|R^{\e}\|_m,\qquad \text{ for }m \geq 1.
\end{align}
\end{subequations}
}\end{proposition}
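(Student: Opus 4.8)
The plan is to treat the five Neumann pressures $\pi^\e_i$ one by one, with Lemma~\ref{estpik} as the main engine: when the Poisson data of a pressure is in divergence form $(\dive f,\,f\cdot\n)$ it gives the derivative-loss-free conormal bound $\|\nabla\pi\|_m\le C\|f\|_m$. The auxiliary tools are the approximate-solution bounds of Lemma~\ref{appes}, the product inequality Lemma~\ref{SGNM}, the transport-term structure of Proposition~\ref{optan} and Lemma~\ref{unfi}, and the change-of-variables estimate Lemma~\ref{IS}. For $i=1,3,5$ the right-hand side of the problem for $\pi^\e_i$ is already of the form $(\dive f_i,\,f_i\cdot\n)$, so Lemma~\ref{estpik} applies verbatim and only $\|f_i\|_m$ has to be estimated; for $i=2,4$ the raw data $-\p_t d^\e$ and $\e\Delta d^\e$ must first be rewritten in divergence form.

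For $\pi^\e_1$ one has $f_1=f^\e$ and \eqref{app4} gives $\|\nabla\pi^\e_1\|_m\le C\|f^\e\|_5\le C\e^{1/2}\t^{-\gamma}\le C\e^{1/4}\t^{-\gamma}$. For $\pi^\e_3$, with $f_3=-(u^\e\cdot\nabla r^\e+r^\e\cdot\nabla u^\e_{app})$, I would split $u^\e=u^\e_{app}+\e^{3/2}r^\e$ and use that $\dive u^\e=\sigma^0+\e\sigma^2$, the correction $d^\e$ cancelling between $\dive u^\e_{app}=\sigma^0+\e\sigma^2-\e^{3/2}d^\e$ and $\e^{3/2}\dive r^\e=\e^{3/2}d^\e$; by Proposition~\ref{optan} and Lemma~\ref{unfi} the operator $u^\e_{app}\cdot\nabla$ is tangential up to the coefficient $u^\e_{app}\cdot\n/\varphi$, which \eqref{unphi1} and \eqref{app5} bound by $C\t^{-\gamma}$, so together with Lemma~\ref{SGNM} and \eqref{app2} the linear part is $\le C\t^{-\gamma}\|r^\e\|_m$, while the genuinely nonlinear piece $\e^{3/2}r^\e\cdot\nabla r^\e$ is handled by \eqref{unabv} with $\dive r^\e=d^\e$ and produces the term $\e^{3/2}\|r^\e\|_{m+1}(\|r^\e\|_{1,\infty}+\t^{-\gamma})$ of \eqref{estpi3}; the extra conormal order $\|r^\e\|_{m+1}\simeq\|\nabla r^\e\|_m$ is harmless because, once \eqref{estpi3} is fed into Propositions~\ref{tanes} and \ref{nores}, it is paired with $r^\e$ and weighted by $\e^{3/2}$, hence absorbed by the dissipation $\e\int\|\nabla r^\e\|_m^2$. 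For $\pi^\e_5$ the scheme is identical with $B^\e=B^\e_{app}+\e^{3/2}R^\e$ and $\dive B^\e=0$; since $B^\e_{app}=\e B^2$ is supported in $[0,T]$ of size $\e\chi_{[0,T]}$ (cf.\ \eqref{appB2}) and the remaining contributions carry $\e^{3/2}$, every term acquires a factor $\e\t^{-\gamma}$ or $\e^{3/2}\|R^\e\|_{1,\infty}$, which is \eqref{estpi5}.

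For $\pi^\e_2$ I would exhibit a vector field $G^\e_2$ with $-\p_t d^\e=\dive G^\e_2$, $G^\e_2\cdot\n=0$ on $\p\cO$, and $\|G^\e_2\|_m\le C\e^{1/4}\t^{-\gamma}$, and then apply Lemma~\ref{estpik}. The construction of $G^\e_2$ uses the explicit near-boundary layer expression $d^\e=-\{\dive_x(v^3+\n w^3)\}_\e$, the chain-rule identities for the operator $\{\cdot\}_\e$, and the structural relations $v^i\cdot\n=0$, $\p_z w^2=\dive_x v^1$, $\p_z w^3=\dive(v^2+\n w^2)$ from Section~\ref{cstexp}, which make the normal components of the successive layer corrections drop out (this is precisely the cancellation that makes $\dive u^\e_{app}$ accurate to order $\e^{3/2}$); the harmonic backflow potentials $\phi^2,\phi^3$ are then added to $G^\e_2$ to kill its boundary normal trace, which does not change $\dive G^\e_2$ since $\Delta\phi^2=\Delta\phi^3=0$. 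The $C^1_\gamma$-in-time regularity of the profiles in \eqref{vw23} together with Lemma~\ref{IS} deliver the bound. For $\pi^\e_4$, split $\pi^\e_4=\pi^\e_{4a}+\pi^\e_{4b}$: $\pi^\e_{4a}$ has data $(\dive(\e\nabla d^\e),\,\e\nabla d^\e\cdot\n)$, so Lemma~\ref{estpik} and the layer structure of $d^\e$ (cf.\ \eqref{app5}, \eqref{vw23}) give $\|\nabla\pi^\e_{4a}\|_m\le C\e\|\nabla d^\e\|_m\le C\e^{3/4}\t^{-\gamma}$, while $\pi^\e_{4b}$ is harmonic with Neumann datum $\e(\Delta r^\e-\nabla d^\e)\cdot\n=-\e\,(\nabla\times(\nabla\times r^\e))\cdot\n$, obtained from $\Delta=\nabla\dive-\nabla\times\nabla\times$ and $\dive r^\e=d^\e$; on $\p\cO$ this is a first-order tangential (surface-curl) expression in the tangential part of $\nabla\times r^\e$, hence, after Lemma~\ref{norpa} and the trace theorem, is controlled by $C\e(\|r^\e\|_m+\|\nabla r^\e\|_m+\t^{-\gamma})$, and standard elliptic estimates for the harmonic Neumann problem give \eqref{estpi4} (the term $\e\|r^\e\|_m$ being of lower, absorbable order).

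The main obstacle is the divergence-form reduction for $\pi^\e_2$ and $\pi^\e_4$: reaching the advertised $\e^{1/4}\t^{-\gamma}$ (resp.\ the $\e$ prefactor) is impossible with crude bounds, because the naive divergence representations of $\p_t d^\e$ and $\Delta d^\e$ carry negative powers of $\e$; one must therefore organise the computation exactly along the layer cancellations of Section~\ref{cstexp} and keep careful track of the harmonic backflow corrections that enforce the Neumann compatibility. Everything else reduces to routine bookkeeping with Lemmas~\ref{estpik}, \ref{appes}, \ref{SGNM} and \ref{unfi}.
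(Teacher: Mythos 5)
Your treatments of \eqref{estpi1} and \eqref{estpi5} match the paper, and for \eqref{estpi2}, \eqref{estpi4} (which the paper itself defers to [LSZ1, Prop.~5.8]) your sketch is a sensible plan, though you correctly note it is not a complete proof without carrying out the layer-cancellation bookkeeping. The genuine gap is in \eqref{estpi3}. You apply Lemma~\ref{estpik} directly with $f_3=-(u^{\e}\cdot\nabla r^{\e}+r^{\e}\cdot\nabla u^{\e}_{app})$ and then claim that, because $u^{\e}_{app}\cdot\nabla$ is a tangential operator with $O(\t^{-\gamma})$ coefficients, the linear part is bounded by $C\t^{-\gamma}\|r^{\e}\|_m$. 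That is off by one conormal order: writing $u^{\e}_{app}\cdot\nabla=\sum_i c_i Z_i$, the quantity $\|u^{\e}_{app}\cdot\nabla r^{\e}\|_m$ requires $m$ more tangential derivatives of $Z_ir^{\e}$, i.e.\ $\|r^{\e}\|_{m+1}\sim\|r^{\e}\|_m+\|\nabla r^{\e}\|_m$, and this time there is no prefactor $\e^{\frac32}$ to absorb the $\|\nabla r^{\e}\|_m$ contribution into the dissipation $\e\int\|\nabla r^{\e}\|_m^2$.

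The paper avoids this by rewriting the Poisson data before invoking Lemma~\ref{estpik}: from $\dive r^{\e}=d^{\e}$, $\dive u^{\e}=\sigma^0+\e\sigma^2$ one has
\begin{equation*}
\dive(u^{\e}\cdot\nabla r^{\e})=\dive\bigl(r^{\e}\cdot\nabla u^{\e}+d^{\e}u^{\e}-(\sigma^0+\e\sigma^2)r^{\e}\bigr),
\end{equation*}
and, using $u^{\e}\cdot\n=r^{\e}\cdot\n=0$ together with the symmetry of $\nabla\n$, the Neumann trace also transfers: $u^{\e}\cdot\nabla r^{\e}\cdot\n=r^{\e}\cdot\nabla u^{\e}\cdot\n$ on $\p\cO$. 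This lets one replace $f_3$ by $-(r^{\e}\cdot\nabla(2u^{\e}_{app}+\e^{\frac32}r^{\e})+d^{\e}u^{\e}-(\sigma^0+\e\sigma^2)r^{\e})$, whose linear-in-$r^{\e}$ terms carry the derivative on $u^{\e}_{app}$ rather than on $r^{\e}$ and hence cost only $\|r^{\e}\|_m\t^{-\gamma}$, while the nonlinear piece still carries the absorbable $\e^{\frac32}$ weight. Without this algebraic step your argument would yield only $\|\nabla\pi^{\e}_3\|_m\le C(\e^{\frac14}\t^{-\gamma}+\t^{-\gamma}\|r^{\e}\|_{m+1}+\cdots)$, which is too weak to close the energy estimate in the subsequent corollary.
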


\begin{proof}
 \eqref{estpi1} follows directly from  Lemma \ref{estpik} and \eqref{app4}. The proof of inequalities \eqref{estpi2} and \eqref{estpi4} can be found in \cite[Proposition 5.8]{LSZ1}.

Since $\dive r^{\e}=d^{\e}, \dive u^{\e}=\sigma^0+\e\sigma^2,$ we write
\begin{align*}
\dive (u^{\e}\cdot\nabla r^{\e})=\dive \bigl(r^{\e}\cdot\nabla u^{\e}+d^{\e}u^{\e}-(\sigma^0+\e\sigma^2)r^{\e}\bigr).
\end{align*}
On the other hand, notice that $u^{\e}\cdot\n =r^{\e}\cdot\n=0$ on $\p\cO$ and $\nabla\n$ is symmetric, one has
\begin{align*}
u^{\e}\cdot\nabla r^{\e}\cdot\n=-u^{\e}\cdot\nabla\n\cdot r^{\e}=-r^{\e}\cdot\nabla\n\cdot u^{\e}=r^{\e}\cdot\nabla u^{\e}\cdot\n\qquad \text{ on }\p\cO.
\end{align*}
So that in view of \eqref{dpi3}, $\pi^{\e}_3$ satisfies
\begin{equation*}
\left\{
\begin{aligned}
\Delta\pi^{\e}_3&=-\dive\bigl(r^{\e}\cdot\nabla(2u^{\e}_{app}+\e^{\frac{3}{2}}r^{\e})+d^{\e}u^{\e}-(\sigma^0+\e\sigma^2)r^{\e}\bigr),
\quad &&\text{ in }\cO,\\
\p_{\n}\pi^{\e}_3&=-r^{\e}\cdot\nabla(2u^{\e}_{app}+\e^{\frac{3}{2}}r^{\e})\cdot\n,\quad&&\text{ on }\p\cO,
\end{aligned}
\right.
\end{equation*}
from which, Lemma \ref{estpik} and \eqref{rnabr}, we infer
\begin{align*}
\|\nabla\pi^{\e}_3\|_{m}&\leq C\bigl(\|r^{\e}\cdot\nabla u^{\e}_{app}\|_m+\e^{\frac{3}{2}}\|r^{\e}\cdot\nabla r^{\e}\|_m+\|d^{\e}u^{\e}\|_m+\|(\sigma^0+\e\sigma^2) r^{\e}\|_m\bigr)\\
&\leq C\bigl(\e^{\frac{1}{4}}\t^{-\gamma}+\|r^{\e}\|_m\t^{-\gamma}+\e^{\frac{3}{2}}\|r^{\e}\|_{m+1}(\|r^{\e}\|_{1,\infty}+\t^{-\gamma})\bigr).
\end{align*}
This proves \eqref{estpi3}.

While it follows from Lemma \ref{appes}, Lemma \ref{estpik}, \eqref{BnabR} and \eqref{RnabR} that for $m\geq 1$
\begin{align*}
\|\nabla \pi^{\e}_5\|_{m-1}&\leq C\left(\|B^{\e}_{app}\cdot\nabla R^{\e}\|_{m-1}+\e^{\frac{3}{2}}\|R^{\e}\cdot\nabla R^{\e}\|_{m-1}
+\|R^{\e}\cdot\nabla B^{\e}_{app}\|_{m-1}\right)\\
&\leq C\left(\e\chi_{[0,T]}\|R^{\e}\|_m+\e^{\frac{3}{2}}\|R^{\e}\|_{m}\|R^{\e}\|_{1,\infty}\right),
\end{align*} which leads to \eqref{estpi5}. We thus complete the proof of \eqref{S4eq14}.
\end{proof}

Thanks to Proposition \ref{espi1}, we can deal with the pressure terms appearing in \eqref{ZarR} and \eqref{S4eq10}.

\begin{corollary}
{\sl
Let $\gamma=\frac{5}{4}$, $m\in\mathbb{N}$ with $1\leq m\leq 5,$ then there exist constants $c_0, C>0$ such that the solution of \eqref{eqrR} satisfies
\beq\label{esrR}
\begin{split}
&\|(r^{\e},R^{\e})(t)\|^2_{m}+\|(\eta^{\e},Q^{\e})(t)\|^2_{m-1}+c_0\e\int_0^t\left(\|\nabla r^{\e}(s)\|^2_{m}+\|\nabla\eta^{\e}(s)\|^2_{m-1}\right)ds\\
&\leq C\e^{\frac{1}{2}}
+C\int_0^t\left(\|(r^{\e},R^{\e})\|^2_m+\|(\eta^{\e},Q^{\e})\|^2_{m-1}\right)\\
&\quad\cdot\bigl(\e+\s^{-\gamma}+\e^2\|(r^{\e},R^{\e})\|^2_{2,\infty}
+\e^{\frac{3}{2}}\|\nabla r^{\e}\|_{L^{\infty}(\cO)}
+\e^2\|\eta^{\e}\|^2_{1,\infty}+\e^2\|Q^{\e}\|^2_{L^{\infty}(\cO)}\bigr)ds.
\end{split}
\eeq
}\end{corollary}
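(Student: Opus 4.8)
The plan is to obtain \eqref{esrR} by adding the two differential inequalities already established — \eqref{ZarR} of Proposition \ref{tanes} and \eqref{S4eq10} of Proposition \ref{nores} — and disposing of the two pressure contributions that survive on their right‑hand sides. Upon summing, the left‑hand sides combine into $\|(r^\e,R^\e)(t)\|^2_m+\|(\eta^\e,Q^\e)(t)\|^2_{m-1}+c_0\e\int_0^t\bigl(\|\na r^\e(s)\|^2_m+\|\na\eta^\e(s)\|^2_{m-1}\bigr)\,ds$, while the Gronwall‑type integrands already carry exactly the admissible weights $\e+\s^{-\gamma}+\e^2\|(r^\e,R^\e)\|^2_{2,\infty}+\e^{\f32}\|\na r^\e\|_{L^\infty(\cO)}+\e^2\|\eta^\e\|^2_{1,\infty}+\e^2\|Q^\e\|^2_{L^\infty(\cO)}$ appearing in \eqref{esrR}. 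Hence everything reduces to bounding
\[\sum_{|\alpha|\le m}\Bigl|\int_0^t\int_{\cO}Z^\alpha\na\pi^\e\cdot Z^\alpha r^\e\,dxds\Bigr|\andf\sum_{|\beta|\le m-1}\Bigl|\int_0^t\int_{\cO}Z^\beta\bigl(\sqrt\e\chi\CN(\na\pi^\e)\bigr)\cdot Z^\beta\eta^\e\,dxds\Bigr|.\]
To treat these I would normalise $\pi^\e$ to have zero mean on $\cO$ (so that $\|\pi^\e\|_{L^2}\le C\|\na\pi^\e\|_{L^2}$) and use the decomposition $\pi^\e=\sum_{i=1}^5\pi^\e_i$ introduced before Proposition \ref{espi1}, handling each $\pi^\e_i$ by the corresponding estimate in \eqref{S4eq14}.

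For the first sum, the contributions of $\pi^\e_1,\pi^\e_2$ follow at once from \eqref{estpi1}--\eqref{estpi2} with Cauchy--Schwarz and Young, producing only $C\e^{\f12}\t^{-2\gamma}+C\t^{-\gamma}\|r^\e\|^2_m$; that of $\pi^\e_4$ comes from \eqref{estpi4}, the term $\e\|\na r^\e\|_m\|r^\e\|_m$ being split by Young into $\lambda\e\|\na r^\e\|^2_m$ (absorbed by the dissipation) plus $C_\lambda\e\|r^\e\|^2_m$; and that of $\pi^\e_3$ follows from \eqref{estpi3} after using $\|r^\e\|_{m+1}\le C(\|r^\e\|_m+\|\na r^\e\|_m)$, so that the cubic term becomes, again by Young, $\lambda\e\|\na r^\e\|^2_m+C_\lambda\e^2\|r^\e\|^2_m(\|r^\e\|^2_{1,\infty}+\t^{-2\gamma})$, which is admissible since $\e^2\|r^\e\|^2_{1,\infty}\le\e^2\|(r^\e,R^\e)\|^2_{2,\infty}$. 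The second sum, which a priori involves one normal derivative of $\na\pi^\e$ through $\CN(\na\pi^\e)=(\p_{\n}\na\pi^\e+M\na\pi^\e)_{\tan}$, is handled uniformly for all five pieces by using that $\eta^\e$, hence $Z^\beta\eta^\e$, vanishes on $\p\cO$: an integration by parts moves that extra derivative onto $Z^\beta\eta^\e$ without any boundary term, so that only $\sqrt\e\|\na\pi^\e_i\|_{m-1}$ is needed — available for every $i$ by \eqref{estpi1}--\eqref{estpi5} — and the result is paired with $\sqrt\e\|\na\eta^\e\|_{m-1}$ and absorbed, after Young, into the dissipation $c_0\e\int_0^t\|\na\eta^\e\|^2_{m-1}\,ds$.

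The genuinely delicate point — the one I expect to be the main obstacle — is the contribution of $\pi^\e_5$ to the first sum, since the lack of any smoothing on the magnetic field forces \eqref{estpi5} to be available only at the reduced conormal order $m-1$. For $|\alpha|\le m-1$ one simply pairs $\|\na\pi^\e_5\|_{m-1}$ with $\|r^\e\|_{m-1}$. For $|\alpha|=m$ the key is the structural fact that $\btau^j\cdot\n=0$ on $\p\cO$ for every $j=0,\dots,5$ (for $j=0$ because $\btau^0=\varphi\n$) together with $\dive\btau^j=0$, so that integrating by parts a single tangential field $Z_j$ leaves no boundary term; writing $Z^\alpha=Z_jZ^{\alpha'}$ with $|\alpha'|=m-1$ one gets
\[\int_{\cO}Z^\alpha\na\pi^\e_5\cdot Z^\alpha r^\e\,dx=-\int_{\cO}Z^{\alpha'}\na\pi^\e_5\cdot Z_j\bigl(Z^\alpha r^\e\bigr)\,dx,\]
which is bounded by $C\|\na\pi^\e_5\|_{m-1}\bigl(\|\na r^\e\|_m+\|r^\e\|_m\bigr)$; inserting \eqref{estpi5} and using Young's inequality together with $\e^{\f32}\|R^\e\|_{1,\infty}\le\f12(\e+\e^2\|R^\e\|^2_{1,\infty})$, this is absorbed into $\lambda\e\|\na r^\e\|^2_m$ and the admissible Gronwall weight. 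Finally, substituting all of the above into the summed inequality, choosing $\lambda>0$ small enough that the $\lambda\e\|\na r^\e\|^2_m$ and $\lambda\e\|\na\eta^\e\|^2_{m-1}$ terms are absorbed by $c_0\e\int_0^t(\|\na r^\e\|^2_m+\|\na\eta^\e\|^2_{m-1})\,ds$, and integrating over $(0,t)$, yields exactly \eqref{esrR}.
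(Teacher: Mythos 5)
Your argument is correct and follows essentially the same route as the paper: sum \eqref{ZarR} and \eqref{S4eq10}, decompose $\pi^\e=\sum_{i=1}^5\pi^\e_i$ and invoke Proposition~\ref{espi1}, integrate by parts for $\pi^\e_5$ paired against $r^\e$ (yielding $\|\na\pi^\e_5\|_{m-1}\|r^\e\|_{m+1}$ as in the paper), and, using $\eta^\e|_{\p\cO}=0$, transfer the $\CN$-derivative onto $\eta^\e$ in the second pressure integral before applying Young. One small slip in your justification: $\dive\btau^0\neq 0$ (indeed $\dive(\varphi\n)=\na\varphi\cdot\n+\varphi\dive\n\approx-1$ near $\p\cO$), so for $j=0$ the integration by parts also produces the extra zero-order term $-\int_{\cO}(Z^{\alpha'}\na\pi_5^\e)(Z^\alpha r^\e)\,\dive\btau^0\,dx$; this is however bounded by $C\|\na\pi_5^\e\|_{m-1}\|r^\e\|_m$ and easily absorbed, so the conclusion is unaffected.
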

\begin{proof}
We decompose $\pi^{\e}$ into $\pi^{\e}=\sum_{i=1}^5\pi^{\e}_i.$ Then we get,  by applying Proposition \ref{espi1}, that for any $\lambda>0,$ there exist constants $C,C_{\lambda}>0$ such that
\begin{align*}
&\left|\int_{\cO}Z^{\alpha}\nabla(\pi^{\e}_1+\pi^{\e}_2+\pi^{\e}_3+\pi^{\e}_4)\cdot Z^{\alpha}r^{\e}\right|\\
&\leq C\sum_{1\leq i\leq 4}\|\nabla \pi^{\e}_i\|_m\|r^{\e}\|_m\\
&\leq C\bigl(\e^{\frac{1}{4}}\t^{-\gamma}+\e\|\nabla r^{\e}\|_m+\t^{-\gamma}\|r^{\e}\|_m+\e^{\frac{3}{2}}\|r^{\e}\|_{m+1}(\|r^{\e}\|_{1,\infty}+\t^{-\gamma})\bigr)\|r^{\e}\|_m\\
&\leq \lambda\e\|\nabla r^{\e}\|^2_m+C\e^{\frac{1}{2}}\t^{-\gamma}+C_{\lambda}\|r^{\e}\|^2_m\left(\e+\t^{-\gamma}+\e^2\|r^{\e}\|^2_{1,\infty}\right).
\end{align*}
Whereas by using integration by parts and  Proposition \ref{espi1}, one has
\begin{align*}
\left|\int_{\cO} Z^{\alpha}\nabla\pi^{\e}_5\cdot Z^{\alpha}r^{\e}\right|
&\leq \|\nabla\pi_5^{\e}\|_{m-1}\|r^{\e}\|_{m+1}\\
&\leq C\|R^{\e}\|_m\big(\e\t^{-\gamma}+\e^{\frac{3}{2}}\|R^{\e}\|_{1,\infty}\big)\|r^{\e}\|_{m+1}\\
&\leq\lambda\e\|\nabla r^{\e}\|^2_m+C_{\lambda}\|R^{\e}\|_m^2\big(\t^{-\gamma}+\e^2\|R^{\e}\|^2_{1,\infty}\big).
\end{align*}
As a consequence, we obtain
\beq \label{pret1}
\begin{split}
\left|\int_{\cO}Z^{\alpha}\nabla\pi^{\e}\cdot Z^{\alpha}r^{\e}\right|
\leq\ &\lambda\e\|\nabla r^{\e}\|^2_m+C\e^{\frac{1}{2}}\t^{-\gamma}\\
&+C_{\lambda}\|(r^{\e},R^{\e})\|^2_m\left(\e+\t^{-\gamma}+\e^2\|(r^{\e},R^{\e})\|^2_{1,\infty}\right).
\end{split}\eeq

Since $\eta^{\e}=0$ on the boundary $\p\cO,$ hence $Z^{\beta}\eta^{\e}=0$ on $\p\cO$. By using integration by parts, one has
\begin{align*}\label{pretb0}
\left|\int_{\cO} Z^{\beta}(\sqrt{\e}\chi\CN(\nabla\pi^{\e}))\cdot Z^{\beta}\eta^{\e}\right|
\leq C\sqrt{\e}\|\nabla\pi^{\e}\|_{m-1}\|\nabla\eta^{\e}\|_{m-1}.
\end{align*}
While it follows from Proposition \ref{espi1} and Lemma \ref{eqvr} that
\begin{align*}
\|\nabla\pi^{\e}\|_{m-1} &\leq C\bigl(\e^{\frac{1}{4}}\t^{-\gamma}
+\e\|\nabla r^{\e}\|_{m-1}
+\|(r^{\e},R^{\e})\|_{m}(\e^{\frac{3}{2}}\|(r^{\e},R^{\e})\|_{1,\infty}+\t^{-\gamma})\bigr)\\
\nonumber&\leq C\bigl(\e^{\frac{1}{4}}\t^{-\gamma}
+\sqrt{\e}\|\eta^{\e}\|_{m-1}+\e\|r^{\e}\|_m
+\|(r^{\e},R^{\e})\|_{m}(\e^{\frac{3}{2}}\|(r^{\e},R^{\e})\|_{1,\infty}+\t^{-\gamma})\bigr),
\end{align*}
which gives rise to
\beq\label{pret2}
\begin{split}
&\left|\int_{\cO} Z^{\beta}(\sqrt{\e}\chi\CN(\nabla\pi^{\e}))\cdot Z^{\beta}\eta^{\e}\right|
\leq \lambda\e\|\nabla \eta^{\e}\|^2_{m-1}\\
&\qquad+C_{\lambda}\left(\e^{\frac{1}{2}}\t^{-\gamma}
+\big(\|(r^{\e},R^{\e})\|^2_m+\|\eta^{\e}\|^2_{m-1}\big)\big(\e+\t^{-\gamma}+\e^{3}\|(r^{\e},R^{\e})\|^2_{1,\infty}\big)\right).
\end{split}\eeq

By combining \eqref{ZarR}, \eqref{S4eq10} with \eqref{pret1} and \eqref{pret2}, we achieve \eqref{esrR}.
\end{proof}

\subsection{Estimates of $L^{\infty}$ norms}\label{infes}
\begin{proposition}\label{2inf}
{\sl
There exists a constant $C>0$ such that
\begin{subequations} \label{S4eq15}
\begin{gather}
\label{rinf2}\sqrt{\e}\|r^{\e}\|^2_{2,\infty}\leq C\left(\|r^{\e}\|^2_{5}+\|\eta^{\e}\|^2_{4}+\e\t^{-2\gamma}\right),\\
\label{Rinf2}\sqrt{\e}\|R^{\e}\|^2_{2,\infty}\leq C\left(\|R^{\e}\|^2_{5}+\|Q^{\e}\|^2_{4}+\e\t^{-2\gamma}\right).
\end{gather}
\end{subequations}
}
\end{proposition}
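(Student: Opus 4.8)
The plan is to prove the two inequalities \eqref{rinf2} and \eqref{Rinf2} by a Sobolev-type embedding adapted to the conormal setting, exploiting the fact that one normal derivative of $r^{\e}$ (resp. $R^{\e}$) is controlled, up to the harmless source terms $d^{\e}$ and $N^{\e}$, by its tangential derivatives and by $\eta^{\e}$ (resp. $Q^{\e}$). The starting point is the anisotropic embedding $W^{2,\infty}_{co} \hookrightarrow H^4_{co}$ combined with one genuine normal derivative, i.e. a bound of the shape
\[
\|g\|_{2,\infty}^2 \leq C\bigl(\|g\|_{5}\,\|\p_{\n}g\|_{4} + \|g\|_5^2\bigr),
\]
valid for functions $g$ supported near the boundary, plus the interior estimate (away from $\p\cO$) where $\varphi \geq \delta_0/2$ and all derivatives are conormal. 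Splitting $r^{\e} = \chi r^{\e} + (1-\chi)r^{\e}$ as in the proof of Lemma \ref{unfi}, the interior piece $(1-\chi)r^{\e}$ is handled directly by $\|(1-\chi)r^{\e}\|_{2,\infty} \leq C\|r^{\e}\|_{4} \leq C\|r^{\e}\|_5$ (since all derivatives there count as conormal), so only the piece near the boundary requires the anisotropic argument.

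For the boundary piece, I would first record that by \eqref{nabre} one has $\nabla r^{\e} = \sum_{i=1}^3 \btau^i Z_i r^{\e} + \n\,\p_{\n}r^{\e}$, so that $\|\nabla r^{\e}\|_{k,\infty}$ is controlled by $\|r^{\e}\|_{k+1,\infty} + \|\p_{\n}r^{\e}\|_{k,\infty}$, and then invoke the equivalence in Lemma \ref{eqvr} (its $L^\infty$ version) to replace $\sqrt{\e}\,\p_{\n}r^{\e}$ by $\eta^{\e}$ modulo $\sqrt{\e}\|r^{\e}\|_{1,\infty} + \sqrt{\e}\t^{-\gamma}$. Applying the anisotropic embedding to the tangential derivatives $Z^{\alpha}r^{\e}$ with $|\alpha|\leq 2$, using that one extra normal derivative $\sqrt{\e}\,\p_{\n}(Z^{\alpha}r^{\e})$ is bounded (up to commutators that are again tangential by \eqref{cm}) by $\|\eta^{\e}\|_{4} + \sqrt{\e}\|r^{\e}\|_5 + \sqrt{\e}\t^{-\gamma}$ via \eqref{etaN} and \eqref{diver2}, yields
\[
\sqrt{\e}\|\chi r^{\e}\|_{2,\infty}^2 \leq C\bigl(\|r^{\e}\|_5 + \|\eta^{\e}\|_4 + \sqrt{\e}\t^{-\gamma}\bigr)\bigl(\sqrt{\e}\|r^{\e}\|_5 + \sqrt{\e}\|r^{\e}\|_{2,\infty} + \sqrt{\e}\t^{-\gamma}\bigr).
\]
The term $\sqrt{\e}\|r^{\e}\|_{2,\infty}$ on the right is then absorbed into the left-hand side by Young's inequality, and after using $\sqrt{\e} \leq 1$ one arrives at \eqref{rinf2}. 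The proof of \eqref{Rinf2} for $R^{\e}$ is structurally identical but simpler, since $\dive R^{\e} = 0$ and $R^{\e}$ is exactly tangent to $\p\cO$ with no $N^{\e}$-type correction: here $Q^{\e} = \sqrt{\e}\,\p_{\n}R^{\e}$ plays the role of $\eta^{\e}$, one uses $\nabla R^{\e} = \sum_i \btau^i Z_i R^{\e} + \n\,\p_{\n}R^{\e}$ together with the same anisotropic embedding, and the normal derivative count costs $\|Q^{\e}\|_4 + \sqrt{\e}\|R^{\e}\|_5$.

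The main obstacle I anticipate is bookkeeping the commutators that arise when one slides $\p_{\n}$ past $Z^{\alpha}$ (or past $\chi\CN$) to apply the embedding: by \eqref{cm} these commutators are tangential of the right order, but one must check carefully that the total number of derivatives hitting $r^{\e}$ or $R^{\e}$ never exceeds what $H^5_{co}$, $\|\eta^{\e}\|_4$, $\|Q^{\e}\|_4$ allow — in particular that the ``extra'' normal derivative needed to run the anisotropic Sobolev inequality is always the one absorbed into $\eta^{\e}$ or $Q^{\e}$ (with the $\sqrt{\e}$ weight) and not an uncontrolled $\e$-free normal derivative. This is exactly the mechanism by which Proposition \ref{2inf} is a sharpened version of \cite[Proposition 5.10]{LSZ1}: expanding only to order $\cO(\e^{3/2})$, one has just enough conormal regularity ($H^5_{co}$ for $r^{\e},R^{\e}$ and $H^4_{co}$ for $\eta^{\e},Q^{\e}$) to close the $W^{2,\infty}_{co}$ bound, and the verification that no derivative is wasted is the delicate point.
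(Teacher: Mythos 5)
Your proposal is correct and follows essentially the paper's argument: both rest on the anisotropic Sobolev embedding of Masmoudi--Rousset (stated in the paper as \eqref{ansb}, $\|g\|^2_{L^{\infty}(\cO)}\leq C(\|\p_{\n}g\|_{2}\|g\|_{2}+\|g\|_{2}^2)$) applied to $Z^{\alpha}r^{\e}$, $Z^{\alpha}R^{\e}$ with $|\alpha|\leq 2$, together with Lemma \ref{eqvr} and the commutator property \eqref{cm} to trade the extra normal derivative for $\|\eta^{\e}\|_4$ (resp.\ $\|Q^{\e}\|_4$) plus $\sqrt{\e}\|r^{\e}\|_5$. The paper streamlines the bookkeeping by invoking only the $L^2$-conormal line of Lemma \ref{eqvr}, which avoids the Young-absorption of $\sqrt{\e}\|r^{\e}\|_{2,\infty}$ and the near-boundary cutoff splitting you introduce, but these are cosmetic simplifications and do not change the underlying mechanism.
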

\begin{proof}
We deduce from the anisotropic Sobolev embedding (see \cite[Proposition 20]{masmoudi}) that,
\begin{align}\label{ansb}
\|r^{\e}\|^2_{L^{\infty}(\cO)}\leq C\left(\|\p_{\n}r^{\e}\|_{2}\|r^{\e}\|_{2}+\|r^{\e}\|_{2}^2\right),
\end{align}
which together with Lemma \ref{eqvr} ensures that
\begin{align*}
\sqrt{\e}\|r^{\e}\|_{L^{\infty}(\cO)}^2
&\leq C\left((\|\eta^{\e}\|_{2}+\sqrt{\e}\|r^{\e}\|_{2}+\sqrt{\e}\t^{-\gamma})\|r^{\e}\|_{2}+\sqrt{\e}\|r^{\e}\|^2_{2}\right)\\
&\leq C\left(\|\eta^{\e}\|^2_{2}+\|r^{\e}\|^2_{2}+\e\t^{-2\gamma}\right).
\end{align*}
Along the same line, we can prove \eqref{rinf2} and \eqref{Rinf2}.
\end{proof}

Let
\beq\label{S4eq16}
\begin{split}
E^{\e}(t):&=\|(r^{\e},R^{\e})(t)\|^2_{5}+\|(\eta^{\e},Q^{\e})(t)\|^2_{4},\\
\mathcal{K^{\e}}(t):&=\e^{\frac{3}{2}}\|\nabla r^{\e}(t)\|_{L^{\infty}(\cO)}
+\e^2\|\eta^{\e}(t)\|^2_{1,\infty}+\e^2\|Q^{\e}(t)\|^2_{L^{\infty}(\cO)}.
\end{split}\eeq

\begin{proposition}\label{ult}
{\sl
There exist constants $\e_0,c_0,C_*>0$, such that, when $0<\e\leq \e_0,$ the remainder $r^{\e}$ and $R^{\e}$ satisfy
\begin{align}\label{esrR3}
\sup_{t\in [0,T/{\e}]}E^{\e}(t)
+c_0\e\int_0^{T/{\e}}\left(\|\nabla r^{\e}(s)\|^2_5+\|\nabla \eta^{\e}(s)\|^2_{4}\right)ds\leq C_*\e^{\frac{1}{2}}.
\end{align}
}
\end{proposition}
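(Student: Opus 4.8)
The plan is to close the estimates by a continuity (bootstrap) argument on the energy functional $E^{\e}(t)$ defined in \eqref{S4eq16}, feeding \eqref{esrR} (with $m=5$) back into itself. First I would record the obvious starting point: at $t=0$, Lemma \ref{appes} gives $E^{\e}(0)=\|(r^{\e}_0,0)\|_5^2+\|(\eta^{\e}_0,0)\|_4^2\leq C\e^{1/2}$, so there is a constant $C_1$ with $E^{\e}(0)\leq C_1\e^{1/2}$. Then I would set up the continuity argument: fix a large constant $C_*\geq 2C_1$ (to be adjusted), and define
$$
T^{\e}_\ast:=\sup\Bigl\{\,t\in[0,T/\e]\ :\ \sup_{[0,t]}E^{\e}(s)\leq C_*\e^{1/2}\ \Bigr\}.
$$
Since $E^{\e}$ is continuous in time and $E^{\e}(0)<C_*\e^{1/2}$, we have $T^{\e}_\ast>0$; the goal is to show $T^{\e}_\ast=T/\e$ for $\e$ small, by proving that on $[0,T^{\e}_\ast]$ one actually has the stronger bound $\sup E^{\e}\leq \tfrac12 C_*\e^{1/2}$.

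The heart of the matter is to control the ``bad'' factor $\e+\s^{-\gamma}+\e^2\|(r^{\e},R^{\e})\|_{2,\infty}^2+\mathcal{K}^{\e}(s)$ appearing in \eqref{esrR} using only the a priori bound $E^{\e}\leq C_*\e^{1/2}$. The term $\e^2\|(r^{\e},R^{\e})\|_{2,\infty}^2$ is handled directly by Proposition \ref{2inf}: $\e^2\|(r^{\e},R^{\e})\|_{2,\infty}^2\leq C\sqrt\e(E^{\e}+\e\t^{-2\gamma})\leq C\sqrt\e(C_*\e^{1/2}+\e)\leq CC_*\e$, which is $\mathcal O(\e)$, hence integrable in $s$ over $[0,T/\e]$ with bound $\mathcal O(C_*\e\cdot T/\e)=\mathcal O(C_*)$ — wait, this must instead be absorbed, so more carefully: the coefficient is $o(1)$ as $\e\to0$ \emph{pointwise in $s$}, but $\int_0^{T/\e}\mathcal O(\e)\,ds=\mathcal O(1)$ is not small. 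The correct reading is that \eqref{esrR} is a Gronwall inequality: writing $y(t)=E^{\e}(t)$, it has the form $y(t)\leq C\e^{1/2}+\int_0^t y(s)\,G^{\e}(s)\,ds$ with $G^{\e}(s)=C(\e+\s^{-\gamma}+\e^2\|(r^{\e},R^{\e})\|_{2,\infty}^2+\mathcal K^{\e}(s))$. On $[0,T^{\e}_\ast]$ we bound $\int_0^{T/\e}G^{\e}(s)\,ds\leq C\bigl(\e\cdot\tfrac{T}{\e}+\int_0^\infty\s^{-\gamma}ds+CC_*\e\cdot\tfrac T\e+\int_0^{T^{\e}_\ast}\mathcal K^{\e}(s)ds\bigr)$. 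The term $\int_0^\infty\s^{-\gamma}ds$ converges since $\gamma=5/4>1$; the first and third terms are $\mathcal O(1)$ but \emph{not} small — so in fact the constant in Gronwall's lemma is $\exp(\mathcal O(1))$, a fixed (possibly large) constant $C_2$ \emph{independent of $\e$}, and the conclusion is $E^{\e}(t)\leq C_2 C\e^{1/2}$. Provided $\mathcal K^{\e}$ is also controlled, choosing $C_*:=2C_2C$ then gives $\sup_{[0,T^{\e}_\ast]}E^{\e}\leq \tfrac12 C_*\e^{1/2}<C_*\e^{1/2}$, so by continuity $T^{\e}_\ast$ cannot be $<T/\e$, which yields \eqref{esrR3}.

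The genuinely delicate point — and the main obstacle — is the term $\mathcal K^{\e}(s)$, i.e. controlling the $L^\infty$-type quantities $\e^{3/2}\|\nabla r^{\e}\|_{L^\infty}$, $\e^2\|\eta^{\e}\|_{1,\infty}^2$, $\e^2\|Q^{\e}\|_{L^\infty}^2$ appearing inside the Gronwall factor, since these are \emph{not} directly bounded by $E^{\e}$. This is precisely the place where the ``new method, the continuity method to estimate the $L^{\infty}$ norm of the normal derivatives of the remainder'' advertised in the introduction must be invoked: one needs a separate argument (presumably a companion continuity bound of the form $\mathcal K^{\e}(t)\leq C$, or better $\e^{1/4}$, on the bootstrap interval), likely obtained by applying a maximum-principle / Moser-type estimate to the parabolic equations \eqref{eqeta} for $\eta^{\e}$ and to the transport equation for $Q^{\e}$, together with the anisotropic Sobolev embedding of Proposition \ref{2inf} and the equivalences of Lemma \ref{eqvr}. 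I would therefore structure the proof as: (i) establish the $L^\infty$ bound $\sup_{[0,T^{\e}_\ast]}\bigl(\e^{3/2}\|\nabla r^{\e}\|_{L^\infty}+\e^2\|\eta^{\e}\|_{1,\infty}^2+\e^2\|Q^{\e}\|_{L^\infty}^2\bigr)\leq C$ using only $E^{\e}\leq C_*\e^{1/2}$ on that interval (so that $\int_0^{T^{\e}_\ast}\mathcal K^{\e}\,ds$ contributes only a fixed constant to the Gronwall exponent); (ii) plug this into \eqref{esrR} with $m=5$ and run Gronwall to get $E^{\e}(t)\leq C_2C\e^{1/2}$; (iii) close the bootstrap by the choice $C_*=2C_2C$ and conclude $T^{\e}_\ast=T/\e$. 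Step (i) is where all the work is, and it is the analogue of the argument the paper attributes to the ``continuity method'' replacing the maximum-principle approach of \cite{LSZ1}.
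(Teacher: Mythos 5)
Your architecture (continuity/bootstrap, Gronwall's inequality applied to \eqref{esrR}, and a separate lemma to control $\mathcal{K}^{\e}$) is essentially right, and runs parallel to the paper's, which instead bootstraps on the continuation criterion $\int_0^t\mathcal{K}^{\e}\,ds\leq 1$ and solves the resulting Riccati ODE from \eqref{esrR2} \emph{explicitly} via $\tilde W^{\e}(t)=W^{\e}(t)e^{-\int_0^tC_0(\e+\s^{-\gamma}+\mathcal{K}^{\e})ds}$ rather than linearising the quadratic term with the a priori $E^{\e}$-bound. But there is a genuine gap in your step~(i): you propose a \emph{pointwise} bound $\sup_{[0,T^{\e}_*]}\mathcal{K}^{\e}(t)\leq C$ (or $\lesssim\e^{1/4}$) and claim this makes $\int_0^{T^{\e}_*}\mathcal{K}^{\e}\,ds$ a fixed constant. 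It does not: the interval has length $T/\e$, so a pointwise bound of $O(1)$ gives $\int\mathcal{K}^{\e}\lesssim T/\e\to\infty$, and even $O(\e^{1/4})$ gives $O(\e^{-3/4})$. What is needed --- and what the paper proves in Lemma \ref{proKe} --- is the \emph{time-integral} bound $\int_0^{T^{\e}}\mathcal{K}^{\e}\,ds\lesssim\e^{1/4}$, whose proof for the $\eta^{\e}$-pieces is not a maximum-principle or Moser argument but a Cauchy--Schwarz-in-time argument exploiting the dissipation $\e\int_0^{T^{\e}}\|\nabla\eta^{\e}\|_3^2\,ds\lesssim\e^{1/2}$ already contained in the energy estimate, combined with the anisotropic embedding $\|\eta^{\e}(t)\|_{1,\infty}\lesssim\e^{1/8}\|\nabla\eta^{\e}(t)\|_3^{1/2}+\e^{1/4}$; only the piece $\e^2\|Q^{\e}\|_{L^\infty}^2$ uses a maximum principle, applied to the transport equation for $\nabla R^{\e}$ (giving $\|\nabla R^{\e}\|_{L^\infty}\lesssim\e^{-1/2}$). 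So the correct form of your step~(i) is the integral bound $\int_0^{T^{\e}_*}\mathcal{K}^{\e}\,ds\leq 1$; a supremum-in-time bound, however obtained, cannot substitute for it.

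There is also a computational slip that caused your hedging about whether the $\|(r^{\e},R^{\e})\|_{2,\infty}^2$-term can be absorbed. Proposition~\ref{2inf} gives $\|(r^{\e},R^{\e})\|_{2,\infty}^2\leq C\e^{-1/2}(E^{\e}+\e\t^{-2\gamma})$, hence $\e^2\|(r^{\e},R^{\e})\|_{2,\infty}^2\leq C\e^{3/2}(E^{\e}+\e\t^{-2\gamma})$, not $C\sqrt\e(\cdots)$; under the bootstrap hypothesis this is $\leq CC_*\e^2$ (not $CC_*\e$), whose integral over $[0,T/\e]$ is $CC_*T\e$, which vanishes as $\e\to0$ and so contributes at most $1$ to the Gronwall exponent once $\e\leq\e_0(C_*)$. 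With that correction your linearised-Gronwall closure does go through, provided step~(i) is replaced by the integral bound; the paper's Riccati-ODE route is cleaner in that the quadratic term never needs the bootstrap hypothesis on $E^{\e}$ at all.
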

\begin{proof}
We shall prove \eqref{esrR3} through a continuous argument.
We first observe from Proposition \ref{2inf} that
\begin{align*}
\sqrt{\e}\|(r^{\e},R^{\e})(t)\|^2_{2,\infty}\leq C(E^{\e}(t)+\e\t^{-2\gamma}),
\end{align*}
from which, \eqref{esrR} and \eqref{S4eq16}, we infer
\beq \label{esrR2}
\begin{split}
E^{\e}(t)&+c_0\e\int_0^t\left(\|\nabla r^{\e}\|^2_5+\|\nabla \eta^{\e}\|^2_{4}\right)ds\\
&\leq C_0\e^{\frac{1}{2}}+C_0\int_0^tE^{\e}(s)\left(\e+\s^{-\gamma}+\e^{\frac{3}{2}}E^{\e}(s)
+\mathcal{K}^{\e}(s)\right)ds:=W^{\e}(t),
\end{split} \eeq
for some constants $c_0,C_0>0.$

We denote
\begin{align}\label{defT*}
T^{\e}:=\sup\bigl\{\ t\in \bigl[0,T/{\e}\bigr],\  \int_0^t\mathcal{K}^{\e}(s)ds\leq 1\ \bigr\}.
\end{align}
 Then we observe from \eqref{esrR2} that  $W^{\e}(t)$ satisfies
\begin{equation*}
\begin{cases}
\p_tW^{\e}(t)\leq C_0W^{\e}(t)\bigl(\e+\t^{-\gamma}+\e^{\frac{3}{2}}W^{\e}(t)+\mathcal{K}^{\e}(t)\bigr),\qquad\text{ for }t\in (0,T^{\e}],\\
W^{\e}(0)=C_0\e^{\frac{1}{2}},
\end{cases}
\end{equation*}
from which, we deduce that
\begin{align*}
\tilde{W}^{\e}(t)\leq \frac{C_0\e^{\frac{1}{2}}}{1-C_0^2\e^2 t}\with \tilde{W}^{\e}(t):=W^{\e}(t)e^{-\int_0^tC_0(\e+\s^{-\gamma}+\mathcal{K}^{\e}(s))ds}.
\end{align*}
We take
\begin{equation*}
\e_0=\min\{\frac{1}{2C_0^2T},\frac{1}{(2C_{\mathcal{K}})^4}\},
\end{equation*}
where the constant $C_{\mathcal{K}}$ is determined by Lemma \ref{proKe}.
Then for $0\leq t\leq T^{\e}\leq\frac{T}{\e}, 0<\e\leq \e_0 $,
\begin{align*}
\tilde{W}^{\e}(t)\leq 2C_0\e^{\frac{1}{2}},
\end{align*}
which implies
\begin{align*}
W^{\e}(t)\leq 2C_0\e^{\frac{1}{2}}e^{C_0(T+1+\int_0^{\infty}\s^{-\gamma}ds)},\qquad\text{ for }0< t\leq T^{\e}.
\end{align*}
We take
\begin{align*}
C_*:=2C_0e^{C_0(T+1+\int_0^{\infty}\s^{-\gamma}ds)},
\end{align*}
which is independent of $T^{\e},$ then it follows from \eqref{esrR2} that
\begin{align}\label{eqNm}
E^{\e}(t)+c_0\e\int_0^t\big(\|\nabla r^{\e}(s)\|^2_5+\|\nabla \eta^{\e}(s)\|^2_{4}\big)ds\leq W^{\e}(t)\leq C_*\e^{\frac{1}{2}}
\end{align}
holds for $0\leq t\leq T^{\e}\leq \frac{T}{\e}$.

The proof of Proposition \ref{ult} relies on the following lemma, the proof of which will be postponed after we finish
the proof of Proposition \ref{ult}.

\begin{lemma}\label{proKe}
{\sl
There exists a constant $C_{\mathcal{K}}>0$ which is independent of $\e$, such that, for $T^{\e}\in (0,\frac{T}{\e}]$ defined in \eqref{defT*}, $\mathcal{K}^{\e}(t)$ satisfies
\begin{align}\label{eqKe}
\int_0^{T^{\e}}\mathcal{K}^{\e}(t)dt\leq C_{\mathcal{K}}\e^{\frac{1}{4}}.
\end{align}
}
\end{lemma}

Admitting Lemma \ref{proKe}, if we
assume that $T^{\e}<\frac{T}{\e}$, then for $0<\e\leq \e_0$,
\begin{equation*}
\int_0^{T^{\e}}\mathcal{K}^{\e}(t)dt\leq C_{\mathcal{K}}\e_0^{\frac{1}{4}}\leq \frac{1}{2}.
\end{equation*}
which contradicts the definition of $T^{\e}$ by \eqref{defT*}. This in turn shows that $T^{\e}=\frac{T}{\e}$ and \eqref{esrR3} holds.
\end{proof}

Let us present the proof of Lemma \ref{proKe}

\begin{proof}[\bf{ Proof of Lemma \ref{proKe}}]
 We first get, by applying the anisotropic Sobolev embedding inequalities \eqref{ansb} to $\eta^{\e}$ and $Z_i\eta^{\e},$ $0\leq i\leq 5,$ and using \eqref{eqNm} that
\begin{align*}
\|\eta^{\e}(t)\|_{1,\infty}
&\leq C\big(\|\p_{\n}\eta^{\e}(t)\|^{\frac{1}{2}}_{3}\|\eta^{\e}(t)\|^{\frac{1}{2}}_{3}+\|\eta^{\e}(t)\|_{3}\big)\\
&\leq C(\e^{\frac{1}{8}}\|\nabla \eta^{\e}(t)\|^{\frac{1}{2}}_{3}+\e^{\frac{1}{4}}),\qquad \text{ for }t\in [0,T^{\e}].
\end{align*}
By using \eqref{eqNm} again, we obtain
\begin{equation}\label{K2}
\begin{split}
\int_0^{T^{\e}}\e\|\eta^{\e}(t)\|^2_{1,\infty}dt
&\leq C\e^{\frac{5}{4}}\int_0^{T^{\e}}\|\nabla\eta^{\e}(t)\|_{3}dt+C\e^{\frac{1}{2}}\\
&\leq C\e^{\frac{1}{4}}\Big(\int_0^{T^{\e}}\e\|\nabla\eta^{\e}(t)\|^2_{3}dt\Big)^{\frac{1}{2}}+C\e^{\frac{1}{2}}\leq C\e^{\frac{1}{2}}.
\end{split}
\end{equation}

While it follows from Proposition \ref{2inf} and \eqref{eqNm} that, for $t\in [0,T^{\e}]$,
\begin{align}\label{2inf2}
\|(r^{\e},R^{\e})(t)\|^2_{2,\infty}\leq C\e^{-\frac{1}{2}}\big({E^{\e}}(t)+\e\t^{-2\gamma}\big)\leq C,
\end{align}
 from which and Lemma \ref{eqvr}, we infer
 \beq \label{K4}
\begin{split}
\int_0^{T^{\e}}\e^{\frac{3}{2}}\|\nabla r^{\e}\|_{L^{\infty}(\cO)}dt
&\leq C\Big(\int_0^{T^{\e}}\e^2\|\nabla r^{\e}\|_{L^{\infty}(\cO)}^2dt\Big)^{\frac{1}{2}}\\
&\leq C\Big( \int_0^{T^{\e}}\e(\|\eta^{\e}\|_{L^{\infty}(\cO)}^2+\e\|r^{\e}\|_{1,\infty}^2+\e\t^{-2\gamma})dt\Big)^{\frac{1}{2}}
\leq C\e^{\frac{1}{4}}.
\end{split}\eeq
That is the reason why we  expand $u^{\e}$ to at least order of $\cO(\e^{\frac{3}{2}}).$  Otherwise, it is impossible for us
 to have uniform estimate of $\int_0^{T^{\e}}\|\nabla r^{\e}\|_{L^{\infty}(\cO)}dt.$

In view of \eqref{eqrR}, $\nabla R^{\e}$ satisfies
\begin{align} \label{reste}
\p_t\nabla R^{\e}+u^{\e}\cdot\nabla^2R^{\e}=\nabla F^{\e}-\nabla u^{\e}\cdot\nabla R^{\e}+
\nabla \left(B^{\e}\cdot\nabla r^{\e}+R^{\e}\cdot\nabla u^{\e}_{app}- r^{\e}\cdot\nabla B^{\e}_{app}\right) .
\end{align}
with initial data $\nabla R^{\e}|_{t=0}=0.$ Thanks to \eqref{K4}, we have
\begin{align*}
\int_0^{T^{\e}}\|\nabla u^{\e}(t)\|_{L^{\infty}(\cO)}dt
&\leq \int_0^{T^{\e}}\|\nabla u^{\e}_{app}(t)\|_{L^{\infty}(\cO)}dt
+\int_0^{T^{\e}}\e^{\frac{3}{2}}\|\nabla r^{\e}(t)\|_{L^{\infty}(\cO)}dt\leq C.
\end{align*}
By using the maximum principle for the equation \eqref{reste}, we find
\begin{align*}
\|\nabla R^{\e}(t)\|_{L^{\infty}(\cO)}
\leq &\int_0^t\left(\|\nabla F^{\e}(s)\|_{L^{\infty}(\cO)}+\|\nabla u^{\e}\cdot\nabla R^{\e}(s)\|_{L^{\infty}(\cO)}\right)ds\\
&+\int_0^t\|\nabla \left(B^{\e}\cdot\nabla r^{\e}+R^{\e}\cdot\nabla u^{\e}_{app}
- r^{\e}\cdot\nabla B^{\e}_{app}\right)(s)\|_{L^{\infty}(\cO)}ds.
\end{align*}
It follows from   Lemma \ref{appes} and \eqref{2inf2} that, for $t\in [0,T^{\e}]$,
\begin{align*}
\nonumber\|\nabla F^{\e}\|_{L^{\infty}(\cO)}&\leq C\e^{-\frac{1}{2}}\chi_{[0,T]},\\
\nonumber\|\nabla u^{\e}\cdot\nabla R^{\e}\|_{L^{\infty}(\cO)}
&\leq C\|\nabla R^{\e }\|_{L^{\infty}(\cO)}\big(\s^{-\gamma}+\e^{\frac{3}{2}}\|\nabla r^{\e}\|_{L^{\infty}(\cO)}\big),\\
\nonumber\|\nabla (R^{\e}\cdot\nabla u^{\e}_{app})\|_{L^{\infty}(\cO)}
&\leq \|\nabla R^{\e}\|_{L^{\infty}(\cO)}\|\nabla u^{\e}_{app}\|_{L^{\infty}(\cO)}+\|R^{\e}\|_{L^{\infty}(\cO)}\|\nabla^2 u^{\e}_{app}\|_{L^{\infty}(\cO)}\\
&\leq C\s^{-\gamma}\big(\e^{-\frac{1}{2}}+\|\nabla R^{\e}\|_{L^{\infty}(\cO)}\big),
\end{align*}
and
\begin{align*}
\|\nabla(r^{\e}\cdot\nabla B^{\e}_{app})\|_{L^{\infty(\cO)}}
&\leq \|\nabla r^{\e}\|_{L^{\infty}(\cO)}\|\nabla B^{\e}_{app}\|_{L^{\infty}(\cO)}
+\|r^{\e}\|_{L^{\infty}(\cO)}\|\nabla^2 B^{\e}_{app}\|_{L^{\infty}(\cO)}\\
&\leq C\e\chi_{[0,T]}\big(\|\nabla r^{\e}\|_{L^{\infty}(\cO)}+1\big),\\
\|\nabla (B^{\e}\cdot\nabla r^{\e})\|_{L^{\infty}(\cO)}
&\leq \|\nabla B^{\e}\|_{L^{\infty}(\cO)}\|\nabla r^{\e}\|_{L^{\infty}(\cO)}+\|B^{\e}\cdot\nabla^2 r^{\e}\|_{L^{\infty}(\cO)}\\
&\leq C(\e\chi_{[0,T]}+\e^{\frac{3}{2}}\|\nabla R^{\e}\|_{L^{\infty}(\cO)})\|\nabla r^{\e}\|_{L^{\infty}(\cO)}
+\|B^{\e}\cdot\nabla^2 r^{\e}\|_{L^{\infty}(\cO)}.
\end{align*}
Thanks to Lemma \ref{unfi}, $B^{\e}\cdot\nabla$ involves only tangential derivative, which along with \eqref{2inf2} ensures that
\begin{align*}
\|B^{\e}\cdot\nabla^2 r^{\e}\|_{L^{\infty}(\cO)}\leq C\|B^{\e}\|_{1,\infty}\|\nabla r^{\e}\|_{1,\infty}
\leq C\bigl(\e\chi_{[0,T]}+\e^{\frac{3}{2}}\bigr)\|\nabla r^{\e}\|_{1,\infty}.
\end{align*}

By summarizing the above inequalities, we obtain
\begin{align*}
\|\nabla R^{\e}(t)\|_{L^{\infty}(\cO)}
\leq\ & C\e^{-\frac{1}{2}}
+\int_0^t\|\nabla R^{\e}(s)\|_{L^{\infty}(\cO)}\bigl(\s^{-\gamma}+\e^{\frac{3}{2}}\|\nabla r^{\e}(s)\|_{L^{\infty}(\cO)}\bigr)ds\\
&+\int_0^t\bigl(\e\chi_{[0,T]}+\e^{\frac{3}{2}}\bigr)\|\nabla r^{\e}(s)\|_{1,\infty}\,ds.
\end{align*}
Yet it follows from \eqref{K4} that
\begin{align*}
\int_0^t\bigl(\e\chi_{[0,T]}+\e^{\frac{3}{2}}\bigr)\|\nabla r^{\e}(s)\|_{1,\infty}ds
\leq C\Big(\int_0^{T^{\e}}\e^2\|\nabla r^{\e}\|^2_{1,\infty}dt\Big)^{\frac{1}{2}}
\leq C\e^{\frac{1}{4}}.
\end{align*}
We get, by using \eqref{K4} again and  Gronwall's inequality, that
\begin{align*}
\|\nabla R^{\e}(t)\|_{L^{\infty}(\cO)}\leq C\e^{-\frac{1}{2}},\qquad \text{ for }t\in [0,T^{\e}],
\end{align*}
which in particular implies
\begin{align*}\label{K3}
\int_0^{T^{\e}}\e^3\|\nabla R^{\e}(t)\|^2_{L^{\infty}(\cO)}\leq C\e.
\end{align*}
This together with  \eqref{K2} and \eqref{K4} ensure \eqref{eqKe}. We thus completes the proof of Lemma \ref{proKe}.
\end{proof}

\begin{remark}\label{indexpl}
{\sl
Let us explain the reason why we choose the initial data $(u_a,B_a)$ belonging to $H^{24}\times H^8$.
 Thanks to Proposition \ref{2inf}, we require $m\geq 5$. In view of \eqref{ZarBR}, we require $\|\nabla u^{\e}_{\rm app}\|_{5,\infty}\leq C\t^{-\gamma}$ and $\|\nabla B^{\e}_{\rm app}\|_{5,\infty}\leq C\e\chi_{[0,T]}$. Whereas by virtue of  Sobolev embedding inequality and the construction of $B^2$, for such a process, the minimum regularity for $B_a$ is $H^8(\cO)$. From the construction of $u^{\e}_{app}$ and $\|\nabla u^{\e}_{app}\|_{5,\infty}\leq C\t^{-\gamma}$, the minimum index for integer $p_3$ is $8$.
In view of \eqref{n2uapp1} and \eqref{n2uapp2}, we require $\sqrt{\e}\|\nabla^2 u^{\e}_{app}\|_{4,\infty}\leq C\t^{-\gamma}$, so we need $s_3\geq 7.$
From \eqref{defpi4}, we require $q_3\geq 2.$
From \eqref{ptN} and the definition of $N^{\e}$, we require $k_3\geq 1.$ To close the estimates and get \eqref{uBeT}, we need $\gamma>1$, so we take $\gamma=\frac{5}{4}$ and require $v^3\in C^{1}_{5/4}(\mathbb{R}_+;H^8(\cO;H^7_2(\mathbb{R}_+)))$. By the definition of $k_2,p_2$ in \eqref{index2}, we need $k_2\ge 7, p_2\geq 17$. From the proof of Proposition \ref{lnel}, for such a process, the minimum regularity for $u_a$ is $H^{24}(\cO).$

}
\end{remark}

\subsection{End of proof of Theorem \ref{mainth}}\label{thpf}

Let $\e>0, \gamma=\frac{5}{4}$, we construct the multi-scale asymptotic expansion \eqref{expu} and \eqref{expB} in Section \ref{Sect2}.
Next we shall evaluate $u^{\e}$ and $B^{\e}$ at time $t=\frac{T}{\e}$ in spaces $H^1(\cO)$ and $L^{\infty}(\cO).$

In view of the construction of $B^2$ given by Proposition \ref{prB2},  $B^2$ is supported in $[0,T]$. So that by virtue of \eqref{expB}, one has
\beq\label{S4eq20}
B^{\e}\bigl({T/\e},\cdot\bigr)=\e^{\frac{3}{2}}R^{\e}\bigl({T/\e},\cdot\bigr).
\eeq
While it follows from  \eqref{expu}, the construction of $u^{\e}_{app}$ given by \eqref{dfueapp} and the fact that $u^0, u^2$ is supported in $[0,T]$,
\begin{equation}
\begin{split}\label{S4eq21}
u^{\e}({T/\e},\cdot)=&\ (\e\nabla\phi^2+\e^{\frac{3}{2}}\nabla\phi^3)(T/\e,\cdot)\\
& +\bigl\{\sqrt{\e}v^1+\e(v^2+\n w^2)+\e^{\frac{3}{2}}(v^3+\n w^3)\bigr\}_{\e}({T/\e},\cdot)
+\e^{\frac{3}{2}}r^{\e}({T/\e},\cdot).
\end{split}
\end{equation}
For $1\leq i\leq 3,j=2,3,$ $v^i,w^j\in C^1_{\gamma}(\mathbb{R}_+;H^8(\cO;H^7_2(\mathbb{R}_+))),\phi^j\in C^7_7(\mathbb{R}_+;H^{17}(\cO)).$ We get, by using Lemma \ref{IS} and Sobolev embedding, that
\begin{align*}
\e\|\nabla\phi^j(T/\e)\|_{H^1(\cO)\cap L^{\infty}(\cO)}&\leq
C\e\|\phi^j(T/\e)\|_{H^3(\cO)}\leq C\e\langle{T/\e}\rangle^{-7}\leq C\e^8,\\\
\sqrt{\e}\|(\{v^i\}_{\e},\{w^j\}_{\e})({T/\e})\|_{H^1(\cO)}&\leq C\e^{\frac{1}{4}}\|(v^i,w^j)({T/\e})\|_{H^2(\cO;H^1(\mathbb{R}_+))}
\leq C\e^{\frac{1}{4}}\langle{T/\e}\rangle^{-\gamma}\leq C\e^{\frac{3}{2}},\\
\sqrt{\e}\|(\{v^i\}_{\e},\{w^j\}_{\e})({T/\e})\|_{L^{\infty}(\cO)}&\leq C
\sqrt{\e}\|(v^i,w^j)({T/\e})\|_{H^2(\cO;H^1(\mathbb{R}_+))}\leq C\sqrt{\e}\langle{T/\e}\rangle^{-\gamma}\leq C\e^{\frac{7}{4}}.
\end{align*}
Recall that we denote $\langle{t}\rangle=\sqrt{1+t^2}\geq t$ for $t>0$.

Thanks to Lemma \ref{eqvr} and Proposition \ref{ult}, for $t\in [0,{T/\e}],$ we have
\begin{align*}
\e^{\frac{3}{2}}\|(r^{\e},R^{\e})(t)\|_{H^1(\cO)}
&\leq C\e\left(\|(\eta^{\e},Q^{\e})(t)\|_{L^2(\cO)}+\sqrt{\e}\|(r^{\e},R^{\e})(t)\|_1+\sqrt{\e}\t^{-\gamma}\right)\\
\nonumber&\leq C \e \big(E^{\e}(t)\big)^{\frac{1}{2}}\leq C\e^{\frac{5}{4}}.
\end{align*}
While it follows from Proposition \ref{2inf} and Proposition \ref{ult} that for $t\in [0,T/{\e}]$,
\begin{align*}
\|(r^{\e},R^{\e})(t)\|^2_{L^{\infty}(\cO)}\leq C\e^{-\frac{1}{2}}\big({E^{\e}}(t)+\e\t^{-2\gamma}\big)\leq C.
\end{align*}

Then in view of \eqref{S4eq20} and \eqref{S4eq21}, we get, by summarizing the above estimates, that
\begin{align*}
\|(u^{\e},B^{\e})({T/\e},\cdot)\|_{H^1(\cO)}&\leq C\e^{\frac{5}{4}},\\
\|(u^{\e},B^{\e})({T/\e},\cdot)\|_{L^{\infty}(\cO)}&\leq C\e^{\frac{3}{2}},
\end{align*}
for a constant $C>0$, which implies \eqref{uBeT} and ends the proof of Theorem \ref{mainth}.


\section{Proof of Theorem \ref{LAC}}
\label{sec-lagr}

This section is devoted to the proof of Theorem \ref{LAC}. We use the same strategy as in \cite[Section 6]{LSZ1} to establish the small-time global approximate Lagrangian controllability for the MHD equations. After time scaling \eqref{tsca1}, we shall construct a series of solutions $(u^{\e},p^{\e},B^{\e})$ to equations \eqref{MHDe} with expansions \eqref{expu} and \eqref{expB}.
Since Theorem \ref{LAC} only requires that inequalities \eqref{lc1} and \eqref{lc2} hold for a positive time, we can choose $T_*=\e T$ to be arbitrarily small. Therefore, we do not need the well-prepared dissipation property of the boundary layers. Again, we use a rapid and violent control so that the flow of a solution $u^0$ to a Euler system can transport $\gamma_0$ approximately to $\gamma_1$ at time $T$, where $\gamma_0$ and $\gamma_1$ are isotopic in $\Omega$ and surround the same volume. We need the following proposition due to Krygin.
\begin{proposition}[\cite{krygin}]
{\sl
Assuming that $\gamma_0$ and $\gamma_1$ are as above, there exists a volume-preserving diffeotopy $h\in C^{\infty}([0,1]\times\Omega;\Omega)$ such that $\p_t h$ is compactly supported in $(0,1)\times\Omega$, $h(0,\gamma_0)=\gamma_0$ and $h(1,\gamma_0)=\gamma_1$.
}
\end{proposition}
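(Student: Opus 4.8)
The plan is to prove this classical result of Krygin in the standard two-step way: first produce an \emph{ambient} isotopy of $\Omega$ that carries $\gamma_0$ to $\gamma_1$ while ignoring volumes, and then correct it into a volume-preserving one by Moser's deformation argument applied separately to the two regions cut out by the moving surface.

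First, since $\gamma_0$ and $\gamma_1$ are isotopic closed surfaces embedded in $\Omega$, the isotopy extension theorem provides a smooth ambient isotopy $\Phi\in C^\infty([0,1]\times\Omega;\Omega)$ with $\Phi(0,\cdot)=\mathrm{id}$, each $\Phi(t,\cdot)$ a diffeomorphism of $\Omega$, $\Phi(t,\gamma_0)=:\gamma_t$ a smooth path of surfaces from $\gamma_0$ to $\gamma_1$, and $\partial_t\Phi$ compactly supported in $(0,1)\times\Omega$ (reparametrize $t$ near the endpoints, and multiply the generating vector field by a cut-off equal to $1$ on a fixed compact set containing every $\gamma_t$). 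Since $\Phi(t,\cdot)$ is a diffeomorphism of $\Omega$ sending $\gamma_0$ to $\gamma_t$ and is the identity at $t=0$, by continuity it maps the region $U_0$ enclosed by $\gamma_0$ onto the region $U_t$ enclosed by $\gamma_t$, and the exterior region $V_0:=\Omega\setminus\overline{U_0}$ onto $V_t:=\Omega\setminus\overline{U_t}$.

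Second, I would normalize the intermediate enclosed volumes. In general $t\mapsto\mathrm{vol}(U_t)$ is non-constant, although $\mathrm{vol}(U_0)=\mathrm{vol}(U_1)$ by hypothesis. Let $Y_t$ be a vector field on $\Omega$, smooth in $(t,x)$, compactly supported and equal near $\gamma_t$ to the outward unit normal of $\gamma_t$, with time-$\tau$ flow $F^{Y_t}_\tau$. Then $\frac{d}{d\tau}\mathrm{vol}\big(F^{Y_t}_\tau(U_t)\big)\big|_{\tau=0}=\mathrm{area}(\gamma_t)>0$, so by the implicit function theorem and compactness of $[0,1]$ there is a smooth $t\mapsto\tau(t)$ with $\tau(0)=\tau(1)=0$ and $\mathrm{vol}\big(F^{Y_t}_{\tau(t)}(U_t)\big)=\mathrm{vol}(U_0)$ for all $t$. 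Replacing $\Phi(t,\cdot)$ by $\widehat\Phi(t,\cdot):=F^{Y_t}_{\tau(t)}\circ\Phi(t,\cdot)$ gives a smooth ambient isotopy with $\widehat\Phi(0,\cdot)=\mathrm{id}$, $\widehat\Phi(1,\gamma_0)=\gamma_1$, such that $\widehat U_t:=\widehat\Phi(t,U_0)$ has volume $\mathrm{vol}(U_0)$ for every $t$, hence also $\widehat V_t:=\widehat\Phi(t,V_0)$ has constant volume $|\Omega|-\mathrm{vol}(U_0)$. Now I would run Moser's argument with a parameter: on the fixed compact region $\overline{U_0}$ the densities $dx$ and $\widehat\Phi(t,\cdot)^*dx$ have the same total mass $\mathrm{vol}(U_0)$ for each $t$, so the parametric Dacorogna--Moser theorem yields a smooth family of diffeomorphisms $\alpha^{\mathrm{in}}(t,\cdot)$ of $\overline{U_0}$, equal to the identity near $\gamma_0$ with $\alpha^{\mathrm{in}}(0,\cdot)=\mathrm{id}$, such that $\big(\alpha^{\mathrm{in}}(t,\cdot)\big)^*\big(\widehat\Phi(t,\cdot)^*dx\big)=dx$ on $\overline{U_0}$; the same theorem on $\overline{V_0}$ (a domain with boundary $\gamma_0\cup\partial\Omega$) gives $\alpha^{\mathrm{out}}(t,\cdot)$, equal to the identity near $\gamma_0\cup\partial\Omega$. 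Since both are the identity near $\gamma_0$ they glue to a smooth family $\alpha(t,\cdot)$ of diffeomorphisms of $\overline\Omega$, equal to the identity near $\partial\Omega$ and outside a fixed compact subset of $\Omega$, with $\alpha(t,\cdot)^*\big(\widehat\Phi(t,\cdot)^*dx\big)=dx$. Then $h(t,\cdot):=\widehat\Phi(t,\cdot)\circ\alpha(t,\cdot)$ is volume-preserving, $h(0,\cdot)=\mathrm{id}$, and $h(1,\gamma_0)=\widehat\Phi(1,\alpha(1,\gamma_0))=\widehat\Phi(1,\gamma_0)=\gamma_1$ because $\alpha(1,\cdot)$ fixes $\gamma_0$; a final reparametrization in $t$ makes $h(t,\cdot)$ constant near $t=0$ and $t=1$, so that $\partial_t h$ is compactly supported in $(0,1)\times\Omega$.

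The main obstacle is the last step: running Moser's deformation trick on a \emph{manifold with boundary}, so that the Moser vector field is tangent to $\gamma_0$ and to $\partial\Omega$ (hence integrates to self-diffeomorphisms fixing those surfaces) and so that everything depends smoothly on the parameter $t$; this is exactly the Dacorogna--Moser divergence-equation argument with the boundary condition incorporated, plus the bookkeeping to make the interior and exterior corrections match smoothly along the moving surface $\gamma_t$. The volume-normalization step and the verification that all maps involved stay equal to the identity outside a fixed compact subset of $\Omega$ are then routine.
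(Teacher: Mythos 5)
The paper does not prove this proposition: it is quoted directly from Krygin's 1971 article \cite{krygin}, so there is no in-text argument to compare against. Your overall strategy (ambient isotopy via the isotopy extension theorem, normalization of the intermediate enclosed volumes by flowing along a normal field, then a parametric Moser correction inside and outside the moving surface) is nonetheless a reasonable and essentially standard route to such a statement.

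There is, however, a genuine gap at the gluing step. The Dacorogna--Moser theorem on $\overline{U_0}$ gives a diffeomorphism that is the identity \emph{on} $\p U_0=\gamma_0$, not on an open neighbourhood of it; the stronger ``identity in a collar'' version requires the two densities to already agree near the boundary, and $\widehat\Phi(t,\cdot)^*dx$ does not equal $dx$ near $\gamma_0$ in general. Consequently $\alpha^{\mathrm{in}}(t,\cdot)$ and $\alpha^{\mathrm{out}}(t,\cdot)$ coincide with the identity only on the surface itself, their transverse derivatives there need not match, and the concatenated map is merely continuous across $\gamma_0$; the claim that ``they glue to a smooth family'' does not follow. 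The usual repair is to bypass the gluing of diffeomorphisms altogether and build a single globally smooth Moser vector field $w_t$ on $\Omega$ solving $\dive w_t=-\p_t J_t$ (with $J_t:=\det\nabla\widehat\Phi(t,\cdot)$), $w_t\cdot\n=0$ on $\p\Omega$ and $w_t\cdot\nu=0$ on $\gamma_0$ (here $\nu$ denotes a unit normal of $\gamma_0$): first solve the divergence equation globally with only the $\p\Omega$ condition to obtain some $w_t^{(0)}$, note that the volume normalization forces $\int_{\gamma_0}w_t^{(0)}\cdot\nu\,dS=0$, and then subtract a compactly supported divergence-free correction (for instance of curl type, built from a solution of a tangential divergence equation on the closed surface $\gamma_0$) which cancels that normal trace. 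The resulting $w_t$ is smooth on all of $\Omega$ and tangent to both $\gamma_0$ and $\p\Omega$, so its Moser flow fixes both surfaces setwise and performs the correction with no patching needed. You correctly flagged this matching across $\gamma_0$ as the main obstacle, but the argument as written would fail precisely there.
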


As a direct consequence, for $h^{-1}(t,x)$ being the inverse mapping of $h(t,x)$ with respect to the space variables, the smooth vector field $X(t,x):=\p_th(t,h^{-1}(t,x))$ is compactly supported in $(0,1)\times\Omega$ and satisfies $\phi^X(t,0,\gamma_0)\subset\Omega,$ $\forall t\in [0,1], \phi^{X}(1,0,\gamma_0)=\gamma_1$ and $\dive X=0$ in $[0,1]\times\Omega$,
where $\phi^X$ is the flow of $X$ defined similar to \eqref{flowu}. Then we can apply the following proposition due to Glass and Horsin.

\begin{proposition}[Proposition 1 of \cite{GH2}]
{\sl
Let $\gamma_0$ be a $C^{\infty}$ contractible two-sphere embedded in $\Omega$ and consider $X\in C^0([0,1];C^{\infty}(\overline{\Omega};\mathbb{R}^3))$ a solenoidal vector field such that
\begin{equation*}
\forall \,t\in [0,1],\quad \phi^{X}(t,0,\gamma_0)\subset\Omega,
\end{equation*}
and let $\gamma_1:=\phi^{X}(1,0,\gamma_0)$. For any $\mu>0$ and $k\in \mathbb{N}$, there exists $\theta\in C_0^{\infty}((0,1)\times\overline{\Omega};\mathbb{R})$ such that
\begin{subequations} \label{S5eq1}
\begin{align}
\label{tta1}\Delta_x\theta=0, \quad&\text{ in } [0,1]\times\Omega,\\
\label{tta2}\p_{\n}\theta=0,  \quad&\text{ on }[0,1]\times(\partial\Omega\setminus\Gamma),\\
\label{tta3}\forall\,t\in [0,1], \quad &\ \phi^{\nabla\theta}(t,0,\gamma_0)\subset\Omega,\\
\label{tta4}\|\phi^{\nabla\theta}(1,0,\gamma_0)&-\gamma_1\|_{C^{k}(\mathbb{S}^2)}< \mu,
\end{align}
\end{subequations}
up to reparameterization.
}
\end{proposition}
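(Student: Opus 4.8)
The idea is to force the gradient flow of $\theta$ to shadow the flow of the given solenoidal field $X$ along the moving sphere, exploiting that the only constraints on $\theta$ are that it be harmonic in $x$, satisfy $\p_{\n}\theta=0$ on $\p\Om\setminus\Gamma$, be compactly supported in time in $(0,1)$, and that we only need to match $\gamma_1=\phi^X(1,0,\gamma_0)$ up to reparameterization. I would first reduce the problem to an infinitesimal statement by discretising time: for a large integer $N$ put $\Gamma_j:=\phi^X(j/N,0,\gamma_0)$; since $\phi^X(j/N,(j-1)/N,\cdot)$ is $C^{k+1}$-close to the identity, $\Gamma_j$ is the normal graph over $\Gamma_{j-1}$ of a function $s_j$ with $\|s_j\|_{C^{k+1}(\Gamma_{j-1})}\le C/N$, and — because $X$ is solenoidal, so $\phi^X(t,s,\cdot)$ is volume preserving — $\Gamma_{j-1}$ and $\Gamma_j$ enclose the same volume, whence $\big|\int_{\Gamma_{j-1}}s_j\big|\le C/N^2$. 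The motion of a set under a flow is governed entirely by the normal component of the driving field, so up to reparameterization it suffices, on the $j$-th sub-interval, to drive $\Gamma_{j-1}$ by a harmonic potential whose normal trace on $\Gamma_{j-1}$ approximates a rescaling of $s_j$.

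\textbf{The key lemma.} The heart of the proof is a Runge-type density statement: given a smooth embedded two-sphere $\Sigma\subset\subset\Om$, a function $g\in C^k(\Sigma)$ with $\int_\Sigma g=0$, and $\eta>0$, there exists $q\in C^\infty(\overline\Om)$ with $\Delta q=0$ in $\Om$, $\p_{\n}q=0$ on $\p\Om\setminus\Gamma$, and $\|(\nabla q)\!\cdot\!\n_\Sigma-g\|_{C^k(\Sigma)}<\eta$ on $\Sigma$, where $\n_\Sigma$ is the unit normal of $\Sigma$. I would prove this by duality: if the set of such normal traces were not dense in the mean-zero subspace of $C^k(\Sigma)$, Hahn--Banach would produce a nonzero distribution $\mu$ on $\Sigma$ annihilating all of them; pairing against the Green kernel of the mixed Laplacian (Dirichlet on $\Gamma$, Neumann on $\p\Om\setminus\Gamma$) yields a function $w$ harmonic on $\Om\setminus\Sigma$ which vanishes together with its normal derivative in a neighbourhood of $\Gamma$, so that by Holmgren--Aronszajn unique continuation $w\equiv0$ on the component of $\Om\setminus\Sigma$ containing $\p\Om$; this component does reach $\Gamma$ precisely because $\Sigma$ is an embedded sphere — the decisive topological input — and inspecting the jump of $w$ across $\Sigma$ forces $\mu=0$. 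The mean-zero restriction is both necessary and harmless: normal traces of functions harmonic on the region bounded by $\Sigma$ integrate to zero by the divergence theorem, while $s_j$ has mean $O(1/N^2)$.

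\textbf{Assembling.} Apply the lemma with $\Sigma=\Gamma_{j-1}$ and $g=N(s_j-\bar s_j)$ (so $\|g\|_{C^k}$ is bounded uniformly in $N$) at a fixed relative tolerance, obtaining $q_j$ with $\|q_j\|_{C^{k+1}(\overline\Om)}\le M_0$ uniform in $N$; set $\theta(t,x):=\sum_j\rho_j(t)q_j(x)$ with $\rho_j$ a smooth time bump supported in $((j-1)/N,j/N)$, normalised so that the gradient flow of $\rho_jq_j$ over that sub-interval realises, to leading order, the displacement $s_j$. Harmonicity in $x$ and the Neumann condition are inherited because they are linear, and since $X$ (hence each $s_j$) is compactly supported in $(0,1)$ in time, $\theta\in C_0^\infty((0,1)\times\overline\Om)$. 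A Gronwall estimate comparing $\phi^{\nabla\theta}(t,0,\gamma_0)$ with $\phi^X(t,0,\gamma_0)$ then controls the error: the normal-velocity mismatch is $O(1/N)$ plus a fixed relative error, amplified by $\exp\!\big(C\!\int_0^1\|\nabla\theta(t,\cdot)\|_{C^1}dt\big)$, and the total action $\int_0^1\|\nabla\theta(t,\cdot)\|_{C^1}dt\le C M_0$ is bounded uniformly in $N$; hence the final sphere is $C^k$-close to $\gamma_1$, up to reparameterization, within a distance that is a fixed fraction of the initial gap. Because this closeness also keeps $\phi^{\nabla\theta}(t,0,\gamma_0)\subset\Om$ for all $t$ — the reference surface $\phi^X(t,0,\gamma_0)$ stays in $\Om$ with a uniform margin — iterating the same construction on consecutive time windows $[0,\tfrac12]$, $[\tfrac12,\tfrac34]$, $[\tfrac34,\tfrac78],\dots$ contracts the residual geometrically; after finitely many windows it drops below $\mu$, and one sets $\theta\equiv0$ afterwards, preserving smoothness and compact support in $(0,1)$.

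\textbf{Main obstacle.} The genuinely hard part is the density lemma together with the quantitative bookkeeping needed to make the whole scheme close — that is, arranging the error per sub-step to be small while the total action $\int_0^1\|\nabla\theta\|_{C^1}$, which governs the Gronwall amplification, stays bounded; this is why one rescales to the uniformly bounded data $Ns_j$, fixes the relative accuracy once and for all, and recovers the arbitrary target precision $\mu$ by a finite geometric refinement rather than by a single pass. The unique-continuation ingredient is classical, but its combination with the mixed boundary condition and with the topology of $\Om\setminus\Sigma$ (the exterior component must touch $\Gamma$) is the real content of the lemma; and a routine but indispensable companion step is the verification that, up to reparameterization, only the normal component of $\nabla\theta$ along the moving surface affects its image — which is exactly what allows us to disregard the uncontrolled tangential part of $\nabla q_j$.
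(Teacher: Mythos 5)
First, a point of comparison: the paper does not prove this proposition at all — it is imported verbatim as Proposition 1 of Glass–Horsin \cite{GH2} (the two–dimensional analogue being in \cite{GH1}), so there is no in-paper argument to measure your attempt against. What you have written is, in outline, a reconstruction of the strategy of the cited reference itself: discretize the reference flow in time, note that up to reparameterization only the normal component of the velocity along the moving surface matters, observe the mean-zero compatibility coming from incompressibility, prove a density lemma for normal traces of gradients of harmonic functions satisfying $\partial_{\n}q=0$ on $\partial\Omega\setminus\Gamma$ (by Hahn–Banach duality plus unique continuation from $\Gamma$; in 2D Runge's theorem is used instead), compare the two flows by Gronwall, and iterate on shrinking time windows to beat the Gronwall amplification. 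So the route is the right one; the issues are quantitative.

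There are, however, genuine gaps in the bookkeeping. (i) The uniform bound $\|q_j\|_{C^{k+1}(\overline{\Omega})}\le M_0$ independent of $N$ is asserted, not derived: the density lemma controls only the normal trace on $\Gamma_{j-1}$, not the size of the harmonic approximant, and without such a bound the total action $\int_0^1\|\nabla\theta(t,\cdot)\|_{C^1}\,dt$ — hence the Gronwall factor — is not uniform in $N$ and the scheme does not close. The standard repair is a compactness/uniform-continuity argument in time: the data $N(s_j-\bar s_j)$ converge, along the fixed flow $\phi^{X}$, to the normal component of $X$ on $\Gamma_t$, so one can approximate the continuous family $t\mapsto X(t,\cdot)\cdot\n_{\Gamma_t}$ by finitely many harmonic potentials once and for all and reuse them for every $N$; this should be made explicit. (ii) The dyadic iteration on $[\tfrac12,\tfrac34],[\tfrac34,\tfrac78],\dots$ needs, at each stage, a new solenoidal reference field carrying the current (perturbed) surface to $\gamma_1$ with norms controlled uniformly over the iteration. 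Equal enclosed volumes (your $\nabla\theta$ is divergence free) and $C^k$-closeness give existence of such a field, but the uniform quantitative control is precisely what makes the per-window contraction factor fixed; as written, the "contracts geometrically" claim is circular. (iii) Density in the $C^k(\Sigma)$ topology via Hahn–Banach requires handling dual functionals of order $k$, for which the jump relations of the associated layer potential are delicate; the usual remedy is to prove $L^2$ or $C^0$ density on a slightly enlarged surface and upgrade to $C^k$ on $\Sigma$ by interior elliptic estimates for harmonic functions. Relatedly, in the unique-continuation step you must specify the Green function (mixed Dirichlet/Neumann) you pair with, so that annihilation of all admissible traces really yields vanishing Cauchy data of $w$ on the open portion $\Gamma$ before invoking Holmgren. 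None of these defects is fatal — they are exactly the technical content of \cite{GH2} — but they are the parts that must be supplied for the sketch to become a proof.
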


Now we need to extend $\nabla\theta$ to $\cO$ and remain tangent to the whole domain $\p\cO.$ We shall prove the following proposition, as a Lagrangian version of Lemma \ref{lmu0}.

\begin{proposition}\label{lagctr}
{\sl There exist a solution $(\tilde{u}^0,\tilde{p}^0,\tilde{\sigma}^0)\in C^{\infty}([0,T]\times\cO; \mathbb{R}^3\times \mathbb{R}\times \mathbb{R})$ to the system
\begin{equation*}
\left\{
    \begin{aligned}
        &\p_t\tilde{u}^0+\tilde{u}^0\cdot\nabla\tilde{u}^0+\nabla \tilde{p}^0=0,\quad &&\text{ in } (0,T)\times\cO,\\
        &\dive \tilde{u}^0=\tilde{\sigma}^0,\quad &&\text{ in } (0,T)\times\cO,\\
        &\tilde{u}^0\cdot\n=0 \quad &&\text{ on } (0,T)\times\p\cO,\\
        &\tilde{u}(0,\cdot)=\tilde{u}(T,\cdot)=0,\quad&&\text{ in }\cO.
    \end{aligned}
\right.
\end{equation*}
Moreover $\tilde{\sigma}^0$ is supported in $\overline{\cO\setminus\Omega},$ $\tilde{u}^0,\tilde{p}^0$ and $\tilde{\sigma}^0$ are compactly supported in $(0,T)$.

}
\end{proposition}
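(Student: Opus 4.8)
The plan is to mimic the construction behind Lemma~\ref{lmu0}, but starting from the potential flow $\nabla\theta$ produced by the Glass--Horsin proposition rather than from the abstract flushing field. First I would fix $\mu>0$ and $k\in\mathbb N$ and let $\theta\in C_0^\infty((0,1)\times\overline\Omega;\mathbb R)$ be the harmonic-in-space function given by the proposition above, so that $\nabla\theta$ is a solenoidal vector field on $[0,1]\times\Omega$, compactly supported in $(0,1)$ in time, tangent to $\partial\Omega\setminus\Gamma$ (because $\partial_{\mathbf n}\theta=0$ there), and whose flow carries $\gamma_0$ to within $\mu$ of $\gamma_1$ in $C^k$. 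Rescaling time from $[0,1]$ to $[0,T]$ (replace $\theta(t,x)$ by $\frac1T\theta(t/T,x)$, say) I may assume $\theta$ lives on $[0,T]$; this only changes the size of the field, not any of its structural properties.

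Next I would extend $\theta$ from $\Omega$ to $\cO$. Since $\theta$ is compactly supported in $(0,T)\times\overline\Omega$ and harmonic in $x$, and since we only need an \emph{extension} that is smooth and compactly supported away from $\Omega$ in the region $\overline{\cO\setminus\Omega}$, I can pick any smooth extension $\tilde\theta\in C_0^\infty((0,T)\times\cO)$ with $\tilde\theta=\theta$ on a neighborhood of $\overline\Omega$, and then set $\tilde u^0:=\nabla\tilde\theta$. By construction $\tilde u^0$ is smooth, compactly supported in $(0,T)\times\cO$, equals $\nabla\theta$ on $\overline\Omega$, and (after shrinking the extension near $\partial\cO$, or by a cutoff argument as in the definition of $\cO$) satisfies $\tilde u^0\cdot\mathbf n=0$ on $\partial\cO$ because $\partial\cO$ consists of the piece $\partial\Omega\setminus\Gamma\subset\partial\cO$ where $\partial_{\mathbf n}\theta=0$ together with the artificial part, on which one simply arranges the cutoff so that $\tilde\theta$ is constant. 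Then I define the divergence and pressure by
\begin{equation*}
\tilde\sigma^0:=\dive \tilde u^0=\Delta\tilde\theta,\qquad
\tilde p^0:=-\partial_t\tilde\theta-\tfrac12|\nabla\tilde\theta|^2,
\end{equation*}
so that the momentum equation $\partial_t\tilde u^0+\tilde u^0\cdot\nabla\tilde u^0+\nabla\tilde p^0=0$ holds identically (using $\tilde u^0\cdot\nabla\tilde u^0=\nabla(\tfrac12|\nabla\tilde\theta|^2)$ for a gradient field). Since $\tilde\theta$ agrees with the harmonic $\theta$ on $\overline\Omega$, we have $\tilde\sigma^0=\Delta\tilde\theta=0$ there, hence $\tilde\sigma^0$ is supported in $\overline{\cO\setminus\Omega}$; and all of $\tilde u^0,\tilde p^0,\tilde\sigma^0$ are compactly supported in $(0,T)$ because $\theta$ is, which in particular forces $\tilde u^0(0,\cdot)=\tilde u^0(T,\cdot)=0$.

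Finally I would record the Lagrangian output: on $\overline\Omega$ the flow $\phi^{\tilde u^0}$ of $\tilde u^0$ coincides with $\phi^{\nabla\theta}$ as long as trajectories stay in $\Omega$, which they do for the set $\gamma_0$ by \eqref{tta3}; hence $\phi^{\tilde u^0}(t,0,\gamma_0)\subset\Omega$ for all $t\in[0,T]$ and $\|\phi^{\tilde u^0}(T,0,\gamma_0)-\gamma_1\|_{C^k}<\mu$ up to reparameterization. The main obstacle — really the only delicate point — is the boundary compatibility of the extension: making sure that $\tilde u^0\cdot\mathbf n=0$ holds on \emph{all} of $\partial\cO$ while keeping $\tilde u^0=\nabla\theta$ near $\overline\Omega$ and $\tilde\sigma^0$ supported off $\Omega$. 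This is handled by exploiting that $\partial\Omega\setminus\Gamma\subset\partial\cO$ with $\partial_{\mathbf n}\theta=0$ there, choosing the extension $\tilde\theta$ to be locally constant near the remaining (artificial) portion of $\partial\cO$, and invoking the same smooth-cutoff construction already used to build $\cO$ and $\varphi$ in Section~\ref{scal}; everything else is a routine verification of identities for gradient vector fields.
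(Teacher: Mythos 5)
The student takes a genuinely different route from the paper, and there is a real gap in it.

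\textbf{What the paper does.} The paper does \emph{not} extend $\theta$ from $\Omega$ to $\cO$; it \emph{approximates} it. Using Walsh's harmonic approximation theorem (Theorem~\ref{harapp}) it builds a harmonic polynomial $v_K$ that is simultaneously $\mu$-close to $\theta$ on the compact set $K_1\subset\Omega$ where the Lagrangian flow lives, and $\mu$-close to $0$ on the set $K_2$ surrounding $\partial\Omega\setminus\Gamma$. After the cutoff $\chi_K$, the Neumann data $\partial_{\n}(\chi_K v_K)$ on all of $\partial\cO$ is therefore of size $O(\mu)$, so the harmonic correction $\Psi_K$ that enforces exact tangency $\partial_{\n}\tilde\theta=0$ on $\partial\cO$ is itself $C^k$-small and does not spoil the flow. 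This approximation mechanism is the whole content of the proof.

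\textbf{Where your argument breaks.} You propose instead to extend $\theta$ \emph{exactly}, keeping $\tilde\theta=\theta$ on a neighborhood of $\overline\Omega$, and to arrange tangency on the remaining boundary by ``choosing the extension $\tilde\theta$ to be locally constant near the remaining (artificial) portion of $\partial\cO$.'' This fails at the interface curve $\partial\Gamma=\overline\Gamma\cap\overline{\partial\Omega\setminus\Gamma}$, which is simultaneously in $\overline\Omega$ (where you require $\tilde\theta=\theta$) and on the closure of the artificial part of $\partial\cO$ (where you want $\tilde\theta$ constant). Since the Glass--Horsin $\theta$ need only satisfy $\partial_{\n}\theta=0$ on $\partial\Omega\setminus\Gamma$ --- there is no reason for $\theta$ itself to be constant there, let alone on $\partial\Gamma$ --- the two requirements are inconsistent, and no neighborhood of the artificial boundary can carry a constant $\tilde\theta$. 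If one instead tries a naive cutoff $\tilde\theta=\chi\,\theta_{\mathrm{ext}}+(1-\chi)c$ to interpolate between the two regimes, the gradient acquires the term $(\theta_{\mathrm{ext}}-c)\nabla\chi$, whose normal component on $\partial\cO$ has no reason to vanish, so $\tilde u^0\cdot\n=0$ is lost precisely where you need it. Thus the ``only delicate point'' you identify is not in fact handled by the construction you describe, and the divergence and pressure identities that follow are vacuous until tangency is secured.

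\textbf{Salvageability vs.\ the paper's route.} Your goal is not hopeless: since $\partial_{\n}\theta\equiv 0$ on the region $\partial\Omega\setminus\Gamma\subset\partial\cO$, any smooth extension of $\theta$ has Neumann data on $\partial\cO$ that vanishes to \emph{infinite order} along $\partial\Gamma$, so one could in principle build an extension whose normal derivative vanishes on the whole artificial boundary by a Whitney/Borel-type argument glued across the interface, and then $\tilde\theta=\theta$ exactly on $\Omega$ would hold. But this is a delicate flat-extension problem near $\partial\Gamma$ that your proposal does not carry out, and it is precisely the difficulty the paper sidesteps by sacrificing exactness: approximating rather than extending makes the Neumann data quantitatively small everywhere on $\partial\cO$ at once, so a \emph{global} harmonic correction $\Psi_K$ suffices and the flow is only perturbed by $O(\mu)$. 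As written, your proof is missing the step that secures $\tilde u^0\cdot\n=0$ on the artificial boundary.
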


To prove the above proposition, we need the following Runge-type harmonic approximation theorem (Runge's Theorem for $2D$ case and Walsh's Theorem for $3D$ case, see \cite[Section 1.1]{Gar} )
\begin{theorem}\label{harapp}
{\sl
Let $K$ be a compact set in $\mathbb{R}^n$ such that $\mathbb{R}^n\setminus K$ is connected. Then for each function $u$ which is harmonic on an open set containing $K$ and for each positive number $\mu$, there is a harmonic polynomial $v$ such that $|u-v|<\mu$ on $K$.
}
\end{theorem}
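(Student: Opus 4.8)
The plan is to establish Theorem~\ref{harapp} by the classical functional-analytic (duality) argument. First I would fix the compact set $K$ and, by the Riesz representation theorem, identify the dual of the real Banach space $C(K)$ (continuous real-valued functions on $K$ with the sup norm) with the space of signed Radon measures on $K$. Since the restrictions to $K$ of harmonic polynomials form a linear subspace $\mathcal{H}\subset C(K)$ (a linear combination of harmonic polynomials is again a harmonic polynomial), the Hahn--Banach separation theorem reduces the statement to: every signed Radon measure $\nu$ on $K$ with $\int_K q\,d\nu=0$ for all harmonic polynomials $q$ also satisfies $\int_K u\,d\nu=0$. Indeed, if this holds then $u|_K$ lies in the $C(K)$-closure of $\mathcal{H}$, which is precisely the assertion that for every $\mu>0$ there is a harmonic polynomial $v$ with $|u-v|<\mu$ on $K$.

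Given such an annihilating measure $\nu$, I would introduce its Newtonian (for $n\ge 3$) or logarithmic (for $n=2$) potential $p_\nu(x):=\int_K E(x-y)\,d\nu(y)$, where $E$ is the fundamental solution of the Laplacian on $\mathbb{R}^n$. Three elementary facts are used: $p_\nu\in L^1_{\mathrm{loc}}(\mathbb{R}^n)$ (since $E\in L^1_{\mathrm{loc}}$ and $\nu$ is finite), $p_\nu$ is harmonic on $\mathbb{R}^n\setminus K$ (differentiating under the integral sign, as $E$ is harmonic off the origin), and $\Delta p_\nu=\nu$ in the sense of distributions. The key point is that the annihilation conditions force $p_\nu$ to vanish near infinity. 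To see this I would expand $E(x-y)$, for $|x|>R_K:=\max_{y\in K}|y|$, in the multipole (solid-harmonic) series $E(x-y)=\sum_{k\ge 0}H_k(x,y)$, where $H_k(x,\cdot)$ is for each fixed $x$ a harmonic polynomial homogeneous of degree $k$ in $y$, the series converging uniformly in $y\in K$; integrating term by term then yields $p_\nu(x)=\sum_{k\ge 0}\int_K H_k(x,y)\,d\nu(y)=0$ for $|x|>R_K$. When $n=2$ the expansion produces in addition a term $\tfrac{1}{2\pi}\nu(K)\log|x|$, which vanishes since constants are harmonic polynomials; so the conclusion $p_\nu\equiv 0$ on $\{|x|>R_K\}$ holds in every dimension.

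I would then invoke the hypothesis that $\mathbb{R}^n\setminus K$ is connected: $p_\nu$ is harmonic, hence real-analytic, on the connected open set $\mathbb{R}^n\setminus K$ and vanishes on the nonempty open subset $\{|x|>R_K\}$, so by the identity theorem $p_\nu\equiv 0$ throughout $\mathbb{R}^n\setminus K$. Finally I would use the harmonicity of $u$: choosing an open $U\supset K$ on which $u$ is harmonic (hence smooth) and a cutoff $\chi\in C_c^\infty(U)$ with $\chi\equiv 1$ on an open neighborhood $W$ of $K$, one has $\chi u\in C_c^\infty(\mathbb{R}^n)$ and, since $\operatorname{supp}\nu\subset K$,
\[
\int_K u\,d\nu=\langle\nu,\chi u\rangle=\langle\Delta p_\nu,\chi u\rangle=\langle p_\nu,\Delta(\chi u)\rangle .
\]
As $\Delta u=0$, $\Delta(\chi u)=u\,\Delta\chi+2\nabla u\cdot\nabla\chi$ is supported in $\operatorname{supp}\chi\setminus W\subset\mathbb{R}^n\setminus K$, where $p_\nu$ vanishes; hence the last pairing is $0$, so $\int_K u\,d\nu=0$, which completes the reduction and the proof.

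I expect the middle step to be the main obstacle: translating the moment conditions on $\nu$ against \emph{all} harmonic polynomials into the \emph{identical} vanishing of $p_\nu$ on a full neighborhood of infinity. This needs the multipole expansion of the fundamental solution together with its uniform convergence, and a small but genuine case distinction between $n\ge 3$ (no extra term) and $n=2$ (the logarithmic term, eliminated precisely because constants are harmonic). The other ingredients---Riesz representation, Hahn--Banach separation, the distributional identity $\Delta E=\delta_0$, real-analyticity of harmonic functions, and the cutoff localization---enter only in the routine manner sketched above. It is worth stressing that the connectedness of $\mathbb{R}^n\setminus K$ is used exactly once, in the identity-theorem step, and cannot be dropped: otherwise $p_\nu$ may persist on a bounded complementary component and obstruct the approximation.
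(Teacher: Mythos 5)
The paper does not prove Theorem \ref{harapp} at all: it is quoted as a known result (Runge's theorem in $2D$, Walsh's theorem in $3D$) with a citation to \cite[Section 1.1]{Gar}, so there is no in-paper argument to compare with. Your proposal is a correct, self-contained proof, and it is in fact the classical duality proof of Walsh's theorem found in the potential-theory literature: Hahn--Banach plus Riesz representation reduce the statement to showing that any measure $\nu$ on $K$ annihilating all harmonic polynomials annihilates $u$; the potential $p_\nu=E*\nu$ vanishes near infinity by the moment conditions, vanishes on all of $\mathbb{R}^n\setminus K$ by real-analyticity and the connectedness hypothesis, and then the cutoff computation $\int_K u\,d\nu=\langle p_\nu,\Delta(\chi u)\rangle=0$ closes the argument. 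All the steps you flag as delicate do go through: each term of the multipole expansion is a solid (zonal) harmonic in $y$, the series converges uniformly on $K$ for each fixed $|x|>R_K$, and the $n=2$ logarithmic term is killed because constants are harmonic polynomials. One small simplification worth noting: instead of the explicit multipole expansion, you can observe that for fixed $x$ with $|x|>R_K$ the map $y\mapsto E(x-y)$ is harmonic on the ball $\{|y|<|x|\}\supset K$, hence expands there into homogeneous harmonic polynomials in $y$ with locally uniform convergence; this yields $p_\nu(x)=0$ for $|x|>R_K$ in all dimensions at once, with no case distinction between $n\ge 3$ and $n=2$.
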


\begin{proof}[\bf Proof of Proposition \ref{lagctr}]
Due to the smoothness of $\theta$ and \eqref{tta3}, the flow of $\nabla \theta$ starting from $\gamma_0$ would never reach $\p\Omega$. Therefore there exists a small constant $\delta_*>0$ such that
\begin{equation*}\label{pgm0}
\text{dist}(\phi^{\nabla\theta}(t,0,\gamma_0),\p\Omega)> \delta_*>0,\quad \forall \, t\in [0,1].
\end{equation*}
We take a compact set $K:=K_1\cup K_2$ (see Figure \ref{figure2} for a simple case) with two components defined by
\begin{equation*}\label{K1}
\begin{split}
&K_1:=\bigl\{\ x\in\Omega | \ \text{dist}(x,\p\Omega)\geq\delta_*\ \bigr\},\\
&K_2:=\bigl\{\ x\in \mathbb{R}^3 |\ \text{dist}(x,\p\Omega\setminus\Gamma)\leq\frac{\delta_*}{2}\ \bigr\}.
\end{split}
\end{equation*}
\begin{figure}
  \centering
  \includegraphics{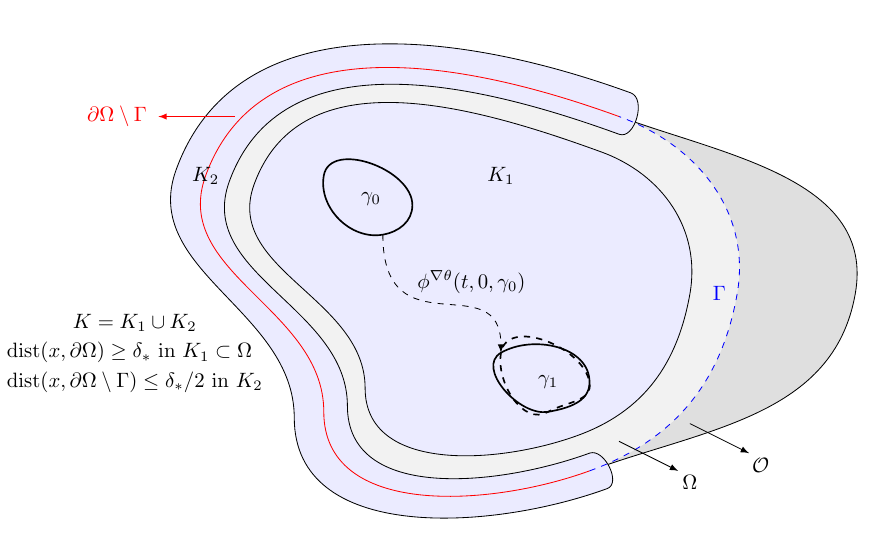}
  \caption{Domains for the Runge-type theorem}\label{figure2}
\end{figure}

We define a harmonic function $u_K$ in a small neighborhood of $K$, such that $u_K=\theta$ in a small neighborhood of $K_1$ and $u_K=0$ in a small neighborhood of $K_2.$ Then for any $\mu>0$ and $k\in\mathbb{N}$, by the compactness of $[0,1]\times K$, there exists a integer $N>0$ such that
\begin{equation}\label{ukts}
    \|u_K(t,x)-u_K(s,x)\|_{C^{k+2}(K)}<\mu,\qquad\text{ for }|t-s|\leq \frac{1}{N}.
\end{equation}
Then by using the above Theorem \ref{harapp}, we can find a harmonic polynomial $v_i (0\leq i\leq N)$ in $\cO$ such that $\|u_K(\frac{i}{N},\cdot)-v_i(\cdot)\|_{C(K)}<\mu$ in $K$. For any fixed integer $k\in\mathbb{N}$, since $u_K(\frac{i}{N},\cdot),v_i(\cdot)$ are harmonic functions, we have
\begin{equation}\label{Ckapp}
\bigl\|u_K(\frac{i}{N},\cdot)-v_i(\cdot)\bigr\|_{C^{k+2}(K)}<C_k\mu,
\end{equation}
for a constant $C_k$ and with a slight shrinking of the set $K,$  which we still denote by $K.$ Taking a partition of unity $\alpha_i$ associated to the covering of $[0,1]$ by $(\frac{i-1}{N},\frac{i+1}{N}),0\leq i\leq N,$ we define
\begin{equation*}
    v_K(t,x):=\sum_{i=0}^N \alpha_i(t)v_i(x).
\end{equation*}
Together with \eqref{ukts} and \eqref{Ckapp}, we get
\begin{equation*}
    \sup_{0\leq t\leq 1}\|u_K(t,\cdot)-v_K(t,\cdot)\|_{C^k(K)}\leq C_k\mu,
\end{equation*}

We introduce a smooth cut-off function $\chi_K$ in $\cO$ such that $\chi_K(x)=1$ when $x\in \Omega$ or $\text{dist}(x,\Gamma)\leq \frac{\delta_*}{16}$, and $\chi_K(x)=0$ when $x\in\cO\setminus\Omega$ and $\text{dist}(x,\Gamma)\geq \frac{\delta_*}{8}$.
Then we define a harmonic function $\Psi_K$ to compensate the normal derivatives of $\chi_K v_K$ by
\begin{equation*}
\left\{
    \begin{aligned}
        \Delta \Psi_K&=0,\quad&&\text{ in }\cO,\\
        \p_{\n}\Psi_K&=\p_{\n}(\chi_K v_K),\quad&&\text{ on }\p\cO.
    \end{aligned}\right.
\end{equation*}
Since $u=0$ in $K_2$,  by combining \eqref{Ckapp} and using classical regularity estimates for elliptic equations wo obtain $\|\nabla \Psi_K\|_{C^k(\cO)}<C\mu$ for some constant $C$ depending on $k$ and $\chi_K$.
We define
\begin{equation*}
\tilde{\theta}=\chi_Kv_K-\Psi_K,
\end{equation*}
then $\tilde{\theta}$ satisfies
\begin{align}
\nonumber    \Delta \tilde{\theta}&=0,\quad \text{ in }\Omega,\\
\nonumber    \p_{\n}\tilde{\theta}&=0,\quad \text{ on }\p\cO,\\
\label{tak}   \|\tilde{\theta}-\theta&\|_{C^{k+1}(K)}<C_k\mu,
\end{align}
for a constant $C_k.$ From the definition of the flow $\phi^{\nabla\tilde{\theta}},\phi^{\nabla\theta}$, \eqref{tta4} and \eqref{tak}, we can easily get
\begin{equation}
    \|\phi^{\nabla\tilde{\theta}}(1,0,\gamma_0)-\gamma_1\|_{C^k(\mathbb{S}^2)}<C_k\mu,
\end{equation}
for a constant $C_k$. By taking $\mu$ small enough, we obtain that
\begin{equation*}
\phi^{\nabla\tilde{\theta}}(t,0,\gamma_0)\subset\Omega.
\end{equation*}
Since $\theta$ is compactly supported in $(0,1)$ so does $v_K,\Psi_K$ and $\tilde{\theta}$.

We define
\beq\label{nu0}
\begin{split}
\tilde{u}^0(t,x)&=\frac{1}{T}(\nabla\tilde{\theta})(\frac{t}{T},x),\\
\tilde{p}^0(t,x)&=\frac{1}{T}(\p_t\tilde{\theta}+\frac{1}{2}|\nabla\tilde{\theta}|^2)(\frac{t}{T},x),\\
\tilde{\sigma}^0(t,x)&=\frac{1}{T}(v_K\Delta\chi_K+2\nabla\chi_K\cdot\nabla v_K)(\frac{t}{T},x).
\end{split}\eeq
Then $(\tilde{u}^0,\tilde{p},\sigma^0)$ meets all the requirements in Proposition \ref{lagctr}.
\end{proof}

\begin{proof}[\bf Proof of theorem \ref{LAC}]

We perform the time scaling \eqref{tsca1}. We can construct the  multi-scale asymptotic expansions \eqref{expu} and \eqref{expB} to system \eqref{MHDe} in $(0,T)\times\Omega$. The only difference is that we replace the first term $u^0$ from the solution of Euler equation \eqref{euler0} given by Lemma \ref{lmu0} to be $\tilde{u}^0$ given by Proposition \ref{lagctr}.

Since $\tilde{u}^0$ satisfies all the conditions of Lemma \ref{lmu0} except that $\tilde{u}^0$ doesn't satisfies the flushing property \eqref{flush}, which is unnecessary. We only need to solve system \eqref{MHDe} in $(0,T)\times\cO$, there is no need for the flushing condition \eqref{flush} to guarantee the decay properties of the boundary profiles.
Therefore we can construct multi-scale asymptotic expansions \eqref{expu} and \eqref{expB} with approximate solution $(u^{\e}_{app},B^{\e}_{app})$ satisfies Lemma \ref{appes} for $t\in [0,T]$.
Then the remainder term $(r^{\e},R^{\e})$ satisfies a similar version of \eqref{esrR3}, which is
\begin{align*}
\sup_{t\in [0,T]}E^{\e}(t)+c_0\e\int_0^T(\|\nabla r^{\e}\|^2_5+\|\nabla \eta^{\e}\|^2_4)ds\leq C_*\e^{\frac{1}{2}},
\end{align*}
for some constants $c_0,C_*,\e_0$ and for all $0\leq \e\leq \e_0.$

The rest of this Section is devoted to proving \eqref{lc1} and \eqref{lc2}.
For $t\in (0,T),x\in \cO,$ by the definition of flow $\phi^{u^{\e}}$ and $\phi^{\tilde{u}^0},$
\begin{align*}
&\p_t\big(\phi^{u^{\e}}(t,0,x)-\phi^{\tilde{u}^0}(t,0,x)\big)=u^{\e}(t,\phi^{u^{\e}}(t,0,x))-\tilde{u}^0(t,\phi^{\tilde{u}^0}(t,0,x))\\
&=\tilde{u}^{0}(t,\phi^{u^{\e}}(t,0,x))-\tilde{u}^0(t,\phi^{\tilde{u}^0}(t,0,x))+(u^{\e}-\tilde{u}^0)(t,\phi^{u^{\e}}(t,0,x)),
\end{align*}
from which, we infer
\beq \label{pueu0}
\begin{split}
\|(\phi^{u^{\e}}-\phi^{\tilde{u}^0})(t,0,\cdot)\|_{L^{\infty}(\cO)}
\leq &\ C\int_0^t \|\nabla \tilde{u}^0(s)\|_{L^{\infty}(\cO)}\|(\phi^{u^{\e}}-\phi^{\tilde{u}^0})(s,0,\cdot)\|_{L^{\infty}(\cO)\,ds}\\
&+\int_0^tC\|(u^{\e}-\tilde{u}^0)(s)\|_{L^{\infty}(\cO)}\,ds.
\end{split}\eeq
It follows from Lemma \ref{appes} that there is a constant $C>0$ such that
\begin{align}\label{ueinf}
\|(u^{\e}_{app}-\tilde{u}^0)(t)\|_{L^{\infty}(\cO)}\leq C\sqrt{\e},\quad\forall\,t\in [0,T].
\end{align}
From Proposition \ref{2inf} and Proposition \ref{ult}, there is a constant $C>0$ such that
\begin{align}\label{rRinfl}
\|r^{\e}(t)\|_{L^{\infty}(\cO)}\leq C,\quad\forall\, t\in [0,T].
\end{align}
Therefore there is a constant $C>0$ such that
\begin{align}\label{ueu0}
C\int_0^t\|(u^{\e}-\tilde{u}^0)(s)\|_{L^{\infty}(\cO)}ds\leq CT\sqrt{\e},\quad\forall\,t\in[0,T].
\end{align}
From the smoothness of $\theta$ and the definition of $\tilde{u}^0$, $\|\nabla \tilde{u}^0\|_{L^{\infty}(\cO)}\leq C/T$ for a constant $C>0$ and for any $t\in [0,T].$ By combining \eqref{pueu0} with \eqref{ueu0}, and by using Gronwall's inequality we find that
\begin{align}
\|(\phi^{u^{\e}}-\phi^{\tilde{u}^0})(t,0,x)\|_{L^{\infty}(\cO)}\leq CTe^{C}\sqrt{\e},\quad\forall\,t\in [0,T],
\end{align}
for a constant $C>0$.

By the definition of the flow $\phi^{u^{\e}}$ and time scaling \eqref{tsca1},
\begin{align*}\label{psca}
\phi^{u^{\e}}(t,0,x)=\phi^u(\e t,0,x)\quad\text{ and }\quad
\phi^{\tilde{u}^0}(t,0,x)=\phi^{\nabla\tilde{\theta}}(t/T,0,x)\quad \forall \, t\in [0,T],x\in \Omega.
\end{align*}
Together with \eqref{tta4}, we obtain
\begin{equation*}
\begin{split}
\|\phi^u(\e T,0,\gamma_0)-\gamma_1\|_{L^{\infty}}
&\leq \|\phi^{u^{\e}}-\phi^{\tilde{u}^0}\|_{L^{\infty}(\Omega)}+\|\phi^{\nabla\tilde{\theta}}(1,0,\gamma_0)-\gamma_1\|_{L^{\infty}}\\
&\leq C_*\sqrt{\e}+C_k\mu,
\end{split}
\end{equation*}
for some constant $C_*,C_k>0$.

For any $\epsilon>0$, we can choose small positive numbers $\e$ and $\mu$ such that $\max\{C_*\sqrt{\e},C_k\mu\}<\min\{\epsilon/2,\delta_*/2\}$ and take $T_*=\e T$, thus
\begin{align}\label{phiinf}
\|\phi^u(T_*,0,\gamma_0)-\gamma_1\|_{L^{\infty}}\leq \epsilon.
\end{align}
By the choice of $\varepsilon$
\begin{align*}
\phi^{u}(t,0,\gamma_0)\subset\Omega,\quad\forall\,t\in [0,T_*],
\end{align*}
which is the first part of Theorem \ref{LAC}.

Now we prove the second part of Theorem \ref{LAC}. If we impose more regularities on the initial data, say that $(u_0,B_0)\in H^{24+2k}(\Omega)\times H^{8+k}(\Omega)$ for an integral $k\in\mathbb{N}_+,$  then we can construct multi-scale asymptotic expansions \eqref{expu} and \eqref{expB} with $u^0$ being replaced by $\tilde{u}^0$ defined by \eqref{nu0}. Moreover, each terms of the expansions will be more regular. Roughly speaking, they can gain $k$ tangential space derivatives than Lemma \ref{appes} and the remainder terms can gain $k$ tangential derivatives than Proposition \ref{ult}. The indexes $(24+2k,8+k)$ is deduce from Proposition \ref{propv} and the definition of indexes $(\gamma_i,k_i,p_i,s_i,q_i)$ in Section \ref{cexp1}. Thus we can get the counterpart of  \eqref{ueinf} and \eqref{rRinfl}, that is
\begin{align*}
\|Z^{\alpha}(u^{\e}-\tilde{u}^0)(t)\|_{L^{\infty}(\Omega)}&\leq C\sqrt{\e},\\
\|Z^{\alpha}r^{\e}(t)\|_{L^{\infty}(\Omega)}&\leq C,
\end{align*}
hold for any $|\alpha|\leq k,\, t\in [0,T].$

Then we can prove \eqref{lc3} the counterpart of \eqref{phiinf} for higher order space derivatives, see \cite[Equation (23)]{koch}. This estimate is performed in a compact set $K\subset \Omega$ such that an open neighborhood of $\cup_{t\in [0,\e T]}\phi^{u}(t,0,\gamma_0)$ is contained in $K$. The existence of such a $K$ is guaranteed by \eqref{phiinf}. Since $K$ is a compact subset inside $\Omega$ and has a positive distance to $\p\Omega$, $\|(r^{\e},R^{\e})\|_{m}$ is equivalent to the usual Sobolev norm of order $m$ by the definition of conormal Sobolev spaces in Section \ref{css}. The details are left to the reader.

\end{proof}

\section*{Acknowledgement}

J. Liao is supported by National Natural Science Foundation of China under Grant No. 12301238.
P. Zhang is partially  supported by National Key R$\&$D Program of China under grant 2021YFA1000800 and by National Natural Science Foundation of China under Grants  No. 12421001, No. 12494542 and No. 12288201.

\section*{Declarations}

\subsection*{Conflict of interest} The authors declare that there are no conflicts of interest.

\subsection*{Data availability}
This article has no associated data.

\end{document}